\theoremstyle{definition}
\newtheorem{para}{}[section]
\newtheorem{remark}[para]{Remark}
\newtheorem{remarks}[para]{Remarks}
\newtheorem{notation}[para]{Notation}
\newtheorem{convention}[para]{Convention}
\newtheorem{definition}[para]{Definition}
\newtheorem{definitions}[para]{Definitions}
\newtheorem{claim}[equation]{Claim}
\newtheorem{fact}[para]{Fact}
\newtheorem*{facts}{Facts}
\newtheorem{example}[para]{Example}
\newcommand\Alternatives{\begin{enumerate}[(i)]}
\newcommand\EndAlternatives{\end{enumerate}}
\newcommand\Conditions{\begin{enumerate}[(1)]}
\newcommand\EndConditions{\end{enumerate}}
\theoremstyle{plain}
\newtheorem{theorem}[para]{Theorem}
\newtheorem{lemma}[para]{Lemma}
\newtheorem{proposition}[para]{Proposition}
\newtheorem{corollary}[para]{Corollary}
\newtheorem{conjecture}[para]{Conjecture}
\numberwithin{equation}{para}
\numberwithin{figure}{section}
\newcommand\Number{\begin{para}}
\newcommand\EndNumber{\end{para}}
\newcommand\Definition{\begin{definition}}
\newcommand\EndDefinition{\end{definition}}
\newcommand\Definitions{\begin{definitions}}
\newcommand\EndDefinitions{\end{definitions}}
\newcommand\Theorem{\begin{theorem}}
\newcommand\EndTheorem{\end{theorem}}
\newcommand\Conjecture{\begin{conjecture}}
\newcommand\EndConjecture{\end{conjecture}}
\newcommand\Remark{\begin{remark}}
\newcommand\EndRemark{\end{remark}}
\newcommand\Remarks{\begin{remarks}}
\newcommand\EndRemarks{\end{remarks}}
\newcommand\Convention{\begin{convention}}
\newcommand\EndConvention{\end{convention}}
\newcommand\Notation{\begin{notation}}
\newcommand\EndNotation{\end{notation}}
\newcommand\Lemma{\begin{lemma}}
\newcommand\EndLemma{\end{lemma}}
\newcommand\Proposition{\begin{proposition}}
\newcommand\EndProposition{\end{proposition}}
\newcommand\Corollary{\begin{corollary}}
\newcommand\EndCorollary{\end{corollary}}
\newcommand\Claim{\begin{claim}}
\newcommand\EndClaim{\end{claim}}
\newcommand\Proof{\begin{proof}}
\newcommand\EndProof{\end{proof}}
\newcommand\Equation{\begin{equation}}
\newcommand\EndEquation{\end{equation}}
\newcommand\Bullets{\begin{itemize}}
\newcommand\EndBullets{\end{itemize}}
\newcommand\PtsToPoly{\cite[Lemma 1.4]{DeB_cyclic_geom}}  
\newcommand\OneSide{\cite[Lemma 1.5]{DeB_cyclic_geom}} 
\newcommand\IsoscelesDecomp{\cite[Lemma 1.6]{DeB_cyclic_geom}} 
\newcommand\LongestSide{\cite[Corollary 1.11]{DeB_cyclic_geom}} 
\newcommand\CenteredSpace{\cite[Definition 3.1]{DeB_cyclic_geom}} 
\newcommand\BCn{\cite[Lemma 3.4]{DeB_cyclic_geom}} 
\newcommand\RadiusFunction{\cite[Lemma 3.6]{DeB_cyclic_geom}} 
\newcommand\InCenteredClosure{\cite[Lemma 3.9]{DeB_cyclic_geom}} 
\newcommand\HCn{\cite[Lemma 6.4]{DeB_cyclic_geom}} 
\newcommand\VerticalRay{\cite[Corollary 6.5]{DeB_cyclic_geom}} 
\newcommand\ShrinkingParameters{\cite[Lemma 6.12]{DeB_cyclic_geom}}
\newcommand\FullSector{\cite[Lemma 5.2]{DeB_cyclic_geom}} 
\newcommand\DefectDerivative{\cite[Proposition 5.5]{DeB_cyclic_geom}} 
\renewcommand\Re{\mathop{\rm Re}}
\renewcommand\Im{\mathop{\rm Im}}
\newcommand\calp{{\mathcal P}}
\newcommand\calAC{\mathcal{AC}}
\newcommand\calBC{\mathcal{BC}}
\newcommand\calHC{\mathcal{HC}}
\newcommand\calc{{\mathcal C}}
\newcommand\cale{\mathcal{E}}
\newcommand\calf{\mathcal{F}}
\newcommand\calh{{\mathcal H}}
\newcommand\cals{\mathcal{S}}
\newcommand\co{\colon\thinspace}
\newcommand\bd{\mathbf{d}}
\newcommand\bs{\mathbf{s}}
\newcommand\bt{\mathbf{t}}
\newcommand\bu{\mathbf{u}}
\newcommand\bv{\mathbf{v}}
\newcommand\bw{\mathbf{w}}
\newcommand\bx{\mathbf{x}}
\newcommand\by{\mathbf{y}}
\newcommand\bz{\mathbf{z}}
\begin{document}

\title[Maximal injectivity radius of hyperbolic surfaces]{The centered dual and the maximal injectivity radius of hyperbolic surfaces}

\author{Jason DeBlois}
\address{Department of Mathematics\\University of Pittsburgh}
\email{jdeblois@pitt.edu}
\thanks{Partially supported by NSF grant DMS-1240329}

\begin{abstract}  We give sharp upper bounds on the maximal injectivity radius of finite-area hyperbolic surfaces and use them, for each $g\geq 2$, to identify a constant $r_{g-1,2}$ such that the set of closed genus-$g$ hyperbolic surfaces with maximal injectivity radius at least $r$ is compact if and only if $r>r_{g-1,2}$.  The main tool is a version of the ``centered dual complex'' that we introduced earlier, a coarsening of the Delaunay complex.  In particular, we bound the area of a compact centered dual two-cell below given lower bounds on its side lengths.\end{abstract}

\maketitle

This paper analyzes the ``centered dual complex" of a locally finite subset $\cals$ of $\mathbb{H}^2$, first introduced in our prior preprint \cite{DeB_tessellation}, and applies it to describe the maximal injectivity radius of hyperbolic surfaces.  The centered dual complex is a cell decomposition with vertex set $\cals$ and totally geodesic edges.  Its underlying space contains that of the geometric dual to the Voronoi tessellation.  We regard it as a tool for understanding the geometry of packings.

The rough idea behind the construction is that geometric dual $2$-cells that are not ``centered'' (see Definition \ref{first centered}) are hard to analyze individually but naturally group into larger cells that can be treated as units.  Our first main theorem bears the fruit of this approach, turning a lower bound on edge lengths into a good lower bound on area for centered dual $2$-cells.

\newcommand\maintheorem{Let $C$ be a compact two-cell of the centered dual complex of a locally finite set $\cals\subset\mathbb{H}^2$, such that for some fixed $d>0$ each edge of $\partial C$ has length at least $d$.  If $C$ is a triangle then its area is at least that of an equilateral hyperbolic triangle with side lengths $d$.  If $\partial C$ has $k > 3$ edges then:  
$$ \mathrm{Area}(C) \geq (k-2) A_{\mathit{m}}(d) $$
Here $A_{\mathit{m}}(d)$ is the maximum of areas of triangles with two sides of length $d$, that of a triangle whose third side is a diameter of its circumcircle.}
\newtheorem*{MainTheorem}{Theorem \ref{main}}
\begin{MainTheorem}\maintheorem\end{MainTheorem}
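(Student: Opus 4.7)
The proof naturally splits into the triangle case $k=3$ and the polygon case $k>3$. For the triangle case, $C$ is a single centered Delaunay triangle and the claim reduces to: among hyperbolic triangles with each side at least $d$, the equilateral triangle with side $d$ has the smallest area. I would prove this by a monotonicity argument. Combining the hyperbolic law of cosines with the Gauss--Bonnet formula $\text{Area} = \pi - A - B - C$, shrinking one side of a hyperbolic triangle to $d$ while holding the other two fixed should reduce area (at least for acute triangles, which centered Delaunay triangles are, having their circumcenter in the interior). Iterating the three sides down to $d$ realizes the equilateral triangle with side $d$ as the area-minimizer.

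For $k>3$, the key structural input is that the vertices of $C$ lie on a common circle of some radius $R$, by construction of the centered dual. I would apply \IsoscelesDecomp to express $\text{Area}(C)$ as a signed sum of areas of $k$ isosceles triangles $T_1,\ldots,T_k$, where $T_i$ has two sides of length $R$ and base an edge $s_i \geq d$ of $\partial C$. The central angles $\theta_i$ and edges are linked by $\sinh(s_i/2) = \sinh R \sin(\theta_i/2)$, with $\sum \theta_i = 2\pi$ in the convex case; because $C$ is non-centered when $k>3$, the circumcenter lies outside $C$ and at least one term carries a negative sign. Using \DefectDerivative to quantify the response of $\text{Area}(C)$ to variations of the $s_i$, I would argue that shortening any $s_i$ decreases area, reducing to the extremal configuration in which every $s_i$ equals $d$. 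The result is a regular cyclic $k$-gon with side $d$ and circumradius $R_k$ given by $\sinh R_k = \sinh(d/2)/\sin(\pi/k)$, whose area equals $(k-2)\pi - 2k\beta_k$ with $\cot \beta_k = \cosh R_k \tan(\pi/k)$.

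The hardest step is the final inequality $(k-2)\pi - 2k\beta_k \geq (k-2) A_m(d)$ for all integers $k \geq 4$ and all $d > 0$. I would approach it by triangulating the regular $k$-gon into $k-2$ triangles, each inscribed in the common circumcircle, and arguing that each such triangle has area at least $A_m(d)$; the extremal characterization of $A_m(d)$ in the statement --- the triangle whose third side is a diameter of its circumcircle --- should drive the per-triangle comparison, with the hyperbolic analogue of an angle subtending a diameter as the key ingredient. A secondary technical issue, present already in the variational step, is bookkeeping the signs in the isosceles decomposition when the circumcenter lies outside $C$; the \FullSector framework from the earlier paper should let me interpret negative-area contributions as genuine positive area subtracted, making the comparison with $A_m(d)$ geometrically transparent.
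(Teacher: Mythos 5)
Your approach for $k>3$ rests on a false premise: \emph{centered dual $2$-cells are not cyclic polygons}. You write ``the vertices of $C$ lie on a common circle of some radius $R$, by construction of the centered dual,'' but this is precisely what fails. Only the \emph{geometric dual} two-cells $C_v$ (the Delaunay cells) are cyclic --- that is Lemma \ref{vertex polygon}. A centered dual cell $C_T$ dual to a non-trivial tree $T$ in the non-centered Voronoi subgraph is, by Definition \ref{tree cells}, the union $\bigcup_{v\in T^{(0)}} C_v$ of \emph{several} cyclic polygons glued along geometric dual edges, each with its own circumcircle of a different radius $J_v$. Its vertices do not lie on a common circle, and it is not determined by its boundary edge lengths. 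The entire point of introducing the centered dual --- stated explicitly in the abstract and introduction --- is that the bound of Theorem \ref{main} \emph{fails} for individual Delaunay or geometric dual cells, so the aggregation into $C_T$ is essential; your reduction to a single cyclic polygon throws this away. Your variational step (shorten each $s_i$ to $d$, land on a regular $k$-gon) therefore has no meaning for $C_T$: the edge lengths of $\partial C_T$ do not determine $C_T$, which is why the paper has to introduce an auxiliary \emph{admissible space} $\overline{\mathit{Ad}}(\bd_{\calf})$ parametrizing the internal edge lengths subject to centeredness and radius-ordering constraints (Definitions \ref{admissible criteria}, \ref{bigger admissible}), prove it is compact (Lemma \ref{admissible closure}), characterize minimizers of $D_T$ on it (Proposition \ref{minimum at}), and handle the ``equal radii'' minimizer case by an edge-collapsing induction (Lemmas \ref{two polys}, \ref{reduce edges}) that reduces the number of vertices of $T$.

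Two smaller issues. First, not every centered dual cell with $k>3$ edges is non-centered: the centered dual also contains ordinary centered geometric dual $k$-gons $C_v$ for Voronoi vertices $v$ outside the non-centered subgraph (Definition \ref{centered dual}), so the sign structure you describe is not uniform across the $k>3$ case. Second, even when $C$ \emph{is} a single centered cyclic $k$-gon, your claim that the regular $k$-gon with side $d$ minimizes area over all cyclic $k$-gons with sides $\geq d$ needs more than monotonicity of sides: the relevant quantitative statement is Proposition \ref{centered main}, which compares $D_0(d,\ldots,d)$ to $(k-2)D_0(b_0(d,d),d,d)$ by a two-variable calculus estimate, not by a per-triangle comparison of the type you sketch. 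Your triangle case is fine in spirit (it is essentially Corollary \ref{monotonicity}), but the $k>3$ case as written does not address the objects the theorem is actually about.
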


The bounds of Theorem \ref{main} do not hold for arbitrary Delaunay or geometric dual cells, even triangles.  The theorem further gives explicit form to the assertion that $\cals$ has low density in a centered dual two-cell of high combinatorial complexity.  We prove an analog of Theorem \ref{main} for centered dual $2$-cells of finite complexity that are not compact, in Theorem \ref{main for noncompact}.  

Our next main theorem, which uses Theorems \ref{main} and \ref{main for noncompact}, illustrates the sort of application we have in mind for the centered dual complex.  Below let $\mathit{injrad}_x F$ denote the \textit{injectivity radius} of a hyperbolic surface $F$ at $x\in F$, half the length of the shortest non-constant geodesic arc in $F$ that begins and ends at $x$.

\newcommand\mainapp{For $r > 0$, let $\alpha(r)$ be the angle of an equilateral hyperbolic triangle with sides of length $2r$, and let $\beta(r)$ be the angle at either endpoint of the finite side of a horocyclic ideal triangle with one side of length $2r$:\begin{align*}
 & \alpha(r) = 2\sin^{-1}\left(\frac{1}{2\cosh r}\right) &
 & \beta(r) = \sin^{-1}\left(\frac{1}{\cosh r}\right) \end{align*}
A complete, oriented, finite-area hyperbolic surface $F$ with genus $g\geq 0$ and $n\geq 0$ cusps has injectivity radius at most $r_{g,n}$ at any point, where $r_{g,n}>0$ satisfies:\begin{align*}
  (4g+n-2)3\alpha(r_{g,n}) + 2n\beta(r_{g,n}) = 2\pi \end{align*}
Moreover, the collection of such surfaces with injectivity radius $r_{g,n}$ at some point is a non-empty finite subset of the moduli space $\mathfrak{M}_{g,n}$ of complete, oriented, finite-area hyperbolic surfaces of genus $g$ with $n$ cusps.}
\newtheorem*{MainApp}{Theorem \ref{main app}}

\begin{MainApp}\mainapp\end{MainApp}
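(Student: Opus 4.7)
The plan is to lift any point $x \in F$ to the $\pi_1(F)$-invariant, locally finite set $\cals = \pi^{-1}(x) \subset \HH^2$, whose points are pairwise separated by at least $2r = 2\,\mathit{injrad}_x F$, and to analyze the cell decomposition of $F$ induced by the centered dual complex of $\cals$. This decomposition has $x$ as its single finite vertex, compact edges of length at least $2r$ between lifts of $x$, and $2$-cells that are either compact or non-compact of finite complexity (the latter opening toward cusps of $F$). By Theorem \ref{main}, each compact $k$-gonal cell $C$ has area at least $\pi - 3\alpha(r)$ when $k = 3$ and at least $(k - 2) A_m(2r)$ when $k > 3$; Theorem \ref{main for noncompact} gives the analogous lower area bound for non-compact cells, whose simplest extremal case is the horocyclic ideal triangle of area $\pi - 2\beta(r)$.

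Rewriting each area lower bound via $\mathrm{Area}(C) = (k_C - 2)\pi - \sum_v \theta_v(C)$ converts it to an upper bound $B(k_C, r)$ on the sum of angles of $C$ at its finite vertices. Summing these per-cell bounds and using that the total angle of all cells at $x$ is $2\pi$ gives $\sum_C B(k_C, r) \geq 2\pi$. The pivotal estimates I would verify are $B(k, r) \leq (k - 2)\cdot 3\alpha(r)$ for each compact cell, equivalent to $A_m(2r) \geq \pi - 3\alpha(r)$ (the diameter-base triangle has strictly greater area than the equilateral of the same side length), and $B \leq 2(j_C - 1)\beta(r)$ for each non-compact cell with $j_C$ finite vertices. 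Euler's formula applied to the compactification of $F$ (so $V = 1 + n$ and $V - E + F = 2 - 2g$), together with $\sum_C k_C = 2E$, yields $\sum_C (k_C - 2) = 4g + 2n - 2$; writing $H_{\mathrm{tot}} = \sum_{C\text{ non-compact}} (j_C - 1)$, substitution gives
$$ 2\pi \leq 3(4g + 2n - 2)\alpha(r) + H_{\mathrm{tot}}\bigl(2\beta(r) - 3\alpha(r)\bigr). $$
Since each cusp forces at least one non-compact cell with $j_C \geq 2$, one has $H_{\mathrm{tot}} \geq n$; and since $2\beta(r) - 3\alpha(r) < 0$ for every $r > 0$, the right side is at most $3(4g + n - 2)\alpha(r) + 2n\beta(r)$. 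So $(4g + n - 2)\cdot 3\alpha(r) + 2n\beta(r) \geq 2\pi$, i.e.\ $r \leq r_{g,n}$.

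For the moreover clause, equality throughout forces every compact cell of the extremal decomposition to be an equilateral triangle of side $2r_{g,n}$ and every non-compact cell to be a horocyclic ideal triangle of finite side $2r_{g,n}$, with exactly $4g + n - 2$ triangles and $n$ horocyclic triangles (one per cusp). Since the cells are rigid, only the combinatorial gluing data can vary, and such data form a finite set, so the extremal set is finite in $\mathfrak{M}_{g,n}$. Non-emptiness is established by exhibiting at least one such gluing and realizing it as a hyperbolic surface, most cleanly via a symmetric Fuchsian quotient.

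The main obstacle is proving the two per-cell angle-budget estimates $B(k, r) \leq (k - 2)\cdot 3\alpha(r)$ and $B \leq 2(j_C - 1)\beta(r)$, along with the structural claim $H_{\mathrm{tot}} \geq n$. Once these inequalities and the combinatorial identity $\sum_C (k_C - 2) = 4g + 2n - 2$ are in hand, the sign of $2\beta - 3\alpha$ concludes the bound. Secondary technical tasks are handling degenerate non-compact cells at cusps and producing an explicit non-emptiness construction for every pair $(g, n)$.
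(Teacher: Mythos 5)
Your architecture matches the paper's almost step for step: lift $x$ to $\widetilde{\cals}=\pi^{-1}(x)$, invoke Theorems~\ref{main} and~\ref{main for noncompact} for per-cell area lower bounds, and close with an Euler-characteristic count on $\bar F$. Your ``angle-budget'' reformulation is a direct dualization of the paper's Gauss--Bonnet bookkeeping via $\mathrm{Area}(C)=(k_C-2)\pi-\sum_v\theta_v(C)$, so this is the same proof, not a genuinely different route. There is, however, a real error in one of the two per-cell estimates you flag as pivotal.

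The bound $B\leq 2(j_C-1)\beta(r)$ for a non-compact cell with $j_C=k_C-1$ finite vertices is false whenever $j_C\geq 3$. It is equivalent to $\mathrm{Area}(C)\geq(k_C-2)\,D_0(\infty,d,\infty)$, whereas Theorem~\ref{main for noncompact} --- even after the weakening the paper actually uses --- only gives $\mathrm{Area}(C)\geq D_0(\infty,d,\infty)+(k_C-3)\,D_0(d,d,d)$, and $D_0(d,d,d)<D_0(\infty,d,\infty)$ for every $d>0$, since $2\beta<3\alpha$ (Corollary~\ref{special monotonicity}): a horocyclic ideal triangle strictly outweighs the equilateral one in area. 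The correct per-cell angle bound for a non-compact cell is
$$\theta(C)\ \leq\ 3(k_C-2)\alpha(r)+\bigl(2\beta(r)-3\alpha(r)\bigr),$$
and feeding this into your Euler computation replaces $H_{\mathrm{tot}}=\sum_{\text{noncompact}}(j_C-1)$ in your displayed inequality with $k_0$, the \emph{number} of non-compact cells. Since $k_0\geq n$ and $2\beta-3\alpha<0$, the conclusion $r\leq r_{g,n}$ still follows, so you reach the right endpoint via a wrong intermediate inequality. Separately, for cusped $F$ the raw centered dual complex of $\widetilde{\cals}$ does not cover $\mathbb{H}^2$; the paper's ``centered dual plus'' (Proposition~\ref{augmented centered dual}) decomposes the parabolic-invariant Delaunay cells into horocyclic ideal triangles precisely so that the projection is a genuine cell decomposition of $\bar F$. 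That construction is load-bearing, not merely ``handling degenerate non-compact cells at cusps'': without it the Euler-characteristic count on $\bar F$ with $V=n+1$ is not justified and the non-compact cells need not have the form to which Theorem~\ref{main for noncompact} applies.
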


The closed (i.e.~$n=0$) case of Theorem \ref{main app} was proved by Christophe Bavard \cite{Bavard}.  It follows from ``B\"or\"oczky's theorem'' \cite{Bor}, which bounds the local density of constant-radius packings of $\mathbb{H}^2$, since a disk embedded in a hyperbolic surface has as its preimage a packing of the universal cover $\mathbb{H}^2$ with constant local density.  We reproduce this argument in Lemma \ref{Boroczky}.

The general case does not follow in the same way, since the preimage of a maximal-radius embedded disk on a noncompact hyperbolic surface is not a maximal-density packing of $\mathbb{H}^2$.

By basic calculus, $\alpha$ and $\beta$ are decreasing functions of $r$ with $\alpha(r)< \beta(r)$ for each $r>0$.  Thus if $g'\leq g$ and $n'\leq n$ then $r_{g',n'}\leq r_{g,n}$.  It also happens that $2\beta(r) < 3\alpha(r)$ for each $r>0$, see Corollary \ref{special monotonicity}; whence $r_{g-1,n+2} < r_{g,n}$ for any $g>0$ and $n\geq 0$.  Therefore:
$$ r_{0,2g} < r_{1,2g-2} <\cdots < r_{g-1,2} < r_{g,0}, $$
for any $g \geq 2$.  This relates Theorem \ref{main app}'s upper bounds on maximal injectivity radius of surfaces with a fixed even Euler characteristic.  It implies compactness results for certain subsets of moduli space.  Below we use the topology of geometric convergence on $\mathfrak{M}_g\doteq \mathfrak{M}_{g,0}$ (see \cite[\S E.1]{BenPet}).  This is the usual, ``algebraic'' topology on $\mathfrak{M}_g$ (compare eg.~\cite[\S 10.3]{FaMa}).

\begin{corollary}\label{compactness}For $g\geq 2$, the collection of surfaces of maximal injectivity radius at least $r$,
$$\mathfrak{C}_{\geq r,g}\doteq \{F\ \mbox{orientable, closed, and hyperbolic}\,|\,\mathit{injrad}_x F \geq r\ \mbox{for some}\ x\in F\},$$ 
is a compact subset of $\mathfrak{M}_g$ if and only if $r >  r_{g-1,2}$.\end{corollary}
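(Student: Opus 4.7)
The plan is to combine Mumford's compactness theorem with Theorem \ref{main app}. Mumford says a subset of $\mathfrak{M}_g$ is compact if and only if it is closed and its systole is bounded below. Closedness of $\mathfrak{C}_{\geq r, g}$ follows easily from continuity of injectivity radius under geometric convergence: given $F_n \to F$ with $x_n \in F_n$ of injectivity radius at least $r$, a subsequential limit $x \in F$ satisfies $\mathrm{injrad}_x F \geq r$. The substantive task is therefore to establish a uniform systole bound across $\mathfrak{C}_{\geq r, g}$ when $r > r_{g-1, 2}$.

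For the systole bound I would argue by contradiction. Suppose $F_n \in \mathfrak{C}_{\geq r, g}$ satisfies $\mathrm{sys}(F_n) \to 0$. After passing to a subsequence, extract a geometric limit $F_\infty$ of the thick parts by pinching all short essential simple closed curves; this is a finite-area cusped hyperbolic surface, possibly disconnected, whose components have signatures $(g_i, n_i)$ satisfying $\sum_i (2g_i - 2 + n_i) = 2g - 2$ and $n_i \geq 1$ (each component abuts at least one pinched curve). In particular $g_i \leq g-1$ and $2g_i + n_i \leq 2g$ for every $i$. The points $x_n$ realizing $\mathrm{injrad}_{x_n} F_n \geq r$ stay away from the thin parts and converge (subsequentially) to some $x$ lying in a component $F_\infty^*$ of type $(g_*, n_*)$, with $\mathrm{injrad}_x F_\infty^* \geq r$. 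Theorem \ref{main app} applied to $F_\infty^*$ then yields $r \leq r_{g_*, n_*}$.

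I expect the main obstacle to be verifying the combinatorial inequality
\[ r_{g_*, n_*} \leq r_{g-1, 2} \qquad \text{whenever } g_* \leq g-1, \ n_* \geq 1, \ 2g_* + n_* \leq 2g, \]
which closes the contradiction. Setting $s = g - g_* \geq 1$ and $t = n_*$, so $1 \leq t \leq 2s$, and comparing the defining equations for $r_{g_*, n_*}$ and $r_{g-1, 2}$, this should reduce to showing $(t + 2 - 4s) \cdot 3\alpha(r) + (2t - 4)\beta(r) \leq 0$ at $r = r_{g-1, 2}$. When $t \leq 2$ both coefficients are nonpositive. When $t > 2$ the $\beta$-coefficient is positive, but the strict inequality $2\beta(r) < 3\alpha(r)$ highlighted in the excerpt lets one replace $(2t-4)\beta$ by a bound of $(3t-6)\alpha$, after which the whole expression collapses to $6(t - 2s)\alpha(r) \leq 0$. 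Equality holds only for $(g_*,n_*) = (g-1,2)$.

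For the converse I would exhibit explicit degenerating sequences showing $\mathfrak{C}_{\geq r, g}$ fails to be compact whenever $r \leq r_{g-1, 2}$. Theorem \ref{main app} produces $F^* \in \mathfrak{M}_{g-1, 2}$ with a point $x^*$ satisfying $\mathrm{injrad}_{x^*} F^* = r_{g-1, 2}$. Using Fenchel--Nielsen coordinates on $\mathfrak{M}_g$, build closed genus-$g$ surfaces $F_n$ each with a simple closed geodesic $\gamma_n$ of length tending to zero whose complement is geometrically close to $F^*$ minus small horocyclic cusp neighborhoods. The collar lemma forces any geodesic loop at $x^*$ passing through the collar of $\gamma_n$ to have diverging length, so the shortest loops at $x^*$ persist from $F^*$ and $\mathrm{injrad}_{x^*} F_n \to r_{g-1, 2} \geq r$. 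Since $\mathrm{sys}(F_n) \to 0$ rules out any convergent subsequence in $\mathfrak{M}_g$, the set $\mathfrak{C}_{\geq r, g}$ is not compact in this regime.
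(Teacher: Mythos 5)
Your overall strategy coincides with the paper's: closedness via continuity of injectivity radius under geometric convergence, plus Mumford's criterion, plus Theorem \ref{main app} applied to a noncompact geometric limit. The forward direction (compactness for $r > r_{g-1,2}$) is correct, and the combinatorial verification you carry out -- reducing to $(t+2-4s)\cdot 3\alpha(r) + (2t-4)\beta(r)\leq 0$ and splitting on the sign of $2t-4$, using $2\beta < 3\alpha$ -- is a correct spelling-out of the paper's more terse remark $r_{0,2g} < r_{1,2g-2} < \cdots < r_{g-1,2}$, combined with monotonicity of $r_{g,n}$ in $(g,n)$. Where the paper passes directly to a pointed geometric limit $(F,x)$ of $(F_n,x_n)$ and records only $\chi(F)\geq 2-2g$, you decompose into thick parts explicitly; this is a presentational difference rather than a different argument.

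There is, however, a genuine gap in your converse. Your construction gives surfaces $F_n$ with $\mathrm{injrad}_{x^*}F_n \to r_{g-1,2}$, but convergence is not one-sided: the $F_n$ could all have maximal injectivity radius \emph{strictly less than} $r_{g-1,2}$, in which case none of them lies in $\mathfrak{C}_{\geq r_{g-1,2},g}$ and you have shown nothing about that set. Your sentence ``$\mathrm{injrad}_{x^*}F_n \to r_{g-1,2} \geq r$'' only yields membership in $\mathfrak{C}_{\geq r,g}$ for $r < r_{g-1,2}$ strictly (for large $n$), leaving the critical borderline case $r = r_{g-1,2}$ unaddressed. The paper's Example \ref{closed sequence} handles this by building, for each $r$ in the open interval $(r_{g-1,2}, r_{g,0})$, a closed genus-$g$ surface with maximal injectivity radius \emph{equal to} $r$ (out of $4g-4$ equilateral triangles of side $2r$ and two isosceles triangles with a variable long side $x$ determined by a Gauss--Bonnet equation), then shows systole tends to $0$ as $r\to r_{g-1,2}^+$; since each such surface has maximal injectivity radius $r > r_{g-1,2}$, it lies in $\mathfrak{C}_{\geq r_{g-1,2},g}$, which is what you need. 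To repair your version you would either have to control the sign of $\mathrm{injrad}_{x^*}F_n - r_{g-1,2}$ in your Fenchel--Nielsen degeneration, or replace the construction with an explicit polygonal one as in the paper.
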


Corollary \ref{compactness} contrasts with ``Mumford's compactness criterion'' \cite{Mumford} which asserts compactness, for any $\epsilon >0$, of the subset of $\mathfrak{M}_g$ consisting of surfaces with \textit{minimal} injectivity radius at least $\epsilon$.  However it is a standard consequence of the Margulis Lemma that $\mathfrak{C}_{\geq \epsilon_2,g} = \mathfrak{M}_g$ (and hence is noncompact), where $\epsilon_2$ is the $2$-dimensional Margulis constant.  On the other hand, by Theorem \ref{main app} $\mathfrak{C}_{\geq r_g,0}$ is finite and hence compact.

We will sketch a proof of Corollary \ref{compactness} below that uses Theorem \ref{main app} and standard results on geometric convergence (eg.~from \cite[Ch.~E]{BenPet}).  Details can be easily filled in.

\begin{proof}[Proof of Corollary \ref{compactness}]  It is a key fact that if $(F,x)$ is a pointed geometric limit of $\{(F_n,x_n)\}$, then $\mathit{injrad}_x F = \lim_{n\to\infty} \mathit{injrad}_{x_n} F_n$.  This implies that $\mathfrak{C}_{\geq r,g}$ is closed in $\mathfrak{M}_g$.  For $r > r_{g-1,2}$ we will show that it is also bounded; i.e.~contained in one of the ``Mumford sets'' above.

Let $F_n$ be a sequence of closed, oriented, genus-$g$ hyperbolic surfaces with (minimal) injectivity radius approaching $0$, and for each $n$ let $x_n\in F_n$ be a point at which injectivity radius is maximal.  A subsequence of $\{(F_n,x_n)\}$ has a geometric limit $(F,x)$, where $F$ is a non-compact hyperbolic surface with $\mathrm{Area}(F)\leq \mathrm{Area}(F_n)$, hence $\chi(F) \geq 2-2g$, and $x\in F$.   Then $\mathit{injrad}_x F \leq r_{g-1,2}$, by the discussion below Theorem \ref{main app}.  Thus by the key fact the $F_n$ are not all in $\mathfrak{C}_{\geq r,g}$ for any $r > r_{g-1,2}$.  

Thus $\mathfrak{C}_{\geq r,g}$ is closed and bounded in $\mathfrak{M}_g$, hence compact, for $r>r_{g-1,2}$.  Example \ref{closed sequence} describes a sequence in $\mathfrak{C}_{\geq r_{g-1,2},g}$ with minimal injectivity radius approaching $0$, showing that it is not compact.\end{proof}

It is straightforward to extend Corollary \ref{compactness} to moduli spaces of non-compact surfaces, or the bounds of Theorem \ref{main app} to multiple-disk, equal-radius packings on surfaces.  In future work we will apply the centered dual machine to more subtle packing problems on surfaces.  

We now give a brief overview of the paper.  Section \ref{intro Voronoi} recalls basic properties of the Voronoi tessellation of a locally finite subset $\cals$ of $\mathbb{H}^n$ and its geometric dual complex, before pointing out some special features of the two-dimensional setting.  Lemma \ref{vertex polygon} includes the key fact that every geometric dual $2$-cell is \textit{cyclic}: inscribed in a metric circle.  Hence it is determined up to isometry by its collection of side lengths \cite{Schlenker}.

The centered dual complex of $\cals$ is defined in Section \ref{dirichlet dual}.  This runs parallel to Section 3 of \cite{DeB_tessellation}, but the definitions are modified to accommodate non-compact Voronoi edges.  The fact that motivates our definition is that among cyclic polygons in $\mathbb{H}^2$, increasing the length of an edge increases area if and only if that edge is not the longest of a non-centered polygon, see \cite{DeB_cyclic_geom}.  Here is the definition of a ``centered'' polygon.

\begin{definition}\label{first centered}  A polygon $P$ inscribed in a circle $S$ is \textit{centered} if the center of $S$ is in $\mathit{int}\,P$.\end{definition} 

The centered/non-centered dichotomy has been previously considered in the literature, eg.~in \cite{VHGR} (there ``centered'' goes by ``well-centered'').  Centered dual two-cells collect non-centered two-cells of the geometric dual in a natural way.  Two fundamental observations here are Lemma \ref{vertex polygon centered}, relating non-centeredness of geometric dual cells to non-centeredness of Voronoi edges (see Definition \ref{centered edge}), and Lemma \ref{tree components}, describing the structure of the set of these edges.

Centered dual $2$-cells are not determined by their edge lengths, but the set of possible centered dual two-cells with a given combinatorics and edge length collection is parametrized by a compact \textit{admissible space}.  This is defined in Section \ref{admissible for compact}, which parallels Section 5 of \cite{DeB_tessellation}.  The area of centered dual $2$-cells determines a function on the admissible space.  Theorem \ref{main} is proved in Section \ref{compact bounds} by bounding this function below.

Section \ref{moduli for noncompact} has the same structure as \ref{moduli}.  It describes admissible spaces for non-compact centered dual $2$-cells and finishes with a proof of Theorem \ref{main for noncompact}.  We finally consider hyperbolic surfaces in Section \ref{limits}, proving Theorem \ref{main app} there and describing some examples.

\subsection*{Acknowledgements}  This work was prompted by an offhand observation of Marc Culler.  Thanks to Hugo Parlier for pointing out to me the existence of \cite{Bavard}.

\section{The Voronoi tessellation and its geometric dual}\label{intro Voronoi}

In this section we will record some facts about the Voronoi tessellation of a locally finite subset $\cals$ of hyperbolic space and its geometric dual, using \cite{DeB_Delaunay} as a general reference.  We will also establish notation and collect some facts that hold only in the $2$-dimensional setting.  

The \textit{Voronoi tessellation} has $n$-cells in bijection with $\cals$.  The assertions below are from \cite[Lemma 5.2]{DeB_Delaunay}.  For $\bs\in\cals$, the corresponding Voronoi $n$-cell is the convex polyhedron:
$$ V_{\bs} = \{\bx\in\calh^n\,|\, d_H(\bs,\bx)\leq d_H(\bs',\bx)\ \forall\ \bs'\in\cals\} $$
Here $d_H$ is the hyperbolic distance.  The collection of Voronoi $n$-cells is locally finite, and cells of lower dimension are by definition of the form $\bigcap_{i=0}^{n} V_{\bs_i}$ for subsets $\{\bs_0,\hdots,\bs_n\}$ of $\cals$.

The result below, from Corollary 5.5 of \cite{DeB_Delaunay}, identifies the geometric dual to a Voronoi cell.

\begin{proposition}\label{geometric dual}  Let $\cals\subset\mathbb{H}^n$ be locally finite.  For a $k$-cell $V$ of the Voronoi tessellation, if $\cals_0\subset\cals$ is maximal such that $V = \bigcap_{\bs\in\cals_0} V_{\bs}$ then the closed convex hull $C_V$ of $\cals_0$ in $\mathbb{H}^n$ is the \mbox{\rm geometric dual} to $V$, an $(n-k)$-dimensional, compact convex polyhedron in $\mathbb{H}^n$.\end{proposition}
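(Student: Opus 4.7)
The plan is to establish three things: that $\cals_0$ is finite, that the convex hull $C_V$ has dimension $n-k$, and that $C_V$ is the geometric dual of $V$. The last is essentially a matter of unwinding definitions; the real content is in the first two.

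First, I would fix a point $\bx \in V$, which exists since $V$ is a nonempty $k$-cell. By the defining property of Voronoi cells, every $\bs \in \cals_0$ satisfies $d_H(\bs,\bx) = \min_{\bs' \in \cals} d_H(\bs',\bx)$; call this common value $r$. Then $\cals_0$ lies on the metric sphere of radius $r$ about $\bx$, and local finiteness of $\cals$ forces $\cals_0$ to be finite. As a consequence, $C_V$ is the convex hull of a finite set of points in $\mathbb{H}^n$, hence automatically a compact convex polyhedron.

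Next, for the dimension claim, I would observe that any $\bx \in V$ is equidistant from all points of $\cals_0$, so $V$ is contained in the equidistant locus
$$E(\cals_0) = \bigcap_{\bs,\bs'\in\cals_0} H_{\bs,\bs'},$$
where $H_{\bs,\bs'}$ denotes the totally geodesic perpendicular bisector of the geodesic segment from $\bs$ to $\bs'$. In the hyperboloid model of $\mathbb{H}^n$, each such bisector is cut out by a single linear equation of the form $\langle \bs-\bs',\,\cdot\,\rangle=0$, so $E(\cals_0)$ is a totally geodesic subspace of codimension equal to the affine dimension $m$ of $\cals_0$ in the ambient vector space. Maximality of $\cals_0$ prevents $V$ from being forced onto any proper sub-locus of $E(\cals_0)$ by bisectors involving other points of $\cals$: at a generic point $\bx$ of $V$, the set of nearest neighbors in $\cals$ is exactly $\cals_0$. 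Hence the relative interior of $V$ in $E(\cals_0)$ is nonempty, giving $\dim V = \dim E(\cals_0) = n-m$, so that $m = n-k$ and $\dim C_V = n-k$. Identification of $C_V$ with the geometric dual then follows by tracing the definition back through Proposition \ref{geometric dual}'s hypothesis that $\cals_0$ is the unique maximal set with $V = \bigcap_{\bs\in\cals_0} V_\bs$.

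The step I expect to demand the most care is the dimension count for $E(\cals_0)$. One must verify that the bisector equations imposed by $\cals_0$ are linearly independent, which in the hyperboloid picture amounts to checking that $m+1$ points of $\cals_0$ in general position give $m$ linearly independent vectors $\bs_i - \bs_0$ in the ambient $\mathbb{R}^{n+1}$; the fact that these points all lie on a common sphere does not obstruct this, but it should be recorded explicitly. The second delicate point is the bootstrap from $V \subseteq E(\cals_0)$ to $\dim V = \dim E(\cals_0)$: turning the containment into an equality relies essentially on the maximality of $\cals_0$, which rules out the possibility that a further bisector $H_{\bs,\bs''}$ with $\bs'' \in \cals \setminus \cals_0$ cuts $E(\cals_0)$ down further along $V$.
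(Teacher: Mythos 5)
The paper does not prove this proposition; it is imported wholesale, cited as Corollary 5.5 of \cite{DeB_Delaunay}, so there is no in-paper argument to compare against. Judged on its own, your outline has the right skeleton. Fixing $\bx\in V$, observing that all of $\cals_0$ lies on a metric sphere about $\bx$, and invoking local finiteness does give finiteness of $\cals_0$, and with it compactness, convexity, and polyhedrality of $C_V$; that part is clean.

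The dimension count, however, has two genuine gaps. First, you equate $\dim C_V$ with the affine dimension $m$ of $\cals_0$ without argument. In the hyperboloid model this rests on a signature observation you omit: writing $W' = \mathrm{span}\{\bs - \bs' : \bs, \bs' \in \cals_0\}\subset\mathbb{R}^{n+1}$, the nonemptiness of $V\subseteq E(\cals_0)$ forces the Lorentzian orthocomplement of $W'$ to meet $\mathbb{H}^n$, so $W'$ must be positive definite. In particular no $\bs_0\in\cals_0$ lies in $W'$ (it has Lorentzian norm $-1$), so $\mathrm{span}(\cals_0)$ has dimension $m+1$, and the totally geodesic subspace $\mathbb{H}^n\cap\mathrm{span}(\cals_0)$ containing $C_V$ has dimension $m$. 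You flag linear independence as the delicate point, but it is the definiteness of $W'$ that both fixes $\dim E(\cals_0)=n-m$ and excludes degenerate configurations (for instance three points on a common geodesic, where $E(\cals_0)=\emptyset$).

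Second, the step from maximality of $\cals_0$ to ``$V$ has nonempty relative interior in $E(\cals_0)$'' is asserted rather than proved. Maximality only says that for each fixed $\bs''\notin\cals_0$ there is \emph{some} point of $V$ strictly closer to $\cals_0$ than to $\bs''$. But $V$ is cut out inside $E(\cals_0)$ by the half-space inequalities from \emph{all} $\bs''\notin\cals_0$, and these could a priori pinch $V$ onto a proper face of $E(\cals_0)$ with no single bisector responsible. One needs to argue that if $V$ had lower dimension, then at a relative-interior point of $V$ some extra equality $d(\bs'',\cdot)=d(\bs,\cdot)$ would hold on all of $V$, yielding $V\subseteq V_{\bs''}$ and contradicting maximality; local finiteness is what reduces this to a check over finitely many $\bs''$ near $V$. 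You name the right hypothesis but do not carry out the argument.
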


For a locally finite set $\cals$, say the \textit{geometric dual complex} of $\cals$ is the collection of geometric duals to Voronoi cells.  The result below shows it is a \textit{polyhedral complex} in the sense of \cite[Dfn.~2.1.5]{DeLoRS}, and characterizes it by an empty circumspheres condition.

\begin{theorem}[\cite{DeB_Delaunay}, Theorem 5.9]\label{geometric dual char}  Suppose $\cals\subset\mathbb{H}^n$ is locally finite.  For any metric sphere $S$ that intersects $\cals$ and bounds a ball $B$ with $B\cap\cals = S\cap\cals$, the closed convex hull of $S\cap\cals$ in $\mathbb{H}^n$ is a geometric dual cell.  Every geometric dual cell is of this form.  Moreover, if $C$ is the geometric dual to a Voronoi cell then so is every face of $C$, and any geometric dual cell $C'\neq C$ that intersects $C$ does so in a face of each.\end{theorem}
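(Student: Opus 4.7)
The strategy is to bootstrap from Proposition \ref{geometric dual}, which already identifies each geometric dual cell with $\mathrm{conv}(\cals_0)$ for $\cals_0 \subseteq \cals$ maximal with $\bigcap_{\bs\in\cals_0} V_\bs$ nonempty. Both the empty-sphere characterization and the face/intersection assertions then reduce to translating Voronoi-side statements through this correspondence.

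For the first assertion, let $S$ bound $B$ with center $c$, radius $r$, and $B\cap\cals = S\cap\cals$. The hypothesis says $d_H(c,\bs) = r$ for each $\bs\in\cals_0 := S\cap\cals$ and $d_H(c,\bs') > r$ for every other $\bs' \in \cals$. Hence $c$ lies in $V_\bs$ exactly when $\bs\in\cals_0$, so $\cals_0$ is the maximal subset with $c \in \bigcap_{\bs\in\cals_0} V_\bs$. Proposition \ref{geometric dual} then identifies $\mathrm{conv}(\cals_0)$ as the geometric dual to the Voronoi cell containing $c$. Conversely, given a geometric dual $C = \mathrm{conv}(\cals_0)$ with corresponding Voronoi cell $V$, I would pick $c$ in the relative interior of $V$; then the Voronoi cells $V_\bs$ containing $c$ are exactly those with $\bs \in \cals_0$, so the common value $r := d_H(c,\bs)$ for $\bs\in\cals_0$ is strictly less than $d_H(c,\bs')$ for any $\bs'\notin\cals_0$. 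The sphere of radius $r$ centered at $c$ is then the required $S$.

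For the face statement, let $C$ correspond to $\cals_0$ and $V$ as above, and let $C'$ be a proper face. Then $C' = \mathrm{conv}(\cals_0')$ for some $\cals_0' \subsetneq \cals_0$, and $V' := \bigcap_{\bs\in\cals_0'} V_\bs$ is a Voronoi face strictly containing $V$. Enlarging $\cals_0'$ to the maximal $\widetilde{\cals_0'}$ with $V' = \bigcap_{\bs\in\widetilde{\cals_0'}} V_\bs$ --- still contained in $\cals_0$, since any added $\bs$ lies on the empty sphere realizing $C$ --- Proposition \ref{geometric dual} identifies $C' = \mathrm{conv}(\widetilde{\cals_0'})$ as a geometric dual cell. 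For two dual cells $C = C_V$ and $C' = C_{V'}$, the polyhedral structure of the Voronoi complex forces $V$ and $V'$ either to be disjoint or to share a common face $W$; in the latter case duality translates $W \supseteq V,V'$ into a dual cell $C_W = \mathrm{conv}(\cals_{0,V}\cup\cals_{0,V'})$ having $C$ and $C'$ as faces.

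The main obstacle is the last step: showing that $C \cap C'$ actually equals the natural candidate $\mathrm{conv}(\cals_{0,V}\cap\cals_{0,V'})$. The inclusion $\supseteq$ is immediate, but the reverse requires genuine convexity input beyond the combinatorics. I expect the cleanest route is to pass to the hyperboloid/light-cone model, where geometric dual cells arise as projections of hyperplane sections of the lifts of $\cals$, so that intersections of such convex hulls are controlled by the linear structure of the ambient space.
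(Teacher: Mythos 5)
This theorem is imported by citation from \cite{DeB_Delaunay}, Theorem~5.9; the present paper does not reprove it, so there is no ``paper's own proof'' to compare against. I will therefore evaluate your proposal on its own terms.

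Your argument for the empty-sphere characterization (both directions) is sound: reading off from the sphere data that $c\in V_{\bs}$ exactly when $\bs\in\cals_0$, then passing through Proposition~\ref{geometric dual}, and conversely taking $c$ in the relative interior of $V$ and using the maximality of $\cals_0$ together with the fact that Voronoi cells meet in common faces, is the right route and matches what one finds in the standard treatments.

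The gap you flag at the end is real, but there is an earlier one you do not see: the face statement. You let $C'$ be a proper face of $C$ with vertex set $\cals_0'\subsetneq\cals_0$, form $V'=\bigcap_{\bs\in\cals_0'}V_{\bs}$, enlarge $\cals_0'$ to the maximal $\widetilde{\cals_0'}$ with $V'=\bigcap_{\bs\in\widetilde{\cals_0'}}V_{\bs}$, and correctly observe that $\widetilde{\cals_0'}\subset\cals_0$ (any added $\bs$ has $V\subset V'\subset V_{\bs}$, so $\bs\in\cals_0$ by maximality). But then you conclude ``Proposition~\ref{geometric dual} identifies $C'=\mathrm{conv}(\widetilde{\cals_0'})$,'' which is not what the Proposition gives: it gives that $\mathrm{conv}(\widetilde{\cals_0'})$ is the geometric dual of $V'$. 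To identify this with $C'$ you need $\widetilde{\cals_0'}=\cals_0'$ (or at least $\mathrm{conv}(\widetilde{\cals_0'})=\mathrm{conv}(\cals_0')$), i.e.\ that no $\bs\in\cals_0\setminus\cals_0'$ satisfies $V'\subset V_{\bs}$. That is a genuine geometric statement: $\cals_0'$ is cut out of $\cals_0$ by a supporting hyperplane $H$, and you must show that if $\bs\in\cals_0$ lies strictly off $H$ then some point of $\mathrm{relint}(V')$ is strictly closer to $\cals_0'$ than to $\bs$. Your combinatorics alone do not rule out $\widetilde{\cals_0'}\supsetneq\cals_0'$, nor even the degenerate possibility $V'=V$, in which case $\widetilde{\cals_0'}=\cals_0$ and $\mathrm{conv}(\widetilde{\cals_0'})=C\neq C'$. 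This needs the same kind of convexity/sphere input you already recognize is missing for the intersection-is-a-common-face claim. Your instinct to pass to the hyperboloid (equivalently light-cone or paraboloid) model is exactly right --- there, geometric dual cells become faces of an honest convex polyhedron, and both the face statement and the intersection statement drop out of standard polytope theory --- but until that is carried out, both parts of the ``Moreover'' sentence of the theorem remain unproved.
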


We now specialize to dimension $2$ and make some definitions.

\begin{definition}\label{cyclic poly}  We say a polygon $C\subset \mathbb{H}^2$ is \textit{cyclic} if its vertex set is contained in a metric circle $S$.  The \textit{center} $v\in \mathbb{H}^2$ and \textit{radius} $J>0$ of a cyclic $n$-gon $C$ are respectively the center and radius of its circumcircle $S$ (so $S=\{\bx\,|\,d_H(v,\bx) = J\}$), and $C$ is \textit{centered} if $v\in\mathit{int}\,C$.  The vertex set $\cals_0 = \{\bs_0,\hdots,\bs_{n-1}\}$ of such $C$ is \textit{cyclically ordered} if $\bs_i$ shares an edge with $\bs_{i+1}$ for each $i$ (taking $i+1$ modulo $n$).\end{definition}

\begin{lemma}\label{vertex polygon}  For a vertex $v$ of the Voronoi tessellation of a locally finite set $\cals\subset\mathbb{H}^2$, the geometric dual $C_v$ to $v$ is a cyclic polygon with vertex set $\cals_0\subset\cals$ such that for $\bs\in\cals$, $v\in V_{\bs}$ if and only if $\bs\in\cals_0$.  $C_v$ has center $v$ and radius $J_v \doteq d(v,\bs)$ for any $\bs\in\cals_0$; and:\begin{itemize}
  \item  If $\cals_0 = \{\bs_0,\hdots,\bs_{n-1}\}$ is cyclically ordered then the Voronoi $2$-cell $V_{\bs_i}$ shares an edge $e_i$ with $V_{\bs_{i+1}}$ for each $i$ (taking $i+1$ modulo $n$).
  \item  For each $i\in\{0,\hdots,n-1\}$, the geometric dual $\gamma_i$ to $e_i$ as above joins $\bs_i$ to $\bs_{i+1}$.\end{itemize}
For $v\ne w$, $\mathit{int}\, C_v\cap\mathit{int}\, C_w = \emptyset$, and $C_v$ shares an edge with $C_w$ if and only if $v$ and $w$ are opposite endpoints of a Voronoi edge.\end{lemma}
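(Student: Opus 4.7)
The plan is to read the structural claims directly off Proposition \ref{geometric dual} and Theorem \ref{geometric dual char}, with some bookkeeping to match the cyclic combinatorics of $C_v$ to adjacencies in the Voronoi tessellation.

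First I would identify the vertex set via Proposition \ref{geometric dual}: the set $\cals_0$ is maximal in $\cals$ with $\{v\} = \bigcap_{\bs \in \cals_0} V_{\bs}$, and by maximality $\bs \in \cals_0$ iff $v \in V_{\bs}$. The definition of $V_{\bs}$ forces $d(v,\bs) = \min_{\bs' \in \cals} d(v,\bs')$ for every $\bs \in \cals_0$, so $\cals_0$ is equidistant from $v$ at a common value $J_v$. This places the vertices of $C_v = \mathrm{conv}(\cals_0)$ on a circle of radius $J_v$ about $v$, so $C_v$ is cyclic with the asserted center and radius. That $C_v$ is truly a polygon (not a segment or point) uses that $v$ is a Voronoi $0$-cell: by Proposition \ref{geometric dual} its dual is $2$-dimensional, so $|\cals_0| \geq 3$.

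For the bullet points, fix a cyclic ordering of $\cals_0$ and consider the edge $[\bs_i,\bs_{i+1}]$ of $C_v$. By the ``face of a geometric dual cell is a geometric dual cell'' clause of Theorem \ref{geometric dual char}, this edge is the geometric dual $\gamma_i$ of some Voronoi $1$-cell $e_i$. Invoking Proposition \ref{geometric dual} in reverse, the maximal $\cals_0' \subset \cals$ with $e_i = \bigcap_{\bs \in \cals_0'} V_{\bs}$ must satisfy $\mathrm{conv}(\cals_0') = \gamma_i = [\bs_i,\bs_{i+1}]$, and hence $\cals_0' = \{\bs_i,\bs_{i+1}\}$. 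Both bullets follow at once: $e_i = V_{\bs_i} \cap V_{\bs_{i+1}}$ is a Voronoi edge shared by these two $2$-cells, and its geometric dual $\gamma_i$ is the segment from $\bs_i$ to $\bs_{i+1}$.

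Finally, for $v \neq w$, interior disjointness of $C_v$ and $C_w$ is immediate from the ``face of each'' clause in Theorem \ref{geometric dual char}, once one observes that $C_v = C_w$ would force $\cals_0(v) = \cals_0(w)$ and hence $v = \bigcap_{\bs \in \cals_0(v)} V_{\bs} = w$. The edge-sharing equivalence is the same move as above: a shared edge is dual to some Voronoi edge $e = V_{\bs_i} \cap V_{\bs_{i+1}}$, whose closure contains both Voronoi $0$-cells $v$ and $w$, so they must be its two endpoints; conversely, given a Voronoi edge $e$ with endpoints $v,w$ and dual $\gamma$, the two vertices of $\gamma$ both lie in $\cals_0(v) \cap \cals_0(w)$, so $\gamma \subset C_v \cap C_w$, and the ``face of each'' clause identifies $\gamma$ with the shared edge. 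I do not expect a serious obstacle: the substance is already packaged into Proposition \ref{geometric dual} and Theorem \ref{geometric dual char}, and the main point of care is keeping the dimension-reversing duality between Voronoi cells and their geometric duals straight when matching each edge of $C_v$ with the Voronoi edge that dualizes to it.
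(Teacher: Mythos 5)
Your proposal is correct and follows essentially the same route as the paper: the paper's discussion after the lemma statement also invokes Theorem~\ref{geometric dual char} for cyclicity and Proposition~\ref{geometric dual} for the interior-disjointness claim, then defers the two bullets and the edge-sharing equivalence to \cite[Lemma 5.8]{DeB\_Delaunay}, which you instead re-derive in house from those same two results. The one small difference is cosmetic: for cyclicity the paper points at the ``empty circumsphere'' characterization in Theorem~\ref{geometric dual char}, whereas you extract the same equidistance directly from the definition of $V_{\bs}$; both give $\cals_0$ on a circle centered at $v$. Your reconstruction of the bullet points and edge-sharing equivalence is sound---identifying each edge $[\bs_i,\bs_{i+1}]$ of $C_v$ as a geometric dual cell, using the empty-ball condition to rule out interior points of $\cals$ on that segment so that $\cals_0'=\{\bs_i,\bs_{i+1}\}$, and using the ``face of each'' clause plus dimension-counting (a $0$-cell lying in a $1$-cell must be an endpoint) for the converse---so no gap.
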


That the geometric dual to a Voronoi vertex is cyclic follows from Theorem \ref{geometric dual char}.  Proposition \ref{geometric dual} implies $\mathit{int}\,C_v\cap\mathit{int}\,C_w$ for $v\neq w$.  Together with the definitions here, it also implies the fact below, which is useful to record separately:

\begin{fact}\label{vertex radius} Say the \textit{radius} of a Voronoi vertex $v$ is the radius $J_v$ of its geometric dual $C_v$.  For every $\bs\in\cals$, $d_H(v,\bs)\geq J_v$, and equality holds if and only if $\bs$ is a vertex of $C_v$.\end{fact}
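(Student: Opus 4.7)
The plan is to deduce the fact directly from Lemma \ref{vertex polygon} together with the defining inequality of the Voronoi cell recorded at the start of Section \ref{intro Voronoi}. The key observation is that $J_v$ is itself the minimum of $d_H(v,\cdot)$ over $\cals$, and that realizing this minimum is exactly the defining condition of membership in some $V_\bs$.

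First I would invoke Lemma \ref{vertex polygon} to identify the vertex set $\cals_0$ of $C_v$ with $\{\bs\in\cals:v\in V_\bs\}$, and to recall that $J_v=d_H(v,\bs)$ for any (equivalently, every) $\bs\in\cals_0$. Pick any such $\bs$; the condition $v\in V_\bs$ unfolds, via the definition $V_\bs=\{\bx\,|\,d_H(\bs,\bx)\leq d_H(\bs',\bx)\ \forall\ \bs'\in\cals\}$, to $d_H(v,\bs)\leq d_H(v,\bs')$ for every $\bs'\in\cals$. Substituting $d_H(v,\bs)=J_v$ gives the asserted inequality $d_H(v,\bs')\geq J_v$ for every $\bs'\in\cals$.

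For the equality case, suppose some $\bs\in\cals$ satisfies $d_H(v,\bs)=J_v$. Combined with the inequality just established, this yields $d_H(v,\bs)\leq d_H(v,\bs')$ for every $\bs'\in\cals$, which is exactly the condition $v\in V_\bs$. By Lemma \ref{vertex polygon} (or equivalently the maximality of $\cals_0$ in Proposition \ref{geometric dual}) this forces $\bs\in\cals_0$, i.e.\ $\bs$ is a vertex of $C_v$. The converse is immediate from the definition of $J_v$.

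There is essentially no obstacle: the statement is a one-line unpacking of the two ingredients above. The only thing to be careful about is the bookkeeping of equivalences between $\bs\in\cals_0$, $v\in V_\bs$, and $\bs$ being a vertex of $C_v$, which are folded together in Lemma \ref{vertex polygon}; the proof simply makes this unpacking explicit.
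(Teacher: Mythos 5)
Your proof is correct and follows essentially the same route the paper intends: the Fact is recorded as a consequence of Proposition \ref{geometric dual} and the definitions, which is exactly the content of the equivalences ($v\in V_{\bs}\Leftrightarrow\bs\in\cals_0$ and $J_v=d_H(v,\bs)$ for $\bs\in\cals_0$) that you extract from Lemma \ref{vertex polygon} and combine with the defining inequality of $V_{\bs}$. You have simply made explicit the one-line unpacking that the paper leaves to the reader.
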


In two dimensions the vertex set of any polygon admits a cyclic order.  The remaining assertions of Lemma \ref{vertex polygon} follow from \cite[Lemma 5.8]{DeB_Delaunay}.  The facts below are straightforward:

\begin{facts}  Suppose $\cals\subset\mathbb{H}^2$ is locally finite.\begin{itemize}
\item  Each Voronoi edge is the intersection of exactly two Voronoi $2$-cells $V_{\bs}$ and $V_{\bt}$, for $\bs,\bt\in\cals$, and its geometric dual is the arc $\gamma_{\bs\bt}$ joining $\bs$ to $\bt$.
\item  Each Voronoi vertex $v$ is the intersection of at least three Voronoi $2$-cells.\end{itemize}\end{facts}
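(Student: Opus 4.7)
The plan is to derive both facts directly from Proposition \ref{geometric dual} specialized to $n=2$, by matching dimensions. That proposition identifies the geometric dual of a $k$-cell $V$ of the Voronoi tessellation with the closed convex hull of the maximal subset $\cals_0\subset\cals$ for which $V=\bigcap_{\bs\in\cals_0}V_{\bs}$, and asserts the hull is an $(n-k)$-dimensional compact convex polyhedron. The maximality clause also yields the equivalence $V\subseteq V_{\bs}\iff\bs\in\cals_0$, so counting elements of $\cals_0$ is the same as counting the Voronoi $2$-cells that contain $V$.

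For the first bullet I would take $k=1$: the geometric dual of a Voronoi edge $e$ is a $1$-dimensional compact convex polyhedron in $\mathbb{H}^2$, i.e.~a geodesic segment. A $1$-dimensional convex hull of a finite set in $\mathbb{H}^2$ comes either from exactly two points, or from three or more collinear points. The latter case is excluded here: for any three distinct collinear points $\bs,\bt,\bu\in\cals$, the perpendicular bisectors of $\bs\bt$ and $\bt\bu$ are distinct parallel geodesics, so $V_{\bs}\cap V_{\bt}\cap V_{\bu}=\emptyset$, contradicting the fact that this intersection contains $e$. Hence $|\cals_0|=2$ and $\cals_0=\{\bs,\bt\}$ for unique $\bs,\bt\in\cals$, with dual segment the geodesic arc $\gamma_{\bs\bt}$.

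For the second bullet I would take $k=0$: the geometric dual of a Voronoi vertex $v$ is a $2$-dimensional compact convex polyhedron in $\mathbb{H}^2$, i.e.~a polygon, which has at least three vertices. Thus $|\cals_0|\geq 3$, and by the equivalence above $v$ lies in at least three Voronoi $2$-cells.

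I do not foresee a serious obstacle: both assertions reduce to a dimension count within Proposition \ref{geometric dual}, combined with the elementary observation that a $d$-dimensional compact convex polyhedron in $\mathbb{H}^2$ has at least $d+1$ vertices, and (for the first bullet) the standard hyperbolic fact that three distinct points on a common geodesic have no common equidistant point. The only detail worth double-checking is that a Voronoi edge is genuinely $1$-dimensional so that the count is not vacuous, but this is built into the definition of a $1$-cell of the tessellation.
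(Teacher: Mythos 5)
Your proof is correct.  The paper states these assertions under the heading ``Facts'' with the remark that they are ``straightforward'' and supplies no argument, so there is no official proof to compare against; your derivation via the dimension count in Proposition~\ref{geometric dual}, the maximality of $\cals_0$, and the observation that three collinear points in $\mathbb{H}^2$ have pairwise-disjoint perpendicular bisectors and hence no common equidistant point, is exactly the sort of reasoning the author is implicitly invoking.  One small terminological note: the perpendicular bisectors of $\bs\bt$ and $\bt\bu$ are \emph{ultraparallel} (disjoint with a common perpendicular, namely the geodesic through $\bs,\bt,\bu$) rather than ``parallel'' in the asymptotic sense, but the conclusion you draw from their disjointness is what matters and it is right.
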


The geometric dual complex of a locally finite set $\cals$ is a subcomplex of what we call the \textit{Delaunay tessellation} in \cite{DeB_Delaunay}, whose underlying space contains the convex hull of $\cals$.  In important special cases (eg.~if $\cals$ is finite or lattice-invariant, see respectively Prop.~3.5 or Theorem 6.23 there), the Delaunay tessellation is a locally finite polyhedral complex.  It is important to note that the geometric dual may be a proper subcomplex even in good conditions.  See below, which reproduces Example 5.11 of \cite{DeB_Delaunay}.

\begin{example}\label{no dual}  Figure \ref{three point Vor} illustrates the Voronoi and Delaunay tessellations determined by three points in $\mathbb{H}^2$, using the upper half-plane model.  In each case the Delaunay triangle spanned by $x$, $y$, and $z$ is shaded, with its edges dashed.  The edges of the Voronoi tessellation are in bold.  The Euclidean circumcircle for $\bx$, $\by$, and $\bz$ is also included in each case.

\begin{figure}
\input{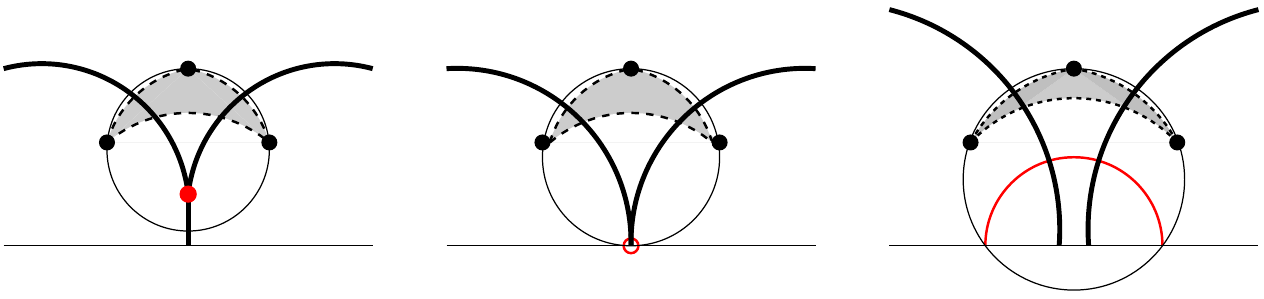_t}
\caption{Delaunay and Voronoi tessellations of three-point sets in $\mathbb{H}^2$.}
\label{three point Vor}
\end{figure}

In the left case the Delaunay tessellation and the geometric dual complex coincide.  In particular, the Delaunay triangle is the geometric dual to the Voronoi vertex: the red dot.  In the middle and on the right, the Voronoi tessellation has no vertex and the Delaunay triangle has no geometric dual; instead, the geometric dual to the Voronoi tessellation has cells $\bx$, $\by$, $\bz$, and the two edges containing $\bx$.

This trichotomy reflects that the Euclidean circumcircle for $\bx$, $\by$, and $\bz$ is a metric hyperbolic circle in the left case, centered at the red dot, and intersects $\mathbb{H}^2$ in a horocycle and geodesic equidistant, respectively, in the middle and right cases.  In particular, the triangle spanned by $\bx$, $\by$, and $\bz$ is cyclic only in the left-hand case.\end{example}

Let us make some precise definitions connected with the upper half-plane model for $\mathbb{H}2$.

\begin{definition}\label{sphere at infinity}  The \textit{upper half-plane model} for $\mathbb{H}^2$ is $\{\bz\in\mathbb{C}\,|\,\Im \bz >0\}$, equipped with the inner product $\langle\bv,\bw\rangle = \frac{\bv\cdot\bw}{\Im z}$ for $\bv,\bw\in T_z\mathbb{H}^2$.  

The \textit{sphere at infinity} of $\mathbb{H}^2$ is $S_{\infty} = \mathbb{R}\cup\{\infty\}$.  For $r\in\mathbb{R}$, a \textit{horocycle} $S$ with \textit{ideal point} $r$ is the non-empty intersection with $\mathbb{H}^2$ of a Euclidean circle in $\mathbb{C}$ tangent to $\mathbb{R}$ at $r$.  The \textit{horoball} $B$ bounded by $S$ is the intersection with $\mathbb{H}^2$ of the Euclidean ball that $S$ bounds.  A horocycle centered at $\infty$ is a horizontal line in $\mathbb{H}^2$, and the horoball that it bounds is the half-plane contained in $\mathbb{H}^2$.\end{definition}

Geodesics of the upper half-plane model are the intersections with $\mathbb{H}^2$ of Euclidean circles and straight lines that meet $\mathbb{R}$ perpendicularly.  Every geodesic ray thus has a well-defined ideal endpoint in $S_{\infty}$ (if it points up in a straight line, its ideal endpoint is $\infty$).

The isometry group of $\mathbb{H}^2$ is $\mathrm{PGL}_2(\mathbb{R})$, acting by M\"obius transformations.  It takes geodesics to geodesics and horocycles to horocycles and extends to a triply transitive action on $S_{\infty}$.

\begin{lemma}\label{exclusive horocycle}  For a locally finite set $\cals\subset\mathbb{H}^2$, if a Voronoi edge $e=V_{\bs}\cap V_{\bt}$, with $\bs,\bt\in\cals$, has an ideal endpoint $v_{\infty}\in S_{\infty}$ then there is a unique horocycle $S$ through $\bs$ and $\bt$ with ideal point $v_{\infty}$, and the horoball $B$ that it bounds satisfies $B\cap\cals= S\cap\cals$.\end{lemma}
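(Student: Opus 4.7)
The plan is to normalize via an isometry so that $v_\infty = \infty$ in the upper half-plane model, reducing everything to a direct computation. Since $\mathrm{PGL}_2(\mathbb{R})$ acts triply transitively on $S_\infty$, I may assume $v_\infty = \infty$. The Voronoi edge $e = V_\bs \cap V_\bt$ lies on the perpendicular bisector of $\bs$ and $\bt$, a geodesic $\ell$. For $\ell$ to have $\infty$ as an ideal endpoint it must be a vertical line $\{\Re z = c\}$. Reflection in $\ell$ is the hyperbolic isometry $z\mapsto 2c-\bar z$, the unique non-trivial isometry fixing $\ell$ pointwise; since reflection in the perpendicular bisector of $\bs,\bt$ swaps them, this map swaps $\bs$ and $\bt$. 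Because it preserves imaginary parts, $\Im \bs = \Im \bt =: h$.

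Horocycles at $\infty$ are horizontal lines, so $S \doteq \{\Im z = h\}$ is a horocycle with ideal point $v_\infty$ through both $\bs$ and $\bt$. Any horocycle at $\infty$ through $\bs$ must be $\{\Im z = \Im \bs\} = S$, giving uniqueness. The horoball it bounds is $B = \{\Im z \geq h\}$, and it remains to show $\cals \cap B \subset S$.

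For this last step I will suppose for contradiction that some $\bu\in\cals$ has $\Im \bu > h$, and consider points $\bx_y = c + iy \in e$ with $y \to \infty$; such points exist since $v_\infty$ is an ideal endpoint of $e$. A direct application of the distance formula $\cosh d(z,w) = 1 + |z-w|^2/(2\,\Im z\,\Im w)$ yields the asymptotic $d(\bx_y, \bs) - d(\bx_y, \bu) \to \log(\Im \bu / h) > 0$ as $y\to\infty$. Hence $d(\bx_y, \bu) < d(\bx_y, \bs)$ for $y$ large, contradicting $\bx_y \in e \subset V_\bs$. The only mildly technical step is this asymptotic, but it follows routinely from the distance formula, and nothing else in the argument is deep.
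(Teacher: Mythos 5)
Your proposal is correct and follows essentially the same route as the paper's proof: normalize to the upper half-plane with $v_\infty = \infty$, use the reflection across the geodesic containing $e$ (the perpendicular bisector of $\bs,\bt$) to deduce $\Im\bs = \Im\bt$ and hence identify the horocycle, then use the emptiness of Voronoi disks centered along $e$ as $e\to\infty$ to show no point of $\cals$ lies strictly inside the horoball. Where the paper's proof explicitly computes the Euclidean radius and center of the circle through $\bs,\bt$ centered at $iu\in e$ and shows these circles sweep out the interior of the horoball as $u\to\infty$, you instead compute the asymptotic $d(\bx_y,\bs)-d(\bx_y,\bu)\to\log(\Im\bu/h)$ directly from the distance formula, which gives the Voronoi contradiction more cleanly; this is a slightly tidier calculation reaching the same conclusion.
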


\begin{proof}  We work in the upper half-plane model.  After moving $\cals$ by an isometry, $e$ is a subinterval $[iy_0,\infty)$ of $i\mathbb{R}^+$ and $v_{\infty} = \infty$.  Each horocycle with ideal point $\infty$, being a horizontal line, is preserved by reflection $\rho$ through $i\mathbb{R}^+$.  Since $i\mathbb{R}^+$ perpendicularly bisects the geometric dual $\gamma$ to $e$, $\rho$ preserves $\gamma$ and exchanges its endpoints $\bs$ and $\bt$.  They thus lie on the same horocycle through $\infty$.  Moving $\cals$ again, by an isometry preserving $i\mathbb{R}^+$, we may assume this is $S_{\infty} =\mathbb{R}+i$; so $\bs = -x_0+i$, $\bt = x_0+i$,  for some $x_0>0$.

For each $u\geq y_0$, the hyperbolic circle $S_u$ centered at $\bu = iu$ containing $\bx$ and $\by$ has no points of $\cals$ in the interior of the disk that it bounds, since $\bu\in V_{\bs}\cap V_{\bt}$.  Direct computation reveals that this hyperbolic circle is identical to the Euclidean circle of radius $u\sinh r_u$ centered at $(0,u\cosh r_u)$, where $r_u = d(\bu,\bt)$ satisfies $\cosh r_u = (x_0^2+u^2+1)/2u$.    (Recall that circles of the upper half-plane model are Euclidean circles contained in $\mathbb{H}^2$.)  One can use this to show in particular that $r_u > \log u$.

The convex complementary component to $S_{\infty}$ is $\{x+iy\,|\,y>1\}$.  For $z = x+iy$ in this complementary component, we claim there exists $u_1\geq y_0$ such that $S_u$ encloses $z$ for all $u>u_1$.  This is obvious if $|x|\leq x_0$, taking $u_1 = y$, say, so assume that $|x| > x_0$.  For a point $x+iy_u$ on $S_u$, the Euclidean distance formula gives:\begin{align}
  \label{eucdist} u^2\sinh^2 r_u = x^2 + (u\cosh r_u-y_u)^2 \end{align}
Solving for $y_u < u\cosh r_u$ gives
$$ y_u = u\cosh r_u - \sqrt{u^2\sinh^2 r_u - x^2} = \frac{u^2 + x^2}{u\cosh r_u + \sqrt{u^2\sinh^2 r_u - x^2}} \leq 1+\frac{x^2}{u^2} $$
Above we applied the identity $\sqrt{x}-\sqrt{y} = (x-y)/(\sqrt{x}+\sqrt{y})$, then noted that the resulting denominator is at at least $u\cosh r_u \geq u^2$.  We have also assumed without loss of generality that $u$ is large enough that the quantity under the square root is positive.

Such a solution $y_u$ is bounded below by $1$, so it is clear that $y_u\to 1$ as $u\to\infty$.  A simpler argument shows that the solution $y_u >u\cosh r_u$ to (\ref{eucdist}) increases without bound as $u\to\infty$, and the claim follows.  But the claim implies the result since for any $\bu\in e$, no point of $\cals$ has distance less than $d(\bu,\bt)$ from $\bu$.\end{proof}

\section{The centered dual to the Voronoi tessellation}\label{dirichlet dual}

Here we relate the various aspects of ``non-centeredness'', the central notion of the paper.  

\begin{definition}\label{centered edge}  For a locally finite set $\cals\subset\mathbb{H}^2$, we will say an edge $e$ of  the Voronoi tessellation of $\cals$ is \textit{centered} if $e$ intersects its geometric dual edge $\gamma_{\bs\bt}$ at a point in $\mathit{int}\,e$.  If $e$ is not centered, we orient it pointing away from $\gamma_{\bs\bt}$.

We will refer to the one-skeleton of the Voronoi tessellation as the \textit{Voronoi graph}, and to the union of its non-centered edges as the \textit{non-centered Voronoi subgraph}.\end{definition}

We will describe the structure of the non-centered Voronoi subgraph in Section \ref{non-centered structure}.  In Section \ref{centered dual props} we use this structure to organize the ``centered dual decomposition'', in Definition \ref{centered dual}, and prove its basic properties.

\subsection{Non-centeredness in the Voronoi graph}\label{non-centered structure}  We relate centeredness of edges to centeredness of geometric dual cells (in the sense of Definition \ref{cyclic poly}) in Lemma \ref{vertex polygon centered}.  The non-centered Voronoi subgraph has restricted combinatorics: its components are trees, each with a canonical root vertex if finite, see Lemma \ref{tree components}.

\begin{fact} For locally finite $\cals \subset\mathbb{H}^2$ and $\bs\in\cals$, an edge $e$ of the Voronoi $2$-cell $V_{\bs}$ is non-centered with initial vertex $v$  if and only if the angle $\alpha$ at $v$, measured in $V_{\bs}$ between $e$ and the geodesic segment joining $v$ to $\bs$, is at least $\pi/2$.\end{fact}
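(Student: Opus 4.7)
The plan is to reduce the claim to right-triangle trigonometry by introducing the midpoint $m$ of $\gamma_{\bs\bt}$. Since the line $\ell$ containing $e$ is the perpendicular bisector of $\gamma_{\bs\bt}$, we have $m = \ell \cap \gamma_{\bs\bt}$ and $\gamma_{\bs\bt}\perp \ell$ at $m$. Non-centeredness of $e$ means $m\notin \mathrm{int}\,e$; i.e., $m$ is either an endpoint of $e$ or lies outside $e$ on $\ell$.

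First I would dispatch the degenerate case $v=m$. Then $v\bs$ is a subsegment of $\gamma_{\bs\bt}$, which meets $\ell$ perpendicularly, so $\alpha = \pi/2$. Since $m=v$ is an endpoint of $e$, $e$ is non-centered, and orienting $e$ away from $\gamma_{\bs\bt}$ (hence away from the point $m=v$) forces the orientation to run from $v$ to the other endpoint, making $v$ the initial vertex; both sides of the equivalence hold.

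For $v\neq m$, form the right triangle $\triangle \bs v m$ with right angle at $m$, and let $\theta\in(0,\pi/2)$ denote its angle at $v$. From $v$ there are two rays along $\ell$: one toward $m$ and one opposite. Measuring in the half-plane of $\ell$ containing $\bs$, the angle from $v\bs$ to the toward-$m$ ray is $\theta$, while the angle from $v\bs$ to the opposite ray is $\pi-\theta$. Since $e$ exits $v$ along exactly one of these two rays --- whichever one contains the other endpoint $v'$ --- the angle $\alpha$ equals $\theta$ or $\pi-\theta$ accordingly.

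To finish I would match the two cases with the centered/non-centered dichotomy. If $v'$ lies on the ray opposite $m$, then $v$ separates $m$ from $v'$ on $\ell$, so $m\notin e$ and the orientation of $e$ away from $m$ runs from $v$ to $v'$; this is exactly the condition ``$e$ non-centered with initial vertex $v$,'' and here $\alpha=\pi-\theta\geq\pi/2$. Otherwise $v'$ lies on the toward-$m$ ray, and there are two sub-possibilities: $e$ is centered ($m$ strictly between $v$ and $v'$) or $e$ is non-centered with $v'$ as initial vertex ($m=v'$ or $m$ beyond $v'$); in both sub-cases $\alpha=\theta<\pi/2$, and $v$ is not the initial vertex of a non-centered edge. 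No step presents a real obstacle --- the argument is straightforward bookkeeping about the relative positions of $v$, $v'$, and $m$ along $\ell$, with the only minor care needed being the degenerate case $v=m$ and the sign convention for the orientation.
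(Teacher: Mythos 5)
Your proof is correct and follows essentially the same route as the paper: both introduce the foot $m$ of the perpendicular from $\bs$ to the geodesic containing $e$ (the midpoint of $\gamma_{\bs\bt}$), form the right triangle with vertices $\bs$, $v$, $m$, and observe that its angle at $v$ equals $\alpha$ or $\pi-\alpha$ according to which ray of that geodesic the edge $e$ follows. The paper states this in a single sentence with a figure, whereas you spell out the degenerate case $v=m$ and the sub-cases for the position of $m$ relative to the second endpoint; all of your bookkeeping is accurate.
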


This is because there is a right triangle with vertices at $\bs$ and $v$ and edges contained in $\gamma_{\bs\bt}$ and $\gamma_{\bs\bt}^{\perp}$, where $\gamma_{\bs\bt}$ is the geometric dual to $e$.  This triangle has angle equal to either $\alpha$ or $\pi-\alpha$ at $v$, depending on the case above; see Figure \ref{non-centered edge}.

\begin{figure}
\input{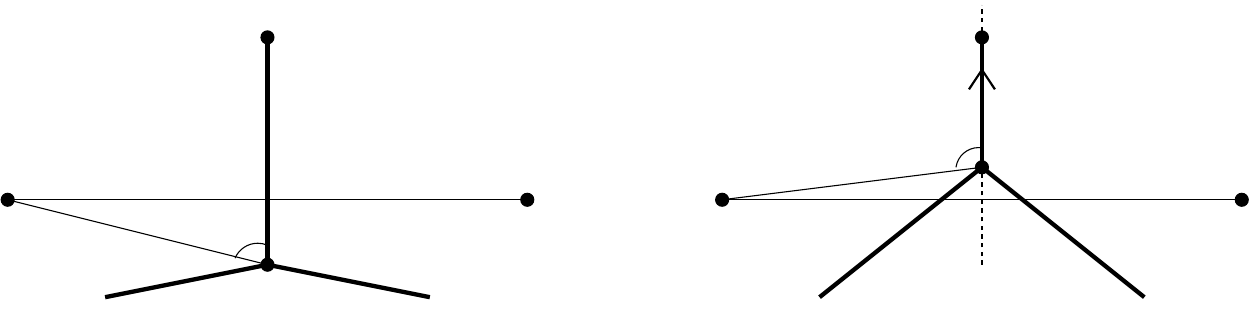_t}
\caption{Centered and non-centered edges.}
\label{non-centered edge}
\end{figure}

If $e$ has another endpoint $w$ then since $\bs\in C_v\cap C_w$, the fact above and the hyperbolic law of cosines imply that the respective radii $J_v$ and $J_w$ of $v$ and $w$ (see Fact \ref{vertex radius}) satisfy:\begin{align}\label{radius relation}  \cosh J_w = \cosh\ell(e)\cosh J_v - \sinh\ell(e)\sinh J_v\cos\alpha \end{align}
Because $\cos\alpha \leq 0$ if $\alpha \geq \pi/2$, we have:

\begin{lemma}\label{increasing radius} Suppose $v$ is the initial and $w$ the terminal vertex of a non-centered edge, oriented as prescribed in Definition \ref{centered edge}, of the Voronoi tessellation of a locally finite set $\cals\subset\mathbb{H}^2$.  Then $J_{v} < J_{w}$.\end{lemma}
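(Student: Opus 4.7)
The plan is to prove the lemma as a direct consequence of equation (\ref{radius relation}) combined with the angle characterization of non-centeredness stated in the fact just above it. The proof is essentially a one-line estimate, with the only subtlety being the need to extract a strict rather than weak inequality.

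First I would invoke the fact preceding equation (\ref{radius relation}): because the edge $e$ is non-centered with initial vertex $v$ in the prescribed orientation, the angle $\alpha$ at $v$ between $e$ and the segment from $v$ to $\bs$ satisfies $\alpha \geq \pi/2$, hence $\cos\alpha \leq 0$. Then I would rewrite (\ref{radius relation}) as
\begin{align*}
\cosh J_w - \cosh J_v = (\cosh\ell(e) - 1)\cosh J_v - \sinh\ell(e)\sinh J_v \cos\alpha.
\end{align*}
Both terms on the right are nonnegative: the second because $\cos\alpha\leq 0$ while the remaining factors are positive (both $J_v$ and $\ell(e)$ are positive hyperbolic distances), and the first because $\cosh J_v \geq 1$ with $\cosh\ell(e)-1 \geq 0$.

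The key point for strictness is that $\ell(e) > 0$, since $e$ is an actual (one-dimensional) Voronoi edge, so $\cosh\ell(e) - 1 > 0$, and combined with $\cosh J_v \geq 1$ this forces the first term to be strictly positive. Therefore $\cosh J_w > \cosh J_v$, and since $\cosh$ is strictly increasing on $[0,\infty)$ this gives $J_v < J_w$.

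There is no real obstacle here; the only thing to be careful about is the boundary case $\alpha = \pi/2$, in which the $\cos\alpha$ contribution vanishes, but strictness is still delivered by the factor $\cosh\ell(e)-1 > 0$. I would mention this explicitly so the reader sees that the $\alpha = \pi/2$ case does not cause trouble.
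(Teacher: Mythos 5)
Your proof is correct and takes essentially the same route as the paper: both rely on equation (\ref{radius relation}) together with the observation that $\cos\alpha \leq 0$ when $\alpha \geq \pi/2$. The paper states the conclusion directly after noting $\cos\alpha \leq 0$; your rearrangement making explicit that $\ell(e) > 0$ forces strictness (including in the borderline case $\alpha = \pi/2$) simply spells out what the paper leaves to the reader.
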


\begin{remark}  While every non-centered edge of the Voronoi tessellation has an initial vertex, note that not every such edge has a terminal vertex in $\mathbb{H}^2$, as in the middle case of Figure \ref{three point Vor}.  There all Voronoi edges are non-compact, and $V_y\cap V_z$ is non-centered.\end{remark}

Below we relate centeredness of edges of $V$ to that of geometric dual $2$-cells.

\begin{lemma}\label{vertex polygon centered} Let $v$ be a vertex of the Voronoi tessellation of a locally finite set $\cals\subset\mathbb{H}^2$.  Its geometric dual $C_v$ is non-centered if and only if $v$ is the initial vertex of a non-centered edge $e$ of $V$.  If this is so, the geometric dual $\gamma$ to $e$ is the unique longest edge of $C_v$, and $C_v\cup T(e,v)$ is a convex polygon, where $T(e,v)$ is the triangle determined by $v$ and $\partial\gamma$.\end{lemma}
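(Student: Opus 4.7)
The plan is to reduce to the angular condition in the Fact preceding Figure~\ref{non-centered edge}: that a Voronoi edge $e_i$ is non-centered with initial vertex $v$ iff the angle $\alpha_i$ at $v$ in $V_{\bs_i}$ between $e_i$ and $v\bs_i$ is at least $\pi/2$. The geometric input is that the vertices $\bs_0,\ldots,\bs_{n-1}$ of $C_v$ (cyclically ordered as in Lemma~\ref{vertex polygon}) lie on the circumcircle $S$ of radius $J_v$ centered at $v$, so their cyclic order on $S$ matches the cyclic arrangement of the Voronoi sectors $V_{\bs_0},\ldots,V_{\bs_{n-1}}$ around $v$.

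Let $\tilde\theta_i$ be the cyclic gap at $v$ from $v\bs_i$ to $v\bs_{i+1}$, so $\sum_i\tilde\theta_i = 2\pi$, and let $\theta_i = \angle\bs_iv\bs_{i+1}\in(0,\pi)$ be the apex angle of the isoceles triangle $v\bs_i\bs_{i+1}$; then $\tilde\theta_i\in\{\theta_i,\,2\pi-\theta_i\}$. The circumcenter $v$ lies in $\mathrm{int}\,C_v$ iff every $\tilde\theta_i<\pi$, so $C_v$ is non-centered iff some $\tilde\theta_j\ge\pi$, and since the gaps sum to $2\pi$, such $j$ is unique. Next I would show $\alpha_i = \tilde\theta_i/2$: the edge $e_i$ lies along the perpendicular bisector $\ell_i$ of $\gamma_i$, which bisects $\angle\bs_iv\bs_{i+1}$ at $v$; the direction of $e_i$ along $\ell_i$ is the one separating the sectors of $V_{\bs_i}$ and $V_{\bs_{i+1}}$, which by the cyclic matching lies in the cyclic gap between $v\bs_i$ and $v\bs_{i+1}$, and reflection symmetry across $\ell_i$ then yields $\alpha_i = \tilde\theta_i/2$. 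Combining with the Fact, $e_i$ is non-centered with initial vertex $v$ iff $\tilde\theta_i\ge\pi$, establishing the required equivalence and identifying the geometric dual of the unique such edge as $\gamma = \gamma_j$.

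Now assume $C_v$ is non-centered with outside index $j$, so $\tilde\theta_j = 2\pi-\theta_j$ and $\tilde\theta_i = \theta_i$ for $i\ne j$; then $\sum_{i\ne j}\theta_i = \theta_j$, so $\theta_i<\theta_j$ for each $i\ne j$. The hyperbolic law of cosines in $v\bs_i\bs_{i+1}$ shows that chord length is strictly increasing in the apex angle on $(0,\pi)$, so $\gamma_j$ is the unique longest edge of $C_v$. For convexity of $C_v\cup T(e_j,v)$, observe that $T = T(e_j,v) = v\bs_j\bs_{j+1}$ is glued to $C_v$ across $\gamma_j$ on the opposite side from $\mathrm{int}\,C_v$. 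The only vertices where the union's interior angle could exceed its counterpart in $C_v$ or $T$ are $\bs_j$ and $\bs_{j+1}$. At $\bs_j$: since $v$ and $\bs_{j-1}$ lie on opposite sides of $\gamma_j$, the ray $\bs_j\bs_{j+1}$ is angularly between $\bs_j\bs_{j-1}$ and $\bs_jv$, so the union's interior angle at $\bs_j$ equals the angle at $\bs_j$ in the isoceles triangle $v\bs_{j-1}\bs_j$, which is the base angle $(\pi-\theta_{j-1})/2<\pi/2$; analogously at $\bs_{j+1}$. The angle at $v$ is $\theta_j<\pi$, and the angles at other $\bs_i$ are unchanged, so $C_v\cup T(e_j,v)$ is convex.

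The main subtlety I anticipate is justifying in the second paragraph that the direction of $e_i$ at $v$ lies in the cyclic gap from $v\bs_i$ to $v\bs_{i+1}$ rather than the opposite direction on $\ell_i$. This follows because a small initial segment of the ray $v\bs_i$ is contained in $V_{\bs_i}$ (moving from $v$ toward $\bs_i$ strictly decreases $d(\cdot,\bs_i)$ faster than $d(\cdot,\bs_k)$ for each $k\ne i$), placing $v\bs_i$ in the interior of the sector of $V_{\bs_i}$ at $v$ bounded by $e_{i-1}$ and $e_i$; combined with Lemma~\ref{vertex polygon}'s cyclic matching, this forces $e_i$ to emanate into the cyclic gap on the $v\bs_{i+1}$ side of $v\bs_i$.
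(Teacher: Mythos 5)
Your argument takes a genuinely different and more self-contained route than the paper's. The paper handles the two directions asymmetrically: for the forward implication it works with the radius $J_w$ of the other endpoint $w$ of $e$ and the law of cosines to show that an open half of the circumcircle $S$ is $\cals$-free, and for the converse it simply cites two results from the companion paper on cyclic polygons, namely \OneSide\ (which supplies the convexity of $C_v\cup T(e,v)$) and \LongestSide\ (the unique longest edge). You instead work entirely with angles at $v$: reducing both directions of the equivalence to the single condition ``some cyclic gap $\tilde\theta_j \ge \pi$,'' deriving $\alpha_i = \tilde\theta_i/2$, and then proving the longest-edge and convexity claims directly from $\sum_{i\ne j}\theta_i = \theta_j$. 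This is cleaner in that it never needs $w$ (so it works uniformly whether $e$ is compact or not) and does not lean on the companion paper; what it costs you is that you must re-derive the cyclic-polygon facts that the paper imports, and that is where a gap appears.

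The gap is in the convexity step. You assert that ``since $v$ and $\bs_{j-1}$ lie on opposite sides of $\gamma_j$, the ray $\bs_j\bs_{j+1}$ is angularly between $\bs_j\bs_{j-1}$ and $\bs_jv$.'' Opposite sides of the geodesic $\ell$ through $\gamma_j$ only tells you that $\ell$ separates the two rays, hence that one of the two rays of $\ell$ at $\bs_j$ --- either $\bs_j\bs_{j+1}$ or its opposite --- lies in the short angular sector from $\bs_j\bs_{j-1}$ to $\bs_jv$; it does not determine which one. If it were the opposite ray, the interior angle of the union at $\bs_j$ would be $2\pi - \angle(\bs_{j-1}\bs_j v) > \pi$ and convexity would fail, so this must actually be ruled out. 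A clean way to close the gap: the geodesic $m$ through $\bs_j$ and $v$ is a diameter of $S$, and since all vertices other than $\bs_j$ lie on an arc of angular measure $2\pi-\tilde\theta_j < \pi$ starting at $\bs_j$, they lie strictly on one side of $m$; thus at $\bs_j$ every ray $\bs_j\bs_i$ ($i\ne j$) lies in one open half-plane bounded by $m$ while $\bs_jv$ lies on $m$, and the ray to the far end $\bs_{j+1}$ of the arc makes the smallest angle with $\bs_jv$. This gives the cyclic order $\bs_jv,\,\bs_j\bs_{j+1},\,\dots,\,\bs_j\bs_{j-1}$ at $\bs_j$, which is what you need. (You flag exactly this kind of ordering subtlety for $e_i$ at $v$ in your last paragraph, but the analogous issue at $\bs_j$ went unaddressed.)

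One minor slip: the base angle of a hyperbolic isoceles triangle with apex $\theta_{j-1}$ is strictly less than $(\pi-\theta_{j-1})/2$, not equal to it; the equality you wrote is the Euclidean formula. Your conclusion ``$<\pi/2$'' is if anything reinforced, so this does not affect the proof, but the intermediate equation should be an inequality.
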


\begin{proof}  Suppose first that $v$ is the initial vertex of a non-centered edge $e = V_{\bs}\cap V_{\bt}$, and let $\calh'$ be the half-space containing $e$ and bounded by the geodesic containing $\bs$ and $\bt$.  The circle $S$ with radius $J_v$ and center $v$ intersects $\partial\calh'$ in $\{\bs,\bt\}$.  If $\alpha$ is the angle at $v$ between $e$ and the geodesic arc to $\bs$, then by the Fact above, $\alpha \geq \pi/2$.  The hyperbolic law of cosines implies that $z\in S$ is in the interior of $\calh'$ if and only if the angle $\alpha'$ at $v$ between $e$ and the geodesic arc to $z$ is less than $\alpha$.  If $w$ is the other endpoint of $e$ then for such $z$:
$$\cosh d(z,w) = \cosh\ell(e)\cosh J_v - \sinh\ell(e)\sinh J_v\cos\alpha' $$
Since $\alpha' < \alpha$, comparing with (\ref{radius relation}) we find that $d(z,w) < J_w$, so the intersection of $S$ with the interior of $\calh$ is entirely contained in $B_{J_w}(w)$.  Therefore by Fact \ref{vertex radius} it contains no points of $\cals$.  Since all vertices of $C_v$ are on $S$, it follows that $C_v$ is contained in the half-plane $\calh$ opposite $\calh'$, and hence that $v\notin\mathit{int}\,C_v$.  Thus $C_v$ is non-centered (recall Definition \ref{cyclic poly}).

Assume now that $C_v$ is not centered and apply \OneSide.  This produces an edge $\gamma$ of $C_v$ and a half-space $\calh$ containing $C_v$ and bounded by the geodesic containing $\gamma$, such that $v$ is in the half-space $\calh'$ opposite $\calh$.  \OneSide\ further asserts that $P\cup T(e,v)$ is a convex polygon; also, $\gamma$ is the unique longest edge of $C_v$, by \LongestSide.  We claim that the other endpoint $w$ of the geometric dual $e$ to $\gamma$ is further from $\calh$ than $v$, and hence that $e$ is non-centered with initial vertex $v$.

If $w$ is closer to $\calh$ than $v$ (this includes the possibility $w\in\calh$), then $e$ intersects the geodesic joining $v$ to $x$ in an angle of $\alpha\leq\pi/2$.  $J_w$ again satisfies (\ref{radius relation}), and if $S$ is the circle of radius $J_v$ centered at $v$, the hyperbolic law of cosines again implies that $z\in S$ is in $\mathit{int}\,\calh$ if and only if the angle at $v$ between $e$ and the geodesic joining $v$ to $z$ is less than $\alpha$.  As in the previous case, this implies that the distance from the other vertices of $P_v$ to $w$ is less than $J_w$, contradicting Fact \ref{vertex radius}.  Therefore $w$ is further from $\calh$ than $v$.\end{proof}

If $v$ is the initial vertex of a non-centered Voronoi edge $e$, the fact that the geometric dual to $e$ is the \textit{unique} longest edge of $C_v$ immediately implies the following.

\begin{corollary}\label{one direction}  For a locally finite set $\cals\subset\mathbb{H}^2$, no vertex of the Voronoi tessellation of $\cals$ is the initial vertex of more than one non-centered edge.  \end{corollary}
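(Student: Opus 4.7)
My plan is to argue by contradiction using the uniqueness statement at the end of Lemma \ref{vertex polygon centered}. Suppose a Voronoi vertex $v$ is the initial vertex of two distinct non-centered edges $e_1$ and $e_2$. Applying Lemma \ref{vertex polygon centered} to $e_1$, I conclude that the geometric dual $\gamma_1$ to $e_1$ is \emph{the unique longest edge} of $C_v$. Applying the same lemma to $e_2$, I conclude that the geometric dual $\gamma_2$ to $e_2$ is also the unique longest edge of $C_v$. Hence $\gamma_1 = \gamma_2$.

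Now I invoke the bijective correspondence between Voronoi edges incident to $v$ and edges of $C_v$ provided by Lemma \ref{vertex polygon}: the edges of $C_v$ are exactly the geometric duals of the Voronoi edges emanating from $v$, and distinct Voronoi edges at $v$ have distinct geometric duals. Therefore $\gamma_1 = \gamma_2$ forces $e_1 = e_2$, contradicting our assumption. This proves the corollary.

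The proof is essentially a one-line deduction, so I do not anticipate any substantive obstacle; the real content has already been packaged into Lemma \ref{vertex polygon centered} via the appeal to \LongestSide. The only minor point worth being explicit about is that the ``unique longest'' claim in Lemma \ref{vertex polygon centered} is stated for a single non-centered edge, so one must apply it twice and then combine the two conclusions, rather than reading off the result directly from the lemma statement.
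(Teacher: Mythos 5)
Your proof is correct and follows exactly the paper's reasoning: the paper states the corollary is an immediate consequence of the ``unique longest edge'' conclusion of Lemma \ref{vertex polygon centered}, which is precisely the argument you give.
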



Below, given a graph $G$ we will say that $\gamma = e_0\cup e_1\cup \hdots\cup e_{n-1}$ is an \textit{edge path} if $e_i$ is an edge of $G$ for each $i$ and $e_i\cap e_{i-1}\neq \emptyset$ for $i>0$.  An edge path $\gamma$ as above is \textit{reduced} if $e_i\neq e_{i-1}$ for each $i>0$, and $\gamma$ is \textit{closed} if $e_0\cap e_{n-1} \neq\emptyset$.

\begin{lemma}\label{tree components}  Each component $T$ of the non-centered Voronoi subgraph determined by locally finite $\cals\subset\mathbb{H}^2$ is a tree.  Each compact reduced edge path $\gamma$ of $T$ has a unique vertex $v_{\gamma}$ such that $J_{v_{\gamma}} > J_v$ for all vertices $v\neq v_{\gamma}$ of $\gamma$, and every edge of $\gamma$ points toward $v_{\gamma}$.\end{lemma}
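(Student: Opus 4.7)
The plan is to break the lemma into the two independent claims---that $T$ is acyclic, and that any compact reduced edge path in $T$ has a distinguished ``root'' vertex---and to handle each by an extremum argument on the radius function $J$, combined with Lemma~\ref{increasing radius} and Corollary~\ref{one direction}.

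For acyclicity, I will argue by contradiction. A putative simple cycle in $T$ must use only compact non-centered edges, since a non-compact Voronoi edge has only one endpoint in $\mathbb{H}^2$ and so cannot lie on a graph-theoretic cycle. Among the vertices of the cycle I pick a vertex $v_{\min}$ minimizing $J$. By Lemma~\ref{increasing radius} the terminal vertex of a non-centered edge has strictly greater radius than its initial vertex, so if $v_{\min}$ were the terminal vertex of either of its two incident cycle edges, its partner on the cycle would have strictly smaller $J$, contradicting minimality. Hence $v_{\min}$ is the initial vertex of both incident cycle edges, contradicting Corollary~\ref{one direction}.

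For the second part, given a compact reduced edge path $\gamma$ in the tree $T$, I will use the standard fact that reduced walks in a tree are simple paths to enumerate the vertices as $v_0, v_1, \ldots, v_n$ and the edges as $e_0, \ldots, e_{n-1}$, with $e_i$ joining $v_i$ and $v_{i+1}$. Label $e_i$ by $F$ if its initial vertex (as oriented in Definition~\ref{centered edge}) is $v_i$, and by $B$ otherwise. The key observation is that the label sequence cannot contain a $BF$ substring, for that would make $v_{i+1}$ the initial vertex of both $e_i$ and $e_{i+1}$, violating Corollary~\ref{one direction}. Consequently the sequence has the shape $F^k B^{n-k}$ for a unique $k \in \{0, \ldots, n\}$. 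Set $v_\gamma := v_k$; every edge of $\gamma$ then points toward $v_\gamma$ by construction, and Lemma~\ref{increasing radius} converts this into the strict chain $J_{v_0} < \cdots < J_{v_k} > \cdots > J_{v_n}$, which identifies $v_\gamma$ as the unique maximum and simultaneously records the orientation claim.

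I expect the main conceptual obstacle to be noticing the rigid $F^\ast B^\ast$ shape of the orientation pattern along any path in $T$; once that is extracted from Corollary~\ref{one direction}, both halves of the lemma reduce to direct applications of the already-established Lemma~\ref{increasing radius}. The only technical care required is to keep track of non-compact non-centered edges, which behave as rays in the Voronoi graph and so neither contribute to cycles nor, by the compactness hypothesis, to the path $\gamma$ in the second part.
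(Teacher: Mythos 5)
Your proof is correct, and it rests on the same two pillars as the paper's: Lemma~\ref{increasing radius} and Corollary~\ref{one direction}. The organization is slightly different, though, and both differences are streamlinings. For acyclicity the paper traverses the cycle, uses Corollary~\ref{one direction} to propagate a coherent orientation, and extracts a contradictory chain of strict inequalities $J_{v_0} > J_{v_1} > \cdots > J_{v_{n-1}} > J_{v_0}$; you instead pick the radius-minimizing vertex of the cycle and get a one-step local contradiction (it cannot be terminal by Lemma~\ref{increasing radius}, so it is initial for both incident edges, violating Corollary~\ref{one direction}). For the rooted-path claim, the paper takes a maximum-radius vertex $v_\gamma$ and then argues in two cases depending on whether $v_\gamma$ is at an endpoint or in the interior of $\gamma$; your observation that Corollary~\ref{one direction} forbids a $BF$ substring, forcing the orientation pattern $F^k B^{n-k}$, produces the peak index $k$ directly and lets Lemma~\ref{increasing radius} give the strict monotone chain $J_{v_0} < \cdots < J_{v_k} > \cdots > J_{v_n}$ in a single sweep, with no case split. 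In effect you separate the combinatorics (the $F^\ast B^\ast$ shape) from the analysis (strict monotonicity of $J$ along oriented edges), which is a bit cleaner, but the mathematical content coincides with the paper's.
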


\begin{proof}  Suppose that such a component $T$ admits closed, reduced edge paths, and let $\gamma = e_0\cup e_1\cup \hdots \cup e_{n-1}$ be shortest among them.  Orienting the $e_i$ as in Definition \ref{centered edge}, we may assume (after re-numbering if necessary) that $e_0$ points toward $e_0 \cap e_{n-1}$.  We claim that then $e_i$ points to $e_i\cap e_{i-1}$ for each $i>0$ as well.
Otherwise, for the minimal $i>0$ such that $e_i$ points toward $e_{i+1}$ it would follow that the vertex $e_i \cap e_{i-1}$ was the initial vertex of both $e_i$ and $e_{i-1}$, contradicting Corollary \ref{one direction}.

Let $v_0 = e_0\cap e_{n-1}\in V^{(0)}$, and for $i>1$ take $v_i = e_i\cap e_{i-1}$.  Applying Lemma \ref{increasing radius} to $e_i$ for each $i$, we find that $J_{v_i}> J_{v_{i+1}}$.  By induction this gives $J_{v_0} > J_{v_{n-1}}$; but since $e_{n-1}$ points to $v_{n-1}$ Lemma \ref{increasing radius} implies that $J_{v_{n-1}}$ must exceed $J_{v_0}$, a contradiction.  Thus $T$ contains no closed, reduced edge paths, so it is a tree.

Let $\gamma = e_0\cup \hdots \cup e_{n-1}$ be a reduced edge path, and let $v_{\gamma}$ be a vertex with $J_{v_{\gamma}}$ maximal.  Assume for now that $v_{\gamma}$ is on the boundary of $\gamma$, say the endpoint of $e_0$ not in $e_1$.  Lemma \ref{increasing radius} implies that $e_0$ points toward $v_{\gamma}$; thus if $i>0$ were minimal such that $e_i$ did not point toward $v_{\gamma}$ then $v_i = e_i\cap e_{i-1}$ would the initial endpoint of $e_i$ and $e_{i-1}$, contradicting Corollary \ref{one direction}.  It follows that each edge of $\gamma$ points toward $v_{\gamma}$, and by repeated application of Lemma \ref{increasing radius}, that $J_{v_T}>J_v$ for all vertices $v\neq v_{\gamma}$.  The case that $v_{\gamma}$ is in the interior of $\gamma$ follows by applying the argument above to the compact subpaths obtained by splitting $\gamma$ along $v_{\gamma}$.\end{proof}

\begin{definition}\label{root vertex}If a component $T$ of the non-centered Voronoi subgraph determined by locally finite $\cals\subset\mathbb{H}^2$ has a vertex $v_T$ with maximal radius, we call it the \textit{root vertex} of $T$.\end{definition}

If $v_T$ is a root vertex of $T$, Lemma \ref{tree components} immediately implies that $J_{v_T}> J_v$ for all $v\in T^{(0)}-\{v_T\}$.  In particular, $v_T$ is unique.

\begin{proposition}\label{to the root}  A component $T$ of the non-centered Voronoi subgraph determined by locally finite $\cals\subset\mathbb{H}^2$ has at most one non-compact edge.\begin{enumerate}
\item\label{root not centered}If one exists then its initial vertex is the root vertex $v_T$ of $T$, and $C_{v_T}$ is non-centered.
\item\label{root centered}If all edges are compact and there is a root vertex $v_T$, then $C_{v_T}$ is centered.
\end{enumerate}
For every non-root vertex $v$ of $T$, the geometric dual $C_v$ is non-centered.\end{proposition}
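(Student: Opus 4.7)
The plan is to combine Corollary \ref{one direction} and Lemmas \ref{increasing radius} and \ref{vertex polygon centered} with the tree structure from Lemma \ref{tree components}. A preliminary observation drives all three arguments: for any two finite vertices $v,w$ of $T$, the unique reduced edge path $\gamma$ from $v$ to $w$ in $T$ (unique because $T$ is a tree) is compact. Indeed, every edge of $\gamma$ shares each of its endpoints either with an adjacent edge of $\gamma$ or with the finite endpoints $v$, $w$ of $\gamma$, so all its endpoints are finite; a non-compact edge has an ideal endpoint and is thereby excluded. Hence Lemma \ref{tree components} applies to $\gamma$.

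I would first prove the final assertion by contradiction: suppose $v \in T$ is a non-root vertex with $C_v$ centered. By Lemma \ref{vertex polygon centered}, $v$ is not the initial vertex of any non-centered Voronoi edge, so $v$ is the terminal vertex of every non-centered edge at $v$. Given $w \in T$ with $w \neq v$, the path $\gamma$ from $v$ to $w$ is compact by the observation, and Lemma \ref{tree components} supplies a vertex $v_{\gamma}$ on $\gamma$ of maximal radius toward which every edge points. If $v \neq v_{\gamma}$, the edge of $\gamma$ incident to $v$ would point away from $v$, making $v$ its initial vertex, contradicting centeredness of $C_v$. Hence $v = v_{\gamma}$, giving $J_v > J_w$; since $w$ is arbitrary, $v$ is the unique vertex of $T$ of maximal radius, so $v = v_T$, contradicting non-rootedness.

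For Claim (\ref{root not centered}), suppose $e$ is a non-compact non-centered edge in $T$ with finite initial vertex $v$. For any other finite $w \in T$, the path from $w$ to $v$ is compact, and Lemma \ref{tree components} again yields $v_{\gamma}$. If $v \neq v_{\gamma}$, the edge of this path at $v$ would make $v$ its initial vertex, which combined with $v$ being initial of $e$ contradicts Corollary \ref{one direction}. So $v = v_{\gamma}$ and $J_v > J_w$, making $v$ the root vertex $v_T$. Uniqueness of the non-compact edge in $T$ is then immediate from Corollary \ref{one direction}, and $C_{v_T}$ is non-centered by Lemma \ref{vertex polygon centered}. For Claim (\ref{root centered}), assume all edges of $T$ are compact and $v_T$ is the root; if $C_{v_T}$ were non-centered, Lemma \ref{vertex polygon centered} would produce a non-centered edge $e$ at $v_T$ with $v_T$ as initial vertex, compactness would give $e$ a finite terminal vertex $w$, and Lemma \ref{increasing radius} would yield $J_w > J_{v_T}$, contradicting rootedness.

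The main subtlety I foresee is ensuring that the reduced edge path argument correctly excludes non-compact edges so that Lemma \ref{tree components} is applicable; once the preliminary observation is in hand, each assertion reduces to a short contradiction argument chaining Corollary \ref{one direction}, Lemma \ref{increasing radius}, and Lemma \ref{vertex polygon centered}.
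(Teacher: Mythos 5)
Your proposal is correct and follows essentially the same route as the paper: chaining Corollary \ref{one direction}, Lemma \ref{increasing radius}, Lemma \ref{vertex polygon centered}, and Lemma \ref{tree components}, with the observation that the reduced edge path between two finite vertices of $T$ consists of compact edges. The one place where you are slightly more explicit than the paper is the final assertion: the paper simply invokes Lemma \ref{vertex polygon centered}, leaving the reader to verify that every non-root vertex is indeed the \emph{initial} vertex of some non-centered edge, whereas you spell out the contradiction argument (if $v$ were terminal for every non-centered edge at it, the path argument forces $J_v > J_w$ for all $w$, making $v$ the root) — this is a genuine gap in exposition that you correctly identified and filled.
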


\begin{remark}The middle case of Figure \ref{three point Vor} is an example of the phenomenon (\ref{root not centered}) above.\end{remark}

\begin{proof}  A vertex $v$ of $T$ is contained in at least one non-centered Voronoi edge.  If it is the initial point of a non-centered edge $e$, then by Lemma \ref{vertex polygon centered}, that $C_v$ is non-centered. 

If $v$ is the initial vertex of a non-compact edge $e$ of $T$ then by Corollary \ref{one direction}, $v$ is the terminal vertex of every other edge of $T$ that contains it.  In particular, for any $w\in T^{(0)}-\{v\}$, each edge of the unique reduced edge path $\gamma$ in $T$ joining $v$ to $w$ is compact, so the edge of $\gamma$ that contains $v$ points towards it.  By Lemma \ref{tree components} every other edge of $\gamma$ points toward $v$ as well, and $J_{v}>J_w$.  Since $w$ was arbitrary, it follows that $v = v_T$ is the root vertex of $T$.  The uniqueness of the root vertex now implies that $e$ is the unique non-compact edge of $T$.

If every edge of $T$ is compact and $v_T$ is a root vertex, then by Lemma \ref{increasing radius} $v_T$ is the terminal point of every edge of $T$ that contains it.  Hence Lemma \ref{vertex polygon centered} implies that $C_{v_T}$ is centered.\end{proof}

\subsection{Introducing the centered dual}\label{centered dual props}  The basic idea is to think of the centered dual as dual to a coarsening of the Voronoi tessellation that has a ``large vertex'' for each component of the non-centered Voronoi subgraph.  In particular:

\begin{definition}\label{tree cells}  For a component $T$ of the non-centered Voronoi subgraph of a  locally finite set $\cals\subset\mathbb{H}^2$, we define the \textit{centered dual $2$-cell $C_T$ dual to $T$} as follows:\begin{enumerate}
\item  If $T$ has a non-compact edge $e_0$ with ideal endpoint $v_{\infty}$ (recall Definition \ref{sphere at infinity}), take:
$$  C_T = \Delta(e_0,v_{\infty}) \cup \left(\bigcup_{v\in T^{(0)}} C_v\right),  $$
where $\Delta(e_0,v_{\infty})$ is the convex hull in $\mathbb{H}^2$ of $v_{\infty}$ and the geometric dual to $e_0$.
\item  Otherwise, let $C_T = \bigcup_{v\in T^{(0)}} C_v$.\end{enumerate}

Define the \textit{boundary} $\partial C_T$ of $C_T$ as the union of geometric duals $\gamma$ to Voronoi edges $e$ that are not in $T$ but have an endpoint there, in case (2) above; or in case (1) the union of such $\gamma$ with the infinite edges of $\Delta(e_0,v_{\infty})$.  Let the \textit{interior} $\mathit{int}\,C_T$ of $C_T$ be $C_T-\partial C_T$.\end{definition}

We note that the two cases of the definition match those of Proposition \ref{to the root}.  

\begin{lemma}\label{no infinite triangle overlap} Suppose an edge $e_0$ of the Voronoi tessellation of a locally finite set $\cals\subset\mathbb{H}^2$ has an ideal endpoint $v_{\infty}$.  If $e_0$ has an endpoint $v_0\in\mathbb{H}^2$ then $C_{v_0}\cap \Delta(e_0,v_{\infty})=\gamma$, where $\gamma$ is the geometric dual to $e_0$ and $\Delta(e_0,v_{\infty})$ is as in Definition \ref{tree cells}.  For any other Voronoi vertex $v$, $C_v\cap \Delta(e_0,v_{\infty})\subset\partial \gamma$.\end{lemma}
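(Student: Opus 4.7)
The plan is to work in the upper half-plane model after normalizing by an isometry so that $v_\infty = \infty$ and the horocycle $S$ from Lemma \ref{exclusive horocycle} is $\{\Im z = 1\}$. Then $\bs = -x_0 + i$ and $\bt = x_0 + i$ for some $x_0 > 0$, $\gamma$ is the arc of $\{|z|=\sqrt{x_0^2+1}\}$ in the upper half-plane, and $e_0$ runs up the imaginary axis above $v_0$. Direct inspection shows $\mathrm{int}\,\Delta \subset \mathrm{int}\,B$ (the open horoball $\{\Im z > 1\}$) and $\Delta \cap S = \{\bs, \bt\}$. Combined with $\cals \cap \mathrm{int}\,B = \emptyset$ from Lemma \ref{exclusive horocycle}, this yields $\cals \cap \Delta \subset \{\bs, \bt\}$.

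For the first assertion, note that $e_0$ is non-centered (being a non-compact edge of a tree component of the non-centered subgraph in Definition \ref{tree cells}) with $v_0$ as initial vertex. Lemma \ref{vertex polygon centered} identifies $\gamma$ as the unique longest edge of $C_{v_0}$ and places $C_{v_0}$ in the closed half-plane $\calh$ bounded by the geodesic $L$ containing $\gamma$, opposite from $v_0$. Since $v_0$ and $v_\infty$ lie on the same side of $L$, $\Delta$ is contained in the closed opposite half-plane $\overline{\calh'}$; hence $C_{v_0} \cap \Delta \subset L \cap \Delta = \gamma$, with the reverse containment immediate because $\gamma$ is a common edge.

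For the second assertion, let $v \neq v_0$ be a Voronoi vertex. Vertices of $C_v$ in $\Delta$ must lie in $\cals \cap \Delta \subset \{\bs, \bt\}$. At most one of these can appear: if both were vertices of $C_v$, then $v \in V_\bs \cap V_\bt = e_0$, but interior points of a Voronoi edge lie in only two Voronoi cells, so Voronoi vertices on $e_0$ are its endpoints in $\mathbb{H}^2$, forcing $v = v_0$, a contradiction. Hence at most one of $\{\bs, \bt\}$ is a vertex of $C_v$.

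The main obstacle is then to rule out $C_v$ extending into $\mathrm{int}\,\Delta$ beyond that single vertex, via an edge or its interior. The key geometric input is a direct computation in the normalized coordinates: for any $\bs' \in \cals$ such that the geodesic $\bs\bs'$ has tangent direction at $\bs$ in the angular ``wedge'' of $\Delta$ at $\bs$ (bounded by the tangent to $\gamma$ and the vertical ray to $v_\infty$), one checks that the perpendicular bisector of $\bs, \bs'$ lies entirely in $\{z : d(z,\bs) > d(z,\bt)\}$. Since $V_\bs \subset \{z : d(z,\bs) \leq d(z,\bt)\}$, this forces $V_\bs \cap V_{\bs'} = \emptyset$, so $\bs\bs'$ is not an edge of any geometric dual cell. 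Convexity of $\Delta$ prevents a geodesic leaving $\bs$ in a non-wedge direction from re-entering $\mathrm{int}\,\Delta$, and the symmetric statement holds at $\bt$. A final convexity argument for the polygon $C_v$, together with the fact that its vertex set lies in $\cals$ and therefore avoids $\mathrm{int}\,\Delta$, then yields $C_v \cap \Delta \subset \{\bs, \bt\}$.
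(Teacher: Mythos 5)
Your proposal takes a genuinely different route from the paper, and it has a substantive gap in the second assertion.

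The paper's proof normalizes the same way you do, then makes a key ``one-side'' observation which drives the rest: since every $z\in\cals$ has $\Im z\leq 1$, the vertices of any geometric dual $C_v$ all lie in $\{\Re z\geq -x_0\}$ or all lie in $\{\Re z\leq x_0\}$ --- otherwise the circumcircle of $C_v$, being a Euclidean circle through a point with $\Re<-x_0,\ \Im\leq 1$ and another with $\Re>x_0,\ \Im\leq 1$, would enclose $\bs_0$ or $\bt_0$, contradicting the empty-circumcircle property of geometric dual cells. Your argument does not use this, relying instead on the wedge computation at $\bs$ and $\bt$.

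The wedge computation itself checks out, but the ``final convexity argument'' is where the gap sits. What you have actually shown is: vertices of $C_v$ avoid $\mathrm{int}\,\Delta$; at most one of $\{\bs,\bt\}$ is a vertex of $C_v$; and no edge of $C_v$ having $\bs$ or $\bt$ as an endpoint enters $\mathrm{int}\,\Delta$. None of this rules out an edge $\delta=[\bp,\bq]$ of $C_v$ with $\bp,\bq\in\cals\setminus\{\bs,\bt\}$ that cuts through $\mathrm{int}\,\Delta$. Since $\delta$ is a geometric dual edge it cannot cross $\gamma$, so if $\delta$ enters $\mathrm{int}\,\Delta$ it must do so through the vertical ray $[\bs,v_\infty)$ and exit through $[\bt,v_\infty)$; a short computation with the semicircle containing $\delta$ then shows $\Re\bp>x_0$ and $\Re\bq<-x_0$ (or vice versa). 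Convexity of $C_v$ and $\Delta$ and the fact that vertices avoid $\mathrm{int}\,\Delta$ are perfectly compatible with such a $\delta$. What forbids it is precisely the circumcircle observation above --- i.e.\ the fact that $C_v$ cannot have vertices on both sides of $\{\Re z=-x_0\}$ and $\{\Re z=x_0\}$ --- and this is the ingredient your proof is missing.

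A lesser issue concerns the first assertion. You assume $e_0$ is non-centered, reading the reference to Definition \ref{tree cells} as placing $e_0$ in the non-centered subgraph. But the lemma's hypothesis is only that $e_0$ has an ideal endpoint, and such an edge can be centered (for instance $\cals=\{-1+i,\,1+i,\,i/2\}$ in the upper half-plane yields $e_0=[i\sqrt{3/2},\infty)$, which is centered). The appeal to Lemma \ref{vertex polygon centered} then fails. The paper's route is independent of centeredness: $\gamma$ is an edge of $C_{v_0}$, so all other vertices of $C_{v_0}$ lie on one arc of the circumcircle $S_{v_0}$; a direct computation shows that the points of $S_{v_0}$ with $\Im\leq 1$ are exactly those on the inner side of the geodesic $L$ through $\gamma$, so $C_{v_0}$ is confined to the closed half-plane opposite $\Delta$, which gives $C_{v_0}\cap\Delta=\gamma$.
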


\begin{proof}  Working in the upper half-plane model and moving $\cals$ by an isometry, we will take $e_0 = V_{\bs_0}\cap V_{\bt_0}$, where $\bs_0 = -x_0+i$ and $\bt_0 = x_0+i$ for some $x_0>0$, and $v_{\infty}=\infty$.  The horocycle through $\bs_0$ and $\bt_0$ with ideal point $v_{\infty}$ is $S_{\infty} = \mathbb{R}+i$, so by Lemma \ref{exclusive horocycle} every $z=x+iy\in\cals$ has $y\leq 1$.

For each Voronoi vertex $v$, the above implies that all vertices of $C_v$ are of the form $x+iy$ for $x\geq -x_0$, or all have $x\leq x_0$.  This is because for $x_1+iy_1$ with $x_1<-x_0$ and $x_2+iy_2$ with $x_2>x_0$ (and $y_i\leq 1$ for each $i$), every circle containing $x_1+iy_1$ and $x_2+iy_2$ encloses one of $\bs_0$ or $\bt_0$.  (Since metric circles of the upper half-plane model are Euclidean circles that lie in $\mathbb{H}^2$, though with different hyperbolic centers and radii, this is an entirely Euclidean calculation.)

If the vertices of $C_v$ are all on one side of the geodesic containing the geometric dual $\gamma$ to $C_v$, then so is their convex hull $C_v$ and one easily finds in this case that the lemma holds. This includes the case $v = v_0$, since then $\gamma$ is an edge of $C_v$.  We thus assume that $C_v$ has vertices on either side of this geodesic and moreover, applying the paragraph above, that every vertex $x+iy$ has $x\geq -x_0$.  We claim in this case that $C_v\cap\Delta(e_0,v_{\infty})$ is either $\bt_0 = x_0+i$ or empty.

Since $C_v$ is convex and closed it intersects the geodesic $\lambda$ containing $\gamma$ in a closed interval, with nonempty interior by the assumption above.  By our positioning of $\bs_0$ and $\bt_0$, $\lambda$ is the intersection with $\mathbb{H}^2$ of the Euclidean circle of radius $\sqrt{x_0^2+1}$ centered at the origin.  Because $\gamma$ is a geometric dual edge it does not cross an edge of $C_v$ nor contain any points of $\mathit{int}\,C_v$; thus by our assumption above, $\lambda\cap C_v$ is contained in the subinterval joining $x_0+i$ to $\sqrt{x_0^2+1}$.  The point of $\lambda\cap C_v$ closest to $x_0+i$ is contained in an edge $\delta$ of $C_v$, and the geodesic containing $\delta$ determines a half-plane that contains $C_v$ and separates it from $\Delta(e_0,v_{\infty})$.  This proves the claim and hence the lemma.\end{proof}

\begin{lemma}\label{vertex set}  For every $\bs\in\cals$ and each component $I$ of the intersection between $V_{\bs}$ and the non-centered Voronoi subgraph there is a boundary vertex $v$ of $I$ such that all edges of $I$ point away from $v$.\end{lemma}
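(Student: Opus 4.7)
The plan is to first show $I$ is topologically either a single vertex or a proper arc of $\partial V_{\bs}$, and then apply Lemma \ref{tree components} to that arc to extract the required boundary vertex.

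For the first step, any edge in $I$ must be a non-centered edge of $\partial V_{\bs}$, since any Voronoi edge not on $\partial V_{\bs}$ meets $V_{\bs}$ in at most a single point. So $I$ is a connected subcomplex of $\partial V_{\bs}$, a topological circle, and simultaneously a subcomplex of its containing component $T$ of the non-centered Voronoi subgraph, which is a tree by Lemma \ref{tree components}; hence $I$ is itself a subtree. A subtree embedded in a circle must be a single vertex, a proper arc, or the whole circle; the last option contains a cycle, contradicting the tree property, so $I$ is a single vertex or a proper arc.

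If $I$ is a single vertex the conclusion is vacuous, so assume $I$ is an arc with vertices $v_0, v_1, \ldots, v_n$ ($n \geq 1$) and edges $e_i$ joining $v_i$ to $v_{i+1}$. Viewing $I$ as a reduced edge path in $T$, Lemma \ref{tree components} supplies a unique $v_\ast \in I$ with $J_{v_\ast}$ maximal, and orients every $e_i$ toward $v_\ast$ per Definition \ref{centered edge}. Since $I$ has two boundary vertices and $v_\ast$ is a single vertex, at least one boundary vertex, call it $v$, is distinct from $v_\ast$. The unique edge $e$ of $I$ incident to $v$ is then oriented toward $v_\ast$; because $v \ne v_\ast$, this makes $v$ the initial vertex of $e$, so $e$ points away from $v$.

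The main obstacle is the topological step of identifying $I$ as an arc, especially ruling out the possibility that $I$ contains an isolated vertex component that happens to lie on non-centered edges outside $V_{\bs}$; once the arc structure is established, the desired boundary vertex follows immediately from a second application of Lemma \ref{tree components}, as described.
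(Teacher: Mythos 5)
Your argument has a genuine gap in the non-compact case, which the paper's proof is specifically designed to handle.

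You assert that $\partial V_{\bs}$ is a topological circle, but this holds only when $V_{\bs}$ is compact. For a generic locally finite $\cals$, the Voronoi $2$-cell $V_{\bs}$ can be unbounded, in which case $\partial V_{\bs}$ is a line. Worse, $I$ itself need not be compact: it can contain a non-compact Voronoi edge (with an ideal endpoint, as in the middle and right cases of Figure \ref{three point Vor}), or $V_{\bs}$ can have infinitely many edges and $I$ can be an infinite ray. Your proof then writes ``assume $I$ is an arc with vertices $v_0,\hdots,v_n$'' and invokes Lemma \ref{tree components} --- but that lemma concerns \emph{compact} reduced edge paths, and supplies a unique maximal-radius vertex only under that hypothesis. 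When $I$ is not compact there may be no maximal-radius vertex at all, and your construction produces nothing. This is precisely why the paper's proof branches on whether $I$ is compact: in the non-compact case it fixes an arbitrary initial vertex $v_0$ of some edge, observes via Lemma \ref{increasing radius} that every vertex ``earlier'' than $v_0$ has radius less than $J_{v_0}$ and hence lies in the closed ball $B_{J_{v_0}}(\bs)$, and then uses local finiteness of the Voronoi tessellation to deduce that there are only finitely many such vertices, whence a minimal-radius boundary vertex still exists. Nothing in your argument substitutes for this local-finiteness step.

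A secondary concern: even in the compact case, your last sentence establishes only that the single edge of $I$ incident to $v$ points away from $v$, not that \emph{all} edges of $I$ do. For the latter you would need $v_\ast$ to be the \emph{other} boundary vertex of $I$; Lemma \ref{tree components} alone does not rule out $v_\ast$ lying in the interior of $I$, in which case the edges on the far side of $v_\ast$ point back toward $v$. (The paper's phrasing and the downstream use in Lemma \ref{out on the co-nah} only require the weaker fact that $v$ is a boundary vertex of $I$ with its incident edge pointing away, so this is less damaging than the compactness issue --- but if you intend the strong reading, the step is missing.)
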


\begin{proof}  Note that $I$ is a one-manifold.  If it is compact then its minimal-radius vertex $v$ has this property (by Lemma \ref{increasing radius} and Corollary \ref{one direction}, $v$ is in only one edge of $I$), so let us assume otherwise.  Let $e$ be an edge of $I$ and $v_0$ its initial vertex.  It follows from Lemma \ref{increasing radius} that the initial vertex $w$ of an edge pointing toward $v$ in $I$ has $J_w< J_{v_0}$, so $w$ is contained in the ball about $\bs$ of radius $J_{v_0}$ since $d(\bs,w) = J_w$.  By local finiteness of the Voronoi tessellation there are only finitely many such vertices, so again there is one with minimal radius.\end{proof}

\begin{lemma}\label{out on the co-nah}  For a component $T$ of the non-centered Voronoi subgraph of a locally finite set $\cals\subset\mathbb{H}^2$, $\partial C_T$ contains each $\bs\in\cals\cap C_T$, and every geometric dual edge $\gamma$ whose dual Voronoi edge is centered.\end{lemma}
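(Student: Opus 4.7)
I would split the lemma into its two assertions and handle the easier one first. For the assertion about centered geometric dual edges: if $\gamma \subset C_T$ is the geometric dual of a centered Voronoi edge $e$, then since distinct geometric dual $2$-cells have disjoint interiors (Lemma \ref{vertex polygon}), $\gamma$ must appear as an edge of some $C_v$ with $v \in T^{(0)}$, so $v$ is an endpoint of $e$. As $e$ is centered, $e \notin T$, and Definition \ref{tree cells} places $\gamma$ in $\partial C_T$.

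For the assertion about points of $\cals$, I would take $\bs \in \cals \cap C_T$ and consider two sub-cases. If $\bs \in C_v$ for some $v \in T^{(0)}$, then because $C_v$ is inscribed in the circle of radius $J_v$ about $v$, the inclusion $C_v \subset \overline{B_{J_v}(v)}$ combined with Fact \ref{vertex radius} forces $d_H(v,\bs) = J_v$; so $\bs$ is a vertex of $C_v$, and in particular $v \in V_\bs$. I would then apply Lemma \ref{vertex set} to the component $I$ of $V_\bs \cap T$ containing $v$, obtaining a boundary vertex $v_0$ of $I$. Of the two edges of $V_\bs$ meeting $v_0$, at most one lies in $I$, and the other --- call it $e'$ --- must be centered: were $e'$ non-centered, it would lie in the same component $T$ of the non-centered subgraph (sharing the vertex $v_0 \in T^{(0)}$), hence in $V_\bs \cap T$, and thus in $I$ by connectedness, contradicting the boundary vertex property of $v_0$. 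So $e' \notin T$ has endpoint $v_0 \in T^{(0)}$, whence $\gamma_{e'} \subset \partial C_T$ by Definition \ref{tree cells}; and $\bs$ is an endpoint of $\gamma_{e'}$.

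The remaining sub-case --- $\bs$ lies in $\Delta(e_0, v_\infty)$ but in no $C_v$, which is only possible in the non-compact setting of Definition \ref{tree cells}(1) --- I would handle using Lemma \ref{exclusive horocycle}. Letting $S$ be the horocycle through $\bs_{e_0}, \bt_{e_0}$ with ideal point $v_\infty$ and $B$ its horoball, a short calculation in the upper half-plane model (normalizing $v_\infty = \infty$) shows that $\Delta(e_0, v_\infty)$ is contained in $\overline{B}$ and meets $S$ only at $\bs_{e_0}$ and $\bt_{e_0}$; Lemma \ref{exclusive horocycle} then gives $\cals \cap \Delta(e_0, v_\infty) = \{\bs_{e_0}, \bt_{e_0}\}$. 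These two points lie on the infinite edges of $\Delta(e_0, v_\infty)$, which belong to $\partial C_T$ by Definition \ref{tree cells}.

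The main obstacle is identifying the "other edge" $e'$ at the boundary vertex produced by Lemma \ref{vertex set} as centered; this rests on the observation that a non-centered edge adjacent to $T$ is forced into $T$ by connectedness of components, so the very existence of a boundary vertex yields a centered edge at $v_0$. Once this is in hand, both parts of the lemma drop out of Definition \ref{tree cells}.
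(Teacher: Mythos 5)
Your proof is correct and follows essentially the same approach as the paper's: apply Lemma \ref{vertex set} to a component of $V_{\bs}\cap T$, observe that the resulting boundary vertex supports a centered edge of $V_{\bs}$ (since an adjacent non-centered edge would be absorbed into the component), and read off from Definition \ref{tree cells} that its geometric dual lies in $\partial C_T$ and contains $\bs$, while the centered-edge assertion follows directly from the definitions. You supply a few details the paper leaves implicit, including the half-plane computation for the $\Delta(e_0,v_\infty)$ sub-case, which in fact turns out to be vacuous because the only points of $\cals$ in $\Delta(e_0,v_\infty)$ are the endpoints of $\gamma_0$, which already lie in $C_{v_T}$.
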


\begin{proof}  For $\bs\in\cals\cap C_T$ the intersection $V_{\bs}\cap T$ is non-empty.  Let $I$ be a component, and let $v$ be the vertex supplied by Lemma \ref{vertex set}.  The geometric dual to the centered edge of $V_{\bs}$ containing $v$ lies in $\partial C_T$ and contains $\bs$.

Any geometric dual edge $\gamma$ contained in $C_T$ is by definition an edge of $C_v$ for some $v\in T^{(0)}$, so the geometric dual $e$ to $\gamma$ has $v$ as a vertex.  If $e$ is centered then it does not lie in $T$, so $\gamma\subset\partial C_T$ by definition.\end{proof}

\begin{lemma}\label{frontier vs boundary}  For a component $T$ of the non-centered Voronoi subgraph of a locally finite set $\cals\subset\mathbb{H}^2$, the interior of its geometric dual $C_T$ is connected, open in $\mathbb{H}^2$ and dense in $C_T$.  If $T^{(0)}$ is finite then $C_T$ is closed, and its topological frontier is contained in $\partial C_T$.\end{lemma}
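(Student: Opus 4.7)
The plan is to classify each point $p \in C_T$ as lying in the relative interior of a 2-cell (a $C_v$ with $v \in T^{(0)}$, or $\Delta(e_0, v_\infty)$ if it is present), in the interior of a 1-cell of the decomposition, or in $\cals \cap C_T$. The key observation I will establish first is that a geometric dual edge $\gamma$ to a Voronoi edge $e$ with an endpoint in $T^{(0)}$ lies in $\partial C_T$ if and only if $e \notin T$: for $e \in T$ compact, the fact that $T$ is a component of the non-centered Voronoi subgraph forces both endpoints of $e$ into $T^{(0)}$, so $\gamma$ is shared between $C_v, C_w \subset C_T$; and for $e \in T$ non-compact, Proposition \ref{to the root} identifies $e$ as $e_0$ with endpoint $v_T$, so $\gamma_{e_0}$ is shared between $C_{v_T}$ and $\Delta(e_0, v_\infty)$. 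Lemma \ref{out on the co-nah} already places $\cals \cap C_T$ in $\partial C_T$. Together these imply that every point of $\mathit{int}\, C_T = C_T \setminus \partial C_T$ lies either in the relative interior of a 2-cell of the decomposition or in the interior of a 1-cell shared between two 2-cells of $C_T$.

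From this classification I will deduce the stated properties in turn. For connectedness, each relative interior $\mathit{int}\, C_v$ is path-connected, and two such cells corresponding to adjacent vertices of $T$ are joined through the interior of their shared 1-cell, which lies in $\mathit{int}\, C_T$; since $T$ is a connected graph (with $\Delta(e_0, v_\infty)$ glued to $C_{v_T}$ along $\gamma_{e_0}$ when present), the union is connected. For openness in $\mathbb{H}^2$, a point in the relative interior of a 2-cell has a ball neighborhood inside that cell, while a point interior to a shared 1-cell has a full open-disk neighborhood in the union of the two 2-cells it separates; in the non-compact case Lemma \ref{no infinite triangle overlap} guarantees that no other geometric dual 2-cell encroaches on $\Delta(e_0, v_\infty)$, so this union is genuinely a neighborhood. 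Density of $\mathit{int}\, C_T$ in $C_T$ follows because $\partial C_T$ is a union of 1-cells, each contained in the closure of the two-dimensional relative interior of some adjacent 2-cell of $\mathit{int}\, C_T$.

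When $T^{(0)}$ is finite, $C_T$ is a finite union of subsets closed in $\mathbb{H}^2$ (each $C_v$ is compact, and $\Delta(e_0, v_\infty)$ is closed in $\mathbb{H}^2$), hence closed. Its topological frontier is then $C_T$ minus its topological interior, and the openness argument above places $\mathit{int}\, C_T$ inside the topological interior, so the frontier is contained in $C_T \setminus \mathit{int}\, C_T = \partial C_T$. The main obstacle I anticipate is the bookkeeping around the non-compact case: verifying that $\gamma_{e_0}$ and the two infinite edges of $\Delta(e_0, v_\infty)$ behave correctly with respect to all other cells of the decomposition, for which Lemma \ref{no infinite triangle overlap} is indispensable.
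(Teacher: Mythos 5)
Your proof is correct and takes essentially the same approach as the paper's: classifying points of $\mathit{int}\,C_T$ as lying either in the relative interior of a constituent $2$-cell or in the interior of a geometric dual edge shared between two constituent $2$-cells (with $\cals\cap C_T$ excluded via Lemma \ref{out on the co-nah}, and $\Delta(e_0,v_\infty)$ handled via Lemma \ref{no infinite triangle overlap}) is precisely the paper's route to openness, and your connectedness, density, and closedness arguments match the paper's ingredients as well.
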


\begin{remark}  In fact, the proof below will reveal that an edge $\gamma$ of $\partial C_T$ is entirely contained in the topological frontier of $C_T$ unless its geometric dual has both endpoints in $T$.\end{remark}

\begin{proof}  For any vertex $v$ of $T$ the geometric dual $C_v$ is a convex polyhedron and therefore closed in $\mathbb{H}^2$, with dense interior that is the complement of the union of its edges.  This also holds for $\Delta(e_0,v_{\infty})$, if applicable.  Since $\partial C_T$ is defined in \ref{tree cells} as a union of edges, the interior $C_T-\partial C_T$ of $C_T$ is therefore dense in $C_T$.

It is also connected: for points $\bx$ and $\by$ in the interior of $C_T$ there is a path $\rho$ in $T$ joining $v$ and $w$, where $\bx\in C_v$ and $\by\in C_w$ respectively.  For any edge $e$ of $\rho$, the geometric duals to the endpoints of $e$ intersect in the geometric dual $\gamma$ to $e$ by Lemma \ref{vertex polygon}.  Each point in the interior of $\gamma$ is in the interior of $C_T$, so one easily produces a path from $\bx$ to $\by$ in the interior of $C_T$ that is contained in the union of geometric duals to vertices of $\rho$.

For any $\bx\in\mathit{int}\,C_v\subset C_T$, $\bx$ is in the interior of $C_T$ and has an open neighborhood in $\mathbb{H}^2$ with this property.  If $\bx$ is in the interior of the geometric dual to an edge $e$ of $T$ then $\bx$ has an open neighborhood in $\mathbb{H}^2$ that is contained in $\mathit{int}\,C_v\cup\mathit{int}\,C_w$ and hence the interior of $C_T$, where $v$ and $w$ are the endpoints of $e$.  By Lemma \ref{out on the co-nah}, no point of $\cals$ is in the interior of $C_T$.  Therefore $C_T$ is the union of points already described, hence open in $\mathbb{H}^2$.

If $T^{(0)}$ is finite then $C_T$ is closed in $\mathbb{H}^2$, being a finite union of polygons.  Any convergent sequence in $C_T$ has an infinite subsequence in $C_v$ for some fixed $v\in T^{(0)}$, so if it converges outside the interior of $C_T$ the accumulation point lies in an edge of $C_v\cap\partial C_T$.\end{proof}

To establish finer properties of $C_T$ we will re-decompose it in a couple of different ways.  We first use the collection of triangles defined below, of which $\Delta(e_0,v_{\infty})$ from Definition \ref{tree cells} is one example.

\begin{definition}\label{edge-vertex tri dfn}  For an edge $e$ of the Voronoi tessellation of a locally finite set $\cals\subset\mathbb{H}^2$, and a vertex $v$ of $e$, let $\Delta(e,v)$ be the triangle in $\mathbb{H}^2$ with a vertex at $v$, and the geometric dual $\gamma$ to $e$ as an edge; i.e. $\Delta(e,v)$ is the convex hull in $\mathbb{H}^2$ of $v$ and $\gamma$.

If $e$ is non-compact and $v_{\infty}\in S_{\infty}$ is an ideal endpoint, again let $\Delta(e,v_{\infty})$ be the convex hull in $\mathbb{H}^2$ of $v_{\infty}$ and $\gamma$.\end{definition}

The endpoints of the geometric dual to $e$ are points $\bs,\bt\in\cals$ such that $e = V_{\bs}\cap V_{\bt}$.  Thus $\Delta(e,v)$ is isosceles: its edges joining $v$ to $\bs$ and $\bt$ each have length $J_v$.  If $v$ and $w$ are opposite endpoints of $e$, then $\Delta(e,v)$ and $\Delta(e,w)$ share the edge $\gamma$.  Whether their intersection is larger than this depends on whether $e$ is centered --- see Figure \ref{isosceles dichotomy}.  In particular:

\begin{figure}
\input{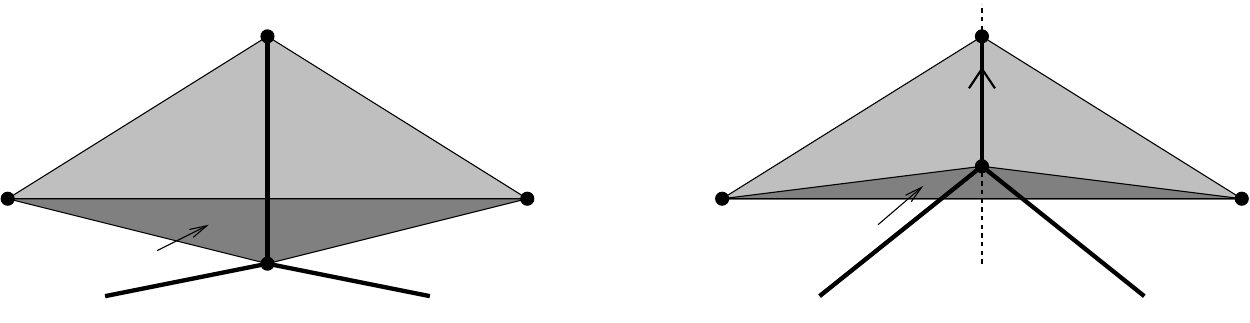_t}
\caption{Triangles $\Delta(e,v)$ and $\Delta(e,w)$ when $e$ is centered (on the left) and not centered.}
\label{isosceles dichotomy}
\end{figure}

\begin{lemma}\label{edge-vertex tri}  If $e$ is a non-centered edge of the Voronoi tessellation of a locally finite set $\cals \subset\mathbb{H}^2$, with initial vertex $v$ and terminal vertex $w$, then $\Delta(e,v) \subset \Delta(e,w)$, and $\Delta(e,v)\cap \partial \Delta(e,w)$ is the geometric dual $\gamma$ to $e$.  The same holds if $e$ is non-compact and $w=v_{\infty}$ is its ideal endpoint.\end{lemma}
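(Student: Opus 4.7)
The plan is to verify the containment and the boundary-intersection claim separately, handling the compact ($w\in\mathbb{H}^2$) and non-compact ($w=v_\infty$) cases in parallel. Let $\gamma$ denote the geometric dual to $e$, with endpoints $\bs,\bt\in\cals$, and let $m$ be the midpoint of $\gamma$. Since $\Delta(e,v)=\mathrm{conv}\{v,\bs,\bt\}$ and $\Delta(e,w)$ is convex with $\bs,\bt$ as vertices, the containment $\Delta(e,v)\subset\Delta(e,w)$ reduces to showing $v\in\Delta(e,w)$. The geodesic containing $e$ is the perpendicular bisector of $\gamma$, and so passes through $m$, $v$, and $w$ (or limits to $v_\infty$). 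By the orientation convention of Definition \ref{centered edge}, $v$ and $w$ (resp.\ $v_\infty$) lie on the same side of the geodesic through $\gamma$ as $e$, with $v$ closer to $\gamma$; so $v$ lies on the segment $[m,w]$ (or ray $[m,v_\infty)$ in the non-compact case). Since $m\in\gamma\subset\partial\Delta(e,w)$ and $w$ is a vertex of $\Delta(e,w)$, convexity puts this segment in $\Delta(e,w)$, giving $v\in\Delta(e,w)$.

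For the identity $\Delta(e,v)\cap\partial\Delta(e,w)=\gamma$, one inclusion is immediate. For the other, by the reflection symmetry of $\Delta(e,w)$ across $e$, it suffices to show the side $[w,\bs]$ (or the ray $[\bs,v_\infty)$) meets $\Delta(e,v)$ only at $\bs$. Let $L$ denote the geodesic through $v$ and $\bs$. The key step is to verify that $L$ strictly separates $\bt$ from $w$, with $v_\infty$ in the ideal closure of the half-plane opposite $\bt$ in the non-compact case. To verify this, I would track tangent directions at $v$: by the Fact preceding equation \eqref{radius relation}, the angle $\alpha$ at $v$ in $V_\bs$ between the tangent of $e$ toward $w$ and that of $[v,\bs]$ satisfies $\alpha\geq\pi/2$. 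Reflection across $e$ exchanges $V_\bs\leftrightarrow V_\bt$ and $\bs\leftrightarrow\bt$, so the analogous angle with $[v,\bt]$ on the $V_\bt$ side is also $\alpha$. For $\alpha>\pi/2$, the tangent of $[v,\bt]$ thus lies strictly in the open half-plane of $L$ opposite to the tangent of $e$ toward $w$. Since $e\neq L$ (otherwise $\bs$ would sit on the perpendicular bisector of $\gamma$, impossible for $\gamma$ of positive length), $e$ meets $L$ only at $v$, and the ray from $v$ along $e$ toward $w$ (or $v_\infty$) remains in the open half-plane $L^-$ opposite $\bt$; hence $w\in L^-$, or $v_\infty$ lies in the ideal closure of $L^-$. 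The boundary case $\alpha=\pi/2$ forces $v=m$ (both $[v,\bs]$ and $[v,\bt]$ become perpendicular to $e$ at $v$, so $v$ lies on the geodesic through $\gamma$), collapsing $\Delta(e,v)$ to $\gamma$ and making the lemma trivial.

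Given the separation, $[w,\bs]$ (or $[\bs,v_\infty)$) is a geodesic segment or ray from $\bs\in L$ into $\overline{L^-}$, meeting $L$ only at $\bs$ since distinct geodesics intersect in at most one point. Because $\Delta(e,v)\subset\overline{L^+}$ by convexity---its vertices $v,\bs$ lie on $L$ and $\bt\in L^+$---I conclude $[w,\bs]\cap\Delta(e,v)=\{\bs\}$, and the symmetric argument handles $[w,\bt]$. The main obstacle will be the tangent-direction bookkeeping in the separation step: translating the local angle bound $\alpha\geq\pi/2$ into the global statement about which side of $L$ contains $w$ (or harbors $v_\infty$ at infinity).
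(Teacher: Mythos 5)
Your proof is correct, but it takes a considerably more elaborate route than the paper's, and a small observation you already have in hand would short-circuit most of Part~2.

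In Part~1 you note that $v$ lies on the segment $[m,w]$ where $m$ is the midpoint of $\gamma$, with $v$ strictly between $m$ and $w$ (barring the degenerate case $v=m$, which you correctly dismiss). But $m$ lies in the relative interior of the edge $\gamma$ of $\Delta(e,w)$ and $w$ is the opposite vertex, so the open segment $(m,w)$ lies in $\mathit{int}\,\Delta(e,w)$; hence $v\in\mathit{int}\,\Delta(e,w)$, not merely $v\in\Delta(e,w)$. This is what the paper exploits. Once $v$ is interior, both conclusions are one step away by convexity: $\Delta(e,v)\subset\Delta(e,w)$ immediately, and for the boundary intersection, the open edges $(v,\bs)$ and $(v,\bt)$ of $\Delta(e,v)$ run from an interior point of $\Delta(e,w)$ to boundary points, hence lie in $\mathit{int}\,\Delta(e,w)$, forcing $\Delta(e,v)\cap\partial\Delta(e,w)=\gamma$. (The paper in fact argues with $T_{w'}=\Delta(e,w')$ for an arbitrary $w'\in e-\{v\}$, which handles the non-compact case uniformly via $\Delta(e,v_\infty)\supset\bigcup_{w'\in e}T_{w'}$.) Your Part~2---introducing the auxiliary geodesic $L$ through $v$ and $\bs$, tracking tangent directions at $v$ to establish that $L$ separates $\bt$ from $w$, then deducing a half-plane containment---is a valid separation argument, and the tangent-direction bookkeeping you were worried about does go through. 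But it buys nothing over the interiority observation, which is shorter, avoids the case split on $\alpha>\pi/2$ versus $\alpha=\pi/2$, and extends to the ideal case without separate treatment.
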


\begin{proof}  Since $e$ is non-centered it is contained on one side of the geodesic in $\mathbb{H}^2$ containing its geometric dual $\gamma$.  Since $v$ is the nearest point on $e$ to $\gamma$, for any point $w$ of $e-\{v\}$ the triangle $T_w$ determined by $w$ and the geometric dual $\gamma$ to $e$ has $v$ in its interior.  Hence by convexity $\Delta(e,v)\subset T_w$, and $\Delta(e,v)\cap\partial T_w$ is their common edge $\gamma$.

If $w$ above is the other endpoint of $e$ then $T_w = \Delta(e,w)$ and the conclusion of the lemma holds.  If $e$ is non-compact with ideal endpoint $w_{\infty}$ then $\Delta(e,w_{\infty}) \supset \bigcup_{w\in e} T_w$, and the conclusion again holds.\end{proof}

\begin{lemma}\label{vertex polygon decomp}  For a vertex $v$ of the Voronoi tessellation of a locally finite set $\cals\subset\mathbb{H}^2$:\begin{enumerate} 
\item  If $C_v$ is centered then $C_{v} = \bigcup\, \{ \Delta(e,v)\,|\,v\in e\}$.
\item  Otherwise, $C_v\cap \Delta(e_v,v) = \gamma_v$ and $C_{v}\cup \Delta(e_v,v) = \bigcup\,\{ \Delta(e',v)\,|\, e'\ne e_v, v\in e'\}$, where $e_v$ is the non-centered edge of $V$ with initial vertex $v$ and $\gamma_v$ is its geometric dual.\end{enumerate}
Let $\gamma$ and $\gamma'$ be the respective geometric duals to $e$ and $e'\ne e$ containing $v$.  In case (1), $\Delta(e,v)\cap \Delta(e',v) = \{v\}$ if $\gamma\cap\gamma'=\emptyset$, and otherwise is an edge joining $v$ to $\gamma\cap\gamma'$.  This holds in case (2) for $e,e'\neq e_v$.\end{lemma}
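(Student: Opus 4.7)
The plan is to handle both cases by triangulating an appropriate cyclic polygon --- either $C_v$ itself or the convex polygon $C_v\cup\Delta(e_v,v)$ provided by Lemma \ref{vertex polygon centered} --- from $v$. In both cases Lemma \ref{vertex polygon} lets me cyclically order the vertices of $C_v$ as $\bs_0,\ldots,\bs_{n-1}$ so that the edges $\gamma_i$ of $C_v$, joining $\bs_i$ to $\bs_{i+1}$, are precisely the geometric duals of the Voronoi edges $e_i$ containing $v$.

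In case (1), centeredness of $C_v$ means $v\in\mathit{int}\,C_v$, so the geodesic arcs $\overline{v\bs_i}$ are contained in the convex polygon $C_v$ and cut it into the triangles $v\bs_i\bs_{i+1}$. Each such triangle is the convex hull of $v$ with the geometric dual edge $\gamma_i$, hence equals $\Delta(e_i,v)$ by Definition \ref{edge-vertex tri dfn}; this gives the desired decomposition. For case (2), Lemma \ref{vertex polygon centered} identifies a unique non-centered Voronoi edge $e_v$ whose geometric dual $\gamma_v$ (say $\gamma_v=\gamma_0$) is the unique longest edge of $C_v$, and promotes $P:=C_v\cup\Delta(e_v,v)$ to a convex polygon. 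The half-plane separation from that lemma's proof --- $C_v$ lies in the closed half-plane bounded by the geodesic through $\gamma_v$ that does not contain $v$, while $\Delta(e_v,v)$ lies in the opposite half-plane --- gives $C_v\cap\Delta(e_v,v)=\gamma_v$ exactly. The combined polygon $P$ is then a convex $(n+1)$-gon with $v$ inserted between $\bs_0$ and $\bs_1$, and drawing geodesic segments from $v$ to $\bs_2,\ldots,\bs_{n-1}$ (contained in $P$ by convexity) partitions $P$ into the triangles $\Delta(e_i,v)$ for $i\ne 0$.

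The intersection property then falls out of this fan triangulation from $v$: two distinct triangles share the arc $\overline{v\bs}$ precisely when their bases $\gamma,\gamma'$ are adjacent edges of $C_v$ meeting in the common vertex $\bs=\gamma\cap\gamma'$, and meet only at $v$ otherwise. I expect no serious obstacle: the main points requiring care are (a) the half-plane separation between $C_v$ and $\Delta(e_v,v)$ in case (2), which is read off the proof of Lemma \ref{vertex polygon centered}, and (b) the fact that diagonals from $v$ stay inside $P$, which is immediate from its convexity.
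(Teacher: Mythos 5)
Your proposal is correct and follows essentially the same route as the paper: both decompose $C_v$ (or the convex polygon $C_v\cup\Delta(e_v,v)$ in the non-centered case) into the triangles $\Delta(e_i,v)$ by fanning from $v$, and both use Lemma \ref{vertex polygon centered} to identify $\gamma_v$ as the unique longest edge and obtain the half-plane separation giving $C_v\cap\Delta(e_v,v)=\gamma_v$. The only difference is that the paper outsources the fan decomposition and its intersection pattern to the cited Isosceles Decomposition lemma from \cite{DeB_cyclic_geom}, whereas you re-derive it directly from the convexity of the ambient polygon; the content is the same.
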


\begin{proof}  If the edges of $C_v$ are enumerated $\gamma_0,\hdots,\gamma_i$, for each $i$ the triangle $T_i$ described in the hypothesis of \IsoscelesDecomp\ is identical to $\Delta(e_i,v)$, where $e_i$ is the geometric dual to $\gamma_i$.  If $C_v$ is non-centered then Lemma \ref{vertex polygon centered} implies that $\gamma_v$ as defined above is its unique longest edge, so \LongestSide\ and \OneSide\ imply that $\Delta(e_v,v)\cap C_v = \gamma_v$.  The decompositions of $C_v$ and $C_v\cup \Delta(e,v)$ described above follow directly from \IsoscelesDecomp.\end{proof}

\begin{lemma}\label{re-decompose}  Let $C_T$ be a centered dual $2$-cell, dual to a component $T$ of the non-centered Voronoi subgraph determined by locally finite $\cals\subset\mathbb{H}^2$.  Then:\begin{enumerate}
\item  If $T$ has a noncompact edge $e_0$ with ideal endpoint $v_{\infty}$, then:
$$C_T = \Delta(e_0,v_{\infty})\cup\left(\bigcup_{v\in T^{(0)},e\ni v} \Delta(e,v)\right)$$
\item  Otherwise, $C_T = \bigcup_{v\in T^{(0)}, e\ni v} \Delta(e,v)$.\end{enumerate}\end{lemma}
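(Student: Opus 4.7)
My plan is to prove the claimed set equality in each of the two cases by establishing both containments, using Lemma \ref{vertex polygon decomp} (which decomposes each Voronoi polygon $C_v$, possibly together with the outgoing ``overhang'' triangle $\Delta(e_v,v)$, as a union of isosceles triangles) together with Lemma \ref{edge-vertex tri} (whose nesting $\Delta(e,v)\subseteq\Delta(e,w)$ holds whenever $e$ is non-centered with initial vertex $v$ and opposite endpoint $w\in\mathbb{H}^2\cup S_{\infty}$). The forward containment $\bigcup_{v\in T^{(0)}} C_v \subseteq \bigcup_{v\in T^{(0)},\, e\ni v} \Delta(e,v)$ is immediate from Lemma \ref{vertex polygon decomp}: if $C_v$ is centered use part (1), and otherwise use part (2) in the form $C_v\subseteq C_v\cup\Delta(e_v,v) = \bigcup_{e\neq e_v,\, e\ni v} \Delta(e,v)$. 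Combined with Definition \ref{tree cells}, this handles both cases.

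For the reverse containment I must show $\Delta(e,v)\subseteq C_T$ for every $v\in T^{(0)}$ and every Voronoi edge $e\ni v$. When $C_v$ is centered, $\Delta(e,v)\subseteq C_v\subseteq C_T$ by Lemma \ref{vertex polygon decomp}(1). When $C_v$ is non-centered, Lemma \ref{vertex polygon decomp}(2) reduces the problem to showing $\Delta(e_v,v)\subseteq C_T$, where $e_v$ is the unique outgoing non-centered edge at $v$ (Corollary \ref{one direction}); by Proposition \ref{to the root} this case arises at all of $T^{(0)}\setminus\{v_T\}$ in case (2) and at all of $T^{(0)}$ in case (1). The root $v_T$ in case (1) is handled directly: here $e_{v_T}=e_0$, so Lemma \ref{edge-vertex tri} and Definition \ref{tree cells} give $\Delta(e_0,v_T)\subseteq\Delta(e_0,v_{\infty})\subseteq C_T$.

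For any non-root $v$ the edge $e_v$ is compact with terminal vertex $w\in T^{(0)}$, and Lemma \ref{edge-vertex tri} gives $\Delta(e_v,v)\subseteq\Delta(e_v,w)$. I induct on the graph distance $d_T(v,v_T)$ in $T$; Lemma \ref{tree components} ensures edges of $T$ point toward $v_T$, so $d_T(w,v_T)<d_T(v,v_T)$. If $w=v_T$, then either $C_{v_T}$ is centered and $\Delta(e_v,v_T)\subseteq C_{v_T}$ by Lemma \ref{vertex polygon decomp}(1), or we are in case (1) with $e_v\neq e_0$ (since $e_v$ is compact) and Lemma \ref{vertex polygon decomp}(2) gives $\Delta(e_v,v_T)\subseteq C_{v_T}\cup\Delta(e_0,v_T)\subseteq C_T$. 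If $w\neq v_T$, then $C_w$ is non-centered and $e_v\neq e_w$ (since $e_w$ has initial vertex $w$ while $e_v$ terminates at $w$), so Lemma \ref{vertex polygon decomp}(2) yields $\Delta(e_v,w)\subseteq C_w\cup\Delta(e_w,w)$, and the inductive hypothesis applied to $w$ places $\Delta(e_w,w)\subseteq C_T$. The main obstacle is precisely this overhang: when $C_v$ is non-centered the triangle $\Delta(e_v,v)$ protrudes from $C_v$ across its longest edge $\gamma_v$, and the induction along $T$ combined with Lemma \ref{edge-vertex tri}'s nesting is what absorbs that excess into the neighboring polygon $C_w$ (or into $\Delta(e_0,v_{\infty})$ at the root in case (1)).
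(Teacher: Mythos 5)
Your argument follows the paper's for the forward containment and for the cases where $T$ has a root vertex $v_T$, but it has a genuine gap: it silently assumes $v_T$ exists. By Definition \ref{root vertex}, a root vertex is only guaranteed when the supremum of $\{J_v : v\in T^{(0)}\}$ is attained; if $T$ has only compact edges but $T^{(0)}$ is infinite, $T$ may have no root vertex, and then ``induct on $d_T(v,v_T)$'' is meaningless. Indeed Proposition \ref{to the root}(\ref{root centered}) is carefully phrased ``if all edges are compact \emph{and there is a root vertex}''. Your induction covers exactly the rooted situations (equivalently, both halves of Proposition \ref{to the root}), but the lemma as stated allows unrooted $T$.

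The paper handles this third case separately: starting from $v$, it follows the uniquely determined coherently-oriented ray $e_v, e_{v_1}, e_{v_2}, \hdots$ in $T$, shows via Lemmas \ref{edge-vertex tri} and \ref{vertex polygon decomp} that $\Delta(e_v,v)\subset \Delta(e_{v_n},v_n)\cup\bigcup_{i=1}^n C_{v_i}$ for all $n$, and then argues that $\left(\bigcup_{i=1}^{\infty}C_{v_i}\right)\cap\Delta(e_v,v)$ is nonempty, open, and closed in the connected set $\Delta(e_v,v)$, so equals all of it. To complete your proof you would need to add this (or an equivalent) argument for the unrooted case; a plain induction on distance-to-root cannot be repaired to reach it. Otherwise your decomposition, use of Lemma \ref{vertex polygon decomp} to reduce to the overhang triangle $\Delta(e_v,v)$, and invocation of the nesting $\Delta(e_v,v)\subset\Delta(e_v,w)$ from Lemma \ref{edge-vertex tri} all match the paper's rooted argument, just phrased as $\Delta(e,v)\subset C_T$ rather than $\Delta(e,v)\subset\bigcup_{w\in\gamma^{(0)}}C_w$.
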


\begin{proof}  Lemma \ref{vertex polygon decomp} and the definition of $C_T$ together directly imply that $C_T$ is contained in the union above.  For the other inclusion we first consider the case that $T$ has a root vertex $v_T$ and no non-compact edge.  In this case we claim, for any $v\in T^{(0)}$ and edge $e$ containing $v$, that $\Delta(e,v)\subset \bigcup_{w\in\gamma^{(0)}} C_w$, where $\gamma$ is the unique reduced edge path joining $v$ to $v_T$.  

The proof is by induction on the number of edges in $\gamma$.  The base case $v = v_T$ follows directly from Lemma \ref{vertex polygon decomp} (since $C_{v_T}$ is centered; see Proposition \ref{to the root}), so we assume $\gamma$ has $n\geq 1$ edges.  By Lemma \ref{vertex polygon decomp}, $\Delta(e,v)\subset C_v\cup \Delta(e_v,v)$, where $e_v$ is the non-centered edge with initial vertex $v$.  Lemma \ref{tree components} implies that the edge of $\gamma$ containing $v$ points toward $v_T$, so $v$ is its initial vertex; hence by Corollary \ref{one direction} this edge is $e_v$.  The claim follows upon applying the inductive hypothesis to the terminal vertex $w$ of $e_v$, which is connected to $v_T$ by the unique reduced edge path consisting of the edges of $\gamma$ other than $e_v$.

In the case that $T$ has a non-compact edge $e_0$, we change the claim to assert that $\Delta(e,v)\subset \Delta(e_0,v_{\infty})\cup \left(\bigcup_{w\in\gamma^{(0)}} C_w\right)$.  The proof is unchanged, except that in the base case Lemma \ref{vertex polygon decomp} gives $\Delta(e,v_T)\subset C_{v_T}\cup \Delta(e_0,v_T)$, and we appeal to Lemma \ref{edge-vertex tri} to show that this is contained in $C_{v_T}\cup \Delta(e_0,v_{\infty})$.

Finally suppose that $T$ has compact edges but no root vertex.  The following inductive procedure constructs a reduced edge ray $\gamma$ that begins at $v$ and has coherently oriented edges: $\gamma = \bigcup_{n=1}^{\infty} \gamma_n$, where $\gamma_1 = e_v$ and for $n>1$, $\gamma_n = \gamma_{n-1}\cup e_{v_{n-1}}$, where $v_{n-1}$ is the terminal vertex of $\gamma_{n-1}$.  Applying Lemmas \ref{edge-vertex tri} and \ref{vertex polygon decomp} in sequence gives 
$$\Delta(e_v,v)\subset \Delta(e_v,v_1) \subset C_{v_1}\cup \Delta(e_{v_1},v_1),$$
so by an inductive argument we have $\Delta(e_v,v)\subset \Delta(e_{v_n},v_n) \bigcup_{i=1}^n C_{v_i}$ for any $n$.

Since $\bigcup_{i=1}^{\infty} C_{v_i}$ is closed, it intersects $\Delta(e_v,v)$ in a closed subset.  This is also open, since for any $n$ the frontier of $\left(\bigcup_{i=1}^{\infty} C_{v_i}\right)\cap \Delta(e_v,v)$ in $\Delta(e_v,v)$ is contained in $C_{v_n}\cap \Delta(e_{v_n},v_n) = C_{v_n}\cap C_{v_{n+1}}$.  Hence $\Delta(e_v,v)\subset \bigcup_{n=1}^{\infty} C_{v_n}$, and we have proved the result.\end{proof}

That the $\Delta(e,v)$ overlap is a problem that we deal with by re-decomposing again.

\begin{definition}\label{obtuse quad}  For an edge $e$, with (possibly infinite) endpoints $v$ and $w$, of the Voronoi tessellation of a locally finite set $\cals\subset\mathbb{H}^2$, define $Q(e)$ by:\begin{itemize}
 \item $Q(e) = \Delta(e,v)\cup \Delta(e,w)$, if $e$ is centered; or
 \item  $Q(e) = \overline{\Delta(e,w)-\Delta(e,v)}$ if $e$ is non-centered and $v$ is its initial vertex.\end{itemize}
Here $\Delta(e,v)$ and $\Delta(e,w)$ are as in Definition \ref{edge-vertex tri}.\end{definition}

\begin{lemma}\label{no quad overlap}  For distinct edges $e$ and $f$ of the Voronoi tessellation of a locally finite set $\cals\subset\mathbb{H}^2$, with geometric duals $\gamma_e$ and $\gamma_f$, 
$$Q(e)\cap Q(f) = \left\{\begin{array}{ll}
  [\bx,v] & \mbox{if}\ e\cap f = v\ \mbox{and}\ \gamma_e\cap\gamma_f = \bx;\ \mbox{or}\\
  v = e\cap f & \mbox{if}\ e\cap f = v\ \mbox{and}\ \gamma_e\cap\gamma_f = \emptyset;\ \mbox{or}\\
  \bx & \mbox{if}\ e\cap f = \emptyset\ \mbox{and}\ \gamma_e\cap\gamma_f = \bx;\ \mbox{or}\\
  \emptyset & \mbox{otherwise}\end{array}\right.$$
Above, $[\bx,v]$ is the geodesic arc joining $\bx$ to $v$.  In particular, $Q(e)$ does not overlap $Q(f)$.\end{lemma}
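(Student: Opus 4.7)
My plan is to give $Q(e)$ a second decomposition $Q(e) = T^e_\bs \cup T^e_\bt$, where $e = V_\bs \cap V_\bt$ and $T^e_\bu$ denotes the (possibly ideal) isosceles triangle with vertex $\bu$ and opposite side $e$. Each $T^e_\bu$ lies in $V_\bu$ by convexity, and $T^e_\bs \cap T^e_\bt = e$. I would first verify this decomposition in both cases of Definition \ref{obtuse quad}. For centered $e$, both $T^e_\bs \cup T^e_\bt$ and $\Delta(e,v) \cup \Delta(e,w)$ describe the same quadrilateral with vertices $v, \bs, w, \bt$, one using $e$ as diagonal and the other using $\gamma_e$. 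For non-centered $e$, a direct inspection shows $\Delta(e,w) = T^e_\bs \cup T^e_\bt \cup \Delta(e,v)$, the three pieces meeting pairwise along shared edges, from which $\overline{\Delta(e,w) - \Delta(e,v)} = T^e_\bs \cup T^e_\bt$ follows (the interior of $\gamma_e$ is missing from both sides of the equality).

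With the decomposition in hand, $Q(e) \cap Q(f)$ becomes a union of four sub-intersections $T^e_\bu \cap T^f_{\bu'}$ over $\bu \in \{\bs, \bt\}$ and $\bu' \in \{\bs', \bt'\}$. The key observation is $T^e_\bu \cap \partial V_\bu = e$: the side $e$ of $T^e_\bu$ lies on $\partial V_\bu$, while the other two sides join the interior point $\bu$ to the Voronoi vertices of $V_\bu$ and so lie in $\inter V_\bu$ except at their endpoints. For $\bu \neq \bu'$, the inclusion $T^e_\bu \cap T^f_{\bu'} \subset V_\bu \cap V_{\bu'}$ confines the intersection to a face of the Voronoi tessellation (a Voronoi edge, a Voronoi vertex, or empty), and intersecting further with $T^e_\bu$ and $T^f_{\bu'}$ forces it into $e \cap f$, which is at most a common Voronoi vertex. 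For $\bu = \bu'$, both triangles sit in $V_\bu$ and share $\bu$. If $e$ and $f$ meet at a Voronoi vertex $v$ of $V_\bu$, the segment $[\bu, v]$ enters $\inter V_\bu$ strictly between $e$ and $f$ at $v$, and a short convexity argument gives $T^e_\bu \cap T^f_\bu = [\bu, v]$. If $e$ and $f$ are non-adjacent edges of $V_\bu$, the triangles occupy disjoint angular sectors at $\bu$ and meet only in $\{\bu\}$.

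Assembling these sub-intersections case by case will give the result. First, by Theorem \ref{geometric dual char} applied to the dual edges $\gamma_e, \gamma_f$, their intersection is a common endpoint $\bx \in \cals$ or empty; and by the analogous face-intersection property for Voronoi cells, $e \cap f$ is a common Voronoi vertex $v$ or empty. When $\gamma_e \cap \gamma_f = \{\bx\}$, only the sub-intersection $T^e_\bx \cap T^f_\bx$ can exceed a point, yielding $[\bx, v]$ or $\{\bx\}$ according as $e \cap f = \{v\}$ or $\emptyset$; the other three sub-intersections contribute only $\{v\}$ or $\emptyset$ and are absorbed. When $\gamma_e \cap \gamma_f = \emptyset$, every sub-intersection is $\{v\}$ or $\emptyset$. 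I expect the main obstacle to be the non-centered case of the decomposition $Q(e) = T^e_\bs \cup T^e_\bt$: the two descriptions look nothing alike at first glance, and careful attention must be paid to the status of $\inter \gamma_e$, which lies in $\partial \Delta(e, w)$ but is excluded from $Q(e)$ by the closure operation (and likewise does not appear in $T^e_\bs \cup T^e_\bt$).
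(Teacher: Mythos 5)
Your proof is correct and takes essentially the same approach as the paper: the paper observes (by inspecting Figure \ref{isosceles dichotomy}) that $Q(e)$ is the union of the arcs joining $\bs$ to $e$ and $\bt$ to $e$---i.e.\ your $T^e_\bs \cup T^e_\bt$---then uses $Q(e)\subset V_\bs\cup V_\bt$ with boundary contact controlled by $e$ to run the same case analysis on shared endpoints of the dual edges. You simply supply the details the paper compresses into a reference to the figure.
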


\begin{proof}  For $\bx$ and $\by\in\cals$ such that $e = V_{\bx}\cap V_{\by}$, inspection of Figure \ref{isosceles dichotomy} reveals that $Q(e)$ is the union of the arcs joining $\bx$ to points of $e$, together with those joining $\by$ to points of $e$.  In particular, $Q(e)\subset V_{\bx}\cup V_{\by}$, and it intersects the boundary of this union only at the endpoints of $e$.  For $\bx'$ and $\by'$ such that $f = V_{\bx}\cap V_{\by}$, it is clear that $\{\bx,\by\}\ne\{\bx',\by'\}$, and if these sets are disjoint then $Q(e)$ can intersect $Q(f)$ only at a shared endpoint of $e$ and $f$.  Therefore suppose $\bx' = \bx$ (and hence $\by'\ne\by$).  It is now easy to see from the description above that $Q(e)$ intersects $Q(f)$ only at $\bx$, if $e$ and $f$ are not adjacent edges of $V_{\bx}$, or along the arc joining $\bx$ to $v$ if $e\cap f$ is a vertex $v$.\end{proof}

\begin{definition}\label{look out below}  For distinct vertices $v$ and $w$ of a component $T$ of the non-centered Voronoi subgraph determined by locally finite $\cals\subset\mathbb{H}^2$, say $w < v$ if $J_v$ is maximal among radii of vertices of the unique edge arc of $T$ joining $v$ to $w$.  (Recall Lemma \ref{tree components}.)\end{definition}

\begin{lemma}\label{looking down}  Let $T$ be a component of the non-centered Voronoi subgraph determined by locally finite $\cals\subset\mathbb{H}^2$, with edge set $\cale$.  For $v\in T^{(0)}$, if $C_v$ is non-centered then\begin{align}\label{like a boss}
 C_v\cup\Delta(e_v,v) \subset \bigcup_{w<v} \left(Q(e_w) \cup \bigcup \{\Delta(e,w)\,| w\in e, e\notin\cale\}\right)
 \end{align}
Here $e_v$ is the edge of $T$ with initial vertex $v$.  The analog holds if $C_v$ is centered, replacing $C_v\cup\Delta(e,v)$ with $C_v$ on the left side above.\end{lemma}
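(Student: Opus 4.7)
The plan is to argue by descent through $T$: iteratively apply Lemma \ref{vertex polygon decomp} at progressively lower vertices, using Lemma \ref{edge-vertex tri} to trade each non-centered pie-slice about a vertex for a $Q(\cdot)$ piece on the right-hand side plus a residual triangle one level lower. Concretely, Lemma \ref{vertex polygon decomp}(2) at $v$ (legal since $C_v$ is non-centered) gives
\[C_v\cup\Delta(e_v,v) \;=\; \bigcup\{\Delta(e,v) : v\in e,\ e\ne e_v\},\]
so it suffices to place each such $\Delta(e,v)$ in the right-hand side. If $e\notin\cale$, then $\Delta(e,v)$ is itself one of the $\Delta(e,w)$ summands listed on the right-hand side. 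If $e\in\cale$ with $e\ne e_v$, Corollary \ref{one direction} forces $v$ to be the terminal vertex of $e$, so its initial vertex $u$ satisfies $e=e_u$ and $u<v$ by Lemma \ref{tree components}; then Lemma \ref{edge-vertex tri} yields $\Delta(e,v)=Q(e_u)\cup\Delta(e_u,u)$, in which $Q(e_u)$ is the $Q(e_w)$ summand for $w=u$ on the right-hand side.

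The recursive step handles $\Delta(e_u,u)$: by Lemma \ref{vertex polygon centered} $C_u$ is non-centered, so Lemma \ref{vertex polygon decomp}(2) at $u$ gives $\Delta(e_u,u)\subset\bigcup\{\Delta(f,u):u\in f,\ f\ne e_u\}$. Each $\Delta(f,u)$ then goes through the same case split, with non-centered $f\ne e_u$ producing a further descent to its initial vertex $u'<u<v$ and centered $f$ yielding a right-hand-side term. This sets up a recursion that walks monotonically down the subtree of $T$ below $v$.

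The main obstacle is showing that the recursion terminates at each point. For $x\in C_v\cup\Delta(e_v,v)$, a non-terminating branch would yield an infinite strictly $J$-descending chain $v=u_0>u_1>u_2>\cdots$ in $T$ with $x\in\Delta(e_{u_i},u_i)$ for all $i$. Since $\Delta(e_{u_i},u_i)$ is the convex hull of $u_i$ with two points of $\cals$ at distance $J_{u_i}\le J_v$ from $u_i$, its diameter is at most $2J_v$, so every $u_i$ lies in the compact closed ball $\overline{B}(x,2J_v)\subset\mathbb{H}^2$. By local finiteness of the Voronoi tessellation this ball contains only finitely many Voronoi vertices, contradicting the infinite descent. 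Hence the recursion terminates at $x$, placing it either in some $Q(e_w)$ or in a centered $\Delta(e,w)$ on the right-hand side.

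The centered analog follows by replacing case (2) of Lemma \ref{vertex polygon decomp} at $v$ with case (1), yielding $C_v=\bigcup\{\Delta(e,v):v\in e\}$ over all Voronoi edges incident to $v$. Proposition \ref{to the root} forces $v=v_T$ here, so every incident non-centered edge is incoming, and the same descent-plus-compactness argument carries over to cover $C_v$ by the right-hand side.
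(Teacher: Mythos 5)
Your reduction coincides with the paper's: both start from Lemma \ref{vertex polygon decomp}(2) to write $C_v\cup\Delta(e_v,v)$ as a union of wedges $\Delta(e,v)$, and both use Lemma \ref{edge-vertex tri} (via the definition of $Q$) to rewrite $\Delta(e_w,v)=Q(e_w)\cup\Delta(e_w,w)$ and pass the residual triangle one level down. Where you diverge is in the argument that this descent can be closed off. The paper separates into two cases: for finite $T$ it runs an induction on the ``height'' of $v$ (maximal length of a chain in $v-1$), and for infinite $T$ it abandons induction and instead shows the right-hand set $U_v$ meets $C_v\cup\Delta(e_v,v)$ in a nonempty subset that is both closed and open, by a careful analysis of frontiers. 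Your argument replaces both of these with a single pointwise termination step: an infinite descent $v=u_0>u_1>\cdots$ with $x\in\Delta(e_{u_i},u_i)$ would force all the $u_i$ into the compact ball $\overline{B}(x,2J_v)$ (since $J_{u_i}<J_v$ and each $\Delta(e_{u_i},u_i)$ has diameter at most $2J_{u_i}$), contradicting local finiteness of the Voronoi vertex set. This buys you a uniform argument that avoids the finite/infinite dichotomy and the somewhat delicate frontier analysis, at the small cost of one extra metric estimate; it is a genuine simplification. The only thing worth flagging is that, as written, the statement's index set is $\{w<v\}$ with the strict order of Definition \ref{look out below}, under which a leaf $v$ has no $w<v$, so the right side would be empty while the left is not; both your proof and the paper's own proof implicitly read the union as including $w=v$ (equivalently $w\le v$), which is clearly the intended meaning, so this is a notational slip in the lemma statement rather than an error in your argument.
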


\begin{proof}  Below, let $v-1$ refer to the set of $w\in T^{(0)}$ such that $v$ is the terminal vertex of the edge $e_w$ of $T$ with initial vertex $w$.  Note that if $w\in v-1$ then $w<v$ (as in Definition \ref{look out below}).  By Lemma \ref{vertex polygon decomp}, $C_v\cup\Delta(e_v,v) = \bigcup \{\Delta(e,v)\,|\,v\in e, e\neq e_v\}$.  Each $e\in \cale-\{e_v\}$ with $v\in e$ is of the form $e_w$ for some $w\in v-1$.  We refine the union above using this fact, yielding:\begin{align}\label{like a bizoss}
  C_v\cup\Delta(e_v,v) & = \left(\bigcup_{w\in v-1} \Delta(e_w,v)\right)\cup \bigcup\ \{\Delta(e,v)\,|\,v\in e,e\notin\cale\} \notag\\
  & = \left(\bigcup_{w\in v-1} Q(e_w)\cup\Delta(e_w,w)\right)\cup \bigcup\ \{\Delta(e,v)\,|\,v\in e,e\notin\cale\}\end{align}
On the second line applies the definition of $Q(e)$.  It is clear from the second line that if (\ref{like a boss}) holds for all $w\in v-1$ then it holds for $v$ as well.

If $T$ is, say, finite, this is enough to prove the result by induction: let the \textit{height} of a vertex $v$ be maximal length of a chain $\{v_0,v_1,\hdots,v_n\}$ such that $v_i\in v_{i-1}-1$ for each $i$.  If $v$ has height zero then the union in parentheses above is empty and (\ref{like a boss}) holds for $v$.  For $v$ of height $n>0$, (\ref{like a boss}) then follows immediately from (\ref{like a bizoss}) and the inductive hypothesis.

For general $T$ we use (\ref{like a bizoss}) to show that the right-hand quantity of (\ref{like a boss}), which we call $U_v$, intersects $C_v\cup\Delta(e_v,v))$ in a set that is both closed and open there.   It is not hard to see that $U_v$ is closed in $\mathbb{H}^2$.  Lemma \ref{vertex polygon decomp} implies that the frontier of its intersection with $C_v\cup\Delta(e_v,v)$ is contained in the union of $Q(e_w)\cap\Delta(e_w,w)$, $w\in v-1$.  For such $w$, we claim that this union intersects the frontier of $U_v$ only in vertices of the geometric dual $\gamma_w$ to $e_w$, if at all.

Using (\ref{like a bizoss}), one can show that each point in the interior of an edge of $\Delta(e_v,v)$ other than $\gamma_v$ has a neighborhood in $\bigcup \{\Delta(e,v)\,|\,v\in e, e\neq e_v\}$ that is disjoint from $\bigcup \{\Delta(e_w,w)\,|\,w\in v-1\}$, and that the same holds true for $v$.  The analog of (\ref{like a bizoss}), applied to each $w\in v-1$, proves the claim.

Since the vertices of $\gamma_w$, $w\in v-1$, are vertices of $C_v$, it follows that $U_v\cap C_v$ is open and closed in $C_v$ and hence all of $C_v$.  The lemma follows for non-centered $v$.  The analogous assertion for centered $C_v$ follows in the same way, upon noting that by Lemma \ref{vertex polygon decomp}, the analog of (\ref{like a bizoss}) differs only by replacing $C_v\cup\Delta(e_v,v)$ by $C_v$ on the left-hand side.\end{proof}

\begin{proposition}\label{re-re-decompose}  Let $C_T$ be a centered dual $2$-cell, dual to a component $T$ of the non-centered Voronoi subgraph determined by locally finite $\cals\subset\mathbb{H}^2$.  Then:
$$ C_T = \left(\bigcup_{e\in\cale} Q(e)\right)\cup \bigcup_{v\in T^{(0)}} \{\Delta(e,v)\,|\,v\in e, e\notin\cale\}, $$
where $\cale$ is the edge set of $T$.  This union is non-overlapping.\end{proposition}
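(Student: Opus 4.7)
The plan is to rearrange the triangle-fan decomposition of $C_T$ from Lemma \ref{re-decompose} by pairing, for each $e\in\cale$, the two triangles $\Delta(e,v_e)$ and $\Delta(e,w_e)$ corresponding to its endpoints into the quadrilateral $Q(e)$ via Lemma \ref{edge-vertex tri} and Definition \ref{obtuse quad}. The resulting leftover ``initial-vertex'' triangles $\Delta(e,v_e)$, one per edge of $\cale$, are then absorbed into the rest of the target union $U$ by Lemma \ref{looking down}.

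First, Lemma \ref{re-decompose} writes $C_T$ as $\bigcup\{\Delta(e,v):v\in T^{(0)},\,e\ni v\}$, augmented by $\Delta(e_0,v_\infty)$ when $T$ has a non-compact edge $e_0$ with ideal endpoint $v_\infty$.  Split this union according to whether $e\in\cale$ or not; the triangles with $e\notin\cale$ already match the second summand of $U$.  For each $e\in\cale$ with initial vertex $v_e$ and ``terminal endpoint'' $w_e$ (the actual terminal vertex in $T^{(0)}$ if $e$ is compact, or $v_\infty$ if $e=e_0$), Lemma \ref{edge-vertex tri} gives $\Delta(e,v_e)\subset\Delta(e,w_e)$, and Definition \ref{obtuse quad} then rewrites this nested pair non-overlappingly as $\Delta(e,w_e)=\Delta(e,v_e)\cup Q(e)$.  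Carrying out this pairing for every $e\in\cale$ (the non-compact case absorbing the separate $\Delta(e_0,v_\infty)$ summand from Lemma \ref{re-decompose}) rewrites $C_T$ as
\[
\bigcup_{e\in\cale}Q(e)\ \cup\ \bigcup_{v\in T^{(0)}}\{\Delta(e,v):v\in e,\,e\notin\cale\}\ \cup\ \bigcup_{e\in\cale}\Delta(e,v_e).
\]

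The third summand is absorbed by Lemma \ref{looking down}: for each $e\in\cale$ with initial vertex $v=v_e$, Lemma \ref{vertex polygon centered} gives that $C_v$ is non-centered, so Lemma \ref{looking down} yields
\[
\Delta(e,v_e)\ \subset\ C_v\cup\Delta(e_v,v)\ \subset\ \bigcup_{w<v}\bigl(Q(e_w)\cup\bigcup\{\Delta(e',w):w\in e',\,e'\notin\cale\}\bigr)\ \subset\ U.
\]
This gives $C_T\subseteq U$; the reverse inclusion $U\subset C_T$ is immediate from Definition \ref{tree cells} combined with Lemma \ref{re-decompose}.

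For non-overlap, distinct $Q(e)$ and $Q(f)$ meet in at most $1$-dimensional sets by Lemma \ref{no quad overlap}. Two centered triangles $\Delta(e,v),\Delta(e',v')$ with $e,e'\notin\cale$ are handled by Lemma \ref{vertex polygon decomp} when $v=v'$; when $v\neq v'$, the fan decomposition of $C_v\cup\Delta(e_v,v)$ from Lemma \ref{vertex polygon decomp} shows $\mathrm{int}\,\Delta(e,v)\subset\mathrm{int}\,C_v$, and the disjointness of distinct $\mathrm{int}\,C_v$'s from Lemma \ref{vertex polygon} finishes the case. A mixed overlap $Q(e)\cap\Delta(f,v)$ restricts (using $Q(e)\subset V_{\bs}\cup V_{\bt}$ from the proof of Lemma \ref{no quad overlap}) to a common Voronoi cell $V_{\bs}$, within which both are sectors based at $\bs$ indexed by distinct Voronoi edges of $V_{\bs}$ and therefore meet only along radial arcs. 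The main obstacle is uniform bookkeeping for the non-compact case: $\Delta(e_0,v_\infty)$ enters Lemma \ref{re-decompose} as a separate summand because $v_\infty\notin T^{(0)}$, but must be paired with $\Delta(e_0,v_T)$ via Definition \ref{obtuse quad} exactly as in the compact case to yield $Q(e_0)$, and one must verify that the Lemma \ref{looking down} absorption step lands in $U$ and not merely in $C_T$.
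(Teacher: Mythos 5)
Your proof leans on exactly the same three ingredients as the paper's (Lemmas \ref{re-decompose}, \ref{looking down}, and \ref{no quad overlap}), so the overall route is essentially the paper's; the paper just arranges it more economically.  It gets $C_T\subseteq U$ in a single stroke by applying Lemma \ref{looking down} to each $C_v$ (compare with Definition \ref{tree cells}), and $U\subseteq C_T$ by noting $Q(e)\subseteq\Delta(e,w_e)$ and invoking Lemma \ref{re-decompose}; your rearrangement-and-absorb bookkeeping via $\Delta(e,w_e)=\Delta(e,v_e)\cup Q(e)$ is correct but longer.  Your worry about the non-compact summand $\Delta(e_0,v_\infty)$ is unfounded: Lemma \ref{edge-vertex tri} and Definition \ref{obtuse quad} are both stated uniformly to cover $w=v_\infty$, and the absorption step you describe does land in $U$ (not merely in $C_T$), because the terms produced by Lemma \ref{looking down} are precisely $Q(e_w)$ with $e_w\in\cale$ and $\Delta(e',w)$ with $e'\notin\cale$.

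There is, however, a genuine gap in your non-overlap argument.  You assert that for $e\notin\cale$, the fan decomposition of $C_v\cup\Delta(e_v,v)$ from Lemma \ref{vertex polygon decomp} gives $\mathrm{int}\,\Delta(e,v)\subset\mathrm{int}\,C_v$.  This is false whenever $C_v$ is non-centered (i.e.\ for every non-root $v$): $v\notin C_v$ in that case, so the apex of the isoceles triangle $\Delta(e,v)$ lies across the edge $\gamma_v$ from $C_v$, and $\Delta(e,v)$ has a two-dimensional piece inside $\Delta(e_v,v)$.  The clean fix --- and the reason the paper's non-overlap argument is a one-liner --- is that every edge $e\notin\cale$ with an endpoint in $T^{(0)}$ is centered (a non-centered such $e$ would lie in $T$), and for a centered edge Definition \ref{obtuse quad} gives $\Delta(e,v)\subset Q(e)=\Delta(e,v)\cup\Delta(e,w)$.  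Hence every term of the union $U$ is contained in some $Q(e)$ for a distinct Voronoi edge $e$, and Lemma \ref{no quad overlap} applies to all pairs at once, subsuming your cases (b) and (c) and bypassing the faulty containment.
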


\begin{proof}  Lemma \ref{looking down} implies that the right-hand side contains the left (compare with Definition \ref{tree cells}).  For an edge $e$ of $T$ with terminal vertex $v$, $Q(e)\subset \Delta(e,v)$, so Proposition \ref{re-decompose} implies the other containment.  That the union is non-overlapping follows directly from Lemma \ref{no quad overlap}.\end{proof}

\begin{corollary}\label{cell a cell}  Let $C_T$ be a centered dual $2$-cell, dual to a component $T$ of the non-centered Voronoi subgraph determined by locally finite $\cals\subset\mathbb{H}^2$.  Then $T\subset \mathit{int}\, C_T$, and there is a homeomorphism $U\to \mathit{int}\,C_T$, where $U$ is the interior of the unit disk $\mathbb{D}^2$.  If $T^{(0)}$ is finite this extends to an onto map $\mathbb{D}^2\to C_T$; or $\mathbb{D}^2\to C_T\cup\{v_{\infty}\}$ if $T$ has a non-compact edge with ideal vertex $v_{\infty}$.\end{corollary}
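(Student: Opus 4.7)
\emph{Plan.} We establish $T\subset\mathit{int}\,C_T$ and the homeomorphism separately, both using the non-overlapping decomposition of Proposition \ref{re-re-decompose}.

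For the inclusion, consider $v\in T^{(0)}$. Every Voronoi edge $e$ at $v$ contributes a piece of the decomposition in Proposition \ref{re-re-decompose}: $Q(e)$ if $e\in\cale$, or $\Delta(e,v)$ if $e$ is centered. Writing $\bs_1,\ldots,\bs_k$ for the vertices of $C_v$ in cyclic order (so $e_j = V_{\bs_j}\cap V_{\bs_{j+1}}$), the piece associated to $e_j$ occupies the radial sector at $v$ between $[\bs_j,v]$ and $[\bs_{j+1},v]$, and these $k$ sectors together cover a full $\mathbb{H}^2$-open neighborhood of $v$ in $C_T$. For $e\in\cale$ with initial vertex $v$ and terminal vertex $w$, the open edge $e\setminus\{v,w\}$ lies in the interior of $Q(e)=\overline{\Delta(e,w)-\Delta(e,v)}$ by Lemma \ref{edge-vertex tri}, hence in $\mathit{int}\,C_T$; the non-compact case is handled analogously via $\Delta(e_0,v_\infty)$.

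For the homeomorphism, the local analysis above, together with analogous arguments at points of $\cals\cap C_T$ and of $\partial C_T$, shows that $C_T$ is a $2$-manifold with boundary (ignoring the ideal point $v_\infty$ in the non-compact-edge case). To see that $C_T$ is contractible, we build a deformation retraction $r\from C_T\to T$ piecewise via Proposition \ref{re-re-decompose}: on each $\Delta(e,v)$ with $e$ centered at $v$, let $r$ be the constant map to $v$; on each quadrilateral $Q(e)$ for $e\in\cale$ with endpoints $v,w$, deformation-retract $Q(e)$ onto its interior diagonal (the Voronoi edge $e$) so that the two sides of $\partial Q(e)$ meeting at $v$ both collapse to $v$, and the two sides meeting at $w$ both collapse to $w$. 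These piecewise definitions agree on the shared radial arcs $[\bs_j,v]$ (both sides retract them to $v$), giving a continuous deformation retraction $r$. Since $T$ is a tree by Lemma \ref{tree components}, hence contractible, so is $C_T$.

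A connected contractible $2$-manifold with boundary is homeomorphic to a (possibly open) disk: in the compact case this gives $C_T\cong\mathbb{D}^2$; in the non-compact-edge case, $\Delta(e_0,v_\infty)$ adds a single ideal boundary point and $C_T\cup\{v_\infty\}\cong\mathbb{D}^2$; for infinite $T^{(0)}$, $\mathit{int}\,C_T$ is a connected contractible open $2$-manifold and hence homeomorphic to $U$. The main technical subtlety is the careful construction of $r|_{Q(e)}$: the two ``corner'' vertices $\bs,\bs'$ of $\gamma_e$ on $\partial Q(e)$ are shared with neighboring $\Delta$-pieces that collapse to definite Voronoi vertices, which pins down $r(\bs)$ as a specific endpoint of $e$ (the one on the same side of $\bs$ as the adjacent piece in the cyclic fan). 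Once this choice is made consistently at each $\cals$-vertex of $\partial Q(e)$, the rest of the retraction is straightforward.
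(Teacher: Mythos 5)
Your approach for the homeomorphism part is genuinely different from the paper's: you aim to show $C_T$ (or $C_T\cup\{v_\infty\}$) is a contractible $2$-manifold with boundary and appeal to a classification theorem, whereas the paper deformation-retracts only $\mathit{int}\,C_T$ onto $T$ to get simple-connectedness, applies the Riemann mapping theorem to obtain the homeomorphism $\mathit{int}\,\mathbb{D}^2\to\mathit{int}\,C_T$, and then invokes Palka's theory of conformal maps on domains ``finitely connected along the boundary'' to extend to $\mathbb{D}^2$. Your topological route would be more elementary if it worked, but it carries extra burdens (the manifold-with-boundary structure of $C_T$, including at $\bs\in\cals\cap C_T$ and at the ideal point, is asserted but not established; and ``connected contractible $2$-manifold with boundary'' is not in general a disk, as a closed half-plane or closed strip shows --- though your two specific cases, compact and open-without-boundary, are fine).

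The serious gap is in the deformation retraction $r\from C_T\to T$. Your consistency check covers only shared \emph{arcs} $[\bs_j,v]$, but by Lemma \ref{no quad overlap} two pieces $Q(e)$ and $Q(f)$ with $e,f\in\cale$ can intersect in exactly one point $\bs\in\cals$, namely when $e\cap f=\emptyset$ but $\gamma_e\cap\gamma_f=\{\bs\}$. This happens whenever $V_{\bs}\cap T$ has more than one component (which is not forbidden by a tree structure on $T$: the components can be joined by a path in $T$ outside $V_{\bs}$). In that case the retraction of $Q(e)$ onto $e$ must send $\bs$ to a vertex of $e$, and the retraction of $Q(f)$ onto $f$ must send $\bs$ to a vertex of $f$; since $e\cap f=\emptyset$, these are distinct points, so $r$ is not well-defined at $\bs$. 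Your last-paragraph fix pins $r(\bs)$ using adjacent $\Delta$-pieces in the cyclic fan around $\bs$, but this only resolves consistency along a \emph{contiguous} fan; it cannot reconcile two non-adjacent $Q$-pieces meeting only at $\bs$. The paper avoids this precisely by retracting $Q(e)-\{\bx,\by\}$ (removing the $\cals$-vertices) and hence retracting $\mathit{int}\,C_T$ rather than all of $C_T$; the $\cals$-points lie in $\partial C_T$ by Lemma \ref{out on the co-nah} and so do not need to be assigned a value. You could salvage your argument by adopting that restriction and then handling the boundary extension as a separate step, but as written, contractibility of $C_T$ has not been established.
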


\begin{proof}  For a compact Voronoi edge $e = V_{\bx}\cap V_{\by}$, $\bx,\by\in\cals$, $Q(e) - \{\bx,\by\}$ deformation retracts to $e$ along geodesic arcs from $\bx$ and $\by$.  If $e$ has an endpoint $v_{\infty}$ on $S_{\infty}$ then we must also exclude the arcs joining $\bx$ and $\by$ to $v_{\infty}$.  If the restriction to each arc is parametrized by arc length then the resulting homotopies are well-defined on overlaps $Q(e)\cap Q(f)$ for $e\neq f$ (recall Lemma \ref{no quad overlap}).  

For a component $T$ of the non-centered Voronoi subgraph with edge set $\cale$, $T\subset \bigcup_{e\in\cale} Q(e)\subset C_T$.  The deformation retracts described above determine a well-defined homotopy on a set that includes the interior of $C_T$, since all points of $\cals$ are in $\partial C_T$ by Lemma \ref{out on the co-nah}.  

Note also that if $e$ is centered then its geometric dual $\gamma$ is a flow line of the deformation retract $Q(e)-\{\bx,\by\}\to e$.  This thus restricts to a deformation retract $Q(e)-\gamma\to e - (e\cap\gamma)$.  Since a geometric dual edge $\gamma$ in $C_T$ is dual to a centered Voronoi edge if and only if $\gamma\subset\partial C_T$, Proposition \ref{re-re-decompose} and these observations  define a deformation retract from the interior of $C_T$ to 
$$T\cup\left(\bigcup \{[v,e\cap\gamma)\,|\, v\in e\cap T^{(0)}, e\notin\cale, \gamma\ \mbox{geometrically dual to}\ e\}\right)$$
Here $[v,e\cap\gamma)$ refers to the sub arc of $e$ joining $v$ to $e\cap\gamma$, but not including the latter point.  Each such arc deformation retracts to $v$, determining a further homotopy from the set above to $T$.

By the above, the interior of $C_T$ deformation-retracts to $T$ and hence is simply-connected.  The Riemann mapping theorem thus asserts the existence of a homeomorphism $f\co\mathit{int}\,\mathbb{D}^2\to\mathit{int}\,C_T$.  (Recall from Lemma \ref{frontier vs boundary} that $\mathit{int}\,C_T$ is a connected, open subset of $\mathbb{H}^2$, which we may take in $\mathbb{C}$ using the Poincar\'e disk model.)  

If $T^{(0)}$ is finite then by Lemma \ref{frontier vs boundary} $C_T$ is closed in $\mathbb{H}^2$ and the closure of its interior.  Either $C_T$ is compact, therefore also closed in $\mathbb{C}$, or it is compactified by the addition of the ideal point $v_{\infty}$ of the non-compact edge of $T$ (recall Proposition \ref{to the root}).  It is not hard to show that $\mathit{int}\,C_T$ is ``finitely connected along its boundary'' in the sense of \cite[\S IX.4.4]{Palka}, so the results there on conformal mapping imply that $f$ extends to a map from $\mathbb{D}^2$ to $C_T$ or $C_T\cup\{v_{\infty}\}$.\end{proof}

\begin{corollary}  Let $C_T$ be a centered dual $2$-cell, dual to a component $T$ of the non-centered Voronoi subgraph determined by locally finite $\cals\subset\mathbb{H}^2$.  Then:\begin{itemize}
  \item  If $T'\neq T$ is a component of the non-centered Voronoi subgraph then $C_T\cap C_{T'}\subset \partial C_T$.
  \item  For a Voronoi vertex $v$ outside the non-centered Voronoi subgraph, $C_T\cap C_v = \partial C_T\cap\partial C_v$.\end{itemize}
\end{corollary}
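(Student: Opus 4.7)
My plan is to treat both bullets in parallel via a case analysis on which piece of the decomposition $C_T = \bigcup_{u \in T^{(0)}} C_u \cup \Delta(e_0, v_\infty)$ from Definition \ref{tree cells} contains $x \in C_T \cap C_{T'}$ (or $x \in C_T \cap C_v$), where the last piece appears only when $T$ has a non-compact edge $e_0$ with ideal endpoint $v_\infty$. The goal is to show each intersection sub-piece lies in $\partial C_T$; for the second bullet, the same sub-pieces will also lie on an edge or vertex of $C_v$, yielding $\subseteq \partial C_v$, and equality follows from the trivial inclusion $\partial C_T \cap \partial C_v \subseteq C_T \cap C_v$.

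The core sub-case $x \in C_u \cap C_{u'}$, with $u \in T^{(0)}$ and $u'$ either a vertex of $T'$ or equal to the Voronoi vertex $v$ of the second bullet, reduces via Lemma \ref{vertex polygon} to a shared vertex or edge, since $u \neq u'$. A shared vertex lies in $\cals$, hence in $\partial C_T$ by Lemma \ref{out on the co-nah}. A shared edge is, again by Lemma \ref{vertex polygon}, dual to the Voronoi edge $V_u \cap V_{u'}$, which must be centered: for the first bullet because $u$ and $u'$ lie in different components of the non-centered subgraph, and for the second because $v$ lies outside the non-centered subgraph entirely. By Definition \ref{tree cells} this dual edge therefore belongs to $\partial C_T$, and for the second bullet it is manifestly an edge of $C_v$, giving the needed $\subseteq \partial C_v$. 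The remaining mixed terms $C_u \cap \Delta(e'_0, v'_\infty)$, $\Delta(e_0, v_\infty) \cap C_{u'}$, and $\Delta(e_0, v_\infty) \cap C_v$ all fall under Lemma \ref{no infinite triangle overlap}: neither $u$ nor $v$ can coincide with the finite endpoint $v_{T'}$ or $v_T$ of the relevant non-compact edge (these root endpoints lie in the opposite component), so the intersection is contained in $\partial \gamma \subset \cals$ and lands in $\partial C_T$ by Lemma \ref{out on the co-nah}.

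The main obstacle will be the first bullet's remaining sub-case $\Delta(e_0, v_\infty) \cap \Delta(e'_0, v'_\infty)$, when both $T$ and $T'$ have non-compact edges. I plan to apply Lemma \ref{exclusive horocycle} to each of $e_0, e'_0$, producing horocycles $S, S'$ through $\{\bs_0, \bt_0\}$ and $\{\bs'_0, \bt'_0\}$ whose horoballs meet $\cals$ only on those horocycles. When $v_\infty = v'_\infty$, the two horocycles coincide, and working in the upper half-plane with $v_\infty = \infty$ realizes both $e_0, e'_0$ as vertical perpendicular bisectors of consecutive pairs on the horizontal horocycle $S$, so the two ideal triangles have disjoint $x$-intervals except in the case of adjacent pairs, when they share a single vertical ray which is an infinite edge of both and hence lies in $\partial C_T$ by Definition \ref{tree cells}(1). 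When $v_\infty \neq v'_\infty$, the interiors of the triangles lie in open horoballs around distinct ideal points, and any hypothetical point in both interiors would have its closest $\cals$-point forced to lie in $S \cap S' \cap \cals$, a set of at most two points; an angular sector argument at any shared $\cals$-vertex should then show that the two ideal triangles' sectors there can overlap only along shared boundary arcs or points of $\cals$, all of which lie in $\partial C_T$.
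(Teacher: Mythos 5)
Your proof follows the same route as the paper: decompose $C_T$ via Definition \ref{tree cells}, reduce $C_u\cap C_{u'}$ to a shared vertex or shared geometric dual edge via Lemma \ref{vertex polygon} and Theorem \ref{geometric dual char}, dispose of vertices with Lemma \ref{out on the co-nah}, observe that the shared edge must be dual to a \emph{centered} Voronoi edge (else $u$ and $u'$ would lie in the same component) and hence lies in $\partial C_T$, and invoke Lemma \ref{no infinite triangle overlap} for the mixed $C_u\cap\Delta$ terms. One small slip: you write the relevant Voronoi edge as $V_u\cap V_{u'}$, but $u,u'$ are Voronoi vertices, not points of $\cals$; what you mean is the Voronoi $1$-cell with $u$ and $u'$ as its endpoints, whose geometric dual is the shared edge of $C_u$ and $C_{u'}$. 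For the final sub-case $\Delta(e_0,v_\infty)\cap\Delta(e_0',v_\infty')$ the paper only \emph{asserts} that the two ideal triangles meet along their boundaries; your $v_\infty=v_\infty'$ analysis via Lemma \ref{exclusive horocycle} is sound, but the $v_\infty\neq v_\infty'$ case is left as a sketch (``should then show''). A cleaner way to close that sub-case is to note $\Delta(e_0,v_\infty)=Q(e_0)\cup\Delta(e_0,v_T)$, appeal to Lemma \ref{no quad overlap} for $Q(e_0)\cap Q(e_0')$ (which is at most a point of $\cals$, since $e_0\cap e_0'=\emptyset$ in $\mathbb{H}^2$), and then handle the remaining pieces $\Delta(e_0,v_T)$, $\Delta(e_0',v_{T'})$ with Lemma \ref{no infinite triangle overlap}, rather than relying on an undeveloped angular-sector argument.
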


\begin{proof}  Let $T$ be a component of the non-centered Voronoi subgraph and $v$ a Voronoi vertex outside $T$.  Then for any $w\in T^{(0)}$, $C_v\cap C_w$ is either a vertex of each or the geometric dual to an edge $e$ with endpoints $v$ and $w$.  In the former case $C_v\cap C_w\subset\partial C_T$ by Lemma \ref{out on the co-nah}.  In the latter $e$ is centered since $v\notin T$, so again $C_v\cap C_w\subset\partial C_T$ (recall Definition \ref{tree cells}).  

The paragraph above implies the Lemma's second assertion if $T$ has no non-compact edges.  If there is a non-compact edge $e_0$ we appeal to Lemma \ref{no infinite triangle overlap}.  The first assertion follows as well, upon noting that for distinct non-compact edges $e_0$ and $f_0$ with respective ideal endpoints $v_{\infty}$ and $w_{\infty}$, $\Delta(e_0,v_{\infty})$ and $\Delta(f_0,w_{\infty})$ intersect along their boundaries.\end{proof}

\begin{definition}\label{centered dual}  The \textit{centered dual complex} of a locally finite set $\cals\subset\mathbb{H}^2$ has vertex set:
$$\cals\cup \{v_{\infty}\,|\,v_{\infty}\ \mbox{is the ideal endpoint of a non-centered Voronoi edge}\}$$ 
one-skeleton consisting of all:\begin{itemize}
  \item geometric dual edges whose dual Voronoi edges are centered; and
  \item rays $[\bx,v_{\infty}]\doteq [\bx,v_{\infty})\cup\{v_{\infty}\}$, where $v_{\infty}$ is the ideal endpoint of a non-centered Voronoi edge, $\bx$ is an endpoint of its geometric dual, and $[\bx,v_{\infty})$ is the geodesic ray from $\bx$ with ideal endpoint $v_{\infty}$;\end{itemize}
and two-skeleton consisting of all:\begin{itemize}
  \item  geometric dual two-cells $C_v$ where $v$ is a Voronoi vertex outside the non-centered Voronoi subgraph (in particular such a $C_v$ is centered, see Proposition \ref{vertex polygon centered}); and
  \item  cells $C_T$ or $C_T\cup \{v_{\infty}\}$ of Definition \ref{tree cells}, where $T$ is a component of the non-centered Voronoi subgraph and $v_{\infty}$ is an ideal endpoint of its noncompact edge (if applicable).\end{itemize}\end{definition}

The results of this section imply that if each component $T$ of the non-centered Voronoi subgraph has finite vertex set then the centered dual is indeed a cell decomposition of a subspace of $\mathbb{H}^2\cup S_{\infty}$, in the sense that each cell above is the image of a disk by a map that restricts on the interior to a homeomorphism.  By construction, its underlying topological space contains every geometric dual two-cell.

\section{Admissible spaces and area bounds: the compact case}\label{moduli}

The ultimate goal of this section is to prove Theorem \ref{main}, which bounds the area of a compact centered dual $2$-cell below given a uniform lower bound on its edge lengths.  There is no corresponding result for cyclic polygons (at least no good one) because of a non-monotonicity property of the area of those that are non-centered.  See Section \ref{cyclic geom} below, where we will collect useful results from \cite{DeB_cyclic_geom} on cyclic polygons.

The price we pay for passing from the geometric dual to the centered dual complex is that a two-cell is no longer determined by its collection of boundary edge lengths.  In Section \ref{admissible for compact} we will define an ``admissible space'' that parametrizes all possibilities for a centered dual two-cell with a given combinatorics and edge length collection, and prove some of its basic properties.  Finally in Section \ref{compact bounds} we will prove the theorem, by bounding values of the area functional on admissible spaces.

\subsection{The geometry of cyclic polygons}\label{cyclic geom}  J.M.~Schlenker originally proved existence and uniqueness of cyclic polygons with prescribed edge lengths satisfying a certain inequality \cite{Schlenker}.  Given this it is natural to parametrize the set of cyclic $n$-gons by $n$-tuples of real numbers that satisfy this inequality.  Here we will record some differential formulas that we proved in \cite{DeB_cyclic_geom}, treating the area and radius of cyclic polygons as functions on this space.

\begin{definition}\label{parameter spaces}For $d>0$ and $J\geq d/2$, let $A_d(J)\in(0,\pi]$ satisfy $\cos A_d(J) = 1-\frac{2\sinh^2(d/2)}{\sinh^2 J}$. Let each set below inherit the subspace topology from $\mathbb{R}^n$:\begin{align*}
  \mathcal{C}_n & = \left\{(d_0,\hdots,d_{n-1}) \in (\mathbb{R}^+)^n\,|\,\sum_{i=0}^{n-1}A_{d_i}(D/2) > 2\pi,\ \mbox{where}\ D=\max\{d_i\}_{i=0}^{n-1} \right\} \\
  \mathcal{BC}_n & = \left\{(d_0,\hdots,d_{n-1})\in (\mathbb{R}^+)^n\,|\,\sum_{i=0}^{n-1}A_{d_i}(D/2) = 2\pi,\mbox{where}\ D=\max\{d_i\}_{i=0}^{n-1} \right\} \\
   \mathcal{AC}_n & = \left\{(d_0,\hdots,d_{n-1}) \in (\mathbb{R}^+)^n\,|\, \sinh (d_i/2) < \sum_{j\neq i} \sinh(d_j/2) \ \mbox{for each}\ i \in \{0,\hdots,n-1\}\right\} \\
   \mathcal{HC}_n & = \left\{(d_0,\hdots,d_{n-1}) \in (\mathbb{R}^+)^n\,|\, \sinh (d_i/2) = \sum_{j\neq i} \sinh(d_j/2) \ \mbox{for some}\ i \in \{0,\hdots,n-1\}\right\}\end{align*}\end{definition}

$\calAC_n$ and $\calc_n\subset\calAC_n$ parametrize the sets of marked, oriented cyclic and centered $n$-gons, respectively.  $\calBC_n$ bounds $\calc_n$ in $\calAC_n$, and parametrizes a transitional class of cyclic $n$-gons: those with the center of the circumcircle in a side.  See Section 3 of \cite{DeB_cyclic_geom}.  $\calHC_n$ bounds $\calAC_n$ in $\mathbb{R}^n$ and parameterizes the \textit{horocyclic} $n$-gons, those with vertices on a horocycle; see \cite[\S 6]{DeB_cyclic_geom}.

The result below, on radii of cyclic $n$-gons (cf.~Definition \ref{cyclic poly}), combines parts of Lemmas 3.6, 4.5, and 4.7 of \cite{DeB_cyclic_geom}, and Proposition 4.1 there.

\begin{proposition}\label{omniradius}  For $n\geq 3$, $J\co\calAC_n\to\mathbb{R}^+$ is symmetric and $C^1$, where $J(d_0,\hdots,d_{n-1})$ is the radius of the cyclic $n$-gon determined by $(d_0,\hdots,d_{n-1})$.  Its partial derivatives satisfy:\begin{itemize}
  \item  For $(d_0,\hdots,d_{n-1})\in\calc_n$, $0<\frac{\partial J}{\partial d_i} (d_0,\hdots,d_{n-1}) <\frac{1}{2}$ for each $i$.
  \item  For $(d_0,\hdots,d_{n-1})\in\calBC_n$,\begin{align*}
  \frac{\partial J}{\partial d_i} (d_0,\hdots,d_{n-1}) & = \left\{\begin{array}{ll} 
    \frac{1}{2} &\mbox{ if $d_i$ is maximal} \\ 
    0 &\mbox{ otherwise} \end{array}\right.\end{align*}
  \item  For $(d_0,\hdots,d_{n-1})\in\calAC_n-(\calc_n\cup\calBC_n)$,
  \begin{align*}  \frac{\partial J}{\partial d_i} (d_0,\hdots,d_{n-1}) & \left\{\begin{array}{ll} 
    > \frac{1}{2} &\mbox{ if $d_i$ is maximal} \\ 
    < 0 &\mbox{ otherwise} \end{array}\right.\end{align*}\end{itemize}
 Finally, $J\to\infty$ as $(d_0,\hdots,d_{n-1})\in\calAC_n$ approaches $\calHC_n$.\end{proposition}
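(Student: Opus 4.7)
My plan is to assemble this proposition from results already established in \cite{DeB_cyclic_geom}, following its three-case structure.

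First I would record the basic regularity. Symmetry of $J$ is automatic, since permuting $(d_0,\ldots,d_{n-1})$ produces an isometric polygon by the existence/uniqueness of cyclic $n$-gons with prescribed edge lengths (\cite[Proposition 4.1]{DeB_cyclic_geom}). The $C^1$-regularity of $J$ as a function of these lengths on $\calAC_n$ is supplied by \RadiusFunction.

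For the partial derivative formulas, the organizing tool is the isosceles decomposition \IsoscelesDecomp: a cyclic $n$-gon with circumcenter $v$ and circumradius $J$ has $n$ "spoke" isosceles triangles obtained by coning sides to $v$, each with legs of length $J$, base $d_i$, and apex angle $\alpha_i = A_{d_i}(J)$ at $v$. In the centered case these angles satisfy $\sum_i \alpha_i = 2\pi$; on $\calBC_n$ the apex on the longest edge equals $\pi$ while the rest sum to $\pi$; in the non-centered case \OneSide\ and \LongestSide\ say that the unique longest edge $d_{i_0}$ lies on the far side of $v$, and the signed relation $\sum_{i\neq i_0}\alpha_i - \alpha_{i_0} = 2\pi$ holds. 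Implicit differentiation of the appropriate defining identity $F(d_0,\ldots,d_{n-1},J)=2\pi$ with respect to $d_j$ yields $\partial J/\partial d_j = -(\partial F/\partial d_j)/(\partial F/\partial J)$, and the claimed strict sign/magnitude bounds are then exactly the statements of \cite[Lemmas 4.5 and 4.7]{DeB_cyclic_geom}, which I would invoke directly.

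Finally, the limit $J\to\infty$ as $(d_0,\ldots,d_{n-1})\to\calHC_n$ reflects the fact that the defining condition $\sinh(d_i/2)=\sum_{j\neq i}\sinh(d_j/2)$ of $\calHC_n$ characterizes polygons inscribed in a horocycle rather than a metric circle; geometrically, the circumscribing circles of centered $n$-gons approaching this boundary are uniformly flatter and their radii diverge. The precise statement needed is \HCn.

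The substantive difficulty is not in this proposition, but rather in the calculus it packages inside \cite{DeB_cyclic_geom}: namely, the sign reversal on the maximal edge as one crosses $\calBC_n$. This is where the jump from $\partial J/\partial d_i \in (0,1/2)$ in the centered regime to $\partial J/\partial d_{i_0}>1/2$ and $\partial J/\partial d_j<0$ in the non-centered regime ultimately originates, traceable to the sign change of $\alpha_{i_0}$ in the angle-sum relation.
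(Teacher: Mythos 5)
Your proposal is essentially the same as the paper's treatment: Proposition~\ref{omniradius} is stated without proof and is, as the text immediately preceding it notes, merely a repackaging of Lemmas 3.6, 4.5, and 4.7 and Proposition 4.1 of \cite{DeB_cyclic_geom}, which you also invoke. The only discrepancy is bookkeeping: you attribute the final assertion ($J\to\infty$ as one approaches $\calHC_n$) to \HCn\ rather than to the set of lemmas named by the paper, but since both the paper and your proposal defer all substance to the companion article, this is an immaterial difference. The additional narrative you supply about the isosceles decomposition, the angle-sum identity, and the sign flip on the maximal edge across $\calBC_n$ is a faithful gloss of what those cited lemmas accomplish and is consistent with the way \IsoscelesDecomp, \OneSide, and \LongestSide\ are used elsewhere in the paper.
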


The area of cyclic $n$-gons (a.k.a the ``radius-$0$ defect''; see \cite[Definition 5.1]{DeB_cyclic_geom}) satisfies a nice differential formula.

\begin{proposition}[\cite{DeB_cyclic_geom}, Proposition 5.5]\label{omniarea}  For $n\geq 3$, $D_0\co\calAC_n\to\mathbb{R}^+$ is symmetric and $C^1$, where $D_0(d_0\hdots,d_{n-1})$ is the area of the cyclic $n$-gon determined by $(d_0,\hdots,d_{n-1})$.  At $(d_0,\hdots,d_{n-1})\in\calc_n\cup\calBC_n$, for each $i\in\{0,\hdots,n-1\}$ we have:
$$ \frac{\partial}{\partial d_i} D_0(d_0,\hdots,d_{n-1}) = \sqrt{\frac{1}{\cosh^2(d_i/2)}-\frac{1}{\cosh^2 J(d_0,\hdots,d_{n-1})}}  $$
This also holds for $(d_0,\hdots,d_{n-1})\in\calAC_n - \calc_n$ such that $d_i$ is not maximal.  Otherwise:
$$ \frac{\partial}{\partial d_i} D_0(d_0,\hdots,d_{n-1}) = -\sqrt{\frac{1}{\cosh^2(d_i/2)}-\frac{1}{\cosh^2 J(d_0,\hdots,d_{n-1})}} $$\end{proposition}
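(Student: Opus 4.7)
The plan is to decompose any cyclic $n$-gon into $n$ isoceles triangles sharing an apex at the circumcenter, apply Gauss--Bonnet to express the total area in terms of base angles, and then differentiate via the chain rule together with the implicit constraint that links $J$ to the edge lengths. First I set up the decomposition. For $(d_0,\ldots,d_{n-1}) \in \calc_n$, the $i$-th triangle $T_i$ has base $d_i$, two sides of length $J$, apex angle $A_{d_i}(J)$, and equal base angles $\alpha_i$. Bisecting $T_i$ by its apothem $h_i$ gives a right triangle with hypotenuse $J$ and legs $d_i/2, h_i$, from which one reads off the relations $\sin(A_{d_i}(J)/2) = \sinh(d_i/2)/\sinh J$, $\cos\alpha_i = \tanh(d_i/2)/\tanh J$, and $\cosh J = \cosh(d_i/2)\cosh h_i$. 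Gauss--Bonnet on each $T_i$ and summation, together with the centered constraint $\sum_j A_{d_j}(J) = 2\pi$, yield $D_0 = (n-2)\pi - 2\sum_j \alpha_j$.

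Next I compute $\partial J/\partial d_i$ by differentiating $\sum_j A_{d_j}(J) = 2\pi$ implicitly, using $\sin(A_{d_j}(J)/2) = \sinh(d_j/2)/\sinh J$. After repeated use of $\cosh J = \cosh(d_j/2)\cosh h_j$ to simplify, this produces an expression of the form $\partial J/\partial d_i = \sinh J/(2\sinh h_i \cdot S)$ where $S = \sum_j \cot\alpha_j$. Then I compute $\partial D_0/\partial d_i = -2\sum_j \partial\alpha_j/\partial d_i$ via the chain rule on $\cos\alpha_j = \tanh(d_j/2)/\tanh J$, substitute $\partial J/\partial d_i$, and observe that the factor $S$ cancels. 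The surviving terms collapse, again via $\cosh J = \cosh(d_i/2)\cosh h_i$, to
$$\frac{\partial D_0}{\partial d_i} \;=\; \frac{\sinh h_i}{\cosh J},$$
which one checks algebraically equals $\sqrt{\cosh^{-2}(d_i/2) - \cosh^{-2}J}$.

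For the non-centered case $\calAC_n\setminus(\calc_n\cup\calBC_n)$, a modified decomposition is required: with the circumcenter lying outside the polygon beyond the longest edge $e_k$, the polygon equals $\bigl(\bigcup_{j\ne k} T_j\bigr)\setminus T_k$, and the constraint becomes $\sum_{j\ne k}A_{d_j}(J) - A_{d_k}(J) = 2\pi$. The same scheme now runs with the sign of the contributions from $T_k$ flipped; the computation proceeds in parallel and yields $\partial D_0/\partial d_i = +\sinh h_i/\cosh J$ for $i\ne k$ but $-\sinh h_k/\cosh J$ for $i=k$, matching the claimed signs. On the transitional locus $\calBC_n$, where $e_k$ is a diameter, $T_k$ degenerates and $h_k=0$, so both formulas give zero for $i=k$ and agree for $i\ne k$, yielding $C^1$ continuity.

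The main obstacle is the algebraic collapse in the second step: several multi-index sums involving $S$, apothem lengths, and circumradius must telescope to the clean $\sinh h_i/\cosh J$ formula, and this demands consistent use of the apothem identity $\cosh J = \cosh(d_i/2)\cosh h_i$ at every stage. A secondary subtlety is the bookkeeping for the non-centered decomposition and verifying $C^1$ extension across $\calBC_n$; a limiting argument is needed to extend the formula up to the horocyclic boundary $\calHC_n$, where $J\to\infty$ but $\cosh h_i/\cosh J$ remains finite, so the derivative has a finite boundary value.
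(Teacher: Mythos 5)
Your plan is the natural one and almost certainly tracks the cited proof closely: the operative commands imported from \cite{DeB_cyclic_geom} in this paper include an isosceles-decomposition lemma and a radius function lemma, so the decomposition-into-isosceles-triangles-plus-implicit-differentiation scheme you outline is what the reference uses as well. I verified the algebraic collapse you flagged as the ``main obstacle'': working from $\cos\alpha_j = \tanh(d_j/2)/\tanh J$ and $\sin\alpha_j = \sinh h_j/\sinh J$ one gets $\cot\alpha_j = \tanh(d_j/2)\cosh J/\sinh h_j$, so the sum $\Sigma = \sum_j \tanh(d_j/2)/\sinh h_j$ equals $S/\cosh J$ with $S=\sum_j\cot\alpha_j$, and the terms do telescope to $\partial D_0/\partial d_i = \sinh h_i/\cosh J$, which equals $\sqrt{\cosh^{-2}(d_i/2)-\cosh^{-2}J}$ by the apothem identity. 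Your formula $\partial J/\partial d_i = \sinh J/(2\sinh h_i S)$ is correct in the centered case.

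One slip to fix: in the non-centered case the constraint you write, $\sum_{j\neq k}A_{d_j}(J) - A_{d_k}(J) = 2\pi$, should read $\sum_{j\neq k}A_{d_j}(J) - A_{d_k}(J) = 0$. The center being across the longest edge means the vertices lie on the minor arc subtended by that edge, so the central angles of the other $n-1$ sides sum to the central angle of the longest side, not to $2\pi$ more than it. This constant error does not affect the implicit differentiation (only the derivative of the constraint enters), but it would corrupt the Gauss--Bonnet identity you write for $D_0$ in the non-centered case if taken literally, so be careful there; with the corrected constraint you get $D_0 = (n-2)\pi - 2\sum_{j\neq k}\alpha_j + 2\alpha_k$, and the rest of the computation goes through with the sign flips you describe. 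Also, the concluding remark about extending past $\calHC_n$ is unnecessary for this proposition: $D_0$ is required to be $C^1$ on $\calAC_n$ only, which is open and does not meet $\calHC_n$; the only gluing needed is across $\calBC_n$, and your observation that both one-sided derivative formulas degenerate to $0$ at the maximal index there (since $h_k\to 0$) and agree at the other indices does establish $C^1$-ness.
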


\begin{corollary}[\cite{DeB_cyclic_geom}, Corollary 5.8]\label{monotonicity}  For $n\geq 3$ and $(d_0,\hdots,d_{n-1}), (d_0',\hdots,d_{n-1}')\in\calc_n\cup\calBC_n$, if $d_i < d'_{\sigma(i)}$ for each $i$ and some fixed permutation $\sigma$ then $D_0(d_0,\hdots,d_{n-1})< D_0(d_0',\hdots,d_{n-1}')$.\end{corollary}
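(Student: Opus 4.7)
The plan is to apply the fundamental theorem of calculus to $D_0$ along the straight-line path in parameter space, using the partial derivative formulas from Proposition~\ref{omniarea}. Since $D_0$ is symmetric, after permuting the coordinates of $(d_0',\ldots,d_{n-1}')$ via $\sigma^{-1}$ I may take $\sigma$ to be the identity and hence assume $d_i < d_i'$ for every $i$. Define $\gamma\co[0,1]\to(\mathbb{R}^+)^n$ by $\gamma(t) = (1-t)(d_0,\ldots,d_{n-1}) + t(d_0',\ldots,d_{n-1}')$.

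The main obstacle is to verify the key claim that $\gamma(t) \in \calc_n \cup \calBC_n$ for every $t \in [0,1]$. Writing $g(t) = \sum_i A_{\gamma_i(t)}(D(t)/2) - 2\pi$ with $D(t) = \max_i \gamma_i(t)$, this is equivalent to showing $g(t) \geq 0$ throughout, given $g(0), g(1) \geq 0$. The delicate point is that $g$ need not be convex along $\gamma$: the $A$-term attached to the maximum coordinate is pinned at $\pi$, while each non-maximum $A_{\gamma_j(t)}(D(t)/2)$ can rise or fall depending on how fast $\gamma_j$ grows relative to $D$. My plan for this step is to exploit the explicit bounds on $\partial J/\partial d_i$ in each of the regions $\calc_n$, $\calBC_n$, and $\calAC_n \setminus (\calc_n \cup \calBC_n)$ from Proposition~\ref{omniradius}, together with the trivial inequality $D \leq 2J$ (the longest side of a cyclic polygon is at most a diameter), to preclude any interior excursion of $\gamma$ out of $\calc_n \cup \calBC_n$.

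Granting the key claim, the conclusion is immediate. By Proposition~\ref{omniarea} each partial along $\gamma$ is given by the nonnegative expression
$$\frac{\partial D_0}{\partial d_i}(\gamma(t)) = \sqrt{\frac{1}{\cosh^2(\gamma_i(t)/2)} - \frac{1}{\cosh^2 J(\gamma(t))}},$$
which vanishes iff $\gamma_i(t) = 2 J(\gamma(t))$; such equality holds iff $\gamma(t) \in \calBC_n$ and $\gamma_i(t)$ is its (unique) maximum coordinate. Hence at each $t$ at most one of the $n$ partials vanishes and the rest are strictly positive. Since $d_i' - d_i > 0$ for every $i$, the integrand in
$$D_0(d') - D_0(d) = \int_0^1 \sum_{i=0}^{n-1}(d_i'-d_i)\,\frac{\partial D_0}{\partial d_i}(\gamma(t))\,dt$$
is a sum of nonnegative terms with at most one vanishing, and is therefore strictly positive. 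The integral is strictly positive, yielding $D_0(d') > D_0(d)$.
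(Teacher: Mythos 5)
The central step of your argument---the ``key claim'' that the segment $\gamma(t) = (1-t)\bd + t\bd'$ stays in $\calc_n\cup\calBC_n$---is not only left unproven (you describe a ``plan'' for it rather than a proof), it is in fact false. Here is a concrete counterexample with $n=3$. On $\calBC_3$ with $d_0$ maximal, the defining equation $\sum_i A_{d_i}(d_0/2)=2\pi$ reduces, via $A_d(J)=2\arcsin(\sinh(d/2)/\sinh J)$, to $\sinh^2(b_0/2)=\sinh^2(d_1/2)+\sinh^2(d_2/2)$. Take $\bd = (b_0(1,1),1,1)\approx(1.365,1,1)$ and $\bd'=(b_0(3,1.5),3,1.5)\approx(3.127,3,1.5)$; both lie in $\calBC_3\subset\calc_3\cup\calBC_3$, both are sorted decreasing, and $d_i<d_i'$ for each $i$. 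The midpoint is $\approx(2.246,2,1.25)$, but $b_0(2,1.25)\approx 2.218<2.246$. So the midpoint lies in $\calAC_3-(\calc_3\cup\calBC_3)$: the straight line leaves the centered region. Conceptually, in the slice where $d_0$ is largest, $\calc_n\cup\calBC_n$ is the region below the graph $d_0 = b_0(d_1,\hdots,d_{n-1})$, so the segment between two points of $\calBC_n$ stays inside only if $b_0$ is concave along that segment; but for $n=3$ the formula above gives, in the small-length (Euclidean) regime, $b_0(d_1,d_2)\approx\sqrt{d_1^2+d_2^2}$, which is \emph{strictly convex}, and the convexity persists in the numerical example above.

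Once $\gamma(t)$ exits $\calc_n\cup\calBC_n$, the conclusion of your final paragraph no longer holds: by Proposition~\ref{omniarea}, in $\calAC_n-(\calc_n\cup\calBC_n)$ the partial derivative of $D_0$ with respect to the maximal coordinate is \emph{negative}, so the integrand $\sum_i(d_i'-d_i)\,\partial D_0/\partial d_i$ is no longer a sum of nonnegative terms and can change sign. Thus the fundamental-theorem-of-calculus argument does not go through along the linear path. A correct proof must either choose a different path that demonstrably remains in $\calc_n\cup\calBC_n$ (for example, a carefully ordered piecewise-coordinate increase combined with the monotonicity of $b_0$ from Proposition~\ref{smooth BC}), or else argue by a different mechanism than sign-definite integration along the chord.
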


\begin{remark}  Since the radius and defect functions are symmetric, we will not worry much in practice about the particular cyclic order on edge or vertex sets of geometric dual polygons.\end{remark}

$\calBC_n$ and $\calHC_n$ are smoothly parametrized, disjoint, codimension-one submanifolds of $(\mathbb{R}^+)^n$.  The result below combines \BCn, \HCn, and \VerticalRay.

\begin{proposition}\label{smooth BC}  For each $n\geq 3$, there are smooth, positive-valued functions $b_0$ and $h_0$ on $(\mathbb{R}^+)^{n-1}$ such that $(b_0(\bd),\bd)\in\calBC_n$ and $(h_0(\bd),\bd)\in\calHC_n$ for any $\bd = (d_1,\hdots,d_{n-1})\in(\mathbb{R}^+)^{n-1}$.  Moreover, under the $\mathbb{Z}_n$-action on $(\mathbb{R}^+)^n$ by cyclic permutation of entries we have:\begin{align*}
  & \calBC_n = \bigsqcup_{\gamma\in\mathbb{Z}_n} \gamma\cdot\left\{(b_0(\bd),\bd)\,|\,\bd\in(\mathbb{R}^+)^{n-1}\right\} &
  & \calHC_n = \bigsqcup_{\gamma\in\mathbb{Z}_n} \gamma\cdot\left\{(h_0(\bd),\bd)\,|\,\bd\in(\mathbb{R}^+)^{n-1}\right\}\end{align*}
The functions $b_0$ and $h_0$ have the following additional properties:\begin{enumerate}
\item\label{the proper order}  For any $(d_1,\hdots,d_{n-1})\in(\mathbb{R}^+)^{n-1}$, $\max\{d_i\}_{i=1}^{n-1} < b_0(d_1,\hdots,d_{n-1}) < h_0(d_1,\hdots,d_{n-1})$.
\item\label{vertical ray} If $\bd = (d_0,\hdots,d_{n-1})\in(\mathbb{R}^+)^n$ has largest entry $d_0$ then
$$ \bd\in\calc_n \Leftrightarrow d_0 < b_0(d_1,\hdots,d_{n-1})\quad\mbox{and}\quad\bd\in\calAC_n \Leftrightarrow d_0 < h_0(d_1,\hdots,d_{n-1}) $$
\item\label{parameter monotonicity}  If $0< d_i \leq d_i'$ for each $i\in\{1,\hdots,n-1\}$ then $b_0(d_1,\hdots,d_{n-1}) \leq b_0(d_1',\hdots,d_{n-1}')$, and the same holds for $h_0$.
\end{enumerate}\end{proposition}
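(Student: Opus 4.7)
The plan is to invert the defining equations of $\calBC_n$ and $\calHC_n$ to express the maximal coordinate as a function of the others. For $\calHC_n$ this is explicit: putting
\begin{equation*}
h_0(d_1,\dots,d_{n-1}) = 2\sinh^{-1}\Bigl(\sum_{i=1}^{n-1}\sinh(d_i/2)\Bigr)
\end{equation*}
yields a smooth positive function satisfying $\sinh(h_0/2)=\sum_{i=1}^{n-1}\sinh(d_i/2)$, so that $(h_0(\bd),\bd)\in\calHC_n$. For $\calBC_n$ I would exploit the identity $A_b(b/2)=\pi$ to reduce the condition that $(b,d_1,\dots,d_{n-1})\in\calBC_n$ with $b$ the maximal coordinate to
\begin{equation*}
\sum_{i=1}^{n-1}A_{d_i}(b/2)=\pi,\qquad b\geq\max\{d_i\}_{i=1}^{n-1}.
\end{equation*}
The left side is smooth, strictly decreasing in $b$ (since $A_d(J)$ is strictly decreasing in $J$), exceeds $\pi$ at $b=\max\{d_i\}_{i\geq 1}$ (one summand equals $\pi$ and the rest are positive), and tends to $0$ as $b\to\infty$. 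Hence a unique solution $b_0(\bd)>\max\{d_i\}_{i\geq 1}$ exists, with smoothness coming from the implicit function theorem. This reproduces \BCn.

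For the disjoint union decompositions I would argue that each point of $\calBC_n$ and of $\calHC_n$ has a \emph{unique} maximal coordinate, so that the cyclic rotation placing it at position $0$ is well-defined. In $\calHC_n$ this is immediate from strict monotonicity of $\sinh$ on $\mathbb{R}^+$. In $\calBC_n$ it follows because two coordinates equal to the maximum $D$ would each contribute $\pi$ to $\sum_i A_{d_i}(D/2)$, forcing that sum to exceed $2\pi$.

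Property (\ref{the proper order}) splits in two: $\max\{d_i\}_{i\geq 1}<b_0(\bd)$ was established above, while $b_0(\bd)<h_0(\bd)$ follows from $(b_0(\bd),\bd)\in\calBC_n\subset\calAC_n$, which gives $\sinh(b_0/2)<\sum_{i\geq 1}\sinh(d_i/2)=\sinh(h_0/2)$. Property (\ref{vertical ray}) is essentially \VerticalRay: for $\bd=(d_0,\dots,d_{n-1})$ with $d_0$ largest, the strict inequality defining $\calc_n$ (resp.~$\calAC_n$) becomes $d_0<b_0$ (resp.~$d_0<h_0$) by the monotonicity recorded above. For (\ref{parameter monotonicity}), the explicit formula yields $\partial h_0/\partial d_i=\cosh(d_i/2)/\cosh(h_0/2)>0$, and implicit differentiation of $\sum_{i\geq 1} A_{d_i}(b_0/2)=\pi$ gives
\begin{equation*}
\frac{\partial b_0}{\partial d_i}=-\frac{\partial_d A_{d_i}(b_0/2)}{\tfrac12\sum_{j\geq 1}\partial_J A_{d_j}(b_0/2)}>0,
\end{equation*}
since $\partial_d A>0$ and $\partial_J A<0$.

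The main obstacle is the bookkeeping around uniqueness of the maximal coordinate: ensuring that the $\mathbb{Z}_n$-translates of the graphs of $b_0$ and $h_0$ genuinely partition $\calBC_n$ and $\calHC_n$, rather than overlapping on some diagonal locus, is the one delicate point, and it rests entirely on the contribution-of-$\pi$ computation above.
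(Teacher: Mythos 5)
The paper does not prove this proposition; the text preceding it says only that it ``combines'' three lemmas cited from \cite{DeB_cyclic_geom} (\BCn, \HCn, and \VerticalRay), so no argument appears here. Your proof instead derives everything directly from Definition~\ref{parameter spaces}, and the argument is sound: the explicit formula for $h_0$, the reduction of the $\calBC_n$ condition to $\sum_{i\geq 1}A_{d_i}(b/2)=\pi$ via $A_b(b/2)=\pi$, the IVT/IFT construction of $b_0$ with $b_0>\max\{d_i\}_{i\geq 1}$, and the contribution-of-$\pi$ uniqueness argument for $\calBC_n$ all check out, as do the sign computations for Property~(\ref{parameter monotonicity}). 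Two small remarks. First, your closing sentence attributes uniqueness of the maximal coordinate entirely to the $\pi$-computation, but for $\calHC_n$ it rests on the separate (and equally quick) observation that two indices achieving the defining equality would force $\sinh(d_{i_1}/2)=\sinh(d_{i_2}/2)$ and then a vanishing sum of the remaining $\sinh$ terms, impossible for $n\geq 3$. Second, the inequality $b_0<h_0$ in Property~(\ref{the proper order}) relies on the containment $\calBC_n\subset\calAC_n$, which the paper asserts in the prose following Definition~\ref{parameter spaces} but does not prove locally; citing it as background is fair, but be aware it is not a one-line consequence of your reduction (it amounts to showing $\sum_{i\geq 1}\sin^{-1}\bigl(\sinh(d_i/2)/\sinh(b_0/2)\bigr)=\pi/2$ with all arguments in $(0,1)$ forces $\sum_{i\geq 1}\sinh(d_i/2)>\sinh(b_0/2)$, which takes a short extra step). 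Overall this is a faithful reconstruction of what the cited lemmas presumably establish.
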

  
It is not convenient to attempt explicit formulas for the functions $D_0$ and $J$, but it is useful to know explicit values in a few cases.

\begin{lemma}\label{symmetric cyclic}  For $n\geq 3$ and $d>0$, $\bd\doteq (d,\hdots,d)\in(\mathbb{R}^+)^n$ is in $\calc_n$, and:\begin{align*}
 & D_0(\bd) = (n-2)\pi - 2n\sin^{-1}\left(\frac{\cos(\pi/n)}{\cosh(d/2)}\right) &
 & \sinh J(\bd) = \frac{\sinh(d/2)}{\sin (\pi/n)} \end{align*}
For $n\geq 3$ and $(d,\hdots,d)\in(\mathbb{R}^+)^{n-1}$, let $B_0= b_0(d,\hdots,d)$, where $b_0\co(\mathbb{R}^+)^{n-1}\to\mathbb{R}^+$.  Then:
$$ D_0(B_0,d,\hdots,d) = (n-2)\pi - (2n-2)\sin^{-1}\left(\frac{\cos(\pi/(2n-2))}{\cosh(d/2)}\right)$$
\end{lemma}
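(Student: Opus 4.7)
The plan is to use the $\mathbb{Z}_n$-symmetry of the regular cyclic $n$-gon for the first pair of formulas, and then obtain the $\mathcal{BC}_n$ formula by a reflection-doubling trick. To verify $\bd = (d,\ldots,d) \in \mathcal{C}_n$, I note that $D = d$ forces $\cos A_d(D/2) = 1 - 2\sinh^2(d/2)/\sinh^2(d/2) = -1$, giving $A_d(d/2) = \pi$ and hence $\sum_i A_{d_i}(D/2) = n\pi > 2\pi$ whenever $n \geq 3$.

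For the radius and area I would decompose the regular $n$-gon into $n$ congruent isosceles triangles with common apex at the center $v$ of its circumcircle, each having apex angle $2\pi/n$ at $v$ and opposite side of length $d$. Bisecting one of these along the apothem from $v$ produces a right triangle with legs $d/2$ and the apothem, hypotenuse $J$, angle $\pi/n$ at $v$, and base angle $\phi$ at a polygon vertex. The standard hyperbolic right-triangle identities then yield
\[
\sinh J = \frac{\sinh(d/2)}{\sin(\pi/n)} \qquad\text{and}\qquad \sin\phi = \frac{\cos(\pi/n)}{\cosh(d/2)}.
\]
Since the interior angle of the polygon at each of its $n$ vertices equals $2\phi$, Gauss--Bonnet gives $D_0(\bd) = (n-2)\pi - 2n\phi$, which is the claimed formula.

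For the second formula, the key observation is that a polygon $P$ corresponding to a tuple $(B_0, d, \ldots, d) \in \mathcal{BC}_n$ has the center of its circumcircle on its longest side, so that side is a diameter. Reflecting $P$ across this diameter gives a cyclic polygon $\widetilde P$ inscribed in the same circumcircle, with $2(n-1) = 2n-2$ sides all of length $d$: a regular cyclic $(2n-2)$-gon. Since $D_0(\widetilde P) = 2 D_0(B_0, d, \ldots, d)$, applying the first area formula to $\widetilde P$ with $2n-2$ in place of $n$ and halving yields the claimed expression.

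I do not anticipate any serious obstacle here; the main care is in selecting the correct pair of hyperbolic right-triangle identities for the circumradius and the interior angle, and in noting that reflection across the diameter preserves the circumcircle (since it is reflection in a geodesic through its center), so that the doubled figure is indeed again inscribed in it.
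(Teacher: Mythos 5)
Your proposal is correct and matches the paper's proof in structure: verify membership in $\mathcal{C}_n$ from the definition, decompose the regular cyclic $n$-gon into $n$ congruent isosceles triangles about the center, compute the circumradius and half-angle, apply Gauss--Bonnet, and then obtain the $\mathcal{BC}_n$ formula by reflecting across the diameter side and halving. The only superficial difference is that you bisect each isosceles triangle and use hyperbolic right-triangle identities ($\sin A = \sinh a/\sinh c$ and $\cos A = \cosh a\sin B$), whereas the paper applies the hyperbolic laws of cosines and sines to the whole isosceles triangle and then simplifies with half-angle formulas; these yield the same expressions.
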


\begin{proof}  It follows directly from the definitions that $\bd\in\calc_n$.  A cyclic $n$-gon with all sides of length $d$ is divided into $n$ isometric isosceles triangles by arcs joining its vertices to its center $v$.  Each of the resulting triangles thus has angle $2\pi/n$ at $v$, with each edge containing $v$ of length $J(\bd)$, and opposite edge of length $d$.  Applying the hyperbolic law of cosines and rearranging yields:
$$ \sinh J = \sqrt{\frac{\cosh d-1}{1-\sin(2\pi/n)}} = \frac{\sinh(d/2)}{\sin(\pi/n)} $$
The latter equation above follows from half-angle formulas.  Applying the hyperbolic law of sines to find the angle $\alpha$ between the sides of length $J(\bd)$ and $d$ yields:
$$ \sin \alpha = \frac{\sinh d}{\sinh(d/2)}\cdot\frac{\sin(2\pi/n)}{\sin(\pi/n)} = \frac{\cos(\pi/n)}{\cosh(d/2)} $$
The latter equation again follows from half-angle formulas.  $D_0(\bd)$ is $n$ times the area of one of these triangles, the angle defect $\pi - 2\pi/n - 2\alpha$.  This gives the first formula above.

We finally note that a cyclic $n$-gon $P$ with side length collection $(B_0,d,\hdots,d)$ has its center at the midpoint of its longest side.  Therefore the union of $P$ with its reflection $\overline{P}$ across the longest side is a cyclic $(2n-2)$-gon with the same circumcircle, all sides of length $d$, and 
$$\mathrm{Area}(P\cup\overline{P}) = \mathrm{Area}(P)+\mathrm{Area}(\overline{P}) = 2\mathrm{Area}(P)$$
The second area formula therefore follows from the first.\end{proof}

\subsection{Admissible spaces}\label{admissible for compact}  By the results of Section \ref{cyclic geom}, a centered dual $2$-cell $C_T$ is determined by the edge lengths of its constituent geometric dual polygons, together with their combinatorial arrangement.  The latter data are captured by the corresponding component $T$ of the non-centered Voronoi subgraph.  Recall from Definition \ref{tree cells} that the boundary of $C_T$ is the union of geometric duals to edges in the frontier of $T$.  Our goal here is to understand the geometry of $C_T$ using only its combinatorial structure and edge length data.

It is not hard to see that this is insufficient to determine $C_T$, but in this section we will describe properties of an \textit{admissible space} that, given this data collection, parametrizes all possibilities for such a cell.  We focus on the case that $C_T$ is compact, so $T$ is as well; in particular, all its edges are compact and $T^{(0)}$ is finite.

\theoremstyle{definition}
\newtheorem*{hypothesis}{Blanket hypothesis}  
\begin{hypothesis}  In this subsection we take $V$ to be a graph, perhaps with some non-compact edges, such that each vertex $v$ has valence $n_v$ satisfying $3\leq n_v < \infty$.  $T\subset V$ is a compact, rooted subtree with root vertex $v_T$, edge set $\cale$, and frontier $\calf$ in $V$.  The sole exception to this rule is Lemma \ref{admissible Voronoi}, where explicit hypotheses are given.\end{hypothesis}

Here the \textit{frontier} of $T$ in $V$ is the set of pairs $(e,v)$ such that $e$ is an edge of $V$ but not of $T$, and $v$ is a vertex in $e \cap T$.  We may refer to ``an edge'' of the frontier of $T$, without reference to its vertices, but note that such an edge may contribute up to two elements to $\calf$.

\begin{definition}\label{partial order}  Partially order $T^{(0)}$ by setting $v < v_T$ for each $v\in T^{(0)} - \{v_T\}$, and $w < v$ if the edge arc in $T$ joining $w \in T^{(0)}- \{v_T,v\}$ to $v_T$ runs through $v$.  Let $v-1$ be the set of $w<v$ joined to it by an edge, and say $v$ is \textit{minimal} if $v-1=\emptyset$.  For $v\in T^{(0)}-\{v_T\}$, let $e_v$ be the initial edge of the arc in $T$ joining $v$ to $v_T$, and say ``$e\to v$'' for each edge $e\neq e_v$ of $V$ containing $v$.\end{definition}

\begin{definition}\label{admissible criteria}  Let $(\mathbb{R}^+)^{\calf}$ be the set of tuples of positive real numbers indexed by the elements of $\calf$, and define $(\mathbb{R}^+)^{\cale}$ analogously.  For any elements $\bd_{\cale}=(d_e\,|\,e\in\cale)\in (\mathbb{R}^+)^{\mathcal{E}}$ and $\bd_{\calf}\in(\mathbb{R}^+)^{\calf}$, let $\bd = (\bd_{\cale},\bd_{\calf})$ and $P_{v}(\bd) = (d_{e_0},\hdots,d_{e_{n-1}})$ for $v\in T^{(0)}$, where the edges of $V$ containing $v$ are cyclically ordered as $e_0,\hdots,e_{n-1}$.  We say the \textit{admissible set} $\mathit{Ad}(\bd_{\calf})$ determined by $\bd_{\calf}$ is the collection of $\bd \in(\mathbb{R}^+)^{\cale}\times\{\bd_{\calf}\}$ such that: \begin{enumerate}
\item\label{not centered} For $v\in T^{(0)}-\{v_T\}$, $P_v(\bd) \in \mathcal{AC}_{n_v} - \calc_{n_v}$ has largest entry $d_{e_v}$. 
\item\label{centered} $P_{v_T}(\bd) \in \calc_{n_T}$, where we refer by $n_T$ to the valence $n_{v_T}$ of $v_T$ in $V$.
\item\label{radius order} $J(P_{v}(\bd)) > J(P_w(\bd))$ for each $w\in v- 1$, where $J(P_v(\bd))$ and $J(P_w(\bd))$ are the respective radii of $P_v(\bd)$ and  $P_w(\bd)$.  \end{enumerate}  
\end{definition}

\begin{remark}\label{empty admissible}  $\mathit{Ad}(\bd_{\calf})$ is empty for certain $\bd_{\calf}\in(\mathbb{R}^+)^{\calf}$.  For instance if $T$ has one edge and vertices of valence $3$ in $V$ then for any $d>0$ and $\bd_{\calf} = (d,d,d,d)$, $\mathit{Ad}(\bd_{\calf}) = \emptyset$.\end{remark}

\begin{remark}  If $T = \{v_T\}$ then $\mathit{Ad}(\bd_{\calf})$ is either empty or $\{\bd_{\calf}\}$ for any $\bd_{\calf}\in(\mathbb{R}^+)^{\calf}$; the latter if and only if $P_{v_T}(\bd_{\calf})\in\calc_{n_T}$.  (Note that the valence $n_T$ of $v_T$ in $V$ is $|\calf|$.)\end{remark}

\begin{definition}\label{tree defect}  Fix $\bd_{\calf} = (d_e\,|\,e\in\calf)\in (\mathbb{R}^+)^{\calf}$ such that $\mathit{Ad}(\bd_{\calf})\neq \emptyset$.  For each $\bd\in\mathit{Ad}(\bd_{\calf})$ and $R\geq 0$, 
define:
$$ D_T(\bd) = \sum_{v\in T^{(0)}} D_0(P_v(\bd)), $$
where $P_v(\bd)$ is as in Definition \ref{admissible criteria} and $D_0(P)$ is as in Proposition \ref{omniarea}.  \end{definition}

\begin{lemma}\label{admissible Voronoi}  Let $C_T$ be a compact centered dual two-cell, dual to a component $T$ of the non-centered Voronoi subgraph determined by locally finite $\cals\subset\mathbb{H}^2$.  Let $\mathcal{E}$ be the edge set of $T$ and $\mathcal{F}$ its frontier in the Voronoi graph $V$, and for each edge $e$ of $V$ that intersects $T$ let $d_e$ be the length of the geometric dual to $e$.  Then $\bd = (d_e\,|\,e\in\cale) \in \mathit{Ad}(\bd_{\calf})$, where $\bd_{\calf}=(d_e\,|\, (e,v)\in\calf\mbox{ for some }v\in T^{(0)})$, and $C_T$ has area $D_T(\bd)$.  \end{lemma}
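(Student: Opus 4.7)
The plan is to verify the three defining conditions of an admissible tuple (Definition \ref{admissible criteria}) in turn, and then to read off the area via the decomposition of $C_T$ in Definition \ref{tree cells}. Since $T$ is given as a compact subtree, all its edges are compact and $T^{(0)}$ is finite, so by Proposition \ref{to the root} we are in case (\ref{root centered}) and the root vertex $v_T$ exists with $C_{v_T}$ centered. Lemma \ref{vertex polygon} identifies the cyclic order of the side lengths of $C_{v_T}$ with the cyclic order $P_{v_T}(\bd)$ of the Voronoi edges about $v_T$, so $P_{v_T}(\bd)\in\calc_{n_T}$; this is condition (\ref{centered}).

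For condition (\ref{not centered}), fix a non-root $v\in T^{(0)}$. The same Proposition \ref{to the root} gives $C_v$ non-centered, so $P_v(\bd)\in\calAC_{n_v}-\calc_{n_v}$. To check that $d_{e_v}$ is the largest entry, note that by Lemma \ref{tree components} the edge arc in $T$ from $v$ to $v_T$ is coherently oriented toward $v_T$, so $e_v$ (the initial edge of this arc, in the sense of Definition \ref{partial order}) is a non-centered edge of $V$ with initial vertex $v$; by Corollary \ref{one direction} it is the unique such edge. Lemma \ref{vertex polygon centered} then asserts that the geometric dual to $e_v$ is the unique longest edge of $C_v$, which is the content of (\ref{not centered}).

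For condition (\ref{radius order}), fix $w\in v-1$. By Definition \ref{partial order}, the $T$-edge joining $w$ to $v$ is $e_w$, which points from its initial vertex $w$ to its terminal vertex $v$ (using Lemma \ref{tree components} together with Corollary \ref{one direction} as above). Lemma \ref{increasing radius} then gives $J_w<J_v$ for the vertex radii of Fact \ref{vertex radius}. By Lemma \ref{vertex polygon}, these radii are precisely the circumradii of $C_v$ and $C_w$, and hence equal $J(P_v(\bd))$ and $J(P_w(\bd))$ in the sense of Proposition \ref{omniradius}. This proves $\bd\in\mathit{Ad}(\bd_{\calf})$.

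The area assertion is then immediate. Since all edges of $T$ are compact we are in case (2) of Definition \ref{tree cells}, so $C_T=\bigcup_{v\in T^{(0)}}C_v$, and by Lemma \ref{vertex polygon} the interiors $\mathit{int}\,C_v$ are pairwise disjoint. Each $C_v$ is a cyclic polygon whose side length collection, read off cyclically, is $P_v(\bd)$, so $\mathrm{Area}(C_v)=D_0(P_v(\bd))$ by the definition of $D_0$ recorded in Proposition \ref{omniarea}. Summing over $v\in T^{(0)}$ yields $\mathrm{Area}(C_T)=D_T(\bd)$. I do not anticipate a substantive obstacle: the lemma is essentially a translation between the Voronoi-geometric setup from Sections \ref{intro Voronoi} and \ref{dirichlet dual} and the abstract admissible-tuple setup of this subsection, and every needed ingredient has already been proved; the main care needed is in matching the root/non-root dichotomy of $T$ with the centered/non-centered dichotomy encoded in conditions (\ref{not centered}) and (\ref{centered}).
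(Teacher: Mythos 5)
Your proof is correct and follows essentially the same route as the paper's: invoke Proposition \ref{to the root} to identify the root/non-root dichotomy with centered/non-centered, use Lemma \ref{tree components} and Corollary \ref{one direction} to match $e_v$ from Definition \ref{partial order} with the unique non-centered Voronoi edge having $v$ as initial vertex, then apply Lemma \ref{vertex polygon centered}, Lemma \ref{increasing radius}, and the non-overlapping decomposition of $C_T$ for the three admissibility conditions and the area formula. The only cosmetic difference is that you spell out the appeal to Corollary \ref{one direction} more explicitly than the paper does.
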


\begin{proof}  Since $C_T$ is compact so is $T$; in particular, $T^{(0)}$ is finite.  It follows that $T$  has a root vertex $v_T$ (recall Definition \ref{root vertex}).  By Proposition \ref{to the root}(\ref{root centered}), the geometric dual $C_{v_T}$ to $v_T$ is centered, and $C_v$ is non-centered for each $v\in T^{(0)}-\{v_T\}$.  It further follows from Lemma \ref{tree components} that for each $v\in T^{(0)}-\{v_T\}$, $e_v$ as defined in \ref{partial order} is the edge of $T$ with initial vertex $v$.

If $e_0,\hdots,e_{n-1}$ is the cyclically ordered collection of edges of $V$ containing $v\in T^{(0)}$, then appealing to Definition \ref{parameter spaces} we find that $C_v$ is represented by $(d_{e_0},\hdots,d_{e_{n-1}})\in\calAC_n$. Criterion (\ref{centered}) from Definition \ref{admissible criteria} follows, as does (\ref{not centered}) upon observing that for each $v\in T^{(0)}-\{v_T\}$, $C_v$ has longest side length $d_{e_v}$ by Lemma \ref{vertex polygon centered}.  

For $v\in T^{(0)}$ and $w \in v -1$, since $w$ is the initial vertex of $e_w$ and $v$ is its terminal vertex Lemma \ref{increasing radius} yields $J_v > J_w$.   Definition \ref{admissible criteria}(\ref{radius order}) follows, upon noting that $J_v = J(P_v)$ and $J_w=J(P_w)$, where the left-hand quantities are described in Lemma \ref{vertex polygon} and the others in Proposition \ref{omniradius}.  

That $C_T$ has area $D_T(\bd)$ is a direct consequence of Definitions \ref{tree cells} and \ref{tree defect}, since the union $C_T = \bigcup_{v\in T^{(0)}} C_v$ is non-overlapping and $D_0(P_v(\bd))$ is the area of $C_v$ for each $v\in T^{(0)}$.  \end{proof}

It is not hard to see that $\mathit{Ad}(\bd_{\calf})$ is generally not closed in $(\mathbb{R}^+)^{\cale}\times\{\bd_{\calf}\}$.  We will find it convenient to enlarge it slightly, since our main goal here is to compute minima of $D_T$.

\begin{definition}\label{bigger admissible}  For $\bd_{\calf} = (d_e\,|\,e\in\calf)\in(\mathbb{R}^+)^{\calf}$ let $\overline{\mathit{Ad}}(\bd_{\calf})$ consist of those $\bd = (\bd_{\cale},\bd_{\calf})$, for $\bd_{\cale}\in (\mathbb{R}^+)^{\cale}$, such that:\begin{enumerate}
\item\label{closure not centered} For $v\in T^{(0)}-\{v_T\}$, $P_v(\bd) \in\mathcal{AC}_{n_v} - \calc_{n_v}$ has largest entry $d_{e_v}$. 
\item\label{closure centered} $P_{v_T}(\bd) \in \calc_{n_T}\cup \calBC_{n_T}$, where we refer by $n_T$ to the valence $n_{v_T}$ of $v_T$ in $V$.
\item\label{closure radius order} $J(P_{v}(\bd)) \geq J(P_w(\bd))$ for each $w\in v- 1$, where $J(P_v(\bd))$ and $J(P_w(\bd))$ are the respective radii of $P_v(\bd)$ and $P_w(\bd)$.\end{enumerate}
\end{definition}

It is immediate from its definition that $\overline{\mathit{Ad}}(\bd_{\calf})$ contains $\mathit{Ad}(\bd_{\calf})$.  We will show in Lemma \ref{admissible closure} that it is compact and in particular closed, so it contains the closure of $\mathit{Ad}(\bd_{\calf})$.  However:

\begin{remark}  If $T$ has one edge and vertices of valence $3$ in $V$ then for any $d>0$ and $\bd_{\calf} = (d,d,d,d)$, $\overline{\mathit{Ad}}(\bd_{\calf}) = \{(B,\bd_{\calf})\}$ where $B = b_0(d,d)$.\end{remark}

With Remark \ref{empty admissible} this shows that the inclusion $\overline{\mathit{Ad}(\bd_{\calf})}\subset\overline{\mathit{Ad}}(\bd_{\calf})$ is proper in some cases.  

\begin{remark}\label{one vertex}  If $T = \{v_T\}$ then $\overline{\mathit{Ad}}(\bd_{\calf})$ is either empty or $\{\bd_{\calf}\}$ for any $\bd_{\calf}\in(\mathbb{R}^+)^{\calf}$; the latter if and only if $P_{v_T}(\bd_{\calf})\in \calc_{n_T}\cup\calBC_{n_T}$.  (Here $n_T = |\calf|$ is the valence of $v_T$ in $V$.)\end{remark}

\begin{remark}\label{length increasing}  Definition \ref{bigger admissible}(\ref{closure not centered}) implies that for any $v\in T^{(0)}-\{v_T\}$, $d_{e_v} > d_e$ for each $e\to v$ (cf.~Proposition \ref{smooth BC}(\ref{vertical ray})).  It follows that $d_{e_v} > d_e$ for each $e\to w$ such that $w < v$.  In particular, for some fixed $d>0$ if $d_e \geq d$ for all $e\in\calf$ then $d_e> d$ for all $e\in\cale$.\end{remark}

The Lemma below expands on Remark \ref{length increasing}.

\begin{lemma}\label{precompact}  Collections $\{b_e\co (\mathbb{R}^+)^{\calf}\to\mathbb{R}^+\}_{e\in\cale}$ and $\{h_e\co (\mathbb{R}^+)^{\calf}\to\mathbb{R}^+\}_{e\in\cale}$ are determined by the following properties: for $\bd_{\calf}\in(\mathbb{R}^+)^{\calf}$ and $\bd_{\cale}\in(\mathbb{R}^+)^{\cale}$, with $\bd=(\bd_{\cale},\bd_{\calf})$,\begin{itemize}
  \item  $P_v(\bd)$ is in $\calBC_{n_v}$, with largest entry $d_{e_v}$, for all $v\in T^{(0)}-\{v_T\}$ if and only if $d_e = b_e(\bd_{\calf})$ for each $e\in\cale$.
  \item  $P_v(\bd)$ is in $\calHC_{n_v}$, with largest entry $d_{e_v}$, for all $v\in T^{(0)}-\{v_T\}$ if and only if $d_e = h_e(\bd_{\calf})$ for each $e\in\cale$.\end{itemize}
For $e\in\cale$, the functions $b_e$ and $h_e$ have the following properties:\begin{enumerate}
\item\label{b increasing in T}  For $\bd_{\calf}$ and $v\in T^{(0)}-\{v_T\}$, $b_{e_v}(\bd_{\calf}) > \max \{b_e(\bd_{\calf})\,|\, e\to v\in\cale\}\cup\{d_e\,|\,e\to v\in\calf\}$.
\item\label{admissible bounded}  If $\bd=(\bd_{\cale},\bd_{\calf}) \in \overline{\mathit{Ad}}(\bd_{\calf})$ then for each $e\in\mathcal{E}$, $b_e(\bd_{\calf}) \leq d_e < h_e(\bd_{\calf})$.
\item\label{b increasing}  If $d_e' \geq d_e$ for each $e\in\calf$ then $b_e(\bd_{\calf}')\geq b_e(\bd_{\calf})$ for each $e\in\cale$, where $\bd_{\calf}' = (d_e')_{e\in\calf}$.\end{enumerate}\end{lemma}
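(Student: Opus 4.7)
The plan is to define the functions $b_e$ and $h_e$ by induction on the partial order on $T^{(0)}$ from Definition \ref{partial order}, starting at the minimal vertices of $T$ and working upward toward $v_T$, invoking Proposition \ref{smooth BC} at each step. For $v \in T^{(0)} - \{v_T\}$, the $n_v - 1$ edges $e \neq e_v$ containing $v$ split into two classes: those in $\cale$, which by Definition \ref{partial order} are precisely the $e_w$ for $w \in v-1$, and those in $\calf$. Assuming inductively that $b_{e_w}(\bd_{\calf})$ has been defined for every $w \in v-1$, I would set
\[
b_{e_v}(\bd_{\calf}) \doteq b_0\bigl((b_{e_w}(\bd_{\calf}))_{w \in v-1},\, (d_e)_{e \to v,\, e \in \calf}\bigr),
\]
where $b_0 \co (\mathbb{R}^+)^{n_v-1}\to \mathbb{R}^+$ is the function from Proposition \ref{smooth BC} at valence $n_v$. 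Define $h_{e_v}$ analogously using $h_0$. The biconditional characterizations in the lemma then follow by induction on the partial order from the biconditional in Proposition \ref{smooth BC}(\ref{vertical ray}), with uniqueness of $b_e$ and $h_e$ coming from the uniqueness built into Proposition \ref{smooth BC}. Property (\ref{b increasing in T}) is immediate from Proposition \ref{smooth BC}(\ref{the proper order}) applied at $v$, since by construction $b_{e_v}(\bd_{\calf})$ is the value of $b_0$ on the $n_v-1$ entries asserted to be smaller.

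To prove (\ref{admissible bounded}), I would run a second induction over the same partial order. For $\bd \in \overline{\mathit{Ad}}(\bd_{\calf})$ and $v \in T^{(0)} - \{v_T\}$, Definition \ref{bigger admissible}(\ref{closure not centered}) places $P_v(\bd)$ in $\calAC_{n_v} - \calc_{n_v}$ with largest entry $d_{e_v}$, so Proposition \ref{smooth BC}(\ref{vertical ray}) yields
\[
b_0\bigl((d_e)_{e \to v}\bigr) \leq d_{e_v} < h_0\bigl((d_e)_{e \to v}\bigr).
\]
Combining the inductive hypothesis $b_e(\bd_{\calf}) \leq d_e < h_e(\bd_{\calf})$ for $e \to v$ with $e \in \cale$ with the monotonicity of $b_0$ and $h_0$ in Proposition \ref{smooth BC}(\ref{parameter monotonicity}) then transfers the bound to $e_v$. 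Property (\ref{b increasing}) is proved by a parallel induction: once $b_{e_w}(\bd_{\calf}') \geq b_{e_w}(\bd_{\calf})$ is established for each $w \in v-1$, the same monotonicity of $b_0$ propagates the inequality to $e_v$.

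The main obstacle will be inductive bookkeeping --- tracking how the edges $e \to v$ split between $\cale$ and $\calf$ and verifying that the ``largest entry $d_{e_v}$'' condition persists as one ascends the tree (which follows from Proposition \ref{smooth BC}(\ref{the proper order}) at each step, but requires some care at the base case of minimal $v$, where $v-1 = \emptyset$ and all $e \to v$ lie in $\calf$). The genuine analytic content is entirely contained in Proposition \ref{smooth BC}, so no further computation with cyclic polygons should be required.
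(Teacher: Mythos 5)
Your proposal is correct and takes essentially the same approach as the paper: both define $b_e$ and $h_e$ by induction over the partial order on $T^{(0)}$ from the minimal vertices upward, feeding the inductively-defined values into $b_0$ and $h_0$ from Proposition \ref{smooth BC} at each step, and both derive properties (\ref{b increasing in T})--(\ref{b increasing}) from parts (\ref{the proper order}), (\ref{vertical ray}), and (\ref{parameter monotonicity}) of that proposition together with the inductive hypothesis.
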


\begin{proof}  We construct by induction, the key point being that for $v\in T^{(0)}-\{v_T\}$, $b_{e_v}(\bd_{\calf})$ is determined by $\bd_{\calf}$ and $\{b_{e_w}(\bd_{\calf})\,|\,w<v\}$, and similarly for $h_{e_v}(\bd_{\calf})$.  Fix $\bd_{\calf}\in(\mathbb{R}^+)^{\calf}$.

Suppose first that $v\in T^{(0)}$ is minimal, so each $e\to v$ is in $\mathcal{F}$.  Cyclically enumerate the edges of $V$ containing $v$ as $e_0,\hdots,e_{n-1}$ so that $e_0 = e_v$, and for each $i >0$ let $d_i = d_{e_i}$.  Let $b_{e_v}(\bd_{\calf})= b_0(d_1,\hdots,d_{n-1})$ and $h_{e_v}(\bd_{\calf}) = h_0(d_1,\hdots,d_{n-1})$, for $b_0$ and $h_0$ taking $(\mathbb{R}^+)^{n-1}$ to $\mathbb{R}^+$ as in Proposition \ref{smooth BC}.  That result implies that $b_{e_v}(\bd_{\calf})$ is the unique real number with the property that $(b_{e_v}(\bd_{\calf}),d_1,\hdots,d_{n-1})$ is in $\calBC_n$ and has its largest entry first; and it implies the analog for $h_{e_v}(\bd_{\calf})$ and $\calHC_n$. 

Note also that if $\bd\in \overline{\mathit{Ad}}(\bd_{\calf})$ then Definition \ref{bigger admissible} (\ref{closure not centered}) implies that $P_v(\bd) = (d_{e_v},d_1,\hdots,d_{n-1})\in \calAC_n - \calc_n$ has largest entry $d_{e_v}$, so $b_{e_v}(\bd_{\calf})\leq d_{e_v} < h_{e_v}(\bd_{\calf})$ by Proposition \ref{smooth BC}(\ref{vertical ray}).  This implies property (\ref{admissible bounded}) above for $b_{e_v}$.  Property (\ref{b increasing in T}) and property (\ref{b increasing}) above also follow from Proposition \ref{smooth BC}, respectively using assertions (\ref{the proper order}) and (\ref{parameter monotonicity}) there.

Now fix $v\in T^{(0)}-\{v_T\}$ non-minimal, and assume that $b_{e_w}(\bd_{\calf})$ and $h_{e_w}(\bd_{\calf})$ are defined for each $w<v$, uniquely such that for $\bd_{\calf}\in(\mathbb{R}^+)^{\calf}$ and $\bd_{\cale}\in(\mathbb{R}^+)^{\cale}$, with $\bd=(\bd_{\cale},\bd_{\calf})$:\begin{itemize}
\item $P_w(\bd)\in\calBC_{n_w}$ with largest entry $d_{e_w}$, for all $w<v$ if and only if $d_{e_w} = b_{e_w}(\bd_{\calf})$ for each $w < v$.
\item $P_w(\bd)\in\calHC_{n_w}$, with largest entry $d_{e_w}$, for all $w<v$ if and only if $d_{e_w} = h_{e_w}(\bd_{\calf})$ for each $w < v$.
\item Property (\ref{admissible bounded}) holds for each $b_{e_w}$ and $h_{e_w}$, and (\ref{b increasing in T}) and (\ref{b increasing}) hold for each $b_{e_w}$, $w<v$.\end{itemize}
Cyclically order the edges containing $v$ as $e_0,\hdots,e_{n-1}$ so that $e_0 = e_v$, and for $i>0$ take:\begin{align*}
  & d_i = \left\{\begin{array}{ll} d_{e_i} & e_i\in\calf \\ b_{e_i}(\bd_{\calf}) & e_i\in\cale \end{array}\right. &
  & d_i' = \left\{\begin{array}{ll} d_{e_i} & e_i\in\calf \\ h_{e_i}(\bd_{\calf}) & e_i\in\cale\end{array}\right.\end{align*}
Proposition \ref{smooth BC} again implies that $b_{e_v}(\bd_{\calf})\doteq b_0(d_1,\hdots,d_{n-1})$ is unique among $b>\max\{d_i\}$ such that $(b_{e_v}(\bd_{\calf}),d_1,\hdots,d_{n-1})\in\calBC_n$; and $h_{e_v}(\bd_{\calf})\doteq h_0(d_1',\hdots,d_{n-1}')$ is unique among $h> \max\{d_i'\}$ such that $(h_{e_v}(\bd_{\calf}),d_1',\hdots,d_{n-1}')\in\calHC_n$.

Now let $\bd\in\overline{\mathit{Ad}}(\bd_{\calf})$.   Since property (\ref{admissible bounded}) holds by hypothesis for each $e_i\in\cale$, $d_{e_i}\geq d_i$ for such $i$ (and otherwise $d_{e_i}=d_i$ by construction).  Thus Proposition \ref{smooth BC}(\ref{parameter monotonicity}) implies that $b_{e_v}(\bd_{\calf})\leq b_0(d_{e_1},\hdots,d_{e_{n-1}})$, and Definition \ref{bigger admissible}(\ref{not centered}) and Proposition \ref{smooth BC}(\ref{vertical ray}) imply that $b_0(d_{e_1},\hdots,d_{e_{n-1}}) < d_{e_v}$.  Analogously, $h_{e_v}(\bd_{\calf}) > h_0(d_{e_1},\hdots,d_{e_{n-1}}) > d_{e_v}$.  To summarize:
$$ b_{e_v}(\bd_{\calf}) \leq b_0(d_{e_1},\hdots,d_{e_{n-1}}) \leq d_{e_v} < h_0(d_{e_1},\hdots,d_{e_{n-1}}) < h_{e_v}(\bd_{\calf})$$
This proves property (\ref{admissible bounded}) for $e_v$.  Property (\ref{b increasing in T}) and (\ref{b increasing}) again follow from the corresponding assertions of Proposition \ref{smooth BC}, along with the inductive hypothesis.

The lemma now follows by induction.  (Recall in particular that there is a unique $e_v$ for each $v\in T^{(0)}-\{v_T\}$, and that $\cale$ is the set of all such $e_v$.)\end{proof}

\begin{remark}\label{outside-in}  For any given tree $T$ with frontier $\calf$, the proof of Lemma \ref{precompact} is easily adapted (using formulas from \cite{DeB_cyclic_geom}) to produce a recursive algorithm that takes $\bd_{\calf}\in(\mathbb{R}^+)^{\calf}$ and computes the values $b_e(\bd_{\calf})$ or $h_e(\bd_{\calf})$ from the ``outside in.''\end{remark}

\begin{lemma}\label{admissible closure}  For any $\bd_{\calf}\in(\mathbb{R}^+)^{\calf}$, $\overline{\mathit{Ad}}(\bd_{\calf})$ is compact.\end{lemma}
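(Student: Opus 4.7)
The plan is to show that $\overline{\mathit{Ad}}(\bd_{\calf})$ is closed and bounded in the finite-dimensional space $(\mathbb{R}^+)^{\cale}\times\{\bd_{\calf}\}$. Boundedness is immediate from Lemma \ref{precompact}(\ref{admissible bounded}): every $\bd = (\bd_{\cale},\bd_{\calf})\in\overline{\mathit{Ad}}(\bd_{\calf})$ satisfies $b_e(\bd_{\calf})\leq d_e < h_e(\bd_{\calf})$ for each $e\in\cale$, so $\bd_{\cale}$ lies in the compact box $\prod_{e\in\cale}[b_e(\bd_{\calf}),h_e(\bd_{\calf})]$.

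For closedness, I would take a sequence $\bd^{(k)}\in\overline{\mathit{Ad}}(\bd_{\calf})$ and, after passing to a subsequence, assume it converges to some $\bd^*$ in this closed box. The crux is to verify $P_v(\bd^*)\in\calAC_{n_v}$ for every $v\in T^{(0)}$, after which the remaining conditions of Definition \ref{bigger admissible} will follow by taking limits of closed conditions. To do this, first observe that Definition \ref{bigger admissible}(\ref{closure centered}) places $P_{v_T}(\bd^{(k)})$ in the bounded subset $\calc_{n_T}\cup\calBC_{n_T}$ of $\calAC_{n_T}$, so continuity of $J$ on $\calAC_{n_T}$ (Proposition \ref{omniradius}) yields a uniform upper bound $M$ on $J(P_{v_T}(\bd^{(k)}))$. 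Propagating this via Definition \ref{bigger admissible}(\ref{closure radius order}) through the partial order of Definition \ref{partial order}: for any $v\in T^{(0)}$, iterating $J(P_{v^+})\geq J(P_v)$ along the unique path from $v$ to $v_T$ gives $J(P_v(\bd^{(k)}))\leq M$. Since $J\to\infty$ as $\calAC_{n_v}\to\calHC_{n_v}$ (Proposition \ref{omniradius}), this uniform $J$-bound confines $P_v(\bd^{(k)})$ to a compact subset of $\calAC_{n_v}$, so $P_v(\bd^*)\in\calAC_{n_v}$.

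With $P_v(\bd^*)\in\calAC_{n_v}$ in hand, I can close the argument: $\calc_{n_v}$ is open in $\calAC_{n_v}$ (defined by a strict inequality) so $\calAC_{n_v}-\calc_{n_v}$ is relatively closed, and the condition that $d_{e_v}$ be the largest entry of $P_v$ is closed, giving Definition \ref{bigger admissible}(\ref{closure not centered}); $\calc_{n_T}\cup\calBC_{n_T}$ is relatively closed in $\calAC_{n_T}$ so (\ref{closure centered}) passes to the limit; and the weak inequalities in (\ref{closure radius order}) are preserved by continuity of $J$. Hence $\bd^*\in\overline{\mathit{Ad}}(\bd_{\calf})$, establishing closedness. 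The main obstacle, which the radius-propagation argument addresses, is that the coordinate bound $d_e<h_e(\bd_{\calf})$ from Lemma \ref{precompact} is not preserved strictly under limits, and a priori the limiting tuple $P_v(\bd^*)$ could lie on $\calHC_{n_v}$; the global constraint imposed by $v_T$ is what excludes this escape.
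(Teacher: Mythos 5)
Your proof is correct in substance and uses essentially the same key idea as the paper: boundedness comes straight from Lemma \ref{precompact}(\ref{admissible bounded}), and the delicate point — excluding escape of some $P_v$ into $\calHC_{n_v}$ in the limit — is handled by combining the radius ordering of Definition \ref{bigger admissible}(\ref{closure radius order}) with the blow-up of $J$ near $\calHC_{n_v}$ from Proposition \ref{omniradius}. The packaging differs slightly: you propagate a uniform upper bound $M$ on $J$ from $v_T$ down the tree, whereas the paper argues by contradiction at a \emph{minimal violator}, taking a vertex $v$ closest to $v_T$ with $P_v(\bd^*)\in\calHC_{n_v}$ and comparing $J(P_v(\bd^{(k)}))\to\infty$ against the convergent $J(P_w(\bd^{(k)}))$ at its parent $w$; that local contradiction avoids needing an explicit bound $M$ at $v_T$.

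One inaccuracy worth fixing: $\calc_{n_T}\cup\calBC_{n_T}$ is \emph{not} a bounded subset of $(\mathbb{R}^+)^{n_T}$ (for instance, the regular $n$-gons $(d,\hdots,d)$ lie in $\calc_{n_T}$ for all $d>0$), so ``continuity of $J$ on $\calAC_{n_T}$'' does not by itself yield your $M$. What is true, and suffices, is that the sequence $P_{v_T}(\bd^{(k)})$ is confined to a \emph{compact} subset of $\calc_{n_T}\cup\calBC_{n_T}$: the $\cale$-coordinates lie in the box $\prod_e[b_e(\bd_{\calf}),h_e(\bd_{\calf})]$ and the $\calf$-coordinates are fixed, so the tuples are bounded and bounded away from the frontier of $(\mathbb{R}^+)^{n_T}$, and $\calc_{n_T}\cup\calBC_{n_T}$ is closed in $(\mathbb{R}^+)^{n_T}$ (the paper cites Lemma 3.3 of \cite{DeB_cyclic_geom} for this) and disjoint from $\calHC_{n_T}$ by Proposition \ref{smooth BC}(\ref{the proper order}). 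With that correction, $J$ attains a maximum $M$ on the resulting compact set and the rest of your argument goes through.
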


\begin{proof}  This is vacuous if $\overline{\mathit{Ad}}(\bd_{\calf})$ is empty, so fix $\bd_{\calf}$ such that $\overline{\mathit{Ad}}(\bd_{\calf})\neq \emptyset$.  It is enough to show that $\overline{\mathit{Ad}}(\bd_{\calf})$ is closed in $\mathbb{R}^{\cale}\times\{\bd_{\calf}\}$, since Lemma \ref{precompact}(\ref{admissible bounded}) implies it is bounded.  Note also that Lemma \ref{precompact}(\ref{b increasing in T}) implies for fixed $\bd_{\calf}\in(\mathbb{R}^+)^{\calf}$ that if $d = \min\{d_e\,|\,e\in\calf\}$ then $\overline{\mathit{Ad}}(\bd_{\calf})\subset [d,\infty)^{\cale}\times\{\bd_{\calf}\}$.

It is clear from the definition of $P_v(\bd)\in\calAC_{n_v}$ that it varies continuously with $\bd$ (to this point, recall that $\calAC_n$ takes the subspace topology from $\mathbb{R}^n$; see \cite{DeB_cyclic_geom} above Lemma 3.2).  Since $\calc_{n_T}\cup\calBC_{n_T}$ is closed in $(\mathbb{R}^+)^{n_T}$ (see Lemma 3.3 of \cite{DeB_cyclic_geom}), and no sequence in $\overline{\mathit{Ad}}(\bd_{\calf})$ approaches the frontier of $(\mathbb{R}^+)^{\cale}\times\{\bd_{\calf}\}$ (see above), condition (\ref{closure centered}) is preserved under any limit of points in $\overline{\mathit{Ad}}(\bd_{\calf})$.  By Proposition \ref{omniradius}, $J(P_v(\bd))$ varies continuously with $\bd$ on $\overline{\mathit{Ad}}(\bd_{\calf})$ for each $v\in T^{(0)}$, so (\ref{closure radius order}) is also preserved by such a limit.

Since $\calAC_n$ is open in $(\mathbb{R}^+)^n$ it is \textit{a priori} possible that (\ref{closure not centered}) is not preserved; ie, for some sequence $\{\bd_{i}\}\subset\overline{\mathit{Ad}}(\bd_{\calf})$ limiting to $\bd\in (\mathbb{R}^+)^{\cale}\times\{\bd_{\calf}\}$ that there exists $v\in T^{(0)}-\{v_T\}$ such that $P_v(\bd) \in\calHC_{n_v}$, where $v$ has valence $n_v$ in $V$.  For such $\{\bd_{i}\}\to\bd$, let $v$ be a closest vertex to $v_T$ such that $P_v(\bd)\in\calHC_{n_v}$.  In particular $P_w(\bd) \in \mathcal{AC}_{n_w}$ for the endpoint $w$ of $e_v$ (note that $P_{v_T}(\bd)\in\calAC_{n_T}$ by preservation of (\ref{closure centered})).  Proposition \ref{omniradius} implies on the one hand that $J(P_w(\bd_i)) \to J(P_w(\bd))$, since $P_w(\bd_i)\to P_w(\bd)$, and on the other that $J(P_v(\bd_i)) \to \infty$, since $P_v(\bd_i)\to P_v(\bd)\in\calHC_n$.  But then for some $\bd_i$ the inequality of Definition \ref{bigger admissible}(\ref{radius order}) fails, a contradiction.  Therefore (\ref{closure not centered}) is preserved under taking limits, and $\overline{\mathit{Ad}}(\bd_{\calf})$ is closed.  \end{proof}

\begin{lemma}\label{continuous defect}  Fix $\bd_{\calf} = (d_e\,|\,e\in\calf)\in (\mathbb{R}^+)^{\calf}$ such that $\overline{\mathit{Ad}}(\bd_{\calf})\neq \emptyset$.  Then $D_T(\bd)$ is continuous on $\overline{\mathit{Ad}}(\bd_{\calf})$ and attains a minimum there.  \end{lemma}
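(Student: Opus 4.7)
The plan is to reduce both claims to properties already established: continuity to Proposition \ref{omniarea} and the continuity of the coordinate projections $P_v$, and attainment of the minimum to the extreme value theorem combined with compactness of $\overline{\mathit{Ad}}(\bd_{\calf})$ (Lemma \ref{admissible closure}).

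For continuity, first I would note that $T^{(0)}$ is finite, since $T$ is compact by the blanket hypothesis of the subsection. Hence $D_T = \sum_{v\in T^{(0)}} D_0\circ P_v$ is a finite sum, so it suffices to prove each summand is continuous on $\overline{\mathit{Ad}}(\bd_{\calf})$. Each $P_v$ is continuous on $(\mathbb{R}^+)^{\cale}\times\{\bd_{\calf}\}$ because it simply reads off a fixed cyclic ordering of coordinates of $\bd$. The image of $P_v$ on $\overline{\mathit{Ad}}(\bd_{\calf})$ lies in $\calAC_{n_v}$: for $v\neq v_T$ this is immediate from Definition \ref{bigger admissible}(\ref{closure not centered}), and for $v=v_T$ it follows from Definition \ref{bigger admissible}(\ref{closure centered}) together with the inclusion $\calc_{n_T}\cup\calBC_{n_T}\subset \calAC_{n_T}$. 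Since $D_0$ is $C^1$ (in particular continuous) on $\calAC_{n_v}$ by Proposition \ref{omniarea}, the composition $D_0\circ P_v$ is continuous on $\overline{\mathit{Ad}}(\bd_{\calf})$.

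For the existence of a minimum, $\overline{\mathit{Ad}}(\bd_{\calf})$ is compact by Lemma \ref{admissible closure}, and it is nonempty by hypothesis. The extreme value theorem applied to the continuous real-valued function $D_T$ on this nonempty compact set gives the desired minimum.

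The statement has no real obstacle — all of the genuine work has already been done, namely verifying compactness of $\overline{\mathit{Ad}}(\bd_{\calf})$ in Lemma \ref{admissible closure} (which in particular confirmed that limits of sequences in $\overline{\mathit{Ad}}(\bd_{\calf})$ cannot escape $\calAC_{n_v}$ through $\calHC_{n_v}$) and proving regularity of $D_0$ in Proposition \ref{omniarea}. The only subtlety to flag is the aforementioned check that the projections land inside the domain of $D_0$; once noted, the rest is routine.
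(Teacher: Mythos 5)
Your proof is correct and follows essentially the same route as the paper's: continuity of $D_0$ on $\calAC_n$ from Proposition \ref{omniarea}, the observation that each $P_v(\bd)\in\calAC_{n_v}$ on $\overline{\mathit{Ad}}(\bd_{\calf})$, and compactness from Lemma \ref{admissible closure} together with the extreme value theorem. You spell out the finiteness of $T^{(0)}$ and the case check for $v=v_T$ a bit more explicitly, but the substance is identical.
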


\begin{proof}  Since $P\mapsto D_0(P)$ is continuous on $\mathcal{AC}_n$ (by Proposition \ref{omniarea}), and  $P_v(\bd) \in \mathcal{AC}_n$ for each $\bd \in \overline{\mathit{Ad}}(\bd_{\calf})$, $D_T(\bd)$ is continuous on $\overline{\mathit{Ad}}(\bd_{\calf})$.  Since this is compact by Lemma \ref{admissible closure}, $D_T(\bd)$ attains a minimum on it.\end{proof}

Finally, we observe that $D_T$ attains a minimum only at one of a short list of special locations.

\begin{proposition}\label{minimum at}  For $\bd_{\calf}\in(\mathbb{R}^+)^{\calf}$ with $\overline{\mathit{Ad}}(\bd_{\calf})\neq\emptyset$, at a minimum point $\bd = (\bd_{\cale},\bd_{\calf})$ for $D_T(\bd)$ one of the following holds:\begin{enumerate}
\item\label{really not centered} $P_v(\bd)\in\calBC_{n_v}$ for each $v\in T^{(0)}-\{v_T\}$, where $v$ has valence $n_v$ in $V$;
\item\label{really centered} $P_{v_T}(\bd)\in\calBC_{n_T}$, where $v_T$ has valence $n_T$ in $V$; or
\item\label{strict radius order} $J(P_v(\bd)) = J(P_w(\bd))$ for some $v\in T^{(0)}$ and $w\in v-1$.
\end{enumerate}\end{proposition}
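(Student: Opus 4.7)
The plan is to argue by contradiction. Assume $\bd=(\bd_{\cale},\bd_{\calf})\in\overline{\mathit{Ad}}(\bd_{\calf})$ is a minimum for $D_T$ at which none of the three conclusions holds. Then there is some $v^{*}\in T^{(0)}-\{v_T\}$ with $P_{v^{*}}(\bd)\in\calAC_{n_{v^{*}}}-(\calc_{n_{v^{*}}}\cup\calBC_{n_{v^{*}}})$, the root polygon satisfies $P_{v_T}(\bd)\in\calc_{n_T}$ (strictly, not merely in $\calc_{n_T}\cup\calBC_{n_T}$), and every radius inequality is strict: $J(P_v(\bd))>J(P_w(\bd))$ for every $v\in T^{(0)}$ and every $w\in v-1$. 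I will contradict minimality by decreasing $d_{e_{v^{*}}}$ alone by a small amount. The edge case $T=\{v_T\}$ is handled vacuously, since conclusion (\ref{really not centered}) then holds by empty quantification.

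The core calculation is $\tfrac{\partial D_T}{\partial d_{e_{v^{*}}}}(\bd)>0$. Only the summands $D_0(P_{v^{*}})$ and $D_0(P_{\hat{v}^{*}})$ depend on $d_{e_{v^{*}}}$, where $\hat{v}^{*}\in T^{(0)}$ is the unique vertex with $v^{*}\in\hat{v}^{*}-1$ (i.e.\ the terminal endpoint of $e_{v^{*}}$ in $T$). Because $d_{e_{v^{*}}}$ is \emph{strictly} the largest entry of $P_{v^{*}}$---strictness coming from $P_{v^{*}}\in\calAC-(\calc\cup\calBC)$ together with Proposition \ref{smooth BC}(\ref{the proper order})---and $P_{v^{*}}\notin\calc$, Proposition \ref{omniarea} yields
\[\frac{\partial D_0(P_{v^{*}})}{\partial d_{e_{v^{*}}}}=-\sqrt{\frac{1}{\cosh^{2}(d_{e_{v^{*}}}/2)}-\frac{1}{\cosh^{2}J(P_{v^{*}})}}.\]
For the parent term: if $\hat{v}^{*}=v_T$ then $P_{v_T}\in\calc_{n_T}$; otherwise $P_{\hat{v}^{*}}\in\calAC-(\calc\cup\calBC)$ with $d_{e_{\hat{v}^{*}}}$ strictly maximal, so $d_{e_{v^{*}}}$ is not maximal in $P_{\hat{v}^{*}}$. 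In either situation Proposition \ref{omniarea} produces
\[\frac{\partial D_0(P_{\hat{v}^{*}})}{\partial d_{e_{v^{*}}}}=+\sqrt{\frac{1}{\cosh^{2}(d_{e_{v^{*}}}/2)}-\frac{1}{\cosh^{2}J(P_{\hat{v}^{*}})}}.\]
Since $J(P_{\hat{v}^{*}})>J(P_{v^{*}})$ strictly, the second radical dominates the first and $\tfrac{\partial D_T}{\partial d_{e_{v^{*}}}}(\bd)>0$.

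The remaining task, which I expect to be the main obstacle, is to verify that for all sufficiently small $\epsilon>0$, replacing $d_{e_{v^{*}}}$ by $d_{e_{v^{*}}}-\epsilon$ produces a tuple still inside $\overline{\mathit{Ad}}(\bd_{\calf})$. Each defining condition (\ref{closure not centered})--(\ref{closure radius order}) of Definition \ref{bigger admissible} holds strictly at $\bd$: the $\calAC-\calc$ and maximal-entry parts of (\ref{closure not centered}) at $v^{*}$ are open conditions (again using Proposition \ref{smooth BC}(\ref{the proper order})), and for any $v\neq v^{*},\hat{v}^{*}$ the polygon $P_v$ does not involve $d_{e_{v^{*}}}$ at all; (\ref{closure centered}) holds strictly because $P_{v_T}\in\calc_{n_T}$; and (\ref{closure radius order}) is strict throughout by hypothesis. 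Continuity of $P_v$, $J(P_v)$, and of membership in $\calAC$, $\calc$, and $\calBC$ under parameter perturbation then guarantees membership in $\overline{\mathit{Ad}}(\bd_{\calf})$ for small $\epsilon$, whereupon the derivative computation forces $D_T$ strictly to decrease, contradicting the minimality of $\bd$. The bulk of the technical work is this openness verification, which amounts to tracking exactly which stratum of the decomposition in Definition \ref{parameter spaces} each $P_v(\bd)$ occupies.
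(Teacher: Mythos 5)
Your overall strategy---argue by contradiction, decrease $d_{e_{v^*}}$ alone, compute the sign of $\partial D_T/\partial d_{e_{v^*}}$ from Proposition~\ref{omniarea}, and verify admissibility of the perturbed tuple---is exactly the paper's. Your derivative computation is correct in all cases. But there is a gap in the openness verification, which you rightly anticipated would be ``the main obstacle.''

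The gap is here: you assert that when $\hat{v}^*\neq v_T$, ``$P_{\hat{v}^*}\in\calAC-(\calc\cup\calBC)$ with $d_{e_{\hat{v}^*}}$ strictly maximal,'' and later that ``each defining condition (\ref{closure not centered})--(\ref{closure radius order}) of Definition~\ref{bigger admissible} holds strictly at $\bd$.'' Neither is justified. Failure of conclusion~(\ref{really not centered}) only gives you a \emph{single} $v^*\in T^{(0)}-\{v_T\}$ with $P_{v^*}(\bd)\notin\calBC_{n_{v^*}}$; for every other $v\neq v_T$, including the parent $\hat{v}^*$, Definition~\ref{bigger admissible}(\ref{closure not centered}) only says $P_v(\bd)\in\calAC_{n_v}-\calc_{n_v}$, which \emph{contains} $\calBC_{n_v}$. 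So it is entirely possible that $P_{\hat{v}^*}(\bd)\in\calBC_{n_{\hat{v}^*}}$ sits on the boundary stratum, and in that case condition~(\ref{closure not centered}) at $\hat{v}^*$ is \emph{not} an open condition in your sense, and a blanket continuity/openness appeal does not show that a small decrease of $d_{e_{v^*}}$ keeps $P_{\hat{v}^*}$ inside $\calAC-\calc$.

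To close this, you need the argument the paper supplies at precisely this point: since $d_{e_{v^*}}$ is not the maximal entry of $P_{\hat{v}^*}(\bd)$ and $A_d(D/2)$ from Definition~\ref{parameter spaces} is increasing in $d$ for fixed $D\geq d$, decreasing $d_{e_{v^*}}$ strictly decreases $\sum A_{d_i}(D/2)$ while leaving $D$ unchanged, hence pushes $P_{\hat{v}^*}$ from $\calBC_{n_{\hat{v}^*}}$ (sum $=2\pi$) into $\calAC_{n_{\hat{v}^*}}-(\calc_{n_{\hat{v}^*}}\cup\calBC_{n_{\hat{v}^*}})$ (sum $<2\pi$), rather than out of the admissible set. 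With that one additional check your proof matches the paper's; note also that your derivative formula for the parent term remains valid in this edge case, since Proposition~\ref{omniarea} gives the positive-sign expression both on $\calc_n\cup\calBC_n$ and at non-maximal indices of $\calAC_n-\calc_n$.
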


\begin{proof}  Suppose that none of the criteria above hold at $\bd$, and fix $v\in T^{(0)}-\{v_T\}$ such that $P_v(\bd)\notin\calBC_{n_v}$, where $v$ has valence $n_v$ in $V$.  We will show that for the edge $e_v$ of $T$ with initial point $v$, reducing $d_{e_v}$ while keeping the remaining entries of $\bd$ constant produces new points of $\overline{\mathit{Ad}}(\bd_{\calf})$ at which $D_T$ takes smaller values.

We first observe that $D_T(\bd)$ is reduced by reducing $d_{e_v}$.  Changing only the length of $e_v$ affects only $P_v(\bd)$ and $P_{v'}(\bd)$, where $v'$ is its terminal vertex.  $P_v(\bd)\in\calAC_{n_v}-\calc_{n_v}$ has largest side length $d_{e_v}$, but either $P_{v'}(\bd)$ has  $d_{e_{v'}}$ as its largest or, if $v'=v_T$, $P_{v'}(\bd) \in\calc_{n_T}\cup\calBC_{n_T}$.  Thus Proposition \ref{omniarea} implies that $\frac{\partial}{\partial d_{e_v}}D_T=\frac{\partial}{\partial d_{e_v}}\left[D_0(P_{v'}(\bd))+D_0(P_v(\bd))\right]$ is:\begin{align}\label{defect deriv diff}
   \sqrt{\frac{1}{\cosh^2(d_{e_v}/2)} - \frac{1}{\cosh^2 J(P_{v'}(\bd))}} - \sqrt{\frac{1}{\cosh^2(d_{e_v}/2)} - \frac{1}{\cosh^2 J(P_v(\bd))}} \end{align}
Since condition (\ref{strict radius order}) above does not hold by hypothesis, but condition (\ref{closure radius order}) of Definition \ref{bigger admissible} does, $J(P_{v'}(\bd)) > J(P_v(\bd))$.  Therefore the quantity above is positive.  Since this is also $\frac{\partial}{\partial d_{e_v}} D_T(\bd)$, reducing $d_{e_v}$ reduces the value of $D_T$ near $\bd$.

Our hypothesis and Definition \ref{bigger admissible}(\ref{closure not centered}) imply that $P_v(\bd)$ is in the open subset $\calAC_{n_v} - (\calc_{n_v}\cup\calBC_{n_v})$ of $\mathbb{R}^{n_v}$.  Thus small deformations of $d_{e_v}$ keep it there.  It is possible that $v' = v_T$; if so then because (\ref{really centered}) above does not hold but the corresponding criterion from Definition \ref{bigger admissible} does, $P_{v'}(\bd)$ is in the open set $\calc_n$.  It follows again in this case that small deformations of $d_{e_v}$ keep it here.

If $v'\neq v_T$ then it is possible that $P_{v'}(\bd)\in\calBC_{n'}$, where $v'$ has valence $n'$ in $V$.  However in this case, direct appeal to the definition of $\calc_{n'}$ in \CenteredSpace\ shows that reducing $d_{e_v}$ keeps $P_{v'}$ in $\calAC_{n'} -\calc_{n'}$.  (Recall in particular that $d_{e_v}$ is not the largest side length of $P_{v'}(\bd)$ by Definition \ref{bigger admissible}(\ref{closure not centered}), and $A_d(D/2)$ increases with $d$ for any fixed $D\geq d$.)  

Criterion (\ref{closure radius order}) from Definition \ref{bigger admissible} holds for any small deformation of $\bd$.  This is because $J(P_v(\bd)) > J(P_w(\bd))$ for all $v\in T^{(0)}$ and $w\in v-1$, as we pointed out above, and $J(P_v(\bd))$ varies continuously with $\bd$.  Thus by Definition \ref{bigger admissible}, any small deformation of $\bd$ that reduces $d_{e_v}$ and leaves every other entry constant lies in $\overline{\mathit{Ad}}(\bd_{\calf})$.
\end{proof}

\subsection{A lower bound on area}\label{compact bounds}  Here we will prove Theorem \ref{main}, by induction on the number of vertices of the component $T$ of the non-centered Voronoi subgraph contained in a centered dual $2$-cell $C_T$.  For the purposes of this argument we will give each Voronoi vertex $v$ that is not contained in the non-centered Voronoi subgraph honorary status as a component of it.  Thus $T=\{v\}$ is a tree with no edges, and the case $C_T = C_v$ is the base case of the induction.  Note that $C_v$ is centered for such $v$, by Lemma \ref{vertex polygon centered}.

\begin{proposition}\label{centered main}  For $d > 0$ and $(d,\hdots,d)\in(\mathbb{R}^+)^n$, where $n\geq 4$, $D_0(d,\hdots,d) \geq (n-2) D_0(B_0,d,d)$, where $B_0 = b_0(d,d)$ for $b_0\co\mathbb{R}^2\to\mathbb{R}$ as in Proposition \ref{smooth BC}.\end{proposition}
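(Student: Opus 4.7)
The plan is to reduce the proposition to a one-variable analytic inequality using the explicit formulas of Lemma \ref{symmetric cyclic}, and then prove it via a monotonicity argument that rests on the classical fact that $\tan\theta/\theta$ is increasing on $(0,\pi/2)$.

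Set $c = 1/\cosh(d/2) \in (0,1)$. Lemma \ref{symmetric cyclic} gives
$$D_0(d,\ldots,d) = (n-2)\pi - 2n\sin^{-1}(c\cos(\pi/n)) \quad\mbox{and}\quad D_0(B_0,d,d) = \pi - 4\sin^{-1}(c/\sqrt{2}).$$
Writing $A_n(c) = \sin^{-1}(c\cos(\pi/n))$, the desired inequality rearranges to $n A_n(c) \leq 2(n-2) A_4(c)$, equivalently $A_n(c)/A_4(c) \leq 2(n-2)/n$. At $c = 1$ we have $A_n(1) = \pi/2 - \pi/n$ and $A_4(1) = \pi/4$, so $A_n(1)/A_4(1) = 2(n-2)/n$ exactly. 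Hence it suffices to prove that $A_n(c)/A_4(c)$ is non-decreasing in $c$ on $(0,1]$.

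For the monotonicity I will take a logarithmic derivative. Writing $\theta_n = A_n(c) \in (0,\pi/2)$, so that $\sin\theta_n = c\cos(\pi/n)$ and $\cos(\pi/n) = (\sin\theta_n)/c$, a direct calculation gives
$$\frac{A_n'(c)}{A_n(c)} = \frac{\cos(\pi/n)}{\sqrt{1-c^2\cos^2(\pi/n)}\,\theta_n} = \frac{\cos(\pi/n)}{\cos\theta_n\cdot\theta_n} = \frac{1}{c}\cdot\frac{\tan\theta_n}{\theta_n}.$$
For $n \geq 4$, the inequality $\cos(\pi/n) \geq \cos(\pi/4)$ yields $\theta_n \geq \theta_4$. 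The classical fact that $\tan\theta/\theta$ is strictly increasing on $(0,\pi/2)$ (verified by observing that the numerator $\theta\sec^2\theta - \tan\theta$ of its derivative vanishes at $0$ and has derivative $2\theta\sec^2\theta\tan\theta > 0$) then gives $\tan\theta_n/\theta_n \geq \tan\theta_4/\theta_4$, so $A_n'/A_n \geq A_4'/A_4$. Thus $\log(A_n/A_4)$, and so $A_n/A_4$, is non-decreasing in $c$, which combined with the value at $c=1$ completes the proof.

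The main insight is the substitution $\theta_n = \sin^{-1}(c\cos(\pi/n))$, which collapses an otherwise opaque logarithmic derivative into the familiar $\tan\theta_n/\theta_n$; after that the inequality is essentially immediate. A mildly counterintuitive point is that the ratio $A_n/A_4$ attains its maximum at the \emph{right} endpoint $c = 1$ (corresponding to the degenerate limit $d \to 0$) rather than in the interior, which is why $n = 4$ gives equality regardless of $d$, while for $n > 4$ and $d > 0$ the strict monotonicity of $\tan\theta/\theta$ yields a strict inequality.
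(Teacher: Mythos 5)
Your proof is correct and takes a genuinely different route from the paper's. The paper defines $f_d(n) = D_0(d,\ldots,d) - (n-2)D_0(B_0,d,d)$ and, holding $d$ fixed, treats $n$ as a continuous variable on $[4,\infty)$: it verifies $f_d(4) = 0$, computes $f_d''(n) > 0$ to get convexity in $n$, and then establishes $f_d'(4) > 0$ for all $d > 0$ by a separate differentiation of $f_d'(4)$ in $d$, analyzing where that auxiliary derivative changes sign. You instead fix $n$ and vary the single parameter $c = 1/\cosh(d/2)$, reducing the claim to the monotonicity in $c$ of the ratio $A_n(c)/A_4(c)$ together with its boundary value $2(n-2)/n$ at $c = 1$, and then collapsing the logarithmic derivative to the classical monotonicity of $\tan\theta/\theta$ on $(0,\pi/2)$. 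The substitution $\theta_n = A_n(c)$ that turns $\cos(\pi/n)/\bigl(\cos\theta_n\,\theta_n\bigr)$ into $c^{-1}\tan\theta_n/\theta_n$ is the key simplification, and it makes your argument shorter and, in my view, more transparent than the paper's: one elementary monotonicity lemma does all the work, rather than a two-variable convexity-plus-derivative-at-endpoint analysis. The paper's approach does have one expository advantage: treating $n$ as the variable makes it manifest that $n = 4$ is the critical case (where both $f_d$ and its derivative are controlled), whereas in your setup this is hidden in the identity $A_n(1)/A_4(1) = 2(n-2)/n$. Both proofs are valid; yours is the cleaner calculation.
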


\begin{proof} For $d^n \doteq (d,\hdots,d)\in(\mathbb{R}^+)^n$, Lemma \ref{symmetric cyclic} implies that
$$ D_0(d,\hdots,d) = (n-2)\pi - 2n\sin^{-1}\left(\frac{\cos(\pi/n)}{\cosh(d/2)}\right); $$
and also that
$$ (n-2) D_0(B_0,d,d) = (n-2)\left[\pi - 4\sin^{-1}\left(\frac{1/\sqrt{2}}{\cosh(d/2)}\right)\right] $$
Fixing $d>0$, for $n\geq 4$ we define $f_d(n) = D_0(d^n) - (n-2)D_0(B_0,d,d)$:
$$ f_d(n) = 2\left[2(n-2)\sin^{-1}\left(\frac{1/\sqrt{2}}{\cosh(d/2)}\right) - n\sin^{-1}\left(\frac{\cos(\pi/n)}{\cosh(d/2)}\right)\right] $$
Note that $f_d(4) = 0$ for each $d$.  This reflects the fact that a cyclic quadrilateral with all sides of length $d$ is the union of two triangles in $\calBC_3$, each with two sides of length $d$.  Now allowing $n$ to take arbitrary values in $[4,\infty)$, we record the first and second derivatives of $f_d$:\begin{align*}
 f_d'(n) & = 2\left[ 2\sin^{-1}\left(\frac{1/\sqrt{2}}{\cosh(d/2)}\right) - \sin^{-1}\left(\frac{\cos(\pi/n)}{\cosh(d/2)}\right) - \frac{\pi}{n}\frac{\sin(\pi/n)}{\sqrt{\cosh^2(d/2)-\cos^2(\pi/n)}} \right] \\
 f_d''(n) & = 2\ \frac{\pi^2}{n^3}\frac{\cos(\pi/n)\sinh^2(d/2)}{(\cosh^2(d/2) - \cos^2(\pi/n))^{3/2}} \end{align*}
From this we find in particular that for any fixed $d$, $f$ is concave up.  For fixed $d$ we have
$$ f_d'(4) = 2\left[\sin^{-1}\left(\frac{1/\sqrt{2}}{\cosh(d/2)}\right) - \frac{\pi/4}{\sqrt{2\cosh^2(d/2)-1}}\right] $$
We claim that the quantity above is positive for each $d>0$.  To this end, we compute:
$$ \frac{\partial}{\partial d} \left(f_d'(4)\right) = \frac{\sinh(d/2)}{\sqrt{2\cosh^2(d/2)-1}}\left[\frac{(\pi/2)\cosh(d/2)}{2\cosh^2(d/2)-1} - \frac{1}{\cosh(d/2)} \right] $$
The quantity in brackets above is positive at $d=0$, and one easily finds the unique $d_0 > 0$ at which it vanishes.  Thus $\frac{\partial}{\partial d} f_d'(4)$ is positive on $(0,d_0)$ and negative on $(d_0,\infty)$.  It is not hard to see that $f_0'(4) = 0=\lim_{d\to\infty} f_d'(4)$, so $f_d'(4)$ is positive on $(0,\infty)$.

For any fixed $d>0$, we showed above that $f_d'(4) > 0$ and that $f_d'(n)$ increases in $n$ on $(4,\infty)$, so in particular $f_d'(n) > 0$ for all $n$.  Therefore $f_d(n) > 0$ for every $n\in(4,\infty)$.  The result follows.\end{proof}

We will address the case that $T$ has more than one vertex using Proposition \ref{minimum at}.  Of the three conditions there, (\ref{really not centered}) and (\ref{really centered}) may each be addressed directly in different ways.  We will use the lemma below to reduce complexity in case (\ref{strict radius order}) and thereby apply induction.

\begin{lemma}\label{two polys}  Suppose $P_0 = (c_0,\hdots,c_m)\in\calAC_m - \calc_m$ and $P_1 = (d_0,\hdots,d_n)\in\calAC_n$ satisfy:\begin{itemize}
  \item  $J(P_0) = J(P_1)$;
  \item  $c_0 = d_0$ is the longest side length of $P_0$; and
  \item  either $P_1\in\calc_n\cup\calBC_n$ or $d_0$ is not the longest side length of $P_1\in\calAC_n - \calc_n$.\end{itemize}
Then $P \doteq (c_1,\hdots,c_m,d_1,\hdots,d_n)$ is in $\calAC_{m+n-2}$, and in $\calc_{m+n-2}\cup\calBC_{m+n-2}$ if and only if $P_1\in\calc_n\cup\calBC_n$.  Also, $D_0(P_0) + D_0(P_1) = D_0(P)$.\end{lemma}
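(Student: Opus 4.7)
The plan is to prove the lemma by realizing $P_0$ and $P_1$ as polygons inscribed in a common circle and gluing them along the shared edge of length $c_0 = d_0$. First I would fix a circle $S \subset \HH^2$ of center $v$ and radius $J = J(P_0) = J(P_1)$, together with a chord $\gamma \subset S$ of length $c_0$. Since a cyclic polygon is determined up to isometry by its cyclically-ordered side length collection (Schlenker's theorem; see \PtsToPoly), I may arrange $P_0$ and $P_1$ each as a cyclic polygon inscribed in $S$ with $\gamma$ as an edge, placed in opposite closed half-planes of the geodesic $\ell$ containing $\gamma$.

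The crux is that $P_0$ and $P_1$ really can be placed on opposite sides of $\ell$. Since $P_0 \in \calAC_m - \calc_m$ has $c_0$ as its longest side, applying \OneSide\ and \LongestSide\ shows that $P_0$ lies in the closed half-plane of $\ell$ opposite to $v$: strictly if $P_0 \notin \calBC_m$, and with $v \in \gamma$ itself (so $\gamma$ is a diameter) if $P_0 \in \calBC_m$. Dually, the hypothesis on $P_1$ puts $v$ in the closed half-plane of $\ell$ containing $P_1$ in every case: if $P_1 \in \calc_n$ then $v \in \mathit{int}(P_1)$; if $P_1 \in \calBC_n$ then $v$ lies on an edge of $P_1$; and if $P_1 \in \calAC_n - (\calc_n \cup \calBC_n)$ with $d_0$ not its longest side, then by \OneSide\ the circumcenter $v$ is separated from $P_1$ by the line of its unique longest edge (some $d_j$ with $j>0$), not by $\ell$, so $v$ is on the $P_1$-side of $\ell$. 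Thus the gluing $P \doteq P_0 \cup P_1$ is a simple polygon whose vertices lie on $S$ with cyclically-ordered side lengths $(c_1,\hdots,c_m,d_1,\hdots,d_n)$, proving $P \in \calAC_{m+n-2}$. Area additivity $D_0(P) = D_0(P_0) + D_0(P_1)$ is immediate because $P_0 \cap P_1 = \gamma$ has measure zero.

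For the characterization of when $P \in \calc_{m+n-2} \cup \calBC_{m+n-2}$, I would do a case analysis on the location of $v$ in $P$. Since $P_0$ is non-centered with $c_0$ its longest side, $v \notin \mathit{int}(P_0)$, with $v \in \partial P_0$ only if $v \in \gamma$. Hence $v$'s position relative to $P$ is controlled by $P_1$: (i) if $P_1 \in \calc_n$, then $v \in \mathit{int}(P_1) \subset \mathit{int}(P)$, so $P \in \calc_{m+n-2}$; (ii) if $P_1 \in \calBC_n$ and $v$ lies on $\gamma$, then $v$ is interior to the shared chord, hence in $\mathit{int}(P)$; (iii) if $P_1 \in \calBC_n$ and $v$ lies on another edge of $P_1$, then that edge is an edge of $P$, so $P \in \calBC_{m+n-2}$; (iv) if $P_1 \in \calAC_n - (\calc_n \cup \calBC_n)$ with $d_0$ not its longest, then $v \notin P_1$, and combined with $v \notin P_0$ this gives $v \notin P$, so $P \in \calAC_{m+n-2} - (\calc_{m+n-2} \cup \calBC_{m+n-2})$. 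This matches the asserted equivalence.

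The main obstacle is the second step: carefully applying \OneSide\ to verify that $v$ lies in the correct closed half-plane of $\ell$ for each of $P_0$ and $P_1$ in every allowed case, so that the opposite-sides gluing is actually legal. Once this geometric picture is pinned down, the remaining assertions (membership in $\calAC_{m+n-2}$, the $\calc/\calBC$ trichotomy, and area additivity) follow by direct inspection.
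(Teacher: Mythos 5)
Your proof is correct and follows essentially the same route as the paper's. The paper frames the gluing slightly differently — it fixes a representative of $P_0$ inscribed in a circle $C$ centered at $v$, places $P_1$ adjacent along $\gamma_0$, and then argues that $P_1$'s circumcircle $C'$ must coincide with $C$ because the two circles have equal radii, share the endpoints of $\gamma_0$, and (by \OneSide) have centers on the same side of $\ell$ — whereas you fix the circle first and verify both polygons can be placed on opposite sides; this is the same geometric content. For the $\calc/\calBC$ characterization, the paper simply notes $v\in P$ iff $v\in P_1$ and cites \InCenteredClosure, while your explicit four-case analysis unpacks the same fact by hand. Both versions implicitly use, at the same level of rigor, the standard geometric uniqueness fact that a non-centered cyclic polygon has exactly one side whose geodesic separates the circumcenter from the polygon (that side being the longest, by \LongestSide).
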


\begin{proof}  Let $C$ be a circle in $\mathbb{H}^2$ containing a fixed representative of $P_0$ (with radius $J(P_0)$ in particular), and let $v$ be the center of $C$.  By \LongestSide\ and \OneSide, $v$ is in the opposite half-space determined by the side $\gamma_0$ with length $d_0$ from the one containing $\calp_0$.

Let a representative of $P_1$ be arranged in $\mathbb{H}^2$ so that its intersection with $P_0$ is $\gamma_0$ (corresponding in $P_1$ to the side length $d_0$).  The circle $C'$ in $\mathbb{H}^2$ containing the vertices of $P_1$ has the same radius as $C$ and intersects $C$ in at least the endpoints of $\gamma_0$.  Furthermore, \OneSide\ implies that the center $v'$ of $C'$ is in the half-space determined by $\gamma_0$ that contains $P_1$, whence $v' = v$ and $C' = C$.

It follows from the above that $P = P_0\cup P_1$ is itself a cyclic polygon inscribed in $C$ (cf.~\PtsToPoly), and from the definition of the defect that $D_0(P) = D_0(P_0)\cup D_0(P_1)$.  Furthermore, by construction $v\in P$ if and only if $v\in P_1$, so the remainder of the lemma follows from the final claim of \InCenteredClosure.\end{proof}

\begin{hypothesis}  Until the proof of Theorem \ref{main}, each definition and result below uses the following hypothesis: $V$ is a finite graph with vertices of valence at least $3$, $T$ is a rooted subtree of $V$ with root vertex $v_T$, $\cale$ is the edge set of $T$, and $\calf$ is its frontier in $V$.\end{hypothesis}

\begin{definition}\label{smushanedge}  For an edge $e$ of $T$, let $p_e\co V\to V_e$ be the quotient map that identifies $e$ to a point, and let $T_e = p_e(T)$.
\end{definition}

\begin{remark}  It is easy to see that $T_e$ is a tree, and that $p_e$ maps $\cale - \{e\}$ and $\calf$ bijectively to the edge set $\cale_e$ and frontier $\calf_e$ of $T_e$, respectively.  In particular, if the endpoints $v$ and $w$ have valences $n_v$ and $n_w$ in $V$, respectively, then $p_e(v) = p_e(w)$ has valence $n_v+n_w - 2$.\end{remark}

\begin{lemma}\label{reduce edges} For $\bd_{\calf}\in(\mathbb{R}^+)^{\calf}$, suppose that some $\bd = (\bd_{\cale},\bd_{\calf})\in\overline{\mathit{Ad}}(\bd_{\calf})$ satisfies condition (\ref{strict radius order}) of Proposition \ref{minimum at}.  Then $D_T(\bd) = D_{T_{f}}(\bd_{f})$ for $T_f$ as in Definition \ref{smushanedge}, where:\begin{itemize} 
  \item  $f\in\cale$ has initial vertex $v$ and terminal vertex $w$ such that $J(P_v(\bd)) = J(P_w(\bd))$.  
  \item  $\bd_{f} = (\bd_{\cale_{f}},\bd_{\calf_{f}}) \in\overline{\mathit{Ad}}(\bd_{\calf_{f}})$ for $\bd_{\cale_{f}} = (d_{p_{f}(e)}\,|\,e\in\mathcal{E}-\{f\})$ and $\bd_{\calf_{f}} = (d_{p_{f}(e)}\,|\,e\in\mathcal{F})$, where $d_{p_{f}(e)} = d_e$ for each $e$ in $\mathcal{E}-\{f\}$ or occuring in $\mathcal{F}$.
\end{itemize}\end{lemma}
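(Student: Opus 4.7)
The plan is to apply Lemma \ref{two polys} to $P_0 = P_v(\bd)$ and $P_1 = P_w(\bd)$, then recognize their glued union as $P_{p_f(w)}(\bd_f)$. The edge $f \in \cale$ has initial vertex $v$ (so $f = e_v$ and $v \in w - 1$) and terminal vertex $w$, and appears as a side of both $P_v(\bd)$ and $P_w(\bd)$ with shared length $d_f$. Since $v \neq v_T$ (because $v_T$ is maximal in the partial order), Definition \ref{bigger admissible}(\ref{closure not centered}) places $P_v(\bd) \in \calAC_{n_v} - \calc_{n_v}$ with largest entry $d_{e_v} = d_f$, which is unique by Proposition \ref{smooth BC}(\ref{the proper order}). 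If $w = v_T$ then $P_w(\bd) \in \calc_{n_T} \cup \calBC_{n_T}$ by Definition \ref{bigger admissible}(\ref{closure centered}); if $w \neq v_T$ then $P_w(\bd) \in \calAC_{n_w} - \calc_{n_w}$ has unique largest entry $d_{e_w}$, and since $e_w$ has $w$ as initial while $f$ has $w$ as terminal, $e_w \neq f$ and $d_f < d_{e_w}$. Combined with the hypothesis $J(P_v(\bd)) = J(P_w(\bd))$, Lemma \ref{two polys} produces a cyclic polygon $P$ of this common radius satisfying $D_0(P) = D_0(P_v(\bd)) + D_0(P_w(\bd))$, with $P \in \calc \cup \calBC$ if and only if $P_w(\bd) \in \calc \cup \calBC$.

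By the remark following Definition \ref{smushanedge}, $p_f(v) = p_f(w)$ has valence $n_v + n_w - 2$ in $V_f$, with incident edges the $p_f$-images of the non-$f$ edges containing $v$ or $w$, and their $\bd_f$-lengths coincide with their $\bd$-lengths; hence $P_{p_f(w)}(\bd_f) = P$ as cyclic polygons. To verify $\bd_f \in \overline{\mathit{Ad}}(\bd_{\calf_f})$ for $T_f$ rooted at $p_f(v_T)$, I check conditions (\ref{closure not centered})--(\ref{closure radius order}) of Definition \ref{bigger admissible}. At each $u \in T^{(0)} - \{v, w\}$ these transfer unchanged. At $p_f(w)$: the case $w = v_T$ is immediate from the dichotomy in Lemma \ref{two polys}. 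When $w \neq v_T$, $P$ lies in $\calAC - \calc$ (again by Lemma \ref{two polys}), and its largest entry is $d_{e_w}$ because every entry of $P_v(\bd)$ other than $d_f$ is strictly less than $d_f$, every entry of $P_w(\bd)$ is at most $d_{e_w}$, and $d_f \leq d_{e_w}$ since $d_f$ is itself an entry of $P_w(\bd)$. Condition (\ref{closure radius order}) at $p_f(w)$ holds because the children of $p_f(w)$ in $T_f$ are the $p_f$-images of the children of $v$ together with the children of $w$ other than $v$, whose $J$ values (unchanged from their $\bd$-values in $T$) are bounded by $J(P_v(\bd)) = J(P_w(\bd)) = J(P) = J(P_{p_f(w)}(\bd_f))$. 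When $w \neq v_T$, the inequality $J(P_{p_f(u)}(\bd_f)) \geq J(P_{p_f(w)}(\bd_f))$ for $u$ the parent of $w$ in $T$ is inherited directly.

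Finally, $D_T(\bd) = D_{T_f}(\bd_f)$ follows by summing: the contributions $D_0(P_u(\bd))$ for $u \in T^{(0)} - \{v, w\}$ match term-for-term the $D_0(P_{p_f(u)}(\bd_f))$, and $D_0(P_v(\bd)) + D_0(P_w(\bd)) = D_0(P) = D_0(P_{p_f(w)}(\bd_f))$ by Lemma \ref{two polys}. The main obstacle is the admissibility check at the contracted vertex $p_f(w)$: one must handle the case split on whether $w = v_T$ (which controls whether $p_f(w)$ inherits the role of root), and when $w \neq v_T$ verify that the combined polygon retains its unique largest side along $p_f(e_w)$. The needed inequality $d_f \leq d_{e_w}$ is easy to overlook, but it follows automatically from the fact that $d_f$ is simultaneously the maximum entry of $P_v(\bd)$ and an entry of $P_w(\bd)$ (so bounded by the maximum $d_{e_w}$ of the latter).
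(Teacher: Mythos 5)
Your proof is correct and takes the same route as the paper, which simply says the lemma ``follows directly from Lemma \ref{two polys}''; you have carefully supplied the bookkeeping that the paper leaves implicit. The one point not literally covered by the statement of Lemma \ref{two polys} is the conclusion that $P\in\calAC_{m+n-2}-\calc_{m+n-2}$ in the sub-case $w\neq v_T$ with $P_w(\bd)\in\calBC_{n_w}$: there the lemma gives only $P\in\calc\cup\calBC$, and excluding $P\in\calc$ requires the extra observation that the common circumcenter lies on the $e_w$-side of $P_w(\bd)$, which survives as a side of $P$ --- a minor supplement that the paper's ``directly'' equally elides.
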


This follows directly from Lemma \ref{two polys}.  The result below will allow us to address condition (\ref{really centered}) of Proposition \ref{minimum at}, by varying $\bd_{\calf}$ and tracking the changes in $\overline{\mathit{Ad}}(\bd_{\calf})$.

\begin{lemma}\label{sad space}  The set $\mathit{SAd}_T$, consisting of $\bd_{\calf}\in(\mathbb{R}^+)^{\calf}$ such that $\overline{\mathit{Ad}}(\bd_{\calf})\neq\emptyset$, is closed in $(\mathbb{R}^+)^{\calf}$, and the function
$$ \bd_{\calf} \mapsto \min\{ D_T(\bd) \,|\,\bd \in\overline{\mathit{Ad}}(\bd_{\calf}) \}$$
is lower-semicontinuous on $\mathit{SAd}_T$.\end{lemma}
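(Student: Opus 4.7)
The plan is to prove both statements via a single compactness-and-continuity extraction built on the bounds from Lemma \ref{precompact}. The preliminary observation I need is that the functions $b_e, h_e \co (\mathbb{R}^+)^{\calf} \to \mathbb{R}^+$ are continuous: each is built by composing the smooth functions $b_0, h_0$ of Proposition \ref{smooth BC}, and smoothness propagates through the recursion defining them in the proof of Lemma \ref{precompact}. Given this, for any convergent sequence $\bd_{\calf}^{(i)} \to \bd_{\calf}$ in $(\mathbb{R}^+)^{\calf}$ and any choice $\bd^{(i)} \in \overline{\mathit{Ad}}(\bd_{\calf}^{(i)})$, the sandwich $b_e(\bd_{\calf}^{(i)}) \leq d_e^{(i)} \leq h_e(\bd_{\calf}^{(i)})$ traps each coordinate in a compact subinterval of $(0,\infty)$ for large $i$, yielding a subsequence converging to some $\bd = (\bd_{\cale},\bd_{\calf})$ with strictly positive entries.

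Next I would verify $\bd \in \overline{\mathit{Ad}}(\bd_{\calf})$ by checking the three conditions of Definition \ref{bigger admissible}, imitating the proof of Lemma \ref{admissible closure}. Condition (\ref{closure radius order}) passes to the limit by continuity of $J$ on $\calAC_{n}$ (Proposition \ref{omniradius}), and condition (\ref{closure centered}) passes because $\calc_{n_T} \cup \calBC_{n_T}$ is closed in $(\mathbb{R}^+)^{n_T}$. The delicate condition is (\ref{closure not centered}), where \emph{a priori} some $P_v(\bd^{(i)})$ could escape to the horocyclic frontier $\calHC_{n_v}$. To rule this out I would select a closest vertex $v$ to $v_T$ at which it happens; its parent $w$ then satisfies $P_w(\bd) \in \calAC_{n_w}$ by minimality, so $J(P_w(\bd^{(i)})) \to J(P_w(\bd)) < \infty$ by continuity, while $J(P_v(\bd^{(i)})) \to \infty$ by the final assertion of Proposition \ref{omniradius}. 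This contradicts $J(P_w(\bd^{(i)})) \geq J(P_v(\bd^{(i)}))$ for large $i$, so no such escape can occur.

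With this extraction in hand both conclusions are immediate. For closedness of $\mathit{SAd}_T$: from any $\bd_{\calf}^{(i)} \in \mathit{SAd}_T$ converging to $\bd_{\calf}$, any selection $\bd^{(i)} \in \overline{\mathit{Ad}}(\bd_{\calf}^{(i)})$ produces a subsequential limit $\bd \in \overline{\mathit{Ad}}(\bd_{\calf})$, certifying $\bd_{\calf} \in \mathit{SAd}_T$. For lower-semicontinuity I would select $\bd^{(i)}$ realizing $m_i \doteq \min\{D_T(\bd)\,|\,\bd \in \overline{\mathit{Ad}}(\bd_{\calf}^{(i)})\}$ (available by Lemma \ref{continuous defect}), pass to a subsequence with $m_i \to \liminf m_i$, and then extract a further subsequence with $\bd^{(i)} \to \bd \in \overline{\mathit{Ad}}(\bd_{\calf})$. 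Continuity of $D_T$ as a sum of compositions $D_0 \circ P_v$ (Proposition \ref{omniarea}) gives $\liminf m_i = D_T(\bd) \geq \min\{D_T(\bd')\,|\,\bd' \in \overline{\mathit{Ad}}(\bd_{\calf})\}$, which is what is needed. The main obstacle throughout is the horocyclic-escape scenario, resolved by the fact that the radius inequality (\ref{closure radius order}) propagates bounded radius outward from $v_T$, while escape to $\calHC$ forces radius to infinity.
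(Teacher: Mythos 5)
Your proposal is correct and follows the same extraction-and-limit strategy as the paper: bound the minimizing sequence, pass to a convergent subsequence, show the limit lies in $\overline{\mathit{Ad}}(\bd_{\calf})$ via the horocyclic-escape argument of Lemma~\ref{admissible closure}, then read off both conclusions. The one point where you deviate in detail is the upper bound on the $d_e^{(i)}$: the paper appeals to the geometric fact that no side of a cyclic polygon exceeds the sum of the others (giving $d_e^{(i)}\leq q^p D_0$), whereas you use continuity of $h_e$ together with the sandwich $b_e(\bd_{\calf}^{(i)})\leq d_e^{(i)}< h_e(\bd_{\calf}^{(i)})$ from Lemma~\ref{precompact}(\ref{admissible bounded}); your version requires the (true, but unstated in Lemma~\ref{precompact}) observation that $b_e,h_e$ are continuous, which does indeed follow from their recursive construction out of the smooth $b_0,h_0$ of Proposition~\ref{smooth BC}, so this is a harmless substitution.
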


\begin{proof}  Suppose $\bd_{\calf}^{(i)}$ is a sequence in $\mathit{SAd}_T$ converging to $\bd_{\calf}\in(\mathbb{R}^+)^{\calf}$, and for each $i$ let $\bd^{(i)}\in\overline{\mathit{Ad}}(\bd_{\calf}^{(i)})$ be a point at which $D_T(\bd)$ attains its minimum over all $\bd\in\overline{\mathit{Ad}}(\bd_{\calf}^{(i)})$.  We claim that there exist $0 < d < D$ such that $\bd^{(i)}\subset[d,D]^{\cale}\times[d,D]^{\calf}$ for each $i$.

Since $\{\bd_{\calf}^{(i)}\}$ converges, $D_0=\sup \{d_e^{(i)}\,|\,e\in\calf, i\in\mathbb{N}\}$ is finite, and since no side of a polygon has length greater than the sum of the lengths of the other sides one finds easily that $d_e^{(i)} \leq D\doteq q^pD_0$ for each $e\in\cale$ and $i\in\mathbb{N}$, where $p = |\cale|$ and $q+1$ is the maximal valence in $V$ of a vertex of $T$.  Furthermore, $d = \inf\{d_e^{(i)}\,|\,e\in\calf, i\in\mathbb{N}\} > 0$, and Lemma \ref{precompact}(\ref{b increasing in T}) implies that $d_e^{(i)} \geq d$ for each $e\in\cale$ and $i\in\mathbb{N}$.  The claim follows.

The claim implies that a subsequence of $\{\bd^{(i)}\}$ converges to $\bd=(\bd_{\cale},\bd_{\calf})$ for some $\bd_{\cale}\in(\mathbb{R}^+)^{\cale}$ and $\bd_{\calf}$ as fixed at the beginning.  We next claim that $\bd\in\overline{\mathit{Ad}}(\bd_{\calf})$.  The proof of this claim is essentially identical to the proof of Lemma \ref{admissible closure}, replacing $\bd_i$ there with $\bd^{(i)}$.  

The second claim immediately implies that $\mathit{SAd}_T$ is closed.  Furthermore, for $\bd=(\bd_{\cale},\bd_{\calf})$, $D_T(\bd)$ is an upper bound for the value at $\bd_{\calf}$ of the minimum function in question here.  \DefectDerivative\ implies $D_T(\bd) = \lim_{i\to\infty} D_T(\bd_i)$, and lower semicontinuity follows.\end{proof}

\begin{proposition}\label{general lower}  For fixed $d>0$ and $\bd_{\calf}\in(\mathbb{R}^+)^{\calf}$ with all entries at least $d$,
$$ \min\{D_T(\bd)\,|\,\bd\in\overline{\mathit{Ad}}(\bd_{\calf})\} \geq \left(|\calf|-2\right)D_0(B_0,d,d),$$
where $B_0 = b_0(d,d)$ for $b_0$ as described in Proposition \ref{smooth BC}.\end{proposition}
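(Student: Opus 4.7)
The plan is to prove the bound by strong induction on the number of vertices $|T^{(0)}|$ of the rooted tree $T$. Existence of a minimizer $\bd^{\ast} = (\bd^{\ast}_{\cale}, \bd_{\calf}) \in \overline{\mathit{Ad}}(\bd_{\calf})$ is guaranteed by Lemma \ref{continuous defect}, and Proposition \ref{minimum at} asserts that one of three conditions holds at any such $\bd^{\ast}$. The inductive step will reduce each of these to a simpler configuration to which the inductive hypothesis applies.

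The base case $|T^{(0)}| = 1$ is quick: here $\cale = \emptyset$, so Remark \ref{one vertex} gives $\overline{\mathit{Ad}}(\bd_{\calf}) \subseteq \{\bd_{\calf}\}$; if non-empty, then $P_{v_T}(\bd_{\calf}) \in \calc_{|\calf|} \cup \calBC_{|\calf|}$ and $D_T(\bd_{\calf}) = D_0(P_{v_T}(\bd_{\calf}))$. Since the constant tuple $(d,\ldots,d)$ lies in $\calc_{|\calf|}$ by Lemma \ref{symmetric cyclic} and every entry of $\bd_{\calf}$ is at least $d$, Corollary \ref{monotonicity} gives $D_0(\bd_{\calf}) \geq D_0(d,\ldots,d)$, and Proposition \ref{centered main} supplies the lower bound $(|\calf|-2)D_0(B_0,d,d)$. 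For the inductive step, case (3) of Proposition \ref{minimum at} is dispatched immediately by Lemma \ref{reduce edges}: the equal-radius coincidence $J(P_v(\bd^{\ast})) = J(P_w(\bd^{\ast}))$ allows collapsing the internal edge $f = e_w$ to produce a tree $T_f$ with strictly fewer vertices, frontier $\calf_f$ of the same cardinality as $\calf$ with identical edge length data, and $D_T(\bd^{\ast}) = D_{T_f}(\bd_f)$. The inductive hypothesis on $T_f$ then yields the required bound.

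The main obstacle is handling cases (1) and (2), where the minimum lies on a $\calBC$-stratum without any equal-radius coincidence. Geometrically, $P_v \in \calBC_{n_v}$ forces the center of $P_v$ (namely the Voronoi vertex $v$) to sit at the midpoint of the longest side of $P_v$, so that side is a diameter and $J(P_v) = \tfrac{1}{2} d_{\mathrm{longest}}$. In case (2), if the root's longest side is an internal edge $e^{\ast} = e_w$ with $w \in v_T - 1$, then $J(P_{v_T}) = d_{e^{\ast}}/2$, and combined with Definition \ref{bigger admissible}(\ref{closure radius order}) and the fact that $d_{e^{\ast}}$ is also the longest side of $P_w$ (which cannot exceed the diameter $2J(P_w)$ of its circumcircle), one forces $J(P_w) = d_{e^{\ast}}/2 = J(P_{v_T})$, returning us to case (3). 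For case (1), I would choose a leaf vertex $v$ of $T$ and use the $\calBC$-limit behavior of Lemma \ref{two polys} to merge $P_v$ into its parent polygon, producing a reduced tree with strictly fewer vertices whose frontier data is unchanged. The remaining subcase of case (2), where the root's longest side is a frontier edge, is the most delicate: here I would combine the sign analysis of Proposition \ref{omniarea}'s partials at $\calBC$ with an approximation argument through interior admissible configurations. Verifying that each of these reductions preserves the frontier-cardinality count, the lower bound $d$ on frontier lengths, and the target bound $(|\calf|-2)D_0(B_0,d,d)$ is the principal technical burden of the argument.
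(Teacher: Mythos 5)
Your overall strategy (induction on $|T^{(0)}|$, applying Proposition \ref{minimum at} at a minimizer, collapsing an edge in case (3) via Lemma \ref{reduce edges}) and your base case match the paper, and your argument in case (2) that an internal longest edge at the root forces an equal-radius coincidence is correct and in fact exactly what the paper does. However, there are two genuine gaps in how you propose to finish.

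First, your plan for case (1) --- merging a leaf polygon $P_v$ into its parent via ``the $\calBC$-limit behavior of Lemma \ref{two polys}'' --- does not work. Lemma \ref{two polys} requires $J(P_0) = J(P_1)$; membership $P_v(\bd) \in \calBC_{n_v}$ at the leaf gives $J(P_v(\bd)) = d_{e_v}/2$ but says nothing about $J$ at the parent, and Definition \ref{bigger admissible}(\ref{closure radius order}) only gives $J(P_{\text{parent}}) \geq J(P_v)$, with equality being precisely case (3). There is no $\calBC$-limit mechanism to invoke. The paper handles case (1) without any merging: since $P_v(\bd) \in \calBC_{n_v}$ with all frontier lengths at least $d$, one bounds $D_0(P_v(\bd)) \geq (n_v - 2)D_0(B_0,d,d)$ for each $v$ separately (using Remark \ref{length increasing}, Proposition \ref{smooth BC}(\ref{parameter monotonicity}), Corollary \ref{monotonicity}, and Proposition \ref{centered main}), then sums and uses the tree Euler-characteristic identity $\sum_v n_v - 2|T^{(0)}| = |\calf| - 2$.

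Second, for the remaining subcase of (2) (root's longest side is a frontier edge), your sketch ``approximation through interior admissible configurations'' omits the key structural idea: the paper does not minimize only over $\bd \in \overline{\mathit{Ad}}(\bd_{\calf})$ for fixed $\bd_{\calf}$, but first minimizes the function $\bd_{\calf} \mapsto \min\{D_T(\bd) \mid \bd \in \overline{\mathit{Ad}}(\bd_{\calf})\}$ over $\mathit{SAd}_T \cap [d,D]^{\calf}$, using the compactness and lower-semicontinuity established in Lemma \ref{sad space}. It is this outer minimization over $\bd_{\calf}$ that lets one perturb the frontier edge length $d_{e_0}$ downward (staying $>d$), decrease $D_T$ via Proposition \ref{omniarea}, and contradict minimality --- thereby ruling out case (2)-only entirely. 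Without varying $\bd_{\calf}$, you have no admissible deformation available, since that coordinate is frozen in $\overline{\mathit{Ad}}(\bd_{\calf})$.
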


\begin{proof}  We prove this by induction on the number of vertices of $T$.  The base case $T=\{v_T\}$ follows directly from Proposition \ref{centered main} and Corollary \ref{monotonicity} (cf.~Remark \ref{one vertex}), so let us suppose that $T$ has $n > 1$ vertices and the result holds for all trees with fewer than $n$ vertices.  Since $T$ is a tree, $|\cale| = n-1$ (by Euler characteristic), so in particular $T$ has at least one edge.

Fix an arbitrary $D > d$ and consider the intersection of $\mathit{SAd}_T$ (as in Lemma \ref{sad space}) with $[d,D]^{\calf}$.  Lemma \ref{sad space} implies that this set is compact, so the function
$$ \bd_{\calf} \mapsto \min\{ D_T(\bd) \,|\,\bd \in\overline{\mathit{Ad}}(\bd_{\calf}) \}$$
attains a minimum on it by lower-semicontinuity.  Fix $\bd_{\calf}$ at which the minimum occurs, and let $\bd$ be a minimum point for $D_T$ on $\overline{\mathit{Ad}}(\bd_{\calf})$.  This satisfies at least one of the three conditions described in Proposition \ref{minimum at}.  We claim that because of our choice of $\bd_{\calf}$, $\bd$ in fact satisfies at least one of conditions (\ref{really not centered}) or (\ref{strict radius order}).

Assume by way of contradiction that $\bd$ satisfies only (\ref{really centered}), and cyclically enumerate the edges of $V$ containing $v_T$ as $e_0,\hdots,e_{n_T-1}$ so that $d_{e_0}$ is maximal.  By the hypothesis and Remark \ref{length increasing}, $d_{e_i} \geq d$ for each $i>0$.  Since $P_{v_T}(\bd)\in\calBC_{n_T}$ we have $d_{e_0} = b_0(d_{e_1},\hdots,d_{e_{n_T-1}})$ for $b_0$ as in Proposition \ref{smooth BC}.  That result implies in particular that $d_{e_0} > \max\{d_{e_i}\}_{i=1}^{n_T} \geq d$, so $d_{e_0}$ can be reduced slightly while preserving the inequality $d_{e_0} > d$.

We note that it is not the case that $e_0\in\cale$: since $P_{v_T}(\bd)\in\calBC_{n_T}$ by (\ref{really centered}), \InCenteredClosure\ implies that $J(P_{v_T}(\bd)) = d_{e_0}/2$.  But for any $v\in T^{(0)}$, $J(P_v(\bd)) \geq \max\{d_e/2\,|\, e\ni v\}$ by \RadiusFunction; in particular, if $e_0$ were in $\cale$ it would follow that $J(P_{v_0}(\bd))\geq d_{e_0}/2$ for the other endpoint $v_0\in T^{(0)}$ of $e_0$.  But Definition \ref{bigger admissible}(\ref{radius order}) implies that $J(P_{v_0}(\bd))\leq J(P_{v_T}(\bd))$, so in fact equality would hold, violating our assumption that Proposition \ref{minimum at}(\ref{strict radius order}) does not.

Thus $e_0$ is in $\calf$; or, more precisely, $(e_0,v_T)\in\calf$.  Moreover, the other endpoint $v_0$ of $e_0$ is not in $T$: if it were, Remark \ref{length increasing} would imply that $d_{e_{v_0}} > d_{e_0}$, and applying Definition \ref{bigger admissible}(\ref{closure not centered}) inductively along the path joining $v_0$ to $v_T$ would yield $i > 0$ such that $d_{e_i} > d_{e_0}$, a contradiction.  Therefore changing $d_{e_0}$ while fixing the other entries of $\bd$ changes only $P_{v_T}(\bd)$.

Reducing $d_{e_0}$ while fixing all other entries of $\bd$ takes $P_{v_T}(\bd)$ into $\calc_{n_T}$, by Proposition \ref{smooth BC}, while reducing $D_0(P_{v_T}(\bd))$, by Proposition \ref{omniarea}.  (Note that a such a deformation $\bd(t)$ would have $\frac{d}{dt} D_0(P_{v_T}(\bd(t)) = 0$ at $t=0$ but negative thereafter.)  Since all other $P_v(\bd)$ are unaffected by such a deformation, and since $J(P_{v_T}(\bd))$ varies continuously with $\bd$, by Definition \ref{bigger admissible} such a family $\bd(t)$ would produce new $\bd_{\calf}(t)\in\mathit{SAd}_T\cap [d,D]^{\calf}$ with $\bd(t)\in\overline{\mathit{Ad}}(\bd_{\calf}(t))$ and $D_T(\bd(t)) < D_T(\bd)$.  This contradicts our minimality hypothesis, and it follows that one of (\ref{really not centered}) or (\ref{strict radius order}) must hold.

Suppose first that $\bd$ satisfies (\ref{really not centered}), so $P_v(\bd)\in\calBC_{n_v}$ for each $v\in T^{(0)}-\{v_T\}$, where $n_v$ is the valence of $v$ in $T$.  Fix such $v$ and cyclically enumerate the edges containing $v$ as $e_0,\hdots,e_{n_v-1}$ so that $e_0 = e_v$.  Remark \ref{length increasing} implies that $d_{e_i} \geq d$ for all $i>0$, so $d_{e_0} = b_0(d_{e_1},\hdots,d_{e_{n_v-1}})$ is at least $b_0(d,\hdots,d)$ (recall Proposition \ref{smooth BC}(\ref{parameter monotonicity})).  If $n_v = 3$ then we conclude from Corollary \ref{monotonicity} that $D_0(P_v(\bd)) > D_0(B_0,d,d)$.  If $n_v \geq 4$ then we only need the bound $d_{e_v} > d$ (and Corollary \ref{monotonicity}) to conclude that $D_0(P_v(\bd)) \geq (n_v-2) D_0(B_0,d,d)$ using Proposition \ref{centered main}.  

For $v = v_T$ we argue as above to show that $D_0(P_{v_T}(\bd))\geq (n_T-2)D_0(B_0,d,d)$, where $n_T$ is the valence of $v_T$ in $V$.  If $n_T \geq 4$ then this follows from Proposition \ref{centered main}.  If $n_T = 3$ one observes that since $T$ has at least one edge (by hypothesis), at least one $e\to v$ is of the form $e_v$ for some $v\in T^{(0)}$ so $d_{e_v}\geq b_0(d,\hdots,d)\geq b_0(d,d,0,\hdots,0) = B_0$ (the latter inequality follows from \ShrinkingParameters).  Thus in this case Corollary \ref{monotonicity} implies that $D_0(P_{v_T}(\bd))\geq D_0(B_0,d,d)$.

For $\bd$ satisfying (\ref{really not centered}) the above implies that
$$ D_T(\bd) \geq \sum_{v\in T^{(0)}} (n_v -2)D_0(B_0,d,d) = \left[\left(\sum_{v\in T^{(0)}} n_v\right) - 2n\right]D_0(B_0,d,d), $$
since $T$ has $n$ vertices. We recall that since $T$ is a tree, its Euler characteristic is one so $|\cale|=n-1$.  We also have $\sum_{v\in T^{(0)}} n_v = 2|\cale|+|\calf| = |\calf|-2$.  (Here it is important to recall that each edge $e$ of $V$ that is not in $\cale$ but has both endpoints in $T$ contributes two distinct elements to $\calf$, see above Definition \ref{partial order}.)  This establishes case (\ref{really not centered}).

It remains only to consider the case that $\bd$ satisfies condition (\ref{strict radius order}); ie, that $J(P_v(\bd)) = J(P_w(\bd))$ for some $v\in T^{(0)}$ and $w\in v -1$.  This case follows directly from Lemma \ref{reduce edges} and the induction hypothesis.  The conclusion thus holds for each  $\bd_{\calf}\in\mathit{SAd}_T\cap [d,D]^{\calf}$ and hence, since $D>d$ is arbitrary, for each $\bd_{\calf}\in\mathit{SAd}_T$ with all entries at least $d$.  Since the conclusion is vacuous for $\bd_{\calf}\notin\mathit{SAd}_T$, the result follows.\end{proof}

\begin{theorem}\label{main}\maintheorem\end{theorem}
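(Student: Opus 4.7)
The plan is to reduce Theorem \ref{main} to Propositions \ref{general lower} and \ref{centered main} combined with Corollary \ref{monotonicity}; the substantive work is already packaged into those earlier results, so what remains is case analysis and a geometric identification of $D_0(B_0,d,d)$. By Definition \ref{centered dual} a compact two-cell $C$ of the centered dual complex is either a single geometric dual polygon $C_v$ for a Voronoi vertex $v$ outside the non-centered Voronoi subgraph (hence centered by Lemma \ref{vertex polygon centered}), or a cell $C_T$ where $T$ is a component of the non-centered Voronoi subgraph with finite vertex set and only compact edges.

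First I would verify $D_0(B_0,d,d) = A_m(d)$, where $B_0 = b_0(d,d)$. By \InCenteredClosure\ a cyclic triangle in $\calBC_3$ has its circumcenter on its unique longest side, so that side is a diameter of the circumcircle. Triangles with two sides of length $d$ are parametrized by the length of the third side, so $(B_0,d,d)$ is precisely the representative whose third side is a diameter of its circumcircle; this is the area-maximizing triangle with two sides of length $d$, so $D_0(B_0,d,d) = A_m(d)$.

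Next, in the case $C = C_v$ with $k$ sides each of length at least $d$, the side-length tuple lies in $\calc_k$ and dominates $(d,\hdots,d)$ entry-wise, so Corollary \ref{monotonicity} gives $\mathrm{Area}(C_v) \geq D_0(d,\hdots,d)$. For $k=3$ the right-hand side is the area of the equilateral hyperbolic triangle with sides $d$, as claimed. For $k \geq 4$, Proposition \ref{centered main} then yields $D_0(d,\hdots,d) \geq (k-2)D_0(B_0,d,d) = (k-2)A_m(d)$.

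Finally, in the case $C = C_T$ with $T$ having at least one edge, the identity $\sum_{v\in T^{(0)}}n_v = 2|\cale|+|\calf|$ together with $|\cale|\geq 1$ and $n_v\geq 3$ gives $|\calf|\geq 4$, so the triangle case cannot arise. Lemma \ref{admissible Voronoi} produces $\bd \in \overline{\mathit{Ad}}(\bd_{\calf})$ with $\mathrm{Area}(C_T) = D_T(\bd)$, where $\bd_{\calf}$ records the boundary-edge lengths (each at least $d$), and Proposition \ref{general lower} then gives $\mathrm{Area}(C_T) \geq (|\calf|-2)A_m(d) \geq (k-2)A_m(d)$. The main bookkeeping subtlety I expect is the comparison of $k$ with $|\calf|$: a Voronoi edge with both endpoints in $T^{(0)}$ that is not itself in $\cale$ contributes two elements to $\calf$ but a single boundary edge to $\partial C$, so $k \leq |\calf|$ in general, and the cruder bound $(|\calf|-2)A_m(d)$ always dominates $(k-2)A_m(d)$.
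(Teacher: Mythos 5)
Your proof is correct and follows essentially the same route as the paper's: split by Definition \ref{centered dual} into the cases $C = C_v$ (centered cyclic) and $C = C_T$, handle the first via Corollary \ref{monotonicity} and Proposition \ref{centered main}, and handle the second via Lemma \ref{admissible Voronoi} feeding into Proposition \ref{general lower}. The one cosmetic difference is that you verify $A_m(d) = D_0(b_0(d,d),d,d)$ via \InCenteredClosure\ (identifying the $\calBC_3$ triangle as the one whose third side is a diameter), whereas the paper invokes Proposition \ref{omniarea} (the derivative of $D_0$ changes sign at $\calBC_3$, so that is where the max over third side lengths is attained); both observations are needed to reconcile $D_0(b_0(d,d),d,d)$ with the two clauses in the theorem's description of $A_m(d)$, so the two routes are really complementary rather than competing. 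Your explicit bookkeeping comparing $k$ with $|\calf|$ is a genuine improvement on the paper's terse statement: the paper silently uses that each boundary edge of $C_T$ corresponds to at least one element of $\calf$ (with possibly two when a centered Voronoi edge has both endpoints in $T^{(0)}$), so $k \leq |\calf|$ and the $(|\calf|-2)$ bound from Proposition \ref{general lower} dominates $(k-2)$. One small caution on your remark that "the triangle case cannot arise" for $C_T$: your Euler-characteristic count gives $|\calf| \geq 4$, which alone only forces $k \geq 2$; ruling out $k = 3$ would require the further (true, but unargued) observation that two distinct Voronoi vertices can share at most one Voronoi edge so the geometric duals do not collapse. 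This does not damage the proof, though, since even in a hypothetical $k=3$ case the bound $(|\calf|-2)A_m(d) \geq 2A_m(d)$ exceeds the equilateral-triangle area, so the conclusion holds with room to spare.
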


\begin{proof}  If $C$ is a triangle then it is centered (recall Definition \ref{centered dual}), so the result follows directly from Corollary \ref{monotonicity}.  Therefore assume below that $\partial C$ has $k >3$ edges.

Proposition \ref{omniarea} implies that $A_m(d)$, as defined above, equals $D_0(b_0(d,d),d,d)$ for $b_0$ as defined in Proposition \ref{smooth BC}.  If $C$ is a centered geometric dual cell, the conclusion thus follows by combining Proposition \ref{centered main} with Corollary \ref{monotonicity}.  We may therefore assume that $C = C_T$ is dual to a component $T$ of the non-centered Voronoi subgraph (recall Definition \ref{tree cells}).  In this case $C_T$ has area $D_T(\bd)$ by Lemma \ref{admissible Voronoi}, where the entries of $\bd$ are lengths of geometric duals to edges of $T$ or its frontier in the Voronoi graph.  Since each such edge has length at least $d$ by hypothesis, the result follows directly from Proposition \ref{general lower}.\end{proof}

\section{Admissible spaces and area bounds with mild noncompactness}\label{moduli for noncompact}

The goal of this section is to produce and prove a result analogous to Theorem \ref{main} for centered dual $2$-cells that are not compact, but for which the associated Voronoi subtree still has finite vertex set.  The development follows a parallel track: we introduce an admissible space parametrizing all possible cells with a given edge length collection, in Section \ref{admissible for noncompact}, and minimize the area functional on it in Section \ref{noncompact bounds}.

Section \ref{horocyclic geom} collects some useful results on horocyclic and ``horocyclic ideal'' polygons.

\subsection{Horocyclic ideal polygons}\label{horocyclic geom}  Recall that \textit{horocycles} of $\mathbb{H}^2$ are defined in \ref{sphere at infinity}; in particular, a horocycle has a single \textit{ideal point} on the sphere at infinity $S_{\infty}$ of $\mathbb{H}^2$.

\begin{definition}\label{horocyclic ideal poly}  A \textit{horocyclic polygon} is the convex hull in $\mathbb{H}^2$ of a locally finite subset of a horocycle.  An \textit{infinite} horocyclic polygon $C$ is the convex hull of an infinite, locally finite subset of a horocycle.  A \textit{horocyclic ideal polygon} is the convex hull of the union of a finite subset of a horocycle with its ideal point.\end{definition}

In constructing the centered dual decomposition we have already encountered a horocyclic ideal triangle. The result below follows from Lemma \ref{exclusive horocycle}.

\begin{lemma}  If $e_0$ is a non-compact edge, with ideal vertex $v_{\infty}$, of the Voronoi tessellation of a locally finite set $\cals\subset\mathbb{H}^2$, then $\Delta(e_0,v_{\infty})$ from Definition \ref{tree cells} is a horocyclic ideal triangle.\end{lemma}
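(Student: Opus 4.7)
The plan is to observe that this lemma is essentially an unpacking of definitions, combined with a single application of Lemma \ref{exclusive horocycle}. First I would unwind the definition of $\Delta(e_0,v_\infty)$ from Definition \ref{tree cells}: it is the convex hull in $\mathbb{H}^2$ of $v_\infty$ together with the geometric dual $\gamma$ to $e_0$. Writing $e_0 = V_{\bs}\cap V_{\bt}$ with $\bs,\bt\in\cals$, the geometric dual $\gamma = \gamma_{\bs\bt}$ is the geodesic segment from $\bs$ to $\bt$ (by the Facts following Lemma \ref{vertex polygon}), so its vertex set is $\{\bs,\bt\}$ and the convex hull in question is $\mathrm{conv}(\{\bs,\bt,v_\infty\})$.

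Next I would invoke Lemma \ref{exclusive horocycle} to produce a horocycle $S$ with ideal point $v_\infty$ that passes through both $\bs$ and $\bt$. This exhibits $\{\bs,\bt\}$ as a finite (hence locally finite) subset of the horocycle $S$, and $v_\infty$ is precisely the ideal point of $S$. Matching this data against Definition \ref{horocyclic ideal poly}, the convex hull $\mathrm{conv}(\{\bs,\bt\}\cup\{v_\infty\})$ is by definition a horocyclic ideal polygon, with three vertices $\bs$, $\bt$, $v_\infty$ (distinct, since the first two lie in $\mathbb{H}^2$ and the third in $S_\infty$, and $\bs\ne\bt$ as they are distinct points of $\cals$). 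Thus $\Delta(e_0,v_\infty)$ is a horocyclic ideal triangle.

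There is essentially no obstacle here: the only substantive content is the existence of the horocycle $S$ through $\bs$ and $\bt$ with ideal point $v_\infty$, which is exactly the conclusion of Lemma \ref{exclusive horocycle}. The remainder is just matching definitions.
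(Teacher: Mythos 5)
Your proof is correct and takes the same approach as the paper, which simply remarks that this result follows from Lemma \ref{exclusive horocycle}; you have filled in exactly the definitional unwinding and the single appeal to that lemma that the paper leaves implicit.
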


The set of marked, oriented horocyclic $n$-gons is parametrized up to orientation-preserving isometry by a subset $\calHC_n$ of $(\mathbb{R}^+)^n$; see Definition \ref{parameter spaces}.  The set of horocyclic ideal $n$-gons is parametrized up to OP isometry by $(\mathbb{R}^+)^{n-2}$.  Such a polygon has two sides of infinite length sharing an ideal vertex, which determine a marking and account for the $n-2$ above.  See \cite[Proposition 6.15]{DeB_cyclic_geom}.  The area of horocyclic and horocyclic ideal polygons has a nice explicit expression.  See Lemma 6.4 and Proposition 6.17 of \cite{DeB_cyclic_geom}.

\begin{proposition}\label{horocyclic defects}  A horocyclic $n$-gon represented by $(d_0,\hdots,d_{n-1})\in\calHC_n$ with maximal entry $d_0$ has area:
$$ D_0(d_0,\hdots,d_{n-1}) = (n-2)\pi + 2\sin^{-1}\left(\frac{1}{\cosh(d_0/2)}\right) - 2\sum_{i=1}^{n-1} \sin^{-1}\left(\frac{1}{\cosh(d_i/2)}\right) $$
An ideal horocyclic $n$-gon represented by $(d_1,\hdots,d_{n-2})\in\calHC_{n+1}^{\infty}$ has area:
$$ D_0(d_1,\hdots,d_{n-2}) = (n-2)\pi - 2\sum_{i=1}^{n-2} \sin^{-1}\left(\frac{1}{\cosh(d_i/2)}\right) $$
The formula above determines a lower-semicontinuous extension of $D_0$ to $\calAC_n\cup\calHC_n\cup\calHC_n^{\infty}$ with the property that $D_0(d_1,\hdots,d_n) \geq D_0(d_1',\hdots,d_n')$ if $d_i \geq d_i'$ for all $n$.\end{proposition}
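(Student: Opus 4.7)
The plan is to argue in three phases: establish the explicit area formulas, verify that the piecewise definition is lower-semicontinuous across the interfaces between $\calAC_n$, $\calHC_n$, and $\calHC_n^{\infty}$, and read off monotonicity from the formulas. For the formulas themselves I would cite Lemma~6.4 and Proposition~6.17 of \cite{DeB_cyclic_geom}, where both are established by decomposing the polygon into sub-triangles with one vertex at the ideal point of the horocycle, using the identity that a horocyclic chord of length $d$ makes angle $\arccos(1/\cosh(d/2))$ with the tangent to the horocycle at either endpoint, and summing angular contributions via Gauss--Bonnet.

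For lower-semicontinuity, within each of the three subspaces $D_0$ is already continuous---Proposition \ref{omniarea} handles $\calAC_n$ and the closed-form expressions handle the others---so the task reduces to the two interfaces. At the $\calAC_n\to\calHC_n$ interface I would take a sequence in $\calAC_n-\calc_n$ converging to a point of $\calHC_n$: Proposition \ref{omniradius} yields $J\to\infty$, so in the partial-derivative expression $\pm\sqrt{1/\cosh^2(d_i/2)-1/\cosh^2 J}$ of Proposition \ref{omniarea} the second term vanishes, giving $\pm 1/\cosh(d_i/2)$ with antiderivative $\pm 2\arcsin(1/\cosh(d_i/2))$---matching the horocyclic formula, with the maximal entry carrying the $+$ sign. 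At the $\calHC_n\to\calHC_n^{\infty}$ interface, sending the maximal entry $d_0$ to infinity forces a finite vertex onto the ideal point of the horocycle so that the two edges incident to it both become infinite; in the horocyclic formula the $+2\arcsin(1/\cosh(d_0/2))$ term and the $-2\arcsin$ term for the companion vanishing edge each tend to $0$, leaving the horocyclic ideal formula in the surviving $n-2$ finite edge lengths.

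For monotonicity, the function $f(d) = \arcsin(1/\cosh(d/2))$ is strictly decreasing in $d$, so each $-2f(d_i)$ summand in the formulas is strictly increasing in its argument. This gives monotonicity on $\calHC_n^{\infty}$ immediately, and on $\calc_n\cup\calBC_n$ it is Corollary \ref{monotonicity}; the remaining cases I would handle by interpolating through a common boundary point and passing to the limit, using the semicontinuous extension to transfer the inequality across subspaces.

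The main obstacle I anticipate is the $\calHC_n\to\calHC_n^{\infty}$ interface: a single vertex migrates to the ideal point of the horocycle, two edge lengths diverge simultaneously, and the $n$-tuple parametrization of $\calHC_n$ collapses to the $(n-2)$-tuple parametrization of $\calHC_n^{\infty}$. The bookkeeping step---pairing off the vanishing $+2\arcsin(1/\cosh(d_0/2))$ term with one vanishing $-2\arcsin$ term and reindexing the $n-2$ survivors to match the horocyclic ideal formula exactly---is where the precise form of the extension must be carefully verified.
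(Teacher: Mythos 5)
The paper does not give a self-contained proof of this proposition: it simply cites Lemma~6.4 and Proposition~6.17 of \cite{DeB_cyclic_geom} for the two area formulas and states the extension/monotonicity claim without further argument. Your proposal matches the paper on the first two formulas (same citation), and then goes beyond the paper in attempting to justify the last sentence, so the two are not directly comparable on that point. Your interface analysis is in the right spirit: the $\calHC_n\to\calHC_n^\infty$ bookkeeping (the $+2\arcsin$ term for $d_0$ and the $-2\arcsin$ term for the companion divergent side cancel, leaving the $(n-2)\pi$ term and the $n-2$ surviving summands) is exactly what makes the ideal formula the limit of the horocyclic one, and your partial-derivative argument at the $\calAC_n\to\calHC_n$ interface can be made rigorous by integrating along a segment into $\calHC_n$ and using that $J\to\infty$ (Proposition~\ref{omniradius}) together with the dominated-convergence bound $\bigl|\partial D_0/\partial d_i\bigr| \le 1/\cosh(d_i/2)$. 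A slightly cleaner route is to note that $D_0$ on $\calAC_n$ is the genuine area of a polygon varying continuously with its circumcircle and side lengths, so continuity up to $\calHC_n$ is geometric.

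The one place your plan is genuinely at risk is the monotonicity step. Proposition~\ref{omniarea} shows that on $\calAC_n-(\calc_n\cup\calBC_n)$ the partial derivative of $D_0$ in the \emph{maximal} entry is strictly negative, so the literal claim ``$D_0(d_1,\hdots,d_n)\geq D_0(d_1',\hdots,d_n')$ whenever $d_i\geq d_i'$'' fails if both points lie in the non-centered part of $\calAC_n$ and differ in the maximal coordinate; the proposed ``interpolate through a common boundary point'' scheme will not rescue this. You should read the monotonicity assertion as restricted to the cases the paper actually uses: Corollary~\ref{monotonicity} on $\calc_n\cup\calBC_n$, the explicit formula on $\calHC_n^\infty$ (each $-2\arcsin(1/\cosh(\cdot/2))$ summand increases in its argument), and transfer across the boundary only via lower-semicontinuity in the direction where the $\calBC_n/\calHC_n^\infty$ value is the (smaller) limit. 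Treat the clause as imprecisely stated rather than as something your argument should fully reproduce.
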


Recall that for $d>0$, the maximal-area triangle with two sides of length $d$ has a third with length $b_0(d,d)$, where $b_0$ is as defined in Proposition \ref{smooth BC}.  This is still less than the area of a horocyclic ideal triangle with finite side length $d$.

\begin{corollary}\label{big ideals}  For any $d>0$, $D_0(\infty,d,\infty) > D_0(b_0(d,d),d,d)$, for $b_0$ as in Proposition \ref{smooth BC}.\end{corollary}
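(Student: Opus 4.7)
The plan is to reduce the claimed inequality to an elementary trigonometric comparison by substituting the explicit area formulas from the earlier results. On the right side, Lemma \ref{symmetric cyclic} applied with $n=3$ gives
$$ D_0(b_0(d,d),d,d) = \pi - 4\sin^{-1}\left(\frac{1}{\sqrt{2}\,\cosh(d/2)}\right), $$
since $\cos(\pi/4) = 1/\sqrt{2}$. On the left, Proposition \ref{horocyclic defects} applied to a horocyclic ideal triangle (so $n=3$, with a single finite side of length $d$) yields
$$ D_0(\infty,d,\infty) = \pi - 2\sin^{-1}\left(\frac{1}{\cosh(d/2)}\right). $$
After cancelling the $\pi$'s, the desired inequality is equivalent to
$$ 2\sin^{-1}\!\left(\frac{u}{\sqrt{2}}\right) > \sin^{-1}(u), $$
where $u \doteq 1/\cosh(d/2) \in (0,1)$ since $d>0$.

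To prove this trigonometric inequality, I would set $\phi = \sin^{-1}(u) \in (0,\pi/2)$, so that $u = \sin\phi = 2\sin(\phi/2)\cos(\phi/2)$. The claim becomes $\sin^{-1}(u/\sqrt{2}) > \phi/2$; since both sides lie in $(0,\pi/2)$ and $\sin$ is strictly increasing there, this is equivalent to $u/\sqrt{2} > \sin(\phi/2)$. Substituting the half-angle expression for $u$ reduces the inequality to $\sqrt{2}\cos(\phi/2) > 1$, i.e., $\cos(\phi/2) > \cos(\pi/4)$, which holds strictly because $\phi \in (0,\pi/2)$ forces $\phi/2 \in (0,\pi/4)$.

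The proof is essentially two substitutions followed by a one-line half-angle manipulation, and I do not anticipate any serious obstacle. The only bookkeeping point is to verify that the arcsine arguments lie in $(0,1)$ so that the manipulations are valid; this is immediate from $d>0$, which gives $\cosh(d/2) > 1$ and hence $u \in (0,1)$ and $u/\sqrt{2} \in (0,1/\sqrt{2})$.
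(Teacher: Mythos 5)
Your proof is correct, and it takes a genuinely different route from the paper's. The paper proves the inequality by a limiting argument: for $x > d$ it applies Corollary \ref{monotonicity} to conclude $D_0(b_0(d,x),d,x) > D_0(b_0(d,d),d,d)$, and then lets $x\to\infty$, invoking the lower-semicontinuity of the extension of $D_0$ to $\calHC_3$ (Proposition \ref{horocyclic defects}) to pass to the horocyclic ideal triangle in the limit. Your approach is instead entirely computational: you substitute the explicit closed forms for both sides --- $D_0(b_0(d,d),d,d) = \pi - 4\sin^{-1}(u/\sqrt{2})$ from Lemma \ref{symmetric cyclic} with $n=3$, and $D_0(\infty,d,\infty) = \pi - 2\sin^{-1}(u)$ from Proposition \ref{horocyclic defects}, with $u = 1/\cosh(d/2)\in(0,1)$ --- and then reduce the inequality to $2\sin^{-1}(u/\sqrt{2}) > \sin^{-1}(u)$, which your half-angle manipulation settles cleanly via $\cos(\phi/2) > \cos(\pi/4)$ for $\phi = \sin^{-1}(u)\in(0,\pi/2)$. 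What the paper's proof buys is generality of method (it reuses the monotonicity and semicontinuity machinery built elsewhere, and would adapt if the explicit formulas were unavailable); what yours buys is elementarity and transparency --- it isolates exactly what elementary fact is responsible for the inequality, requires no limit or semicontinuity claim, and the strict inequality is manifest rather than needing an argument that strictness survives the liminf. Both are valid; yours is arguably the more illuminating for this particular statement.
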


\begin{proof}  For any $x > d$, Corollary \ref{monotonicity} implies that $D_0(b_0(d,x),d,x) > D_0(b_0(d,d),d,d)$.  Note also that $b_0(d,x) > x$.  Because the extension to $\calHC_3$ described in Proposition \ref{horocyclic defects} is lower-semicontinuous, taking a liminf as $x\to\infty$ gives the result.\end{proof}

The lemma below is the analog, in the context of horocyclic polygons, to Lemma \ref{two polys}.

\begin{lemma}\label{two horocyclic polys}  Suppose $(c_0,\hdots,c_{m-1})\in\calHC_m$ and $(d_0,\hdots,d_{n-1})\in\calHC_n$ have largest entries $c_0$ and $d_0$, respectively, such that $c_0 = d_i$ for some $i>0$.  Then 
$$\bd = (d_0,d_1,\hdots,d_{i-1},c_1,\hdots,c_{m-1},d_{i+1},\hdots,d_{n-1})\in\calHC_{m+n-2},$$ 
and $D_0(c_0,\hdots,c_{m-1})+D_0(d_0,\hdots,d_{n-1}) = D_0(\bd)$.  Analogously, if $(c_0,\hdots,c_m)\in\calHC_m$ has $c_0$ maximal, and $(d_1,\hdots,d_{n-1})\in(\mathbb{R}^+)^{n-1}$ has $d_i = c_0$ for some $i$, then:\begin{align*}
  D_0(c_0,\hdots,c_{m-1}) + D_0(\infty,d_1,\hdots,d_{n-1},\infty) = 
  \qquad\qquad\qquad\qquad\quad\ \  \\
  D_0(\infty,d_1,\hdots,d_{i-1},c_1,\hdots,c_{m-1},d_{i+1},\hdots,d_{n-1},\infty)\end{align*}\end{lemma}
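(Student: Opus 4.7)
The plan is to imitate the proof of Lemma \ref{two polys}, with ``circumscribed circle and its center $v$'' replaced throughout by ``circumscribing horocycle and its ideal point $v_\infty$.'' I fix a concrete realization of the horocyclic $m$-gon $P_c$ represented by $(c_0,\ldots,c_{m-1})$ inscribed in a horocycle $S_c$ with ideal point $v_\infty$, and let $\gamma$ denote its edge of length $c_0$. I then position a representative $P_d$ of $(d_0,\ldots,d_{n-1})$ so that its edge of length $d_i$ coincides with $\gamma$, with $P_d$ lying on the opposite side of the geodesic $\ell\supset\gamma$ from $P_c$. Write $S_d$ for the horocycle containing the vertices of $P_d$, with ideal point $w_\infty$.

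The central step is to show $S_c = S_d$. The required ingredient is the horocyclic analog of \LongestSide\ and \OneSide: the longest edge of a horocyclic polygon is the unique edge whose containing geodesic separates the polygon from the ideal point of the circumscribing horocycle, while every other edge has the polygon and the ideal point on the same side. (This should follow from results of \cite{DeB_cyclic_geom} by a limiting argument, or by direct computation in the upper half-plane model along the lines of the proof of Lemma \ref{exclusive horocycle}.) Since $c_0$ is maximal in $P_c$, this places $v_\infty$ on the side of $\ell$ opposite $P_c$, i.e.\ the same side as $P_d$. Since $i>0$, $d_i$ is non-maximal in $P_d$, so $w_\infty$ lies on the same side of $\ell$ as $P_d$. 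Now $S_c$ and $S_d$ both pass through the two endpoints of $\gamma$; any two distinct points of $\mathbb{H}^2$ lie on exactly two horocycles, and these have ideal points on opposite sides of the geodesic through them (a direct upper half-plane calculation). Since $v_\infty$ and $w_\infty$ lie on the same side of $\ell$, I conclude $v_\infty = w_\infty$ and $S_c = S_d$, call it $S$.

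All vertices of $P_c \cup P_d$ now lie on $S$. Parametrizing $S$ as a line, the vertices of $P_d$ contain the endpoints of $\gamma$ as a consecutive pair (since $d_i$ is non-maximal, hence a ``short'' edge) between which the interior vertices of $P_c$ interpolate. Hence $P_c\cup P_d$ is a horocyclic $(m+n-2)$-gon whose edge length sequence reads off in cyclic order exactly as the tuple $\bd$ in the statement, with $d_0$ still maximal; thus $\bd\in\calHC_{m+n-2}$. Since $P_c\cap P_d = \gamma$ has two-dimensional measure zero, area additivity $D_0(c_0,\ldots,c_{m-1}) + D_0(d_0,\ldots,d_{n-1}) = D_0(\bd)$ follows.

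The horocyclic ideal case is handled in the same way, taking $w_\infty$ to be the ideal vertex of the horocyclic ideal polygon, which by construction lies on the same side as the polygon of the geodesic containing every one of its finite edges, including the one of length $c_0 = d_i$. Thus the ``same-side'' conclusion for $d_i$ is automatic, and the horocycle coincidence and concatenation of edge sequences proceed unchanged, the two infinite edges of the ideal polygon carrying over to the combined polygon as its two infinite edges. The main technical obstacle I anticipate is the precise formulation and verification of the side characterization for horocyclic (and horocyclic ideal) polygons; once that is in hand, the rest is essentially a bookkeeping exercise with the cyclic order on the combined vertex set.
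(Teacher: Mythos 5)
Your argument is sound in outline but takes a geometric route that the paper does not need, and you have correctly flagged its missing ingredient. The paper proves Lemma \ref{two horocyclic polys} by a one-line algebraic verification: the defect on $\calHC_n$ and on horocyclic ideal polygons is given by the explicit $\sin^{-1}$ formulas of Proposition \ref{horocyclic defects}, and membership in $\calHC_n$ is the explicit $\sinh$ identity of Definition \ref{parameter spaces}. With $c_0 = d_i$, adding the two defect formulas makes the $+2\sin^{-1}\bigl(1/\cosh(c_0/2)\bigr)$ contribution from the maximal entry of the first tuple cancel the $-2\sin^{-1}\bigl(1/\cosh(d_i/2)\bigr)$ term from the second, leaving exactly the defect formula for $\bd$; and substituting $\sinh(c_0/2) = \sum_{j\geq 1}\sinh(c_j/2)$ into $\sinh(d_0/2) = \sum_{j\geq 1}\sinh(d_j/2)$ shows $\bd\in\calHC_{m+n-2}$ with maximal entry still $d_0$. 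The second assertion is the same cancellation against the horocyclic-ideal formula. Your geometric picture is precisely the one the paper records as \emph{motivation} after the statement, and it is conceptually illuminating, but taking it as the proof forces you to develop the ``side characterization'' for horocyclic polygons that you correctly identify as the technical obstacle --- a limiting analog of \LongestSide\ and \OneSide\ that is not stated in the paper or, in this form, in \cite{DeB_cyclic_geom} --- whereas the algebraic route bypasses it entirely. Since the point of parametrizing by $\calHC_n$ was to have closed-form expressions for $D_0$ and for the membership condition, the cancellation computation is both the shorter and the intended argument here.
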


Lemma \ref{two horocyclic polys} follows directly from the formulas of Proposition \ref{horocyclic defects}.  It reflects the following geometric picture: if the largest side length of a horocyclic polygon $P$ happens to equal a side length of a (say) horocyclic ideal polygon $Q$, then upon moving $P$ by an isometry so that $P\cap Q$ is the longest side of $P$, $P\cup Q$ is itself a horocyclic ideal polygon with area the sum $D_0(P)+D_0(Q)$.

\subsection{Admissible spaces: the case of a non-compact edge}\label{admissible for noncompact}  This is the analog of Section \ref{admissible for compact} for centered dual two-cells $C_T$ such that the dual tree $T$ has a non-compact edge $e_0$ but $T^{(0)}$ finite.  We recall Proposition \ref{to the root}, which motivates this section's blanket hypothesis.

\begin{hypothesis}  Except where explicitly noted, here $V$ is a finite graph with vertices of valence at least $3$ and (possibly) some non-compact edges, and $T\subset V$ is a rooted subtree with a single non-compact edge $e_0$ and root vertex $v_T\in e_0$.  Let $\cale$ be the edge set of $T$ and $\calf$ its frontier in $F$.  We let $n_v$ denote the valence in $V$ of a vertex $v$ of $T$.\end{hypothesis}

The major definitions and results of this section closely parallel those of Section \ref{admissible for compact}, though almost all will require some revision.  We will compare and contrast as appropriate.  Below is the analog of Definition \ref{partial order}, differing from the original only in that we define $e_v$ for $v = v_T$.

\begin{definition}\label{partial order for noncompact}  Partially order $T^{(0)}$ by setting $v < v_T$ for each $v\in T^{(0)} - \{v_T\}$, and $w < v$ if the edge arc in $T$ joining $w \in T^{(0)}- \{v_T,v\}$ to $v_T$ runs through $v$.  Let $v-1$ be the set of $w<v$ joined to it by an edge, and say $v$ is \textit{minimal} if $v-1=\emptyset$.  Let $e_{v_T} = e_0$, and for $v\in T^{(0)}-\{v_T\}$, let $e_v$ be the initial edge of the arc in $T$ joining $v$ to $v_T$.   For each $v\in T^{(0)}$, say ``$e\to v$'' for each edge $e\neq e_v$ of $V$ containing $v$.\end{definition}

Again the definition below differs from its predecessor Definition \ref{admissible criteria} only in treating $v_T$ like other vertices of $T$.

\begin{definition}\label{admissible criteria for noncompact}  Let $(\mathbb{R}^+)^{\calf}$ be the set of tuples of positive real numbers indexed by the elements of $\calf$, and define $(\mathbb{R}^+)^{\cale}$ analogously.  For any elements $\bd_{\cale}=(d_e\,|\,e\in\cale)\in (\mathbb{R}^+)^{\mathcal{E}}$ and $\bd_{\calf}\in(\mathbb{R}^+)^{\calf}$, let $\bd = (\bd_{\cale},\bd_{\calf})$ and $P_{v}(\bd) = (d_{e_0},\hdots,d_{e_{n-1}})$ for $v\in T^{(0)}$, where the edges of $V$ containing $v$ are cyclically ordered as $e_0,\hdots,e_{n-1}$.  We say the \textit{admissible set} $\mathit{Ad}(\bd_{\calf})$ determined by $\bd_{\calf}$ is the collection of $\bd \in(\mathbb{R}^+)^{\cale}\times\{\bd_{\calf}\}$ such that: \begin{enumerate}
\item\label{not centered for noncompact} For each $v\in T^{(0)}$, $P_v(\bd) \in \mathcal{AC}_{n_v} -\calc_{n_v}$ has largest entry $d_{e_v}$. 
\item\label{radius order for noncompact} $J(P_{v}(\bd)) > J(P_w(\bd))$ for each $w\in v- 1$, where $J(P_v(\bd))$ and $J(P_w(\bd))$ are the respective radii of $P_v(\bd)$ and  $P_w(\bd)$.  \end{enumerate}  
\end{definition}

\begin{definition}\label{tree defect for noncompact}  Fix $\bd_{\calf} = (d_e\,|\,e\in\calf)\in (\mathbb{R}^+)^{\calf}$ such that $\mathit{Ad}(\bd_{\calf})\neq \emptyset$.  For each $\bd\in\mathit{Ad}(\bd_{\calf})$ and $R\geq 0$, 
define:
$$ D_T(\bd) = \pi-2\sin^{-1}\left(\frac{1}{\cosh (d_{e_0}/2)}\right) + \sum_{v\in T^{(0)}} D_0(P_v(\bd)), $$
where $P_v(\bd)$ is as in Definition \ref{admissible criteria for noncompact} and $D_0(P)$ is as in Proposition \ref{omniarea}.  \end{definition}

\begin{lemma}\label{admissible Voronoi for noncompact}  Let $C_T$ be a centered dual two-cell, dual to a component $T$ of the non-centered Voronoi subgraph determined by locally finite $\cals\subset\mathbb{H}^2$ with a non-compact edge $e_0$ and $T^{(0)}$ finite.  Let $\mathcal{E}$ be the edge set of $T$ and $\mathcal{F}$ its frontier in the Voronoi graph $V$, and for each edge $e$ of $V$ that intersects $T$ let $d_e$ be the length of the geometric dual to $e$.  Then $\bd = (d_e\,|\,e\in\cale) \in \mathit{Ad}(\bd_{\calf})$, where $\bd_{\calf}=(d_e\,|\, (e,v)\in\calf\mbox{ for some }v\in T^{(0)})$, and $C_T$ has area $D_T(\bd)$.  \end{lemma}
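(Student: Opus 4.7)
The plan is to follow the proof of Lemma \ref{admissible Voronoi} closely, adjusting for two new features of the non-compact setting: the root vertex $v_T$ now behaves like the other vertices of $T$ (its geometric dual is non-centered with longest edge dual to $e_0$), and the area decomposition picks up the horocyclic ideal triangle $\Delta(e_0,v_{\infty})$. Since $T^{(0)}$ is finite, $T$ has a vertex of maximal radius; since $T$ also has a non-compact edge, Proposition \ref{to the root}(\ref{root not centered}) identifies this as the root vertex $v_T$, asserts that $v_T$ is the initial vertex of $e_0$, and asserts that $C_{v_T}$ is non-centered.  The final clause of the same proposition tells us that $C_v$ is non-centered for every $v\in T^{(0)}$.

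To verify admissibility, I would check that for each $v\in T^{(0)}$, $P_v(\bd)\in\calAC_{n_v}-\calc_{n_v}$ has largest entry $d_{e_v}$.  Membership in $\calAC_{n_v}$ is automatic from the fact that $C_v$ is a cyclic polygon (Lemma \ref{vertex polygon}); non-centeredness is the previous paragraph.  Lemma \ref{vertex polygon centered} says that the unique longest edge of $C_v$ is the geometric dual to the unique non-centered Voronoi edge with initial vertex $v$, and Corollary \ref{one direction} guarantees uniqueness.  For $v=v_T$, this edge is $e_0=e_{v_T}$ by Proposition \ref{to the root}(\ref{root not centered}).  For $v\neq v_T$, Lemma \ref{tree components} tells us that every edge of $T$ points toward $v_T$, so the initial edge $e_v$ of the arc from $v$ to $v_T$ has initial vertex $v$, giving the identification as in the compact case.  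This settles condition (\ref{not centered for noncompact}) of Definition \ref{admissible criteria for noncompact}, and condition (\ref{radius order for noncompact}) then follows by applying Lemma \ref{increasing radius} along each edge of $T$ exactly as in the proof of Lemma \ref{admissible Voronoi}.

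For the area computation, Definition \ref{tree cells}(1) presents $C_T$ as $\Delta(e_0,v_{\infty})\cup\bigcup_{v\in T^{(0)}} C_v$.  By Lemma \ref{vertex polygon} the cells $C_v$ have pairwise disjoint interiors, and by Lemma \ref{no infinite triangle overlap} the intersection of $\Delta(e_0,v_{\infty})$ with each $C_v$ is contained in the geometric dual to $e_0$ or its boundary; in particular the union is non-overlapping in measure.  Thus the area of $C_T$ is the sum of $\mathrm{Area}(\Delta(e_0,v_{\infty}))$ and $\sum_{v\in T^{(0)}} D_0(P_v(\bd))$.  By Lemma \ref{exclusive horocycle} the two finite vertices of $\Delta(e_0,v_{\infty})$ lie on a common horocycle with ideal point $v_{\infty}$, so it is a horocyclic ideal triangle with a single finite side of length $d_{e_0}$; applying Proposition \ref{horocyclic defects} with $n=3$ then gives $\mathrm{Area}(\Delta(e_0,v_{\infty}))=\pi-2\sin^{-1}(1/\cosh(d_{e_0}/2))$, matching Definition \ref{tree defect for noncompact}.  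The main obstacle, and really the only point that is not a direct re-run of the compact argument, is correctly identifying $e_0$ as $e_{v_T}$ and hence $d_{e_0}$ as the maximal entry of $P_{v_T}(\bd)$; once the orientation information from Proposition \ref{to the root}(\ref{root not centered}) is in hand, the rest is bookkeeping.
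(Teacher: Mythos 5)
Your proposal is correct and follows essentially the same route as the paper's proof: Proposition \ref{to the root} to identify $v_T$ and its non-centeredness, the compact-case argument for the non-root vertices and for condition (\ref{radius order for noncompact}), and then Definition \ref{tree cells} plus Lemma \ref{no infinite triangle overlap} to split the area into $\sum_v D_0(P_v(\bd))$ plus the horocyclic ideal triangle term. The only cosmetic differences are citation choices — you invoke Lemma \ref{vertex polygon centered} where the paper cites Lemma \ref{vertex polygon decomp} for the longest-side claim, and Proposition \ref{horocyclic defects} (with $n=3$) where the paper points to the external source — but both sets of references support the same facts.
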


\begin{proof}  The proof is analogous to that of Lemma \ref{admissible Voronoi}, with a couple of differences.  Again the main point is that for each $v\in T^{(0)}$, $C_v$ is represented in $\calAC_{n_v}$ by $P_v(\bd)$.  In contrast with that case, here $C_{v_T}$ is non-centered, by Proposition \ref{to the root}, and its longest side is the geometric dual $\gamma_0$ to $e_0$, by Lemma \ref{vertex polygon decomp}.

By Definition \ref{tree cells}, $C_T = \Delta(e_0,v_{\infty})\cup \bigcup_{v\in T^{(0)}} C_v$ in this case, where $v_{\infty}$ is the ideal endpoint of $e_0$.  Lemma \ref{no infinite triangle overlap} implies that the union above is non-overlapping, so the area of $C_T$ is the sum of the areas of the $C_v$ with that of $\Delta(e_0,v_{\infty})$.  But $\Delta(e_0,v_{\infty})$ is a horocyclic ideal triangle with vertices on the unique horocycle through the endpoints of $\gamma_0$ with ideal point $v_{\infty}$ (see Lemma \ref{exclusive horocycle}).  Thus its area is $\pi-2\sin^{-1}(1/\cosh (d_{e_0}/2))$; see eg.~\cite[Proposition 6.17]{DeB_cyclic_geom}.  The lemma follows.\end{proof}

As with Definition \ref{bigger admissible}, we will compactify $\mathit{Ad}(\bd_{\calf})$ here by expanding it somewhat.

\begin{definition}\label{bigger admissible for noncompact}  For $\bd_{\calf} = (d_e\,|\,e\in\calf)\in(\mathbb{R}^+)^{\calf}$ let $\overline{\mathit{Ad}}(\bd_{\calf})$ consist of those $\bd = (\bd_{\cale},\bd_{\calf})$, for $\bd_{\cale}\in (\mathbb{R}^+)^{\cale}$, such that:\begin{enumerate}
\item\label{closure not centered for noncompact} For each $v\in T^{(0)}$, with valence $n_v$ in $V$, $P_v(\bd) \in(\mathcal{AC}_{n_v}\cup\calHC_{n_v}) - \calc_{n_v}$ has largest entry $d_{e_v}$. 
\item\label{closure radius order for noncompact} $J(P_{v}(\bd)) \geq J(P_w(\bd))$ for each $w\in v- 1$, where $J(P_v(\bd))$ and $J(P_w(\bd))$ are the respective radii of $P_v(\bd)$ and $P_w(\bd)$, and $J(P)\doteq\infty$ if $P\in\calHC_n$; see \cite[Lemma 6.6]{DeB_cyclic_geom}.\end{enumerate}
\end{definition}

Note that $\overline{\mathit{Ad}}(\calf)$ above is somewhat ``larger'' than its analog from Definition \ref{bigger admissible}, since it includes points of $\calHC_n$.  We nonetheless require only subtle changes to the analog of Lemma \ref{precompact}. In particular, the properties below apply to all vertices of $T$, including $v_T$, and in (\ref{admissible bounded for noncompact}) below an inequality ceases to be strict.

\begin{lemma}\label{precompact for noncompact}  Collections $\{b_e\co(\mathbb{R}^+)^{\calf}\to\mathbb{R}^+\}_{e\in\cale}$ and $\{h_e\co(\mathbb{R}^+)^{\calf}\to\mathbb{R}^+\}_{e\in\cale}$ are determined by the following properties: for $\bd_{\calf}\in(\mathbb{R}^+)^{\calf}$ and $\bd_{\cale}\in(\mathbb{R}^+)^{\cale}$, with $\bd=(\bd_{\cale},\bd_{\calf})$:\begin{itemize}
\item  $P_v(\bd)\in\calBC_{n_v}$, with largest entry $d_{e_v}$, for all $v\in T^{(0)}$ if and only if $d_e=b_e(\bd_{\calf})$ for each $e\in\cale$.
\item  $P_v(\bd)\in\calHC_{n_v}$, with largest entry $d_{e_v}$, for all $v\in T^{(0)}$ if and only if $d_e=h_e(\bd_{\calf})$ for each $e\in\cale$.\end{itemize}
The collections $\{b_e\}$ and $\{h_e\}$ have the following properties:\begin{enumerate}
\item\label{admissible bounded for noncompact}  If $\bd=(\bd_{\cale},\bd_{\calf}) \in \overline{\mathit{Ad}}(\bd_{\calf})$ then for each $e\in\mathcal{E}$, $b_e(\bd_{\calf}) \leq d_e\leq h_e(\bd_{\calf})$.
\item\label{b increasing in T for noncompact}  For $\bd_{\calf}$ and $v\in T^{(0)}$, $b_{e_v}(\bd_{\calf}) > \max \{b_e(\bd_{\calf})\,|\, e\to v\in\cale\}\cup\{d_e\,|\,e\to v\in\calf\}$.
\item\label{b increasing for noncompact}  If $d_e' \geq d_e$ for each $e\in\calf$ then $b_e(\bd_{\calf}')\geq b_e(\bd_{\calf})$ for each $e\in\cale$, where $\bd_{\calf}' = (d_e')_{e\in\calf}$.\end{enumerate}\end{lemma}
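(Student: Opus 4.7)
The plan is to prove the lemma by induction along the partial order on $T^{(0)}$, proceeding from minimal vertices up toward $v_T$, closely mirroring the proof of Lemma \ref{precompact}. The key structural observation is that in the non-compact setting $e_{v_T}=e_0$ is defined (by Definition \ref{partial order for noncompact}), and Definition \ref{bigger admissible for noncompact}(\ref{closure not centered for noncompact}) imposes the same condition ``$P_v(\bd)\in(\calAC_{n_v}\cup\calHC_{n_v})-\calc_{n_v}$ with largest entry $d_{e_v}$'' uniformly at every $v\in T^{(0)}$, including $v_T$.  Consequently the inductive construction, which in Lemma \ref{precompact} stopped one step short of $v_T$ and treated it separately, here extends to $v_T$ in exactly the same fashion as to every other vertex.

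For the base case, let $v\in T^{(0)}$ be minimal, so every $e\to v$ lies in $\calf$.  Cyclically enumerate the $n_v$ edges of $V$ containing $v$ as $e_v,e_1,\ldots,e_{n_v-1}$, and set $b_{e_v}(\bd_{\calf})\doteq b_0(d_{e_1},\ldots,d_{e_{n_v-1}})$ and $h_{e_v}(\bd_{\calf})\doteq h_0(d_{e_1},\ldots,d_{e_{n_v-1}})$, where $b_0,h_0\co(\mathbb{R}^+)^{n_v-1}\to\mathbb{R}^+$ are from Proposition \ref{smooth BC}.  For the inductive step, fix $v\in T^{(0)}$ with $b_{e_w}(\bd_{\calf})$ and $h_{e_w}(\bd_{\calf})$ already defined for all $w<v$; cyclically enumerate the edges containing $v$ as $e_v,e_1,\ldots,e_{n_v-1}$ and for each $i>0$ set
$$ d_i=\left\{\begin{array}{ll} d_{e_i} & e_i\in\calf \\ b_{e_i}(\bd_{\calf}) & e_i\in\cale \end{array}\right. \qquad d_i'=\left\{\begin{array}{ll} d_{e_i} & e_i\in\calf \\ h_{e_i}(\bd_{\calf}) & e_i\in\cale \end{array}\right. $$
with $b_{e_v}(\bd_{\calf})\doteq b_0(d_1,\ldots,d_{n_v-1})$ and $h_{e_v}(\bd_{\calf})\doteq h_0(d_1',\ldots,d_{n_v-1}')$.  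For $\bd\in\overline{\mathit{Ad}}(\bd_{\calf})$, combining the inductive hypothesis's property (\ref{admissible bounded for noncompact}) with Proposition \ref{smooth BC}(\ref{parameter monotonicity}) and (\ref{vertical ray}) yields
$$ b_{e_v}(\bd_{\calf})\leq b_0(d_{e_1},\ldots,d_{e_{n_v-1}})\leq d_{e_v}\leq h_0(d_{e_1},\ldots,d_{e_{n_v-1}})\leq h_{e_v}(\bd_{\calf}), $$
proving (\ref{admissible bounded for noncompact}) at $v$.  Properties (\ref{b increasing in T for noncompact}) and (\ref{b increasing for noncompact}) at $v$ follow from Proposition \ref{smooth BC}(\ref{the proper order}) and (\ref{parameter monotonicity}) respectively, together with their inductively established counterparts for vertices $w<v$.

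There is no genuine obstacle; the sole substantive divergence from Lemma \ref{precompact} is that the outer inequalities $b_e(\bd_{\calf})\leq d_e\leq h_e(\bd_{\calf})$ in property (\ref{admissible bounded for noncompact}) are not strict.  This reflects the admission of $P_v(\bd)\in\calHC_{n_v}$ into $\overline{\mathit{Ad}}(\bd_{\calf})$ here, which corresponds to equality $d_{e_v}=h_0(d_1,\ldots,d_{n_v-1})$ via the characterization of $\calHC_n$ in Proposition \ref{smooth BC}.  This non-strict inequality then propagates cleanly through the induction in place of the strict one used for Lemma \ref{precompact}.
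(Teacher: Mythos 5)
Your proof is correct and takes essentially the same approach as the paper, which simply states that the argument follows that of Lemma \ref{precompact} and notes the one place where a strict inequality becomes non-strict. You have correctly identified the two substantive adaptations: the induction now runs uniformly through $v_T$ (since $e_{v_T}=e_0$ is defined and Definition \ref{bigger admissible for noncompact}(\ref{closure not centered for noncompact}) treats $v_T$ like every other vertex), and the right-hand bound in property (\ref{admissible bounded for noncompact}) becomes $d_e\leq h_e(\bd_{\calf})$ rather than strict, since $P_v(\bd)\in\calHC_{n_v}$ is now permitted for $\bd\in\overline{\mathit{Ad}}(\bd_{\calf})$.
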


The proof directly follows that of Lemma \ref{precompact}, and there is no need to rehash it.  We merely point out that the reason $d_e$ as at most (instead of less than) $h_e(\bd_{\calf})$ in (\ref{admissible bounded for noncompact}) here is that now $P_v(\bd)$ may be in $\calHC_{n_v}$ for $\bd\in\overline{\mathit{Ad}}(\bd_{\calf})$.

\begin{lemma}\label{admissible closure for noncompact}  For any $\bd_{\calf}\in(\mathbb{R}^+)^{\calf}$, $\overline{\mathit{Ad}}(\bd_{\calf})$ is compact.\end{lemma}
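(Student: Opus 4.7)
The plan is to mirror the proof of Lemma~\ref{admissible closure}, exploiting the structural difference that Definition~\ref{bigger admissible for noncompact} now permits $P_v(\bd) \in \calHC_{n_v}$. This converts what was the main delicacy of the compact case into admissible behavior, so I expect the argument to be cleaner than that of Lemma~\ref{admissible closure}. First I would establish boundedness: Lemma~\ref{precompact for noncompact}(\ref{admissible bounded for noncompact}) gives $b_e(\bd_{\calf}) \leq d_e \leq h_e(\bd_{\calf})$ for every $\bd = (\bd_{\cale},\bd_{\calf}) \in \overline{\mathit{Ad}}(\bd_{\calf})$ and every $e \in \cale$, so $\overline{\mathit{Ad}}(\bd_{\calf})$ is contained in a compact Euclidean box. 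Combined with Lemma~\ref{precompact for noncompact}(\ref{b increasing in T for noncompact}), which forces each $b_e(\bd_{\calf}) > 0$, this also guarantees that the limit of any convergent sequence in $\overline{\mathit{Ad}}(\bd_{\calf})$ lies in $(\mathbb{R}^+)^{\cale} \times \{\bd_{\calf}\}$ and not on its frontier.

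Next I would verify that both defining conditions of $\overline{\mathit{Ad}}(\bd_{\calf})$ are preserved under limits. For condition~(\ref{closure not centered for noncompact}), inspection of Definition~\ref{parameter spaces} shows that $\calAC_{n_v} \cup \calHC_{n_v}$ is cut out of $(\mathbb{R}^+)^{n_v}$ by the weak inequalities $\sinh(d_i/2) \leq \sum_{j \neq i}\sinh(d_j/2)$ and is therefore closed, while $\calc_{n_v}$ is defined by a strict inequality and is open; hence $(\calAC_{n_v} \cup \calHC_{n_v}) - \calc_{n_v}$ is closed in $(\mathbb{R}^+)^{n_v}$, and the requirement that $d_{e_v}$ be the maximal entry of $P_v(\bd)$ is a weak inequality that is also stable under limits. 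For condition~(\ref{closure radius order for noncompact}), Proposition~\ref{omniradius} says $J$ is continuous on $\calAC_{n_v}$ and tends to $\infty$ at $\calHC_{n_v}$; with the convention $J \equiv \infty$ on $\calHC$ this makes $J$ continuous on $\calAC_{n_v} \cup \calHC_{n_v}$ as a map into $\mathbb{R} \cup \{\infty\}$. Given a convergent sequence $\{\bd^{(i)}\} \to \bd$ satisfying $J(P_v(\bd^{(i)})) \geq J(P_w(\bd^{(i)}))$, if $P_v(\bd) \in \calHC$ the limiting inequality holds trivially; if $P_v(\bd) \in \calAC$ then $\{J(P_v(\bd^{(i)}))\}$ is bounded, forcing $\{J(P_w(\bd^{(i)}))\}$ to be bounded as well, which by Proposition~\ref{omniradius} precludes $P_w(\bd) \in \calHC$, so the inequality passes to the limit by the ordinary continuity of $J$ on $\calAC$.

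The only genuinely new feature compared with Lemma~\ref{admissible closure} is the possibility that some $P_v(\bd^{(i)})$ limits into $\calHC_{n_v}$. In the compact setting this had to be ruled out by a careful radius comparison argument; here the enlarged definition of $\overline{\mathit{Ad}}(\bd_{\calf})$ absorbs the possibility directly, and the only point requiring care is to check that the extended radius-order condition behaves correctly at $\calHC$. This is precisely what the lower semi-continuity (in fact continuity into $\mathbb{R} \cup \{\infty\}$) of the radius functional delivers. I do not anticipate a substantive obstacle beyond this bookkeeping.
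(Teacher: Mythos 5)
Your proof is correct and takes essentially the same route as the paper's, which in fact only sketches the argument: boundedness from Lemma~\ref{precompact for noncompact}(\ref{admissible bounded for noncompact}), closedness of condition~(\ref{closure not centered for noncompact}) from closedness of $(\calAC_n\cup\calHC_n)-\calc_n$ (the paper cites this from \cite{DeB_cyclic_geom} rather than re-deriving it from the defining inequalities as you do, but your inspection is valid), and preservation of the radius-order condition by continuity of $J$. Your explicit case analysis for condition~(\ref{closure radius order for noncompact}) when $J$ may take the value $\infty$ is a welcome clarification of the paper's terse ``follows as before'' and correctly captures why the compact-case delicacy (where limiting into $\calHC$ had to be ruled out by a radius contradiction) dissolves here.
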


The proof of this result is easier than the proof of its antecedent Lemma \ref{admissible closure} because $(\calAC_n\cup\calHC_n)-\calc_n$ is closed in $(\mathbb{R}^+)^n$, unlike $\calAC_n-\calc_n$.  See Lemmas 6.4 and 6.6 of \cite{DeB_cyclic_geom}.  For this reason the criterion (\ref{closure not centered for noncompact}) of Definition \ref{bigger admissible for noncompact} is preserved in limits of points in $\overline{\mathit{Ad}}(\bd_{\calf})$.  That criterion (\ref{radius order for noncompact}) is preserved in limits, and that $\overline{\mathit{Ad}}(\bd_{\calf})$ is bounded in $(\mathbb{R}^+)^{\calf}$ away from its frontier in $\mathbb{R}^{\calf}$, follow as before.

We also observe the analog of Lemma \ref{continuous defect}.

\begin{lemma}\label{continuous defect for noncompact}  For $\bd_{\calf} = (d_e\,|\,e\in\calf)\subset (\mathbb{R}^+)^{\calf}$ such that $\overline{\mathit{Ad}}(\bd_{\calf})\neq\emptyset$, $D_T(\bd)$ is continuous on $\overline{\mathit{Ad}}(\bd_{\calf})$ and attains a minimum there.\end{lemma}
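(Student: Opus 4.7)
The proof will follow the two-step template of Lemma \ref{continuous defect}, modified to account for two new features: $D_T$ now contains the area $\pi-2\sin^{-1}(1/\cosh(d_{e_0}/2))$ of the horocyclic ideal triangle $\Delta(e_0,v_\infty)$, and Definition \ref{bigger admissible for noncompact}(\ref{closure not centered for noncompact}) allows each $P_v(\bd)$ to lie on the horocyclic boundary $\calHC_{n_v}$ rather than strictly inside $\calAC_{n_v}$.

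First I would establish continuity of $D_T$. The horocyclic ideal triangle contribution is manifestly a continuous function of the single coordinate $d_{e_0}$. For each $v\in T^{(0)}$, the projection $\bd\mapsto P_v(\bd)$ is continuous, so it suffices to check that $D_0$ is continuous on the set $(\calAC_{n_v}\cup\calHC_{n_v})-\calc_{n_v}$ in which $P_v(\bd)$ is constrained by Definition \ref{bigger admissible for noncompact}(\ref{closure not centered for noncompact}). Proposition \ref{omniarea} gives continuity on $\calAC_{n_v}$, and the explicit formula of Proposition \ref{horocyclic defects} gives continuity on $\calHC_{n_v}$. For continuity across the shared boundary, note that a sequence $P^{(i)}\in\calAC_{n_v}-\calc_{n_v}$ can approach $P\in\calHC_{n_v}$ only with $J(P^{(i)})\to\infty$ by the last clause of Proposition \ref{omniradius}; the partial-derivative formula of Proposition \ref{omniarea} then specializes in the limit to $\pm 1/\cosh(d_i/2)$, which matches the partial derivatives of the explicit formula of Proposition \ref{horocyclic defects}. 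Integrating these derivatives along smooth paths in $\calAC_{n_v}$ and taking limits yields $D_0(P^{(i)})\to D_0(P)$, upgrading the lower-semicontinuous extension asserted in Proposition \ref{horocyclic defects} to a continuous one on the portion of the extended domain relevant here.

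Second, by Lemma \ref{admissible closure for noncompact} the set $\overline{\mathit{Ad}}(\bd_{\calf})$ is compact, so $D_T$, being continuous, attains its minimum on it.

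The main obstacle will be the continuity argument at $\calHC_{n_v}$: Proposition \ref{horocyclic defects} only asserts lower-semicontinuity of the full extension of $D_0$ (which includes the ideal horocyclic polygons, where discontinuities can genuinely occur as finite side lengths diverge), and what the statement demands is strictly stronger on the subdomain $(\calAC_{n_v}\cup\calHC_{n_v})-\calc_{n_v}$. The derivative approach sketched above should close this gap cleanly, but it requires some bookkeeping about which coordinate is maximal, since that determines the sign in the partial-derivative formula of Proposition \ref{omniarea}. If this proves delicate, a fallback is to note that only lower-semicontinuity is strictly needed to attain the infimum on a compact set, and lower-semicontinuity of $D_T$ follows directly from Proposition \ref{horocyclic defects} applied termwise; this is weaker than the stated conclusion but suffices for all downstream uses in the paper.
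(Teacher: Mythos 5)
Your proof is correct and follows the same template as the paper's: establish continuity of $D_T$ and invoke compactness from Lemma~\ref{admissible closure for noncompact}. The paper's own proof is a one-line remark addressing only the new horocyclic-ideal-triangle term and leaving continuity of $D_0$ across $\calHC_{n_v}$ implicit (effectively outsourcing it to \cite{DeB_cyclic_geom}); your derivative-matching argument, with the lower-semicontinuity fallback, supplies a legitimate justification for precisely that point.
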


The only thing worth adding to the proof here is that $\sin^{-1}(1/\cosh(d_{e_0}/2))$ clearly varies continuously with $\bd$ (compare Definitions \ref{tree defect} and \ref{tree defect for noncompact}).  Finally, a version of Proposition \ref{minimum at} for the current context:

\begin{proposition}\label{minimum at for noncompact}  For $\bd_{\calf}\in(\mathbb{R}^+)^{\calf}$ with $\overline{\mathit{Ad}}(\bd_{\calf})\neq\emptyset$, at a minimum point $\bd = (\bd_{\cale},\bd_{\calf})$ for $D_T(\bd)$ on of the following holds:\begin{enumerate}
\item\label{really not centered for noncompact}  $P_v(\bd)\in\calBC_{n_v}$ for each $v\in T^{(0)}$;
\item\label{really horocyclic}  $P_{v_T}(\bd)\in\calHC_{n_{v_T}}$; or
\item\label{strict radius order for noncompact}  $J(P_v(\bd)) = J(P_w(\bd))$ for some $v\in T^{(0)}$ and $w\in v-1$.
\end{enumerate}\end{proposition}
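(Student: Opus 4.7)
The plan is to adapt the proof of Proposition \ref{minimum at} almost verbatim, with modifications to handle the non-compact edge $e_0$ of $T$ and the new possibility that some $P_v(\bd)$ lies in $\calHC_{n_v}$. Suppose none of (\ref{really not centered for noncompact}), (\ref{really horocyclic}), (\ref{strict radius order for noncompact}) holds at the minimum point $\bd$. As in the compact case, I will exhibit an edge $e_v$ along which decreasing $d_{e_v}$ stays in $\overline{\mathit{Ad}}(\bd_{\calf})$ and strictly reduces $D_T$, contradicting minimality.

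A preliminary observation rules out horocyclic vertex polygons: if $P_w(\bd) \in \calHC_{n_w}$ for some $w$, then $J(P_w(\bd)) = \infty$, and repeated application of Definition \ref{bigger admissible for noncompact}(\ref{closure radius order for noncompact}) along the edge arc in $T$ from $w$ to $v_T$ forces $J(P_{v_T}(\bd)) = \infty$, contradicting the failure of (\ref{really horocyclic}). Hence $P_v(\bd) \in \calAC_{n_v} - \calc_{n_v}$ for every $v \in T^{(0)}$, and by the failure of (\ref{really not centered for noncompact}) some $v$ satisfies $P_v(\bd) \in \calAC_{n_v} - (\calc_{n_v} \cup \calBC_{n_v})$.

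If $v \neq v_T$, the argument of Proposition \ref{minimum at} transfers unchanged: decreasing $d_{e_v}$ affects only $P_v(\bd)$ and $P_{v'}(\bd)$ for the terminal vertex $v'$ of $e_v$, and the partial derivative $\partial D_T/\partial d_{e_v}$ takes the form (\ref{defect deriv diff}), which is positive by the strict inequality $J(P_{v'}(\bd)) > J(P_v(\bd))$ forced by the failure of (\ref{strict radius order for noncompact}) together with Definition \ref{bigger admissible for noncompact}(\ref{closure radius order for noncompact}). Note that $d_{e_v}$ is not the maximal entry of $P_{v'}(\bd)$ even when $v' = v_T$, since in that case the maximum is $d_{e_0}$ by Definition \ref{bigger admissible for noncompact}(\ref{closure not centered for noncompact}), so the sign prescription of Proposition \ref{omniarea} gives the same derivative formula as before.

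The genuinely new case is $v = v_T$, where $e_v = e_0$ is non-compact. Here varying $d_{e_0}$ affects both $P_{v_T}(\bd)$ and the ideal-triangle summand $\pi - 2\sin^{-1}(1/\cosh(d_{e_0}/2))$ in Definition \ref{tree defect for noncompact}. A direct computation shows that the latter contributes $+1/\cosh(d_{e_0}/2) = \sqrt{1/\cosh^2(d_{e_0}/2)}$ to $\partial D_T/\partial d_{e_0}$, while by Proposition \ref{omniarea} the term $D_0(P_{v_T}(\bd))$ contributes $-\sqrt{1/\cosh^2(d_{e_0}/2) - 1/\cosh^2 J(P_{v_T}(\bd))}$, since $d_{e_0}$ is the maximal entry of the non-centered polygon $P_{v_T}(\bd)$. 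These terms sum to a strictly positive number precisely because $J(P_{v_T}(\bd)) < \infty$ (the horocyclic case having been excluded), so decreasing $d_{e_0}$ reduces $D_T$. In either case, admissibility of the perturbed point follows from the same openness and continuity considerations used in Proposition \ref{minimum at}: $P_v(\bd)$ sits in the open subset $\calAC_{n_v} - (\calc_{n_v} \cup \calBC_{n_v})$, perturbing a non-maximal entry of a polygon on $\calBC$ keeps it in $\calAC - \calc$ (because $A_d(D/2)$ is increasing in $d$ for $D \geq d$), and the strict radius inequalities guaranteed by the failure of (\ref{strict radius order for noncompact}) are preserved under small perturbations by the continuity of $J$. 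The main (though mild) obstacle is verifying that the ideal-triangle derivative exactly balances the $-1/\cosh^2(d_{e_0}/2)$ in the square-root term so that only the $1/\cosh^2 J$ contribution controls the sign; once this calculation is done, the rest parallels the compact case.
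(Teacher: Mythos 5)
Your proposal matches the paper's proof essentially step for step: you rule out the horocyclic case via the radius-order chain to $v_T$, invoke the compact-case argument for $v < v_T$, and compute the new derivative $\frac{1}{\cosh(d_{e_0}/2)} - \sqrt{\frac{1}{\cosh^2(d_{e_0}/2)} - \frac{1}{\cosh^2 J(P_{v_T}(\bd))}}$ at $v = v_T$, which is exactly formula (\ref{defect deriv diff for noncompact}) in the paper. The argument and all supporting observations (including the note that $d_{e_v}$ is non-maximal in $P_{v_T}(\bd)$ when $v' = v_T$, and that its largest entry is $d_{e_0}$) are correct and identical in substance to the paper's treatment.
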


\begin{proof}  The proof follows the strategy of Proposition \ref{minimum at}: we suppose none of the above criteria holds at $\bd\in\overline{\mathit{Ad}}(\bd_{\calf})$, fix $v\in T^{(0)}$ such that $P_v(\bd)\notin\calBC_{n_v}$, and show that reducing $d_{e_v}$ while keeping all other entries of $\bd$ constant produces a deformation through $\overline{\mathit{Ad}}(\bd_{\calf})$ that lowers the value of $D_T$.

That (\ref{really horocyclic}) does not hold implies for each $w\in T^{(0)}$ that $P_w(\bd)\notin\calHC_{n_w}$.  This is because if $P_w(\bd)\in\calHC_{n_w}$ then $J(P_w(\bd))=\infty$, so criterion (\ref{radius order for noncompact}) of Definition \ref{bigger admissible for noncompact} implies that  $J(P_{v'}(\bd))=\infty$, and hence $P_{v'}(\bd)\in\calHC_{n_{v'}}$, for all $v'\in T^{(0)}$ with $w<v'$; in particular for $v'=v_T$.

For $v < v_T$ the argument of Proposition \ref{minimum at} thus shows that the deformation described in the first paragraph acts as claimed there.  If $v = v_T$ then $e_v = e_0$ is the non-compact edge of $T$, so $\partial D_T/\partial d_{e_v}$ is not quite as described in (\ref{defect deriv diff}).  Instead we have:\begin{align}\label{defect deriv diff for noncompact}
  \frac{\partial D_T}{\partial d_{e_0}} = \frac{1}{\cosh (d_{e_0}/2)} - \sqrt{\frac{1}{\cosh^2(d_{e_0}/2)} - \frac{1}{\cosh^2 J(P_{v_T}(\bd))}}  \end{align}
The right-hand quantity above is $\partial D_0(P_{v_T}(\bd))/\partial d_{e_0}$ (by Proposition \ref{omniarea}); on the left is $\left[\pi - 2\sin^{-1}(1/\cosh(d_{e_0}/2))\right]'$ (by direct computation).
\end{proof}

\subsection{Another area bound}\label{noncompact bounds}  Here we will prove an analog of Theorem \ref{main} for centered dual $2$-cells $C_T$ that are dual to components $T$ of the non-centered Voronoi subgraph with a non-compact edge but finite vertex set (recall Definition \ref{tree cells}).

\begin{theorem}\label{main for noncompact}  Let $C_T$ be a centered dual $2$-cell, dual to a component $T$ of the non-centered Voronoi subgraph determined by locally finite $\cals\subset\mathbb{H}^2$ with finite vertex set but a noncompact edge.  For $d>0$, if $C_T$ has $k$ edges and each has length at least $d$ then
$$ \mathrm{Area}(C_T) \geq D_0(\infty,b_0(d,d),\infty) + (k-3)D_0(b_0(d,d),d,d) $$
for $b_0$ as in Proposition \ref{smooth BC} and $D_0$ as in Proposition \ref{horocyclic defects}.\end{theorem}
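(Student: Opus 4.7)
The plan is to mirror the strategy of Theorem~\ref{main}: first establish a general lower bound on $D_T$ over $\overline{\mathit{Ad}}(\bd_\calf)$ analogous to Proposition~\ref{general lower}, then derive Theorem~\ref{main for noncompact} via Lemma~\ref{admissible Voronoi for noncompact}. Specifically, I would prove that if each entry of $\bd_\calf$ is at least $d$, then
\[ \min\{D_T(\bd) : \bd \in \overline{\mathit{Ad}}(\bd_\calf)\} \geq D_0(\infty, B_0, \infty) + (|\calf| - 1) D_0(B_0, d, d), \]
where $B_0 = b_0(d,d)$.  Together with the combinatorial identity $k = |\calf| + 2$---the two extra boundary edges being the infinite sides of $\Delta(e_0, v_\infty)$---this yields the theorem.

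The lower bound is proved by induction on $|T^{(0)}|$. For the base $T = \{v_T\}$, Proposition~\ref{minimum at for noncompact} ensures a minimizer has either $P_{v_T}(\bd) \in \calBC_{n_T}$ or $P_{v_T}(\bd) \in \calHC_{n_T}$. In the former, reflecting $P_{v_T}$ across its longest side (of length $d_{e_0}$) produces a centered cyclic $(2n_T - 2)$-gon with all sides at least $d$, and Proposition~\ref{centered main} with Corollary~\ref{monotonicity} gives $D_0(P_{v_T}) \geq (n_T - 2) D_0(B_0, d, d)$; monotonicity of $x \mapsto D_0(\infty, x, \infty)$ together with $d_{e_0} \geq B_0$ (via Proposition~\ref{smooth BC}(\ref{parameter monotonicity}) and \ShrinkingParameters) completes this subcase.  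In the latter, Lemma~\ref{two horocyclic polys} glues $P_{v_T}$ to $\Delta(e_0, v_\infty)$ along their shared edge, producing an ideal horocyclic polygon of area $D_T(\bd)$ with $n_T - 1$ finite sides each $\geq d$.  The monotonicity extension in Proposition~\ref{horocyclic defects} reduces the goal to the equilateral-side inequality
\[ D_0(\infty, d, \ldots, d, \infty) \geq D_0(\infty, B_0, \infty) + (n_T - 2) D_0(B_0, d, d), \]
with $n_T - 1$ copies of $d$.  This I would prove by internal induction on $n_T$: at $n_T = 3$, splitting the ideal quadrilateral along the diagonal joining the two finite endpoints of the infinite sides yields a horocyclic triangle whose longest side $\ell$ satisfies $\sinh(\ell/2) = 2\sinh(d/2) > \sinh(B_0/2)$, plus a horocyclic ideal triangle of finite side $\ell$, whence the monotonicity extension gives the bound. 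The inductive step---peeling off a horocyclic ideal triangle of finite side $d$ from one end of the polygon---requires only $D_0(\infty, d, \infty) \geq D_0(B_0, d, d)$, which is Corollary~\ref{big ideals}.

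For the inductive step on $|T^{(0)}|$, fix a minimizer $\bd$ and apply Proposition~\ref{minimum at for noncompact}.  If criterion~(\ref{strict radius order for noncompact}) holds, Lemma~\ref{two polys} merges $P_v$ and $P_w$, producing a smaller tree $T_f$ with $|\calf_{T_f}| = |\calf_T|$ and a point of $\overline{\mathit{Ad}}(\bd_{\calf_f})$ with $D_T(\bd) = D_{T_f}(\bd_f)$, so the induction hypothesis applies.  If~(\ref{really horocyclic}) holds, absorb $\Delta(e_0, v_\infty)$ into $P_{v_T}$ via Lemma~\ref{two horocyclic polys} and iterate the merger along the tree whenever further horocyclic $P_w$ are triggered (as forced by Definition~\ref{bigger admissible for noncompact}(\ref{closure radius order for noncompact})), ultimately bounding the resulting ideal horocyclic contribution as in the base case and handling any remaining non-horocyclic subtrees via Proposition~\ref{general lower}.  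If~(\ref{really not centered for noncompact}) holds, each $P_v$---including $v_T$---is in $\calBC_{n_v}$, so the reflection/Proposition~\ref{centered main} argument gives $D_0(P_v) \geq (n_v - 2) D_0(B_0, d, d)$; summing via the identity $\sum_v n_v = 2|\cale| + |\calf| - 1$ (the $-1$ because $e_0$ contributes to only one vertex valence) yields $\sum_v(n_v - 2) = |\calf| - 1$, after which adding $D_0(\infty, d_{e_0}, \infty) \geq D_0(\infty, B_0, \infty)$ completes the bound.

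The main obstacle will be the clean handling of criterion~(\ref{really horocyclic}) in the inductive step.  The absorption of the ideal triangle into a horocyclic $P_{v_T}$ via Lemma~\ref{two horocyclic polys} can propagate further horocyclic constraints down the tree, and controlling the interaction between the cyclic-polygon framework of Section~\ref{cyclic geom} and the horocyclic-ideal framework of Section~\ref{horocyclic geom} requires care in tracking $|\calf|$ across the mergers.  The base-case auxiliary inequality is also nontrivial but ultimately falls cleanly out of Corollary~\ref{big ideals} combined with the monotonicity extensions of Proposition~\ref{horocyclic defects}.
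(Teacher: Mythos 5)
Your overall strategy matches the paper's---induction on $|T^{(0)}|$, route through Proposition~\ref{minimum at for noncompact}, handle criteria~(\ref{really not centered for noncompact}) and~(\ref{strict radius order for noncompact}) roughly as in the compact case---and your treatment of those two criteria is essentially correct.  But your handling of criterion~(\ref{really horocyclic}) has a genuine gap.  First, the parenthetical claim that Definition~\ref{bigger admissible for noncompact}(\ref{closure radius order for noncompact}) ``forces'' further horocyclic $P_w$ down the tree is backwards: that condition reads $J(P_v)\geq J(P_w)$ for $w\in v-1$, so a horocyclic \emph{child} forces a horocyclic parent, but a horocyclic $P_{v_T}$ imposes nothing on its children.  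Your merger therefore stalls after absorbing $\Delta(e_0,v_\infty)$ into $P_{v_T}$, leaving subtrees rooted at the children $w_1,\hdots,w_l$ of $v_T$.  Second, you cannot invoke Proposition~\ref{general lower} on those subtrees.  Each $P_{w_i}(\bd)$ lies in $\calAC_{n_{w_i}}-\calc_{n_{w_i}}$, i.e.\ it is non-centered, but the blanket hypothesis of Section~\ref{admissible for compact} and Definition~\ref{bigger admissible}(\ref{closure centered}) require the root polygon to lie in $\calc_{n}\cup\calBC_{n}$; the compact admissible set for such a subtree is empty, and Proposition~\ref{general lower} says nothing.  The actual proof decomposes differently: remove $e_0$ to split $T$ into subtrees $T_i$ each with a single ``non-compact'' edge $e_i$ whose closure contains $v_T$, and rewrite $D_0(\infty,d_{e_0},\infty)+D_0(P_{v_T}(\bd))$ via Lemma~\ref{two horocyclic polys} as $\sum_{i}D_0(\infty,d_{e_i},\infty)$, giving $D_T(\bd)=\sum_{i\leq l}D_{T_i}(\bd_i)+\sum_{i>l}D_0(\infty,d_{e_i},\infty)$.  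Then one applies the \emph{inductive hypothesis of Theorem~\ref{main for noncompact} itself} to each $T_i$---precisely because Definition~\ref{bigger admissible for noncompact} permits a non-centered root with $d_{e_i}$ its longest side---and closes the estimate with Corollary~\ref{big ideals}, Corollary~\ref{monotonicity}, and the count $|\calf|=\sum_i|\calf_i|+n_0-1-l$.

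A secondary problem is your $n_T=3$ base of the auxiliary inequality.  After splitting the ideal quadrilateral along $[p_0,p_2]$ you get $D_0(\ell,d,d)+D_0(\infty,\ell,\infty)$ with $\sinh(\ell/2)=2\sinh(d/2)>\sinh(B_0/2)$, and you claim ``the monotonicity extension gives the bound.''  But $D_0(\ell,d,d)<D_0(B_0,d,d)$: by Proposition~\ref{omniarea}, the partial derivative of $D_0$ with respect to the longest side of a polygon in $\calAC_n-\calc_n$ is \emph{negative} once you leave $\calBC_n$, so pushing $(B_0,d,d)$ to $(\ell,d,d)\in\calHC_3$ strictly decreases area.  (At $d=2$ one computes $D_0(B_0,d,d)\approx 1.24$ while $D_0(\ell,d,d)\approx 1.13$.)  The desired inequality is still true---the ideal-triangle gain $D_0(\infty,\ell,\infty)-D_0(\infty,B_0,\infty)$ exceeds this loss---but that is not termwise monotonicity; it is exactly the content of Corollary~\ref{horocyclic ideal bound}, which the paper derives instead from the monotone-increasing behavior of $D_T$ on the interval $[b_{e_0}(\bd_\calf),h_{e_0}(\bd_\calf)]$ in Lemma~\ref{base case for noncompact}.  (That monotonicity also shows the $\calHC$ subcase never occurs at a minimizer when $T$ has one vertex, so the auxiliary inequality is not needed in your base case at all.)
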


The proof strategy is similar to that of Theorem \ref{main}.  In particular, we again induct on the number of vertices.  Here however, in the one-vertex case $T$ has a single non-compact edge.  We address this directly below.

\begin{lemma}\label{base case for noncompact}  Let $T =\{e_0\}$ be a non-compact edge of a finite graph $V$, with vertex $v$ of valence $n\geq 3$ in $V$.  Cyclically enumerate the edges containing $v$ as $e_0,e_1,\hdots,e_{n-1}$.  Then for any $\bd_{\calf} = (d_{e_1},\hdots,d_{e_{n-1}})\in(\mathbb{R}^+)^{n-1}$, $\overline{\mathit{Ad}}(\bd_{\calf}) = [b_{e_0}(\bd_{\calf}),h_{e_0}(\bd_{\calf})]\times\{\bd_{\calf}\}$ (where $b_{e_0}$ and $h_{e_0}$ are as in Lemma \ref{precompact for noncompact}).  $D_T$ takes its minimum and maximum values at the left and right endpoints of this interval, respectively.  These are given by:\begin{align*}
  D_T(b_{e_0}(\bd_{\calf}),\bd_{\calf}) & = D_0(\infty,b_{e_0}(\bd_{\calf}),\infty) + D_0(b_{e_0}(\bd_{\calf}),\bd_{\calf}) \\
  D_T(h_{e_0}(\bd_{\calf}),\bd_{\calf}) & = D_0(\infty,\bd_{\calf},\infty)\end{align*}\end{lemma}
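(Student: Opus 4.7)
The plan is to verify the three assertions by combining the admissible-space machinery with the differentiation formulas already established. Since $T=\{e_0\}$ has only the vertex $v=v_T$ (so $v-1=\emptyset$), condition (\ref{closure radius order for noncompact}) of Definition \ref{bigger admissible for noncompact} is vacuous, and membership of $\bd=(d_{e_0},\bd_{\calf})$ in $\overline{\mathit{Ad}}(\bd_{\calf})$ reduces to condition (\ref{closure not centered for noncompact}): $P_v(\bd)$ lies in $(\calAC_n\cup\calHC_n)-\calc_n$ with largest entry $d_{e_0}$. By Proposition \ref{smooth BC}(\ref{vertical ray}), with $d_{e_0}$ largest, this is equivalent to $b_0(d_{e_1},\ldots,d_{e_{n-1}})\leq d_{e_0}\leq h_0(d_{e_1},\ldots,d_{e_{n-1}})$; these two bounds are precisely $b_{e_0}(\bd_{\calf})$ and $h_{e_0}(\bd_{\calf})$ by the base case of the inductive construction in Lemma \ref{precompact for noncompact}, and Lemma \ref{precompact for noncompact}(\ref{b increasing in T for noncompact}) makes the ``largest entry'' condition automatic on the closed interval.

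Next I would establish monotonicity by differentiating $D_T$ from Definition \ref{tree defect for noncompact}. The result is exactly expression (\ref{defect deriv diff for noncompact}) from the proof of Proposition \ref{minimum at for noncompact}:
\[ \frac{\partial D_T}{\partial d_{e_0}} = \frac{1}{\cosh(d_{e_0}/2)} - \sqrt{\frac{1}{\cosh^2(d_{e_0}/2)} - \frac{1}{\cosh^2 J(P_v(\bd))}}. \]
This is strictly positive on $[b_{e_0}(\bd_{\calf}),h_{e_0}(\bd_{\calf}))$, where $J(P_v(\bd))<\infty$, and vanishes only at the right endpoint where $P_v(\bd)\in\calHC_n$. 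Together with continuity of $D_T$ on the compact interval (Lemma \ref{continuous defect for noncompact}), this forces $D_T$ to be strictly increasing, so the minimum is attained at $b_{e_0}(\bd_{\calf})$ and the maximum at $h_{e_0}(\bd_{\calf})$.

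The final step is to identify the endpoint values, and the key observation is that the summand $\pi-2\sin^{-1}(1/\cosh(d_{e_0}/2))$ in Definition \ref{tree defect for noncompact} is itself the area $D_0(\infty,d_{e_0},\infty)$ of a horocyclic ideal triangle with one finite side of length $d_{e_0}$ (Proposition \ref{horocyclic defects}). At $d_{e_0}=b_{e_0}(\bd_{\calf})$, adding the cyclic-polygon area $D_0(P_v(\bd))=D_0(b_{e_0}(\bd_{\calf}),\bd_{\calf})$ yields the first formula directly. At $d_{e_0}=h_{e_0}(\bd_{\calf})$, Proposition \ref{horocyclic defects} expands $D_0(P_v(\bd))$ as
\[ (n-2)\pi + 2\sin^{-1}\bigl(1/\cosh(h_{e_0}(\bd_{\calf})/2)\bigr) - 2\sum_{i=1}^{n-1}\sin^{-1}\bigl(1/\cosh(d_{e_i}/2)\bigr), \]
and the two terms involving $h_{e_0}(\bd_{\calf})$ cancel against the ideal-triangle contribution; what remains is $(n-1)\pi-2\sum_{i=1}^{n-1}\sin^{-1}(1/\cosh(d_{e_i}/2))$, which by the ideal horocyclic $(n{+}1)$-gon formula of the same proposition equals $D_0(\infty,\bd_{\calf},\infty)$. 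I do not anticipate any serious obstacle; everything follows cleanly from the prior structural results, and the only step requiring a moment of care is the cancellation at the right endpoint that converts the horocyclic defect plus the ideal-triangle defect into a single horocyclic-ideal area.
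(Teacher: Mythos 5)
Your proposal is correct and follows essentially the same path as the paper: identify $\overline{\mathit{Ad}}(\bd_{\calf})$ using Lemma \ref{precompact for noncompact} and the vacuity of the radius-order condition, differentiate $D_T$ to get strict monotonicity in $d_{e_0}$, and evaluate at the endpoints. The only cosmetic difference is at the right endpoint, where the paper invokes the second assertion of Lemma \ref{two horocyclic polys} while you expand $D_0(P_v(\bd))$ via Proposition \ref{horocyclic defects} and cancel by hand; since that lemma is itself just a restatement of the same cancellation, the arguments coincide.
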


Before proving the lemma we record a useful corollary pertaining to horocyclic ideal polygons.

\begin{corollary}\label{horocyclic ideal bound}  For $n\geq 2$ and $\bd = (d_1,\hdots,d_n)\in(\mathbb{R}^+)^{n}$, we have:\begin{align*}
  D_0(\infty,\bd,\infty) & > D_0(\infty,b_0(\bd),\infty) + D_0(b_0(\bd),\bd) \end{align*}
Here $b_0$ is as in Proposition \ref{smooth BC} and $D_0$ is from Proposition \ref{horocyclic defects}.  For $d>0$, if $d_i\geq d$ for each $i$ then:
$$  D_0(\infty,\bd,\infty) > D_0(\infty,b_0(d,d),\infty) + (n-1)D_0(b_0(d,d),d,d) $$\end{corollary}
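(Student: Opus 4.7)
The plan is to deduce both inequalities from Lemma \ref{base case for noncompact}, which describes the structure of the admissible space when the tree $T$ consists of a single non-compact edge. The second inequality then follows from the first by chaining monotonicity with a reflection argument in the spirit of Lemma \ref{symmetric cyclic}.

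For the first inequality, I would apply Lemma \ref{base case for noncompact} to a graph $V$ containing a single vertex $v$ of valence $n+1$, with one non-compact edge $e_0$ and $n$ compact frontier edges, and take $T=\{e_0\}$. With $\bd_{\calf}=\bd$, the admissible space $\overline{\mathit{Ad}}(\bd)$ is exactly the interval $[b_0(\bd),h_0(\bd)]\times\{\bd\}$. The lemma gives
$D_T(b_0(\bd),\bd)=D_0(\infty,b_0(\bd),\infty)+D_0(b_0(\bd),\bd)$ at the left endpoint and $D_T(h_0(\bd),\bd)=D_0(\infty,\bd,\infty)$ at the right endpoint. Strict inequality follows from the derivative formula (\ref{defect deriv diff for noncompact}): at $d_{e_0}=b_0(\bd)$ we have $J(P_v(\bd))=b_0(\bd)/2=d_{e_0}/2$, so $\partial D_T/\partial d_{e_0}=1/\cosh(d_{e_0}/2)>0$, and for $d_{e_0}\in(b_0(\bd),h_0(\bd))$ the radius $J$ is finite and strictly greater than $d_{e_0}/2$, again giving positive derivative. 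Thus $D_T$ is strictly increasing on $[b_0(\bd),h_0(\bd)]$, proving the first inequality.

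For the second inequality I would combine three ingredients. First, the first inequality. Second, two monotonicity reductions: Proposition \ref{smooth BC}(\ref{parameter monotonicity}) combined with \ShrinkingParameters\ gives $b_0(\bd)\geq b_0(d,\ldots,d)\geq b_0(d,d,0,\ldots,0)=b_0(d,d)$, and since $\ell\mapsto D_0(\infty,\ell,\infty)=\pi-2\sin^{-1}(1/\cosh(\ell/2))$ is strictly increasing, the first summand on the right of the first inequality is bounded below by $D_0(\infty,b_0(d,d),\infty)$; meanwhile, Corollary \ref{monotonicity} (in its closed form, obtained by continuity) gives $D_0(b_0(\bd),\bd)\geq D_0(b_0(d,\ldots,d),d,\ldots,d)$. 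Third, a reflection argument: the cyclic $(n+1)$-gon $(b_0(d,\ldots,d),d,\ldots,d)\in\calBC_{n+1}$ has its circumcenter on its longest side, so reflecting it across that side gives a cyclic $2n$-gon with all sides of length $d$, whose area is twice that of the original. Proposition \ref{centered main} applied to this $2n$-gon (valid since $2n\geq 4$) then yields
$$D_0(b_0(d,\ldots,d),d,\ldots,d)=\tfrac{1}{2}D_0(d,\ldots,d)\geq \tfrac{1}{2}(2n-2)\,D_0(b_0(d,d),d,d)=(n-1)\,D_0(b_0(d,d),d,d).$$
Chaining all of the above produces the second inequality.

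The proof is conceptually straightforward once Lemma \ref{base case for noncompact} is in hand; the main obstacle is simply bookkeeping, since $b_0$ appears with varying numbers of arguments and one must carefully invoke the right monotonicity principle (Proposition \ref{smooth BC}(\ref{parameter monotonicity}) for changes in the arguments' values and \ShrinkingParameters\ for changes in the number of arguments) to reduce the comparison to the base case $b_0(d,d)$. The degenerate case $n=2$ of the reflection step produces only the trivial identity $D_0(b_0(d,d),d,d)=D_0(b_0(d,d),d,d)$, but this causes no problem because the required strictness of the overall inequality is already supplied by the first inequality.
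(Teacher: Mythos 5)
Your proof is correct and follows essentially the same route as the paper's terse "follows directly" sketch: the first inequality is read off Lemma \ref{base case for noncompact}, and the second is obtained from the first by monotonicity of $b_0$ (Proposition \ref{smooth BC}(\ref{parameter monotonicity}) together with \ShrinkingParameters, which the paper uses in the same way in the proof of Proposition \ref{general lower}), monotonicity of $D_0$ (Corollary \ref{monotonicity} and Proposition \ref{horocyclic defects}), and Proposition \ref{centered main} via the reflection identity $D_0(b_0(d,\ldots,d),d,\ldots,d)=\tfrac{1}{2}D_0(d,\ldots,d)$ from the proof of Lemma \ref{symmetric cyclic}. You have carefully supplied the derivative argument for strictness in the first inequality (it is built into the proof of Lemma \ref{base case for noncompact}), and correctly noted that the strictness of the second inequality is inherited from the first even when the subsidiary monotonicity bounds are non-strict.
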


The first assertion follows directly from Lemma \ref{base case for noncompact}; the only thing to note is that by its definition in Lemma \ref{precompact}, in this case $b_{e_0} = b_0\co(\mathbb{R}^+)^{n-2}\to\mathbb{R}^+$.  The second follows from the first, applying monotonicity of $b_0$ (see Proposition \ref{smooth BC}(\ref{parameter monotonicity})) and $D_0$ (by Corollary \ref{monotonicity} and Proposition \ref{horocyclic defects}), and Proposition \ref{centered main}.

\begin{proof}[Proof of Lemma \ref{base case for noncompact}]  Lemma \ref{precompact for noncompact}(\ref{admissible bounded for noncompact}) asserts that $\overline{\mathit{Ad}}(\bd_{\calf})$ is contained in the set above, and by the definitions of $b_{e_0}$ and $h_{e_0}$, $P_v(\bd)\in(\calAC_n-\calc_n)\cup\calHC_n$ for any $\bd = (d,\bd_{\calf})$ where $b_{e_0}(\bd_{\calf})\leq d\leq h_{e_0}(\bd_{\calf})$ (cf.~Proposition \ref{smooth BC}).  Thus Definition \ref{bigger admissible for noncompact}(\ref{closure not centered for noncompact}) holds for such $\bd$, and since (\ref{closure radius order for noncompact}) holds vacuously in this case, $\overline{\mathit{Ad}}(\bd_{\calf}) = [b_{e_0}(\bd_{\calf}),h_{e_0}(\bd_{\calf})]\times\{\bd_{\calf}\}$.

We appeal to Definition \ref{tree defect for noncompact} and Proposition \ref{omniarea} to compute the derivative of $D_T(\bd)$ at $\bd = (d,\bd_{\calf})$ in the open interval $(b_{e_0}(\bd_{\calf}),h_{e_0}(\bd_{\calf}))\times\{\bd_{\calf}\}$:
$$ \frac{\partial}{\partial d} D_T(d,\bd_{\calf}) = \frac{1}{\cosh(d/2)} - \sqrt{\frac{1}{\cosh^2(d/2)} -\frac{1}{\cosh^2 J(d,\bd_{\calf})}} $$
This is clearly positive for all such $d$, so $D_T(d,\bd_{\calf})$ attains its minimum on $\overline{\mathit{Ad}}(\bd_{\calf})$ at $(b_{e_0}(\bd_{\calf}),\bd_{\calf})$ and its maximum at $(h_{e_0}(\bd_{\calf}),\bd_{\calf})$.

That $D_T(b_{e_0}(\bd_{\calf}),\bd_{\calf})$ is as described above is a direct application of Definition \ref{tree defect for noncompact}.  That $D_T(h_{e_0}(\bd_{\calf},\bd_{\calf})) = D_0(\infty,\bd_{\calf},\infty)$ follows from the definition and the second assertion of Lemma \ref{two horocyclic polys}, since  $P_{v_T}(h_{e_0}(\bd_{\calf}),\bd_{\calf})\in\calHC_n$ by its definition in Lemma \ref{precompact for noncompact}.\end{proof}

We are now in position to prove Theorem \ref{main for noncompact}.

\begin{proof}[Proof of Theorem \ref{main for noncompact}]  Recall (from Definition \ref{tree cells}) that $\partial C_T$ is the union of geometric duals to edges in the frontier $\calf$ of the tree $T$ dual to $C_T$, together with the two infinite edges of $\Delta(e_0,v_{\infty})$.  Here $e_0$ is the noncompact edge of $T$ and $v_{\infty}$ is its ideal endpoint.  In particular, $\partial C_T$ has $k = |\calf|+2$ edges.

Let $\bd_{\calf}$ collect the lengths of the geometric duals to edges of $\calf$.  By hypothesis, $d_e \geq d > 0$ for each $e\in\calf$.  By Lemma \ref{admissible Voronoi for noncompact}, $C_T$ has area equal to $D_T(\bd)$ for some $\bd\in\mathit{Ad}(\bd_{\calf})\subset \overline{\mathit{Ad}}(\bd_{\calf})$.  We will thus prove the result by showing that for every tree $T$ with one non-compact edge:
$$D_T(\bd)\geq  D_0(\infty,b_0(d,d),\infty) + (|\calf|-1)D_0(b_0(d,d),d,d) $$
for every $\bd_{\calf} = (d_e\,|\,(e,v)\in\calf\ \mbox{for some}\ v\in T^{(0)})$ with all $d_e\geq d$ and $\bd\in\overline{\mathit{Ad}}(\bd_{\calf})$.

If $T$ has one vertex the result follows directly from Lemma \ref{base case for noncompact}.  We will thus assume that $T$ has $n>1$ vertices, and that for all trees with fewer than $n$ vertices, $\min \{D_T(\bd),\bd\in\overline{\mathit{Ad}}(\bd_{\calf})\}$ satisfies the conclusion if $d_e \geq d$ for all $e\in\calf$.

A minimum point $\bd$ for $D_T$ on $\overline{\mathit{Ad}}(\bd_{\calf})$ satisfies one of the cases described in Proposition \ref{minimum at for noncompact}.  Cases (\ref{really not centered for noncompact}) and (\ref{strict radius order for noncompact}) follow from lines of argument analogous to those for Theorem \ref{main}.  In Case (\ref{really not centered for noncompact}), a direct computation yields the conclusion here as well.  In Case (\ref{strict radius order for noncompact}) we collapse the edge $f$ shared by $v$ and $w$ and appeal to induction.  A new possibility here is that $J(P_v(\bd)) = J(P_w(\bd)) = \infty$; i.e., $P_v(\bd)\in\calHC_{n_v}$ and $P_w(\bd)\in\calHC_{n_w}$.  Lemma \ref{smushanedge} still holds in this case, though, upon replacing the appeal to Lemma \ref{two polys} with one to Lemma \ref{two horocyclic polys}.

Our treatment of Case (\ref{really horocyclic}) from the conclusion of Proposition \ref{minimum at for noncompact} departs from the analogous case in the proof of Theorem \ref{main}.  We suppose henceforth that $\bd\in\overline{\mathit{Ad}}(\bd_{\calf})$ satisfies this case; i.e. that $P_{v_T}(\bd)\in\calHC_{n_{v_T}}$.

Let $\cale$ be the edge set of $T$ and $e_0$ its non-compact edge. Removing $e_0$ from $T$ yields a collection $T_1,\hdots, T_l$ of subtrees, one for each edge of $T$ that contains $v_T$.  For each $i$, $T_i$ has a single non-compact edge $e_i$ whose closure contains $v_T$.  Let $\cale_i$ be the edge set of $T_i$ and $\calf_i$ its frontier in $V$, and define $\bd_{\calf_i} = (d_e\,|\,(e,v)\in\calf_i\ \mbox{for some}\ v\in T_i^{(0)})$, $\bd_{\cale_i} = (d_e\,|\,e\in\cale_i)$ and $\bd_i = (\bd_{\cale_i},\bd_{\calf_i})$.  It is evident that the property $\bd_i\in\overline{\mathit{Ad}}(\bd_{\calf_i})$ is inherited from the corresponding property of $\bd$.

Suppose $v_T$ has valence $n_0$ in $V$, and let $e_{l+1},\hdots,e_{n_0-1}$ be the collection of edges in $\calf$ that contain $v_T$.  Then $\cale = \{e_0\}\cup\bigcup_{i=1}^l \cale_i$, and $\calf = \{e_{l+1},\hdots,e_{n_0-1}\}\cup\bigcup_{i=1}^l\calf_i$.  We claim that:
$$  D_T(\bd) = \sum_{i=1}^l D_{T_i}(\bd_i) + \sum_{i=l+1}^{n_0-1} D_0(\infty,d_{e_i},\infty)  $$
The main point here is simply that because $P_{v_T}(\bd)\in \calHC_{n_{v_T}}$, applying Lemma \ref{two horocyclic polys} gives\begin{align*}
  D_0(\infty,d_{e_0},\infty)+D_0(P_{v_T}(\bd)) 
    & = D_0(\infty,d_{e_1},\hdots,d_{e_{n_0-1}},\infty) \\
    & = \sum_{i=1}^{n_0-1} D_0(\infty,d_{e_i},\infty)\end{align*}
That the second and third quantities above are equal is evident on its face from the latter formula of Proposition \ref{horocyclic defects}.  Since each vertex of $T$ other than $v_T$ is in exactly one $T_i$, the claim follows.

The inductive hypothesis applies to $T_i$ for each $i$, so using the claim we find:\begin{align*}
  D_T(\bd) & \geq \sum_{i=1}^l \left[ D_0(\infty,b_0(d,d),\infty) + (|\calf_i|-1)D_0(b_0(d,d),d,d)\right] +\sum_{i=l+1}^{n_0-1} D_0(\infty,d,\infty) \\
  & \geq D_0(\infty,b_0(d,d),\infty) + \left(\sum_{i=1}^l |\calf_i| - 1\right)D_0(b_0(d,d),d,d) + (n_0-1-l)D_0(\infty,d,\infty) \end{align*}
The latter inequality above follows from Lemma \ref{big ideals} and Corollary \ref{monotonicity}.  Applying Lemma \ref{big ideals} again, and the fact that $|\calf| = \sum_{i=1}^l |\calf_i| + n_0-1-l$, gives the result.
\end{proof}

\section{On hyperbolic surfaces}\label{limits}

The main goal of this section is to prove Theorem \ref{main for noncompact}.  Then in Section \ref{examples} we will describe families of hyperbolic surfaces with maximal injectivity radius approaching its upper bound, showing this bound is sharp.  First, in Section \ref{surface dual} below we recall some facts from \cite{DeB_Delaunay} on the Delaunay tessellation and geometric dual complex of a finite subset of a hyperbolic surface, and use these to produce a description of the centered dual.

\subsection{When covering a surface}\label{surface dual}  Below we interpret Theorem 6.23 of \cite{DeB_Delaunay} for surfaces.

\begin{theorem}\label{pseudo EP} For a complete, oriented, finite-area hyperbolic surface $F$ with locally isometric universal cover $\pi\co\mathbb{H}^2\to F$, and a finite set $\cals\subset F$, the Delaunay tessellation of $\widetilde{\cals} = \pi^{-1}(\cals)$ is a locally finite, $\pi_1 F$-invariant decomposition of $\mathbb{H}^2$ into convex polygons (the \mbox{\rm cells}) such that each edge of each cell is a cell, and distinct cells that intersect do so in an edge of each.  For each circle or horocycle of $\mathbb{H}^2$ that intersects $\cals$ and bounds a disk or horoball $B$ with $B\cap\cals = S\cap\cals$, the closed convex hull of $S\cap\cals$ in $\mathbb{H}^2$ is a Delaunay cell.  Each Delaunay cell has this form.

For each parabolic fixed point $u\in S_{\infty}$ there is a unique $\Gamma_u$-invariant $2$-cell $C_U$, where $\Gamma_u$ is the stabilizer of $u$ in $\pi_1 F$, whose unique circumcircle (in the sense above) is a horocycle with ideal point $u$.  Each other cell is compact and has a metric circumcircle.\end{theorem}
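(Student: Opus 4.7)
The plan is to deduce this theorem from Theorem 6.23 of \cite{DeB_Delaunay}, which gives the analogous statement for lattice-invariant locally finite subsets of $\mathbb{H}^n$, after verifying that $\widetilde{\cals}$ meets its hypotheses and separately analyzing the cells at parabolic fixed points.

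First I would verify that $\widetilde{\cals} = \pi^{-1}(\cals)$ is a locally finite, $\pi_1 F$-invariant subset of $\mathbb{H}^2$.  Invariance is immediate from the definition of $\pi$.  Local finiteness follows because $\cals$ is finite, so $\widetilde{\cals}$ is a finite union of $\pi_1 F$-orbits, and the $\pi_1 F$-action on $\mathbb{H}^2$ is properly discontinuous, so any compact subset of $\mathbb{H}^2$ meets each orbit in a finite set.  Applying Theorem 6.23 of \cite{DeB_Delaunay} then produces the Delaunay tessellation as a locally finite, $\pi_1 F$-invariant polyhedral decomposition of $\mathbb{H}^2$ whose cells are characterized by the empty-circumdisk or empty-circumhoroball conditions.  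This yields the first paragraph of the theorem.

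For the second paragraph, fix a parabolic fixed point $u\in S_{\infty}$.  Its stabilizer $\Gamma_u < \pi_1 F$ is generated by a parabolic isometry fixing $u$, and since $F$ has finite area, a small enough horoball $B_0$ based at $u$ projects to an embedded cusp neighborhood of $F$ disjoint from $\cals$; the interior of $B_0$ is therefore disjoint from $\widetilde{\cals}$.  Expanding $B_0$, since $\widetilde{\cals}$ is non-empty and locally finite there is a unique maximal horoball $B$ at $u$ whose interior is disjoint from $\widetilde{\cals}$ but whose boundary horocycle $S$ meets $\widetilde{\cals}$.  By the circumhoroball characterization, the closed convex hull $C_U$ of $S\cap\widetilde{\cals}$ is a Delaunay cell.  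It is $\Gamma_u$-invariant because $\Gamma_u$ fixes $u$ and hence preserves both $S$ and $\widetilde{\cals}$, and it is the unique such cell because $B$ is the unique $\widetilde{\cals}$-empty horoball at $u$.

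The remaining task is to show that every Delaunay cell not of this form is compact.  Any cell whose circumcircle is a horocycle at an ideal point $u'$ determines a horoball $B'$ disjoint from $\widetilde{\cals}$ in its interior.  I claim $u'$ must be a parabolic fixed point:  if it were not, then standard hyperbolic geometry, together with the finite-covolume hypothesis on $\pi_1 F$, implies that every horoball at $u'$ contains $\pi_1 F$-translates of any compact fundamental domain, and in particular infinitely many points of $\widetilde{\cals}$ in its interior, contradicting emptiness.  So $u'$ is parabolic and the cell coincides with $C_{U'}$ by the uniqueness just established.  Every other cell therefore has a metric circumcircle of finite radius and so is a compact convex polygon with finitely many vertices on that circle.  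I expect this last step --- that a horocyclic circumcircle forces its ideal point to be parabolic --- to be the main obstacle; it rests on the fact that on a finite-area hyperbolic surface, the only ideal points not in the conical limit set are the parabolic fixed points.
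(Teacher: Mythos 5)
Your proposal is correct and follows the same route the paper takes: the paper states this theorem as an interpretation of Theorem 6.23 of \cite{DeB_Delaunay} for the lattice-invariant set $\widetilde{\cals}=\pi^{-1}(\cals)$ and gives no separate proof, so your job is to supply the deduction, which you do. The verification that $\widetilde{\cals}$ is locally finite and $\pi_1F$-invariant, the existence and uniqueness of the $\Gamma_u$-invariant cell via the maximal $\widetilde{\cals}$-empty horoball at a parabolic fixed point $u$, and the compactness of the remaining cells because their circumspheres are metric circles are all the right ingredients.

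One phrasing to tighten: when you argue that a horocyclic circumcircle forces its ideal point to be parabolic, you invoke "$\pi_1F$-translates of any compact fundamental domain," but if $F$ has cusps there is no compact fundamental domain. The clean version is exactly the fact you cite at the end: since $\pi_1F$ is a lattice, its limit set is all of $S_\infty$ and decomposes into conical limit points and (bounded) parabolic fixed points. At a conical limit point $u'$, translates of any fixed point $\bs_0\in\widetilde{\cals}$ by a sequence in $\pi_1F$ approach $u'$ within bounded distance of a geodesic ending at $u'$, hence enter the interior of every horoball at $u'$; so no horoball at $u'$ can be $\widetilde{\cals}$-empty, and the ideal point of any circumhorocycle must be parabolic. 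With that adjustment the argument is complete.
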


Fixing a locally isometric universal cover $\pi\co\mathbb{H}^2\to F$ determines an isomorphic embedding of $\pi_1 F$ to a lattice in $\mathrm{PSL}_2(\mathbb{R})$, so that $\pi$ factors through an isometry $\mathbb{H}^2/\pi_1 F\to F$.  An element of $\pi_1 F$ is \textit{parabolic} if it fixes a unique $u\in S_{\infty}$ (recall Definition \ref{sphere at infinity}); such a point $u$ is a \textit{parabolic fixed point}.

Corollary 6.26 of \cite{DeB_Delaunay} describes the image of the Delaunay tessellation in $F$ itself.

\begin{corollary}\label{down below}   For a complete, oriented hyperbolic surface $F$ of finite area with locally isometric universal cover $\pi\co\mathbb{H}^2\to F$, and a finite set $\cals\subset F$, there are finitely many $\pi_1 F$-orbits of Delaunay cells of $\widetilde{\cals} = \pi^{-1}(\cals)$.  The interior of each compact Delaunay cell embeds in $F$ under $\pi$.  For a cell $C_u$ with parabolic stabilizer $\Gamma_u$, $\pi|_{\mathit{int}\, C_u}$ factors through an embedding of $\mathit{int}\,C_u/\Gamma_u$ to a set containing a cusp of $F$.\end{corollary}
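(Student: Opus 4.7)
The plan is to establish the three assertions in sequence, drawing throughout on the $\pi_1 F$-invariance and face-intersection property of the Delaunay tessellation from Theorem~\ref{pseudo EP}, freeness of the $\pi_1 F$-action on $\mathbb{H}^2$, and the finite set of $\pi_1 F$-orbits of parabolic fixed points (corresponding to the finitely many cusps of $F$). For finiteness of orbits, I would split cells into the two types of Theorem~\ref{pseudo EP}: each non-compact cell $C_u$ is uniquely determined by its parabolic point $u$, so its $\pi_1 F$-orbit is controlled by that of $u$, bounding these orbits by the number of cusps. For the compact cells I would pick a $\pi_1 F$-equivariant system of pairwise disjoint open horoballs $\{B_u\}$ around parabolic fixed points, chosen small enough (using finiteness of $\cals$ in $F$) that $\widetilde{\cals}\cap B_u=\emptyset$ for every $u$. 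Then $\Omega\doteq\mathbb{H}^2\setminus\bigcup_u B_u$ has compact quotient $\Omega/\pi_1 F$, hence a compact fundamental domain $D_\Omega$. Every vertex of every Delaunay cell lies in $\widetilde{\cals}\subset\Omega$, so translating each cell suitably by $\pi_1 F$ puts a vertex in the finite set $\widetilde{\cals}\cap D_\Omega$; local finiteness of the Delaunay tessellation then bounds the number of cells meeting any given vertex, giving the finiteness of orbits.

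For embedding $\mathit{int}\,C$ when $C$ is compact, I would suppose $\pi(x)=\pi(y)$ with $x,y\in\mathit{int}\,C$ and write $y=\gamma x$ for $\gamma\in\pi_1 F$.  Both $C$ and $\gamma C$ contain $y$, and Theorem~\ref{pseudo EP} requires either $\gamma C=C$ or $\gamma C\cap C$ to be a proper face of each cell, the latter forcing $y\in\partial C$, a contradiction.  So $\gamma C=C$, and $\gamma$ preserves the unique metric circumcircle of $C$ setwise, hence fixes its center; freeness of the $\pi_1 F$-action forces $\gamma=1$, so $x=y$.

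For the parabolic cell $C_u$ the same face-intersection argument yields $\gamma C_u=C_u$ when $\pi(x)=\pi(y)$ with $x,y\in\mathit{int}\,C_u$, so $\gamma$ preserves the unique horocyclic circumcircle and fixes its ideal point $u$, placing $\gamma$ in $\Gamma_u$. Thus $\pi$ factors through an injection of $\mathit{int}\,C_u/\Gamma_u$.  To identify a cusp inside the image I would work in an upper half-plane model with $u=\infty$, where $\Gamma_u$ acts as an infinite cyclic group of horizontal translations and the vertices of $C_u$ lie on a horizontal horocycle $S=\mathbb{R}+iy_0$. The convex hull $C_u$ then contains every horoball $\{y\geq Y\}$ at $u$ for $Y$ sufficiently large, and each such horoball quotients under $\Gamma_u$ to a cusp neighborhood of $F$.

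The main obstacle is likely the construction of the equivariant horoball system $\{B_u\}$ disjoint from $\widetilde{\cals}$ used in the finiteness step; this rests on the standard cusp picture for finite-area hyperbolic surfaces (Margulis-type horocyclic neighborhoods) together with the fact that $\cals$ is finite in $F$, so that only finitely many $\pi_1 F$-orbits of preimages need to be kept out of each horoball.  Once those horoballs are in hand, the remaining arguments follow cleanly from the structural properties of the Delaunay tessellation recorded in Theorem~\ref{pseudo EP} and the freeness of the covering action.
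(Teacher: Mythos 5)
The paper does not prove Corollary~\ref{down below} internally: the text explicitly cites it as Corollary~6.26 of \cite{DeB_Delaunay}, so there is no ``paper's own proof'' here to compare against. Your reconstruction from Theorem~\ref{pseudo EP} and standard hyperbolic surface facts is correct and self-contained. For finiteness of orbits, your split between the parabolic-invariant cells (controlled by the $n$ orbits of parabolic fixed points) and the compact cells (brought by covering translations to have a vertex in a compact fundamental domain for the thick part, then counted via local finiteness of the tessellation) works, and the equivariant horoball system disjoint from $\widetilde{\cals}$ you need is exactly the standard Margulis cusp picture combined with finiteness of $\cals$ in $F$. The face-intersection argument correctly forces $\gamma C = C$ when $\gamma$ identifies interior points, the circumcircle-center argument plus freeness handles compact two-cells, and the ideal-point argument correctly locates $\gamma$ in $\Gamma_u$ for the parabolic cell. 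Your horoball-containment argument at $u = \infty$ (using that the convex hull of a $\Gamma_u$-periodic set on a horocycle contains all sufficiently high points) gives the cusp containment. One small point you might tighten: for compact $1$-cells (edges) and $0$-cells, ``circumcircle'' is not the right tool since it is not unique; in those cases $\gamma C = C$ implies $\gamma$ fixes the midpoint or the vertex, which suffices by freeness. This does not affect the substance of the argument, which rightly concentrates on the two-cells.
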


A \textit{cusp} of $F$ is a non-compact component of the \textit{$\epsilon$-thin part} of $F$, $F_{(0,\epsilon]} = \{x\in F\,|\, \mathit{injrad}_x F \leq \epsilon\}$, for some $\epsilon >0$ that is less than the two-dimensional Margulis constant.  See eg.~\cite[Ch.~D]{BenPet}.  Each cusp is of the form $B/\Gamma_u$, for a horoball $B$ whose ideal point is a parabolic fixed point $u$ with cyclic stabilizer $\Gamma_u$ in $\pi_1 F$.

The geometric dual complex is identified in Remark 6.24 of \cite{DeB_Delaunay}.

\begin{remark}\label{lattice geometric dual}  For a complete, oriented hyperbolic surface $F$ of finite area with locally isometric universal cover $\pi\co\mathbb{H}^2\to F$, and a finite set $\cals\subset F$, the geometric dual complex of $\widetilde{\cals} = \pi^{-1}(\cals)$ includes all Delaunay cells but the parabolic-invariant ones, and $\pi$ embeds the interior of each geometric dual cell in $F$.\end{remark}

We now build on these results to describe the centered dual complex.  The first observations below use the notions of ``centeredness'' from Proposition \ref{vertex polygon} and Definition \ref{centered edge}.

\begin{lemma}\label{equivalence}  For a complete, oriented hyperbolic surface $F$ of finite area with locally isometric universal cover $\pi\co\mathbb{H}^2\to F$, and a finite set $\cals\subset F$ with $\widetilde{\cals} = \pi^{-1}(\cals)$:\begin{itemize}
  \item if Voronoi vertices $v$ and $w$ of $\widetilde{\cals}$ satisfy $v = g.w$ for some $g\in\pi_1 F$ then $J_v = J_w$ (recall Fact \ref{vertex radius}), and the geometric dual $C_v= g.C_w$ is centered if and only if $C_w$ is;
  \item  and if Voronoi edges $e$ and $f$ determined by $\widetilde{\cals}$ satisfy $\pi(e)=\pi(f)$, then $e$ is centered if and only if $f$ is centered.\end{itemize}\end{lemma}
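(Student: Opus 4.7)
The plan is to exploit the fact that the deck group $\pi_1 F$ acts on $\mathbb{H}^2$ by orientation-preserving isometries preserving $\widetilde{\cals}$. Since Voronoi cells, geometric duals, and the notions of centeredness from Definitions \ref{cyclic poly} and \ref{centered edge} are all defined purely in terms of distance, interior, and incidence, both bullets will reduce to formal equivariance.

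First I will verify the basic equivariance: for each $g\in\pi_1 F$ and $\bs\in\widetilde{\cals}$, $g.V_{\bs} = V_{g.\bs}$, which is immediate from the definition of the Voronoi $2$-cell together with the fact that $g$ is a bijective isometry of $\mathbb{H}^2$ carrying $\widetilde{\cals}$ to itself. Intersecting gives the same equivariance for Voronoi cells of every dimension, and then Proposition \ref{geometric dual} transfers it to geometric duals: $g.C_V = C_{g.V}$ for every Voronoi cell $V$.

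For the first bullet, given $v = g.w$ I apply the above to Voronoi vertices to obtain $g.C_w = C_v$. For any $\bs$ in the vertex set of $C_w$, Fact \ref{vertex radius} yields $J_v = d_H(v, g.\bs) = d_H(g.w, g.\bs) = d_H(w, \bs) = J_w$. Because $g$ is a homeomorphism, $g.\mathit{int}\,C_w = \mathit{int}\,(g.C_w) = \mathit{int}\,C_v$, so $w\in\mathit{int}\,C_w$ iff $v\in\mathit{int}\,C_v$, which is the centered dichotomy. For the second bullet, $\pi(e)=\pi(f)$ supplies $g\in\pi_1 F$ with $f = g.e$ by standard covering-space theory. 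Writing $e = V_{\bs}\cap V_{\bt}$, Lemma \ref{vertex polygon} identifies the geometric dual $\gamma_e$ of $e$ as the geodesic segment from $\bs$ to $\bt$, and the equivariance sends this to the geometric dual $\gamma_f$ of $f = V_{g.\bs}\cap V_{g.\bt}$; moreover $g.\mathit{int}\,e = \mathit{int}\,f$. Thus $e\cap\gamma_e \in \mathit{int}\,e$ iff $f\cap\gamma_f = g.(e\cap\gamma_e) \in \mathit{int}\,f$, which by Definition \ref{centered edge} is precisely the claim. The whole argument is routine; I do not expect a real obstacle beyond making the invocation of Proposition \ref{geometric dual} precise enough to guarantee that geometric duals transform as claimed under the $\pi_1 F$-action.
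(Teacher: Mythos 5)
Your proposal is correct and takes essentially the same approach as the paper: both arguments hinge on the fact that $\pi_1 F$ acts on $\mathbb{H}^2$ by isometries preserving $\widetilde{\cals}$, hence the Voronoi tessellation, geometric duals, radii, and centeredness are all equivariant. The paper's version is just more terse — it cites the isometric action and notes directly that the covering transformation carries the radius-$J_v$ sphere, the dual cell, and the intersection $e\cap\gamma_e$ to the corresponding objects for $w$ (resp.\ $f$) — whereas you spell out the cell-level equivariance step by step.
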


This follows from the fact that $\pi_1 F$-acts \textit{isometrically} by covering transformations.  If Voronoi edges $v$ and $w$ project to the same point of $F$, then the covering transformation taking $v$ to $w$ takes the sphere of radius $J_v$ centered at $v$ to the sphere of radius $J_v$ centered at $w$, and $C_v$ to $C_w$.  And if a centered Voronoi edge $e$ has the same projection as $f$, then the covering transformation taking $e$ to $f$ takes the intersection of $e$ with its geometric dual to the intersection of $f$ with its geometric dual.

\begin{lemma}\label{surface tree components}  For a complete, oriented hyperbolic surface $F$ of finite area with locally isometric universal cover $\pi\co\mathbb{H}^2\to F$, and a finite set $\cals\subset F$, any component $T$ of the non-centered Voronoi subgraph of $\widetilde{\cals} = \pi^{-1}(\cals)$ is a tree, with $T^{(0)}$ finite, that embeds in $F$ under $\pi$.\end{lemma}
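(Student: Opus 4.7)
The plan is to reduce the three claims to two facts: that $\widetilde{\cals}$ is locally finite (so Lemma~\ref{tree components} applies in the universal cover), and that $\pi_1 F$ acts freely and preserves the non-centered Voronoi subgraph, permuting its components. Since $\cals$ is finite and $\pi$ is a covering, $\widetilde{\cals}$ is discrete and $\pi_1 F$-invariant, so its Voronoi tessellation is locally finite and $\pi_1 F$-invariant. Lemma~\ref{tree components} then gives immediately that every component $T$ of the non-centered Voronoi subgraph is a tree.

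The main step will be showing that the stabilizer $\Gamma_T = \{g\in\pi_1 F \mid g.T=T\}$ is trivial. First I would note, via Theorem~\ref{pseudo EP} and Corollary~\ref{down below} (combined with Proposition~\ref{geometric dual} to pass from Delaunay $2$-cells to Voronoi vertices), that there are only finitely many $\pi_1 F$-orbits of Voronoi vertices of $\widetilde{\cals}$; so by Lemma~\ref{equivalence} the radius function $v\mapsto J_v$ takes only finitely many values on $T^{(0)}$. In particular $R_T \doteq \sup\{J_v \mid v\in T^{(0)}\}$ is attained. I then claim the maximizer is \emph{unique}: if two distinct vertices $v,w\in T^{(0)}$ both realized $R_T$, the unique reduced edge path $\gamma$ joining them in $T$ would be compact (since $T$ is locally finite), and Lemma~\ref{tree components} would force a unique maximum-radius vertex on $\gamma$, contradicting that both $v$ and $w$ realize the global maximum $R_T$.

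Given a unique maximizer $v_T$, any $g\in\Gamma_T$ must send $v_T$ to another vertex of $T$ with the same radius (Lemma~\ref{equivalence}), hence to $v_T$ itself. Since $F$ is a complete hyperbolic surface, $\pi_1 F$ is torsion-free and acts freely on $\mathbb{H}^2$, so $g=1$ and $\Gamma_T$ is trivial. This forces $\pi|_T$ to be injective: if $\pi(x)=\pi(y)$ with $x,y\in T$, take $g\in\pi_1 F$ with $g.x=y$; then $g.T\cap T\ni y$, and since distinct components of the non-centered Voronoi subgraph are disjoint, $g.T=T$, so $g\in\Gamma_T=\{1\}$ and $x=y$.

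Finally, finiteness of $T^{(0)}$ will follow because $\pi$ carries the Voronoi $1$-skeleton to a finite $1$-complex in $F$: there are only finitely many $\pi_1 F$-orbits of Voronoi edges (again by the count of Delaunay cells in Corollary~\ref{down below}, dualized), so the image graph in $F$ has finitely many edges and vertices, and the injection $\pi|_T$ must land in it. I expect the main obstacle to be the uniqueness argument for $v_T$, since it needs a mild care-taking about finiteness of the radius spectrum and the local finiteness of $T$; both are supplied cleanly by the preceding results.
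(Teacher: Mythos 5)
Your proof is correct and follows essentially the same route as the paper's: both establish that $T$ is a tree via Lemma~\ref{tree components}, use the finiteness of $\pi_1 F$-orbits of Voronoi vertices (through Corollary~\ref{down below} and Lemma~\ref{equivalence}) to get a finite radius spectrum and hence, via the uniqueness statement of Lemma~\ref{tree components}, a unique maximum-radius vertex $v_T$, and then invoke freeness of the $\pi_1 F$-action to show $T$ meets none of its nontrivial translates. The only cosmetic difference is at the end: the paper concludes finiteness of $T^{(0)}$ directly from the fact that each orbit of Voronoi vertices meets $T^{(0)}$ at most once, while you pass through the finite image graph in $F$.
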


\begin{proof}  Corollary \ref{down below} implies the set of Voronoi vertices determined by $\widetilde{\cals}$ has finitely many $\pi_1 F$-orbits, since Voronoi vertices are geometric duals to compact $2$-cells of the Delaunay tessellation (Remark \ref{lattice geometric dual}).  Therefore the set $\{J_v\,|\,v\in T^{(0)}\}$ has only finitely many distinct elements (recall Lemma \ref{equivalence}).  Let $v_T$ satisfy $J_{v_T}\geq J_v$ for all $v\in T^{(0)}$.  By Lemma \ref{tree components}, $T$ is a tree and $v_T$ satisfies $J_{v_T} > J_v$ for all $v\in T^{(0)}-\{v_T\}$ (cf.~Definition \ref{root vertex} and below).

Since covering transformations exchange components of the union of non-centered edges, if $\gamma.T\cap T \neq \emptyset$ for some $\gamma\in\pi_1 F - \{1\}$ then $\gamma.T = T$.  Since $J_{\gamma.v} = J_v$ for each $v\in T^{(0)}$, the claim above would imply that $\gamma.v_T = v_T$ for such $\gamma$, contradicting freeness of the $\pi_1 F$-action.  Therefore $T$ does not intersect its $\pi_1 F$-translates and thus projects homeomorphically to $F$.  Moreover, each $\pi_1 F$-orbit of Voronoi vertices contains at most one point of $T^{(0)}$, so $T^{(0)}$ is finite.\end{proof}

\begin{corollary}\label{it's a decomp!}  For a complete, oriented hyperbolic surface $F$ of finite area with locally isometric universal cover $\pi\co\mathbb{H}^2\to F$, and a finite set $\cals\subset F$, the centered dual complex of $\widetilde{\cals} = \pi^{-1}(\cals)$ is $\pi_1F$-invariant, and $\pi$ embeds the interior of each centered dual cell in $F$.\end{corollary}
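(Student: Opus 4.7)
The plan is to verify separately the two claims: $\pi_1 F$-invariance of the centered dual complex, and injectivity of $\pi$ on the interior of each cell. For invariance, I would trace through Definition \ref{centered dual} and observe that each constituent is isometry-equivariant. Since $\pi_1 F$ acts on $\mathbb{H}^2$ by isometries preserving $\widetilde{\cals}$, it preserves the Voronoi tessellation. Lemma \ref{equivalence} then yields invariance of the non-centered Voronoi subgraph, its decomposition into components, and the centeredness of each geometric dual $C_v$. Ideal endpoints of non-compact Voronoi edges are permuted by $\pi_1 F$, and the construction of $\Delta(e_0, v_\infty)$ from Definition \ref{tree cells} is manifestly equivariant; the cells listed in Definition \ref{centered dual} are therefore permuted by $\pi_1 F$.

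For cell embedding, I would argue by dimension. The $0$-cells in $\cals$ have singleton interiors, trivially embedded. The $1$-cells dual to centered Voronoi edges and the $2$-cells $C_v$ for $v$ outside the non-centered Voronoi subgraph are geometric dual cells whose interiors embed by Remark \ref{lattice geometric dual}. For a centered dual $2$-cell $C_T$: if $\gamma \in \pi_1 F$ maps some $x \in \mathit{int}\, C_T$ back into $\mathit{int}\, C_T$, then by invariance $\gamma \cdot C_T = C_{\gamma \cdot T}$, and the last corollary of Section \ref{dirichlet dual} forces $\gamma \cdot T = T$, since otherwise $C_T \cap C_{\gamma \cdot T} \subset \partial C_T$ would contain no interior points. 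Lemma \ref{surface tree components} combined with Proposition \ref{to the root} then guarantees that $T$ has a unique root vertex $v_T$, which $\gamma$ must fix, and freeness of the $\pi_1 F$-action yields $\gamma = 1$.

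The remaining case --- rays $[\bx, v_\infty)$ --- is the main obstacle. The first step is to identify $v_\infty$ as a parabolic fixed point of $\pi_1 F$: by Theorem \ref{pseudo EP}, a non-compact Voronoi edge abuts a ``Voronoi vertex at infinity'' dual to a $\Gamma_{v_\infty}$-invariant Delaunay $2$-cell with horocyclic circumcircle, so $v_\infty$ is stabilized by a parabolic subgroup. Now suppose $\gamma \in \pi_1 F$ identifies two interior points of the ray. Then $\gamma$ sends the supporting geodesic $\ell$ either to itself or to a geodesic meeting $\ell$ transversally. The transversal case is impossible: a neighborhood of the crossing point would put points of $\mathit{int}\, C_T$ and $\mathit{int}\, C_{\gamma \cdot T}$ into overlapping local half-quadrants, contradicting the disjoint-interior property (again the last corollary of Section \ref{dirichlet dual}). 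In the remaining case $\gamma$ preserves $\ell$; since $\pi_1 F$ acts orientation-preservingly, $\gamma$ fixes $v_\infty$ and hence is parabolic, but no non-trivial parabolic preserves any geodesic ending at its fixed point, so $\gamma = 1$.
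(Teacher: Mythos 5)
Your overall strategy matches the paper's: invariance via Lemma \ref{equivalence}, and embedding of interiors by checking that no non-trivial deck transformation $g$ identifies interior points of a cell, using freeness of the action, compactness where available, and Lemma \ref{surface tree components} for the $C_T$ cells. You are considerably more explicit than the paper's own proof, which is quite terse and essentially says that $g$ preserves no cell (using compactness of vertices, geometric-dual edges, and $C_v$ cells, and $T$-embeddability for $C_T$) and then relies implicitly on the cell-decomposition property stated after Definition \ref{centered dual}. Your extra attention to the ray edges $[\bx,v_\infty)$ is a reasonable thing to flesh out, and your identification of $v_\infty$ as a parabolic fixed point via Lemma \ref{exclusive horocycle} and Theorem \ref{pseudo EP} anticipates the first lemma of Section \ref{plus!}.

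There is, however, a genuine error in the final step of your ray argument. You write that if $\gamma$ preserves the geodesic $\ell$ then, since $\pi_1 F$ acts by orientation-preserving maps, ``$\gamma$ fixes $v_\infty$ and hence is parabolic.'' The implication is reversed: an orientation-preserving, fixed-point-free isometry that preserves a geodesic line and does not swap its endpoints (swapping would be an elliptic half-turn, ruled out by freeness) fixes \emph{both} endpoints and is therefore \emph{hyperbolic} with axis $\ell$, not parabolic. A parabolic has a single ideal fixed point and preserves no geodesic. The fact you then invoke --- that no non-trivial parabolic preserves a geodesic through its fixed point --- is true but does not apply, since $\gamma$ is hyperbolic in this case. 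The conclusion is still reachable, but by a different mechanism: since $v_\infty$ is a parabolic fixed point of the discrete group $\pi_1 F$ (which you established), its stabilizer is a cyclic parabolic subgroup, and a discrete group cannot contain both a parabolic and a hyperbolic fixing the same point of $S_\infty$. Hence the hyperbolic $\gamma$ must be trivial. Alternatively, one can avoid the line-preserving subcase entirely by observing that two distinct geodesic rays with the same ideal endpoint $v_\infty$ are disjoint, and if $g.v_\infty\neq v_\infty$ the rays can meet at most once, which the transversality argument handles.
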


\begin{proof}  The invariance of the centered dual follows directly from Lemma \ref{equivalence} (recalling Definition \ref{centered dual}).  Since $g\in\pi_1 F$ has no fixed points as a covering transformation of $\mathbb{H}^2$, it is not hard to see that $g$ does not preserve any vertex or edge, or any geometric dual two-cell (each of which is compact, recall Lemma \ref{vertex polygon}).  For a cell of the form $C_T$, where $T$ is a component of the non-centered Voronoi subgraph, applying Definition \ref{tree cells} then Lemma \ref{surface tree components} shows that $g(C_T) = C_{g(T)}\neq C_T$.\end{proof}

\subsection{The centered dual plus}\label{plus!}  For a non-compact hyperbolic surface $F$ and a finite subset $\cals$ as in the previous section, the Delaunay tessellation and centered dual complex of the preimage of $\cals$ in $\mathbb{H}^2$ have some related problems.  Parabolic-invariant Delaunay cells are stabilized by nontrivial subgroups of $\pi_1 F$, so their interiors do not embed in $F$ and have non-trivial topology.  On the other hand the centered dual complex may not cover $\mathbb{H}^2$, and its non-compact $2$-cells overlap parabolic-invariant Delaunay cells without containing them.

In this brief section we will try to clarify the situation, ultimately introducing a complex that we call the ``centered dual plus'' in which parabolic-invariant Delaunay cells have been decomposed into unions of horocyclic ideal triangles.

\begin{lemma}\label{inclusive horocycle} For a horocycle $S$ of $\mathbb{H}^2$, a locally finite set $\cals_0\subset S$ that is invariant under a parabolic isometry $g$ fixing the ideal point $v$ of $S$ can be enumerated $\{s_i\,|\,i\in\mathbb{Z}\}$ so that for each $i$, the compact interval of $S$ bounded by $s_i$ and $s_{i+1}$ contains no other points of $\cals_0$, and $g(s_i) = s_{i+k}$ for each $i$ and some fixed $k\in\mathbb{Z}$.

For such an enumeration, the closed convex hull of $\cals_0$ in $\mathbb{H}^2$ is the non-overlapping union $\bigcup_i T_i$, where $T_i$ is horocyclic ideal triangle with vertices at $s_i$, $s_{i+1}$ and $v$ for each $i$.\end{lemma}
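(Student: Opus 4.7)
The plan is to work in the upper half-plane model of $\mathbb{H}^2$, applying an orientation-preserving isometry to arrange that $v = \infty$, so that $S$ is a horizontal line $\{\Im z = c\}$ for some $c > 0$ and the parabolic $g$ acts as a horizontal translation $z \mapsto z + t$ for some $t \ne 0$. The lemma is vacuous if $\cals_0 = \emptyset$, so fix $s \in \cals_0$; then the orbit $\{g^n(s) : n \in \mathbb{Z}\} \subseteq \cals_0$ has real parts $\Re s + nt$, so $\cals_0$ is unbounded in both directions along $S$. Identifying $S$ with $\mathbb{R}$ via the $x$-coordinate, local finiteness of $\cals_0$ in $\mathbb{H}^2$ becomes discreteness in $\mathbb{R}$; combined with the unboundedness, this gives a unique order-preserving enumeration $\cals_0 = \{s_i : i \in \mathbb{Z}\}$ with no point of $\cals_0$ strictly between $s_i$ and $s_{i+1}$. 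Since $g$ acts on $\mathbb{R}$ by a translation it preserves both $\cals_0$ and its order, so applying $g$ to a consecutive pair $s_i, s_{i+1}$ yields another consecutive pair, forcing $g(s_i) = s_{i+k}$ for an integer $k$ independent of $i$.

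For the convex-hull claim, I will identify the closed convex hull $K$ of $\cals_0$ in $\mathbb{H}^2$ with both $\bigcup_i T_i$ and an intersection of half-planes. For each $i$, let $\lambda_i$ be the geodesic through $s_i$ and $s_{i+1}$ (a Euclidean semicircle perpendicular to $\mathbb{R}$), and let $H_i$ be the closed half-plane bounded by $\lambda_i$ that contains $v=\infty$. A direct computation shows every $s_j \in H_i$: the Euclidean distance from the center of $\lambda_i$ to $s_j$ is at least the radius of $\lambda_i$, which reduces to $\Re s_j \le \Re s_i$ or $\Re s_j \ge \Re s_{i+1}$, holding by the chosen order. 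Hence $\bigcap_i H_i$ contains $\cals_0$ and therefore contains $K$; conversely $K \subseteq H_i$ for each $i$ since each $H_i$ is a closed half-plane containing $\cals_0$. To identify $\bigcap_i H_i$ with $\bigcup_i T_i$: given $p \in \bigcap_i H_i$, pick $i$ so that $\Re s_i \le \Re p \le \Re s_{i+1}$, which places $p$ in the vertical strip between the geodesic rays from $s_i$ and $s_{i+1}$ to $v$, while $p \in H_i$ places it above $\lambda_i$, so $p \in T_i$. Conversely, the vertices $s_i$, $s_{i+1}$, and $v$ of $T_i$ all lie in every $H_j$ (for $v = \infty$, trivially; for the finite vertices by the computation above), so the convex hull $T_i$ lies in $\bigcap_j H_j$ as well.

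The non-overlapping property is immediate: for $i \ne j$, the interiors of $T_i$ and $T_j$ lie in disjoint open vertical strips of the upper half-plane, bounded by the vertical-ray geodesics from the various $s_k$ to $v$. The main obstacle I anticipate is the bookkeeping in the identification $\bigcap_i H_i = \bigcup_i T_i$; the several small facts — points $s_j$ lying in each $H_i$, triangles $T_i$ lying in each $H_j$, and the strip-based location of arbitrary points of $\bigcap_i H_i$ — are each routine, but they must be assembled carefully and with attention to the order structure provided by the enumeration.
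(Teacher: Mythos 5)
Your argument for the enumeration and the equivariance $g(s_i) = s_{i+k}$ is fine, and your proof that $K \subseteq \bigcap_i H_i = \bigcup_i T_i$ (via the half-plane characterization and the strip argument) is also sound and is essentially the same separating-geodesic idea the paper uses for that direction. But there is a genuine gap in the opposite containment: you never establish $\bigcup_i T_i \subseteq K$. Look carefully at the sentence ``Hence $\bigcap_i H_i$ contains $\cals_0$ and therefore contains $K$; conversely $K \subseteq H_i$ for each $i$ since each $H_i$ is a closed half-plane containing $\cals_0$'' --- both clauses assert the same containment $K \subseteq \bigcap_i H_i$. What you have actually shown is $K \subseteq \bigcup_i T_i$, not equality.

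The missing direction is the one that genuinely requires the unboundedness of $\cals_0$. The closed convex hull of a \emph{bounded} subset of a horocycle would certainly not contain the ideal triangles $T_i$; it is only because $\cals_0$ accumulates at $v_\infty=\infty$ from both sides that $K$ captures the vertical rays $[s_j,\infty)$. You must prove this. The paper does it by a limit argument: the geodesic arcs $[s_i,s_n]$, as $n\to\infty$, cross the vertical line through $s_{i+1}$ at heights tending to $\infty$, so by convexity and closedness $K \supseteq [s_{i+1},\infty)$, and then $K \supseteq T_i$ by convexity. An alternative that meshes well with your half-plane framework: every closed half-plane $H$ containing the unbounded set $\cals_0$ must be the \emph{exterior} of a Euclidean semicircle (a vertical geodesic or the interior of a semicircle cannot contain all the $s_j$), and the Euclidean distance $|s_j + iy - c|$ to the center $c \in \mathbb{R}$ is monotone increasing in $y$, so $H$ contains the entire vertical ray $[s_j,\infty)$; since $K$ is the intersection of all such $H$, it contains each ray, and hence each $T_i$. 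Either way, you need to add this step before the lemma is proved.
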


\begin{proof} Applying an isometry of $\mathbb{H}^2$, one can arrange that $S = \mathbb{R}+i$, so $v = \infty$ and for some fixed $r\in\mathbb{R}$, $g(z) = z+r$ for all $z\in\mathbb{H}^2$.  We may thus simply enumerate the points of $\cals_0$ in order of increasing real part, choosing an arbitrary $s\in\cals_0$ to be $s_0$.  By $g$-invariance there are no points of $\cals$ in the interval between $s_k = g(s_0)$ and $g(s_1)$, so since $g$ preserves order or real parts it follows that $g(s_1) = s_{k+1}$.  An induction argument gives $g(s_i) = s_{i+k}$ for all $i$.

For fixed $i$ and any $n\in\mathbb{Z}$, let $r_n = \frac{1}{2}(\Re s_n - \Re s_i)$.  The geodesic arc through $s$ and $s_i$ is contained in the Euclidean circle in $\mathbb{C}$ with center at $\Re s_i+ r_n\in\mathbb{R}$ and radius $\sqrt{r_n^2+1}$.  It follows from $g$-invariance that $\Re s_n\to\infty$ as $n\to\infty$.  This implies that the geodesic arcs from $s_i$ to the $s_n$ intersect the vertical line through $s_{i+1}$ at a sequence of points whose imaginary parts go to infinity.  Hence by convexity the closed convex hull $C$ of $\cals_0$ contains the entire geodesic ray $[s_{i+1},\infty)$.

Since the above holds for any $i$ it is not hard to see that $C$ contains $\bigcup_i T_i$, which further is clearly a non-overlapping union.  For any $x\in\mathbb{H}^2$ outside this union there is some $i$ such that $\Re s_i\leq \Re x\leq\Re s_{i+1}$, and the geodesic arc joining $s_i$ to $s_{i+1}$ separates $x$ from $T_i$ in the region $\{\Re s_i\leq \Re z\leq\Re s_{i+1}\}$.  It follows that the geodesic $\gamma_i$ of $\mathbb{H}^2$ containing $s_i$ and $s_{i+1}$ separates $x$ from $\cals_0$, so since $x$ was arbitrary $C = \bigcup_i T_i$.\end{proof}

\begin{lemma}  Let $F$ be a complete, non-compact hyperbolic surface of finite area with universal cover $\pi\co\mathbb{H}^2\to F$, and for finite $\cals\subset F$ let $\widetilde{\cals} = \pi^{-1}(\cals)$.  If $C_T$ is a non-compact centered dual two-cell and $\Delta(e_0,v_{\infty})\subset C_T$ (recall Definition \ref{tree cells}) then there is a Delaunay two-cell $C_{v_{\infty}}$ invariant under a parabolic element of $\pi_1 F$ fixing $v_{\infty}$, with $\Delta(e_0,v_{\infty})\subset C_{v_{\infty}}$ such that the geometric dual $\gamma$ to $e_0$ is an edge of $C_{v_{\infty}}$.

For a Delaunay two-cell $C$ invariant under a parabolic subgroup $\Gamma$ of $\pi_1 F$ fixing some $v_{\infty}\in S_{\infty}$, the intersection of $\mathit{int}\, C$ with the centered dual complex, if non-empty, is a $\Gamma$-invariant union of the form $\bigcup\Delta(e,v_{\infty})-\gamma$, where $e$ (with geometric dual $\gamma$) ranges over the set of non-centered non-compact Voronoi edges with ideal endpoint $v$.\end{lemma}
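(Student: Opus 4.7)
The plan is to derive both assertions from Lemma \ref{exclusive horocycle} and Theorem \ref{pseudo EP}, together with the decomposition of a parabolic-invariant Delaunay cell into horocyclic ideal triangles supplied by Lemma \ref{inclusive horocycle}.

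For the first assertion, let $\bs,\bt\in\widetilde{\cals}$ be the endpoints of $\gamma$. Applying Lemma \ref{exclusive horocycle} to $e_0$ produces a horocycle $S$ through $\bs$ and $\bt$ with ideal point $v_\infty$ such that the horoball $B$ it bounds satisfies $B\cap\widetilde{\cals}=S\cap\widetilde{\cals}$. Theorem \ref{pseudo EP} realizes the closed convex hull of $S\cap\widetilde{\cals}$ as a Delaunay cell $C_{v_\infty}$. Since $v_\infty$ lies in its ideal boundary, $C_{v_\infty}$ is non-compact, so by Theorem \ref{pseudo EP} it is invariant under a non-trivial parabolic subgroup $\Gamma_{v_\infty}\leq\pi_1 F$ fixing $v_\infty$. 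Convexity yields $\Delta(e_0,v_\infty)\subset C_{v_\infty}$. To see that $\gamma$ is a side of $C_{v_\infty}$, I will show that $\bs$ and $\bt$ are consecutive along $S$: working in the upper half-plane with $v_\infty=\infty$, $S=\mathbb{R}+i$, $\bs=-x_0+i$, $\bt=x_0+i$ as in the proof of Lemma \ref{exclusive horocycle}, the ascending subray of $e_0$ lies on the vertical geodesic $\{x=0\}$, and a direct hyperbolic distance computation shows that any intermediate $\bu=x_1+i\in S\cap\widetilde{\cals}$ with $|x_1|<x_0$ satisfies $d(iy,\bu)<d(iy,\bs)$ for all sufficiently large $y$, contradicting $e_0\subset V_\bs$.

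For the second assertion, Theorem \ref{pseudo EP} provides $C$ with a circumhorocycle $S$ of ideal point $v_\infty$, and Lemma \ref{inclusive horocycle} decomposes $C$ non-overlappingly into horocyclic ideal triangles $T_i$ spanned by consecutive $\bs_i,\bs_{i+1}\in S\cap\widetilde{\cals}$ and $v_\infty$, with $\partial C=\bigcup_i\gamma_i$ where $\gamma_i=[\bs_i,\bs_{i+1}]$. The first part, run in reverse, identifies the Voronoi edges with ideal endpoint $v_\infty$ as precisely $e_i\doteq V_{\bs_i}\cap V_{\bs_{i+1}}$; each has geometric dual $\gamma_i$, so $\Delta(e_i,v_\infty)=T_i$. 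Walking through the 2-cells listed in Definition \ref{centered dual}: compact cells $C_v$, and $C_T$ for $T$ with no non-compact edge, are unions of compact Delaunay cells distinct from $C$ and hence disjoint from $\mathit{int}\,C$ by Theorem \ref{pseudo EP}'s intersection rule. For $C_T$ whose non-compact edge $e$ has ideal endpoint $v_\infty'\neq v_\infty$, the first assertion places $\Delta(e,v_\infty')$ in the parabolic-invariant Delaunay cell $C_{v_\infty'}\neq C$, again disjoint from $\mathit{int}\,C$. Only a $C_T$ whose non-compact edge $e$ has ideal endpoint $v_\infty$ contributes; such $e$ is non-centered as an edge of the non-centered subgraph component $T$, and its contribution is $\Delta(e,v_\infty)\setminus\gamma=T_i\setminus\gamma_i$, the compact $C_v$-summands being absorbed outside $\mathit{int}\,C$ as above. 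Since the underlying space of the 1-skeleton of the centered dual is contained in that of the 2-skeleton, the same formula describes the intersection with the full centered dual complex; $\Gamma$-invariance is immediate from the $\pi_1 F$-invariance of $\widetilde{\cals}$ and of the centered dual.

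The step requiring the most care is the consecutivity argument at the end of the first part, which relies on an explicit hyperbolic distance calculation meshed with Lemma \ref{exclusive horocycle}'s upper-half-plane normalization. Everything else is bookkeeping that compares Delaunay cells via Theorem \ref{pseudo EP}'s disjointness rule.
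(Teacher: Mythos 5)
Your proof is correct and follows essentially the same route as the paper: Lemma \ref{exclusive horocycle} produces the horocycle $S$ and horoball $B$ with $B\cap\widetilde{\cals}=S\cap\widetilde{\cals}$, Theorem \ref{pseudo EP} identifies the convex hull of $S\cap\widetilde{\cals}$ as the unique parabolic-invariant Delaunay cell $C_{v_\infty}$, and Lemma \ref{inclusive horocycle} together with Theorem \ref{pseudo EP}'s cell-intersection rule yields the second assertion. The main difference is in how you show $\gamma$ is an edge of $C_{v_\infty}$: you give an explicit upper-half-plane distance computation proving that the endpoints of $\gamma$ are consecutive on $S$ (indeed, $\cosh d(iy,x+i)=1+\tfrac{x^2+(y-1)^2}{2y}$ makes the inequality hold for \emph{all} $y>0$, not just large $y$), where the paper instead (implicitly) relies on $\gamma$ being a Delaunay 1-cell (Remark \ref{lattice geometric dual}) and hence a face of $C_{v_\infty}$ by Theorem \ref{pseudo EP}. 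Your calculation is a sound, more elementary substitute, and arguably fills in a step the paper's terse wording leaves to the reader.

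Two small points worth tightening. First, when you assert that ``$v_\infty$ lies in its ideal boundary'' to conclude $C_{v_\infty}$ is non-compact, this presupposes $S\cap\widetilde{\cals}$ is infinite; the cleaner route is to note that $C_{v_\infty}$ has a horocyclic circumsphere, and by Theorem \ref{pseudo EP} the only such cells are the parabolic-invariant ones. Second, in the second part you claim that ``the first part, run in reverse'' identifies the Voronoi edges with ideal endpoint $v_\infty$ as \emph{precisely} the $e_i=V_{\bs_i}\cap V_{\bs_{i+1}}$; the converse direction (that each such $e_i$ is in fact a non-compact Voronoi edge) is not argued and, fortunately, is not needed --- the lemma only asserts an identification of the intersection with a union indexed by the non-centered non-compact Voronoi edges, and your case analysis of the centered dual 2-cells only uses the forward containment. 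Either rephrase to the one-sided statement you actually use, or drop the ``precisely.''
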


\begin{proof}  For $C_T$ and $\Delta(e_0,v_{\infty})$ as above, we recall from Lemma \ref{exclusive horocycle} that the endpoints of the geometric dual $\gamma$ to $e_0$ are contained in a unique horocycle $S$ with ideal point $v_{\infty}$, and the horoball $B$ bounded by $S$ satisfies $B\cap\cals = S\cap\cals$.  Theorem \ref{pseudo EP} therefore implies that the closed convex hull of $S\cap\cals$ in $\mathbb{H}^2$ is a Delaunay two-cell $C_{v_{\infty}}$ invariant under a parabolic subgroup of $\pi_1 F$ fixing $v_{\infty}$.  The decomposition of Lemma \ref{inclusive horocycle} includes $\Delta(e_0,v_{\infty})$.

For a Delaunay two-cell $C$ invariant under a parabolic subgroup $\Gamma$ of $\pi_1 F$ fixing some $v_{\infty}\in S_{\infty}$, Corollary \ref{it's a decomp!} implies in particular that the intersection of $C$ with the centered dual complex of $\widetilde{\cals}$ is $\Gamma$-invariant.  Since the centered dual is a union of cyclic Delaunay cells and triangles of the form $\Delta(e,v)$ as above (recall Definition \ref{centered dual}), $\mathit{int}\, C$ intersects only the ideal triangles.  For any such $\Delta(e,v)$ the lemma's first assertion implies that $v = v_{\infty}$, that $\Delta(e,v)\subset C$, and the geometric dual $\gamma$ to $e$ is an edge of $C$.\end{proof}

\begin{proposition}\label{augmented centered dual} For a complete, oriented, non-compact hyperbolic surface $F$ of finite area with locally isometric universal cover $\pi\co\mathbb{H}^2\to F$ and a finite set $\cals\subset F$,  there is a \mbox{\rm centered dual} \mbox{\rm complex plus} of $\widetilde{\cals}\doteq\pi^{-1}(\cals)$ with underlying space:
$$\bar{\mathbb{H}}^2\doteq\mathbb{H}^2\cup\{v\in S_{\infty}\,|\,g(v) = v\ \mbox{for some parabolic}\ g\in\pi_1 F\}$$  
Its vertex and edge sets include those of the centered dual and also, for each parabolic fixed point $v\in S_{\infty}$ that is not the endpoint of a non-centered Voronoi edge, the vertex $v$ and an edge $[\bs,v]$ for each vertex $\bs$ of the corresponding Delaunay two-cell $C_v$ (as in Theorem \ref{pseudo EP}).  The two-cells consist of:\begin{itemize}
\item  all centered dual two-cells; and
\item  for each Delaunay two-cell $C$ that is invariant under a parabolic subgroup $\Gamma$ of $\pi_1 F$ fixing $v\in S_{\infty}$, and each edge $\gamma$ of $C$ that does not intersect the interior of a centered dual two-cell, $T\cup\{v\}$ where $T$ is the horocyclic ideal triangle spanned by $v$ and $\gamma$.\end{itemize}
For each cell $C$ of the centered dual plus, $\pi$ is embedding on $\mathit{int}\,C$ and $\pi|_{C\cap\mathbb{H}^2}$ extends to a continuous map $C\to \bar{F}$, where $\bar{F}$ is the closed surface obtained by adding one point to each cusp of $F$.  The images determine a cell decomposition of $\bar{F}$.\end{proposition}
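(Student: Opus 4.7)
The plan is to verify the four claims implicit in the statement: (i) the proposed cells together cover $\bar{\mathbb{H}}^2$ and meet properly; (ii) the complex is $\pi_1 F$-invariant; (iii) $\pi$ embeds the interior of each cell and extends continuously on $C\cap\mathbb{H}^2$ to $C\to\bar{F}$; and (iv) the images constitute a cell decomposition of $\bar{F}$. The central input is the preceding unnamed lemma, which identifies the Delaunay $2$-cells that parabolic subgroups of $\pi_1 F$ stabilize as exactly those whose circumcircle is a horocycle, and locates their intersection with the centered dual complex. Combined with Lemma \ref{inclusive horocycle}, which decomposes a parabolic-invariant Delaunay $2$-cell $C_v$ into a $\Gamma_v$-periodic union of horocyclic ideal triangles $T_i$ with apex at $v$, this tells us that the horocyclic triangles $T_i$ partition into those of the form $\Delta(e,v)$ for a noncentered Voronoi edge with ideal endpoint $v$ (which already lie in a cell $C_T$ of the centered dual) and those whose side $\gamma$ lies in the centered dual's $1$-skeleton --- these are exactly the new $2$-cells declared in the statement.

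To establish (i), I would first observe that the underlying space of the centered dual complex (from Definition \ref{centered dual}) already contains every geometric dual $2$-cell; the remainder of $\mathbb{H}^2$ is covered by the interiors of the parabolic-invariant Delaunay $2$-cells (Theorem \ref{pseudo EP}, Remark \ref{lattice geometric dual}), each of which Lemma \ref{inclusive horocycle} splits into the non-overlapping union of the $T_i$. The preceding lemma identifies which $T_i$ are already in a centered dual cell; the new $2$-cells $T_i\cup\{v\}$ account for the rest, and adding $v$ fills in the missing point of $\bar{\mathbb{H}}^2$. Proper intersection of distinct cells follows from the corresponding fact for the centered dual (the second corollary after Proposition \ref{re-re-decompose}) together with the non-overlapping union statement of Lemma \ref{inclusive horocycle} and the preceding lemma. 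For (ii), covering transformations act by isometries, and hence permute centered dual cells (Corollary \ref{it's a decomp!}), permute parabolic-invariant Delaunay cells (conjugating parabolic stabilizers), and send horocycles to horocycles, so they carry new $2$-cells to new $2$-cells.

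For (iii), the embedding of interiors of centered dual cells is Corollary \ref{it's a decomp!}. For a new $2$-cell $T_i\cup\{v\}$ in the parabolic-invariant Delaunay cell $C_v$, Corollary \ref{down below} gives that $\pi|_{\mathit{int}\,C_v}$ factors through an embedding of $\mathit{int}\,C_v/\Gamma_v$. Since the index $k$ of Lemma \ref{inclusive horocycle} equals the number of $T_i$ in a fundamental domain for $\Gamma_v$, the restriction of $\pi$ is injective on $\mathit{int}\,T_i$. To extend $\pi|_{T_i\cap\mathbb{H}^2}$ continuously to $\bar F$, I send $v$ to the adjoined cusp point: geodesic rays in $T_i$ with endpoint $v$ project to proper rays into the corresponding cusp of $F$, which converge to the cusp point in $\bar F$ by definition of the topology on $\bar F$. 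Continuity at $v$ follows because a neighborhood basis of $v$ in $T_i\cup\{v\}$ consists of truncations by horocycles with ideal point $v$, whose images are neighborhood bases of the cusp point in the image in $\bar F$.

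For (iv), the images of the old centered dual cells form a cell decomposition of $F$ minus its cusps by Corollary \ref{it's a decomp!}. The remaining images, those of the new horocyclic ideal $2$-cells, must be shown to give a cell decomposition of each filled-in cusp region of $\bar F$. Here the main technical obstacle, which I expect to be the hard part, is verifying that locally around each added cusp point of $\bar F$ the new $2$-cells fit together correctly: I would argue that the $\Gamma_v$-quotient of $\{T_i\cup\{v\}\}_i$ consists of exactly $k$ triangles meeting cyclically at the cusp point, pair each $T_i$ with $T_{i+1}$ across a common edge $[s_{i+1},v]$, and check that boundary identifications induced by $\Gamma_v$ match up. The extension across $\partial C_v$ to the adjacent centered dual cells is automatic from (i) and (ii). This completes the proof that the images form a cell decomposition of $\bar F$.
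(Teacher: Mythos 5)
Your proof is correct and uses essentially the same approach as the paper, whose own proof is the single sentence that the proposition ``follows directly from the prior results of this section.'' Your argument fills in those details in the expected way, relying on the same ingredients the paper cites implicitly: Lemma \ref{inclusive horocycle}, the unnamed lemma preceding the proposition, Theorem \ref{pseudo EP}, Corollary \ref{down below}, and Corollary \ref{it's a decomp!}.
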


Proposition \ref{augmented centered dual} follows directly from the prior results of this section, recalling that each geometric dual two-cell is contained in a centered dual two-cell by Definition \ref{centered dual}.

\subsection{Proof of the main theorem}\label{proof of main app}  For closed hyperbolic surfaces, the upper bound of Theorem \ref{main app} is assertion 1) of the main theorem of \cite{Bavard}.  We reproduce this result below as Lemma \ref{Boroczky}, and prove it along the same lines using B\"or\"oczky's Theorem.  We first saw this kind of argument in the related result \cite[Corollary 3.5]{KM}.

Though it is not noted in \cite{Bavard}, this strategy gives bounds for all surfaces.  They are not sharp in the non-compact case.  To obtain sharp bounds we use the centered dual machine.

\begin{lemma}\label{Boroczky}  For a complete, oriented hyperbolic surface $F$ of finite area and any $x\in F$, $\mathit{injrad}_x F \leq r_{\chi}$ for $r_{\chi} > 0$ defined by the equation $3\alpha(r_{\chi}) = \pi/(1-\chi(F))$, where $\alpha(r)$ is defined in Theorem \ref{main app} and $\chi(F)$ is the Euler characteristic of $F$.\end{lemma}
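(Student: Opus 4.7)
The plan is to treat $\mathit{injrad}_x F = r$ as the radius of a disk embedded in $F$, lift to a $\pi_1 F$-invariant packing of $\mathbb{H}^2$ by disks of radius $r$, and apply B\"or\"oczky's density theorem \cite{Bor}. Choosing a locally isometric universal cover $\pi\co \mathbb{H}^2\to F$, the preimage $\cals\doteq \pi^{-1}(x)$ is a locally finite, $\pi_1 F$-invariant subset, and the hypothesis that the open ball $B_r(x)$ embeds isometrically in $F$ means the open hyperbolic disks of radius $r$ about points of $\cals$ have pairwise disjoint interiors.

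Next I would invoke B\"or\"oczky's theorem in the following form: for a packing of $\mathbb{H}^2$ by congruent disks of radius $r$, and any geodesic triangle $T$ with vertices at three packing centers and sides of length $\geq 2r$, the ratio of the total area of the three vertex sectors in $T$ to the area of $T$ is maximized when $T$ is equilateral with side $2r$. That maximum is $3\alpha(r)(\cosh r-1)/(\pi-3\alpha(r))$, since sector area in $\mathbb{H}^2$ is (angle)$\,\cdot\,(\cosh r-1)$ and the equilateral triangle with side $2r$ has angle sum $3\alpha(r)$, hence area $\pi-3\alpha(r)$ by Gauss--Bonnet. Summing this local bound over a Delaunay triangulation of $\cals$ inside a fundamental domain for $\pi_1 F$ (equivalently, using Corollary \ref{down below}, over the image of the Delaunay tessellation in $F$), the total disk area in $F$ is $2\pi(\cosh r-1)$ while $\mathrm{Area}(F)=-2\pi\chi(F)$. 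Dividing, canceling $\cosh r - 1$, and rearranging yields $3\alpha(r)(1-\chi(F))\geq \pi = 3\alpha(r_\chi)(1-\chi(F))$; and since $\alpha$ is strictly decreasing in $r$, this is equivalent to $r\leq r_\chi$.

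The main subtlety --- and the reason this bound is not sharp in the cusped case --- is the treatment of parabolic-invariant Delaunay two-cells. By Theorem \ref{pseudo EP}, each parabolic fixed point $u\in S_\infty$ of $\pi_1 F$ is the ideal point of a unique $\Gamma_u$-invariant Delaunay cell $C_u$, which is an infinite horocyclic polygon rather than a compact triangle. To make the B\"or\"oczky summation run uniformly I would subdivide each such $C_u$ into horocyclic ideal triangles as in Lemma \ref{inclusive horocycle}, each having two finite vertices in $\cals$ joined by a geodesic of length $\geq 2r$ together with one ideal vertex at $u$. The sector contribution at the ideal vertex is zero, so a limiting form of B\"or\"oczky's bound applies, in fact with strict inequality, and the global summation is unaffected. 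The slack introduced here is precisely what the centered-dual refinement of Theorem \ref{main app} is designed to recover.
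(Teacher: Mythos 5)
Your plan diverges from the paper's at the point where B\"or\"oczky's theorem is invoked, and the divergence is fatal. The paper applies B\"or\"oczky's theorem in its standard Voronoi form: the local density of a disk inside its own Dirichlet--Voronoi cell is at most the density of three mutually tangent radius-$r$ disks in the equilateral triangle spanned by their centers. Since $\pi_1 F$ acts transitively on $\cals = \pi^{-1}(x)$, a single Voronoi $2$-cell $\widetilde V$ is already a fundamental domain, so $\mathrm{Area}(\widetilde V) = -2\pi\chi(F)$ by Gauss--Bonnet, and one application of the local-density bound gives the inequality in a single step, with no summation over Delaunay cells and no need to treat the parabolic-invariant cells that occupy your last paragraph.

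The Delaunay-triangle bound you invoke instead is not B\"or\"oczky's theorem and is false. For a triangle with vertex angles $\theta_0,\theta_1,\theta_2$ the ratio of full-sector area to triangle area is
\[
\frac{(\theta_0+\theta_1+\theta_2)(\cosh r-1)}{\pi-(\theta_0+\theta_1+\theta_2)},
\]
which is unbounded over triangles with all sides $\geq 2r$: an isosceles triangle with two sides of length exactly $2r$ and a third tending to $(4r)^-$ is admissible (three disjoint radius-$r$ disks fit at its vertices) and has angle sum tending to $\pi$, area tending to $0$. If you instead trim each sector to $T$, the trimmed sectors essentially fill the degenerating triangle, so the ratio tends to $1$, still exceeding the equilateral value $3\alpha(r)(\cosh r-1)/(\pi-3\alpha(r))$, which is strictly less than $1$. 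B\"or\"oczky's bound controls density within a Voronoi cell precisely because Voronoi cells cannot collapse in this way. To repair the argument you would have to replace the Delaunay summation with the Voronoi-cell estimate, at which point you recover the paper's proof and the parabolic-cell discussion becomes unnecessary.
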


\begin{proof}  Let $\pi\co\mathbb{H}^2\to F$ be the universal cover.  For an open disk $D$ of radius $r$ embedded in $F$, $\pi^{-1}(D)$ is a packing of $\mathbb{H}^2$ by radius-$r$ disks that is invariant under the $\pi_1 F$-action by covering transformations of $\mathbb{H}^2$.

For such a disk $D$ let $s\in F$ be the center of $D$, and let $\cals=\pi^{-1}(s)\subset\mathbb{H}^2$.  $\cals$ is locally finite at $\pi_1 F$-invariant, so its Voronoi tessellation is as well.  The $\pi_1 F$-action is transitive on $\cals$, so also is on Voronoi $2$-cells.  Thus any fixed two-cell is a fundamental domain for the $\pi_1 F$-action, hence by the Gauss-Bonnet theorem has area $-2\pi\chi(F)$.

Each component $\widetilde{D}$ of $\pi^{-1}(D)$ is contained in a Voronoi $2$-cell $\widetilde{V}$, and the \textit{local density} of $\pi^{-1}(D)$ at $\widetilde{D}$ is by definition the ratio $\mathrm{Area}(\widetilde{D})/\mathrm{Area}(\widetilde{V})$.  Since $\mathrm{Area}(\widetilde{D}) = 2\pi(\cosh r-1)$, Bor\"oczky's theorem \cite{Bor} asserts the following bound:
$$ \frac{\mathrm{Area}(\widetilde{D})}{\mathrm{Area}(\widetilde{V})} = \frac{2\pi(\cosh r -1)}{-2\pi\chi(F)} \leq \frac{3\alpha(r)(\cosh r -1)}{\pi - 3\alpha(r)}$$
The quantity on the right-hand side above is interpreted as follows: it is the ratio of the area of intersection of an equilateral triangle $T$ that has all side lengths $2r$ with the union of radius $r$ disks centered at its vertices, divided by the area of $T$.  Solving the inequality above yields $\pi\leq 3\alpha(r)(1-\chi(F))$, and the desired bound follows since $\alpha$ decreases with $r$.
\end{proof}

\begin{theorem}\label{main app}\mainapp\end{theorem}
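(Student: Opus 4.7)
The plan is to apply the centered dual plus complex of Proposition \ref{augmented centered dual} to $\tilde{\cals} = \pi^{-1}(x)$, where $\pi \colon \mathbb{H}^2 \to F$ is a locally isometric universal cover and $x \in F$ is a point where the injectivity radius $r := \mathit{injrad}_x F$ is attained. By Corollary \ref{it's a decomp!} and Proposition \ref{augmented centered dual}, this descends to a cell decomposition $\mathcal{K}$ of the compactified surface $\bar F$ whose vertex set consists of $x$ together with the $n$ cusp points. Every finite edge of $\mathcal{K}$ has length at least $2r$, since distinct lifts of $x$ in $\mathbb{H}^2$ are at distance at least $2r$ from each other.

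The central identity is $\sum_C \sigma_C = 2\pi$, where the sum runs over cells $C$ of $\mathcal K$ and $\sigma_C$ denotes the sum of angles of $C$ at its vertices projecting to $x$ (ideal vertices at cusps contributing $0$). Gauss--Bonnet gives $\sigma_C = (k_C-2)\pi - \mathrm{Area}(C)$, where $k_C$ is the number of edges of $C$. Using the area lower bounds of Theorem \ref{main} for compact centered dual cells, Theorem \ref{main for noncompact} for non-compact ones, and Proposition \ref{horocyclic defects} for the supplementary horocyclic ideal triangles, together with two calculus inputs---namely $b_0(2r,2r) > 2r$ from Proposition \ref{smooth BC}(\ref{the proper order}) (so $\sin^{-1}(1/\cosh(b_0(2r,2r)/2)) < \beta(r)$), and the inequality $\pi - A_m(2r) \leq 3\alpha(r)$ for $r > 0$---I shall derive, for each cell $C$ with $k_C$ edges:
\begin{align*}
\sigma_C & \leq (k_C - 2) \cdot 3\alpha(r) && \text{if $C$ is compact,} \\
\sigma_C & \leq 2\beta(r) + (k_C - 3) \cdot 3\alpha(r) && \text{if $C$ meets a cusp.}
\end{align*}

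Summing over all cells and using $\sum_C (k_C - 2) = 4g + 2n - 2$ (which follows from $V = 1+n$, $\sum_C k_C = 2E$, and $V - E + F = 2-2g$), I obtain
\[ 2\pi \;\leq\; 3\alpha(r)\bigl(4g + 2n - 2 - N_p\bigr) + 2\beta(r)\,N_p, \]
where $N_p$ is the number of cells of $\mathcal{K}$ meeting a cusp. Since each cusp is incident to at least one cell, $N_p \geq n$; since $3\alpha(r) > 2\beta(r)$ (Corollary \ref{special monotonicity}), the right-hand side is decreasing in $N_p$, yielding $2\pi \leq 3\alpha(r)(4g + n - 2) + 2n\beta(r)$. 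Monotonicity of $\alpha$ and $\beta$ in $r$, together with the defining equation for $r_{g,n}$, then forces $r \leq r_{g,n}$.

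For the ``moreover'' clause, inspection of the chain of inequalities shows that equality forces $N_p = n$ and each cell of $\mathcal K$ to be a triangle---each compact cell equilateral with side $2r_{g,n}$, and each cusp cell a horocyclic ideal triangle with finite side $2r_{g,n}$. Nonemptiness will follow by an explicit construction, gluing $4g+n-2$ such equilateral triangles together with $n$ such horocyclic ideal triangles along any suitable combinatorial triangulation of $\bar F$ with $1 + n$ vertices; the defining equation for $r_{g,n}$ ensures smoothness of the resulting metric at the interior vertex. Finiteness is immediate, since only finitely many such combinatorial triangulations exist. The chief technical hurdle will be verifying the calculus inequality $\pi - A_m(2r) \leq 3\alpha(r)$, which supplies the bridge from the area lower bounds of Theorems \ref{main} and \ref{main for noncompact} to angle upper bounds matching the defining equation for $r_{g,n}$.
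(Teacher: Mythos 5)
Your argument for the inequality $r\leq r_{g,n}$ is correct and follows the same route as the paper: use the centered dual plus of $\pi^{-1}(x)$, combine the area lower bounds of Theorems \ref{main} and \ref{main for noncompact} and Proposition \ref{horocyclic defects} with the Euler characteristic of $\bar F$, and deduce the inequality. Your reformulation in terms of the angle sum $\sum_C\sigma_C=2\pi$ at the lone finite vertex is algebraically equivalent to the paper's bookkeeping via $\sum\mathrm{Area}(C_i)=-2\pi\chi(F)$ and $e-f=1-\chi(F)$, so the approach is the same up to a trivial rearrangement.

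Two remarks. First, the inequality you flag as ``the chief technical hurdle,'' namely $\pi-A_m(2r)\leq 3\alpha(r)$, is immediate from tools already in the paper: $A_m(d)=D_0(b_0(d,d),d,d)\geq D_0(d,d,d)=\pi-3\alpha(d/2)$ by Proposition \ref{smooth BC}(\ref{the proper order}) and Corollary \ref{monotonicity}. There is no further calculus needed, and indeed the paper uses the weaker bound $D_0(d,d,d)$ (rather than $A_m(d)$) for exactly the same reason. Second, for the ``moreover'' clause your sketch has a genuine gap: smoothness of the metric at the interior vertex (cone angle $2\pi$) does not by itself give injectivity radius $r_{g,n}$ there. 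One must verify that the open disk of radius $r_{g,n}$ actually embeds, i.e.\ that no two lifts of the vertex come within $2r_{g,n}$ of each other. The paper does this in Examples \ref{closed examples} and \ref{noncompact examples} by invoking \FullSector\ to show each (centered) triangle contains the full sector of every radius-$r_{g,n}$ disk about its vertices, so that such disks at distinct vertices of the developing tessellation are pairwise disjoint. Your construction sketch should include this step, and you should also be a bit more careful than ``any suitable combinatorial triangulation'': the argument requires the cells meeting cusps to be exactly the $n$ horocyclic ideal triangles and every remaining triangle to be the equilateral one, and it is the full-sector argument, not smoothness alone, that then yields $\mathit{injrad}_x F = r_{g,n}$.
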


\begin{proof} If $n=0$ (i.e.~$F$ is closed), the equation defining $r_{g,n}$ simplifies to $(2g-1)3\alpha(r_{g,0}) = \pi$.  This bound is supplied by Lemma \ref{Boroczky}, so we assume below that $F$ has at least one cusp.

Let $\pi\co\mathbb{H}^2\to F$ be a locally isometric universal cover, fix $x\in F$, and let $\widetilde{\cals}=\pi^{-1}\{x\}$.  Let $\{C_1,\hdots,C_k\}$ be a complete set of representatives in $\bar{\mathbb{H}}^2$ for $\pi_1 F$-orbits of two-cells of the centered dual plus.  By Proposition \ref{augmented centered dual} these project to the two-cells of a cell decomposition of $\bar{F}$, which is obtained from $F$ by compactifying each cusp with a single point.  Their interiors project homeomorphically to $F$, so by the Gauss--Bonnet theorem:
$$  \mathrm{Area}(C_1) + \hdots + \mathrm{Area}(C_k) = -2\pi\chi(F)  $$
(We should technically replace each $C_i$ above by $C_i\cap\mathbb{H}^2$ above.)  Since the centered dual plus has a vertex at each parabolic fixed point of $\pi_1 F$,  its projection has a vertex at each point of $\bar{F}-F$, in addition to the vertex at $x$, for a total of $n+1$.  Each edge of the projection either begins and ends at $x$ or joins $x$ to a point of $\bar{F}-F$.  Edges in the former category have length at least $d\doteq 2\mathit{injrad}_x F$, and those in the latter intersect $F$ in infinite-length arcs.

Each point of $\bar{F}-F$ is contained in at least one cell of the centered dual plus.  Each horocyclic ideal triangle (including all those not in the centered dual, recall Proposition \ref{augmented centered dual}) has area   at least $D_0(\infty,d,\infty)$ by Proposition \ref{horocyclic defects}.  For a cell $C_i$ with $n_i \geq 4$ edges, Theorem \ref{main for noncompact} asserts:
$$ \mathrm{Area}(C_i) \geq D_0(\infty,b_0(d,d),\infty) + (n_i-3)D_0(b_0(d,d),d,d)$$

Each non-triangular cell $C_i$ of the centered dual plus that is entirely contained in $F$ is compact and hence satisfies the bound of Theorem \ref{main} (cf.~Proposition \ref{centered main}):
$$ \mathrm{Area}(C_i) \geq (n_i-2) D_0(b_0(d,d),d,d) $$
A triangular such cell $C_i$ satisfies $\mathrm{Area}(C_i) \geq D_0(d,d,d)$ by Corollary \ref{monotonicity}.

Since $F$ has genus $g$ and $n$ cusps, $\bar{F}$ is closed of genus $g$, and the projection of the centered dual plus is a cell decomposition with vertex set $\{x\}\cup (\bar{F}-F)$ of order $n+1$.  It satisfies the Euler characteristic identity $v-e+f = \chi(\bar{F})$, where $v$, $e$, and $f$ are the numbers of vertices, edges, and faces, respectively.  Substituting $n+1$ for $v$ and $2-2g$ for $\chi(\bar{F})$ yields:
$$ e - f = (n+1) - (2-2g) = 1 - \chi(F) $$
After renumbering if necessary, there exists $k_0 \leq k$ such that $C_i$ has an ideal vertex if and only if $i\leq k_0$. Each such $C_i$ has only one ideal vertex, so $k_0\geq n$ since each of the $n$ points of $\bar{F}-F$ is in the projection of such a cell.  We will apply the area inequalities recorded above, together with the following:
$$ D_0(\infty,b_0(d,d),\infty) > D_0(\infty,d,\infty) > D_0(b_0(d,d),d,d) > D_0(d,d,d) $$
These follow respectively from Proposition \ref{horocyclic defects}, Corollary \ref{big ideals}, and Corollary \ref{monotonicity}.  Together with the above they imply that for $i\leq k_0$,
$$ \mathrm{Area}(C_i) \geq D_0(\infty,d,\infty) + (n_i-3)D_0(d,d,d), $$
with equality holding if and only if $C_i$ is a horocyclic ideal triangle with finite side of length $d$.  For $k_0< i\leq k$ we have
$$ \mathrm{Area}(C_i) \geq (n_i-2) D_0(d,d,d), $$
with equality again holding if and only if $C_i$ is a triangle with all sides of length $d$.  Applying these inequalities and the Gauss--Bonnet theorem yields:\begin{align*}
  -2\pi\chi(F) & \geq k_0\cdot D_0(\infty,d,\infty) + \left(\sum_{i=1}^k (n_i-2) - k_0\right)\cdot D_0(d,d,d) \\
    & \geq n\cdot D_0(\infty,d,\infty) + \left(\sum_{i=1}^k n_i - 2k - n\right)\cdot D_0(d,d,d)\end{align*}
Equality holds here if and only if $k_0 = n$, i.e. every ideal point of $\bar{F}$ is in a unique $C_i$, and every Delaunay edge has length $d$.  The sum of $n_i$ counts each edge of the centered dual plus twice, so $\sum_{i=1}^k n_i - 2k = 2e-2f = 2-2\chi(F))$.  Moreover, $D_0(\infty,d,\infty) = \pi-2\beta(r)$ and $D_0(d,d,d) = \pi-3\alpha(r)$, where $r = d/2$.  It is not hard to check that $\alpha$ and $\beta$ are strictly decreasing functions of $r$, so substituting above yields the desired inequality.

The final assertion follows from the fact that a surface triangulated by equilateral and horocyclic triangles with all (finite) side lengths equal to $d$ has its isometry class determined by the combinatorics of the triangulation, so for a fixed finite collection of such triangles there are only finitely many possibilities.\end{proof}

\subsection{Some examples}\label{examples}

Below we describe a closed, oriented hyperbolic surface $F$ of genus $g$ with maximal injectivity radius $r_g$.  The same examples were constructed in \cite{Bavard}, but we give an alternate approach that extends easily to the non-compact case.

\begin{example}\label{closed examples}  Fix $g\geq 2$ and let $r_g = r_{g,0}$ from Theorem \ref{main app}.  Substituting into the defining equation for $r_{g,0}$ there and solving for $\alpha(r_g)$ yields:
$$ \alpha(r_g) = \frac{\pi}{3(2g-1)} $$
Let $T_1,\hdots,T_{4g-2}$ be a collection of equilateral triangles, each with all vertex angles equal to $\alpha(r_g)$, arranged in $\mathbb{H}^2$ sharing a vertex $v$ so that for $1\leq i < j\leq 4g-6$, $T_i\cap T_j$ is an edge of each, if $j=i+1$, or else $v$.  Then $P_g \doteq T_1\cup T_2\cup\hdots\cup T_{4g-2}$ is a $4g$-gon with all edge lengths equal and vertex angles that sum to
$$ (4g-2)\cdot 3\cdot\frac{\pi}{3(2g-1)} = 2\pi $$
Label the edges of $P$ as $a_1,b_1,c_1,d_1,a_2,b_2,\hdots,d_{g-1},a_g,b_g,c_g,d_g$ in cyclic order.  For each $i$ let $f_i$ be the orientation-preserving hyperbolic isometry such that $f_i(a_i) = \bar{c}_i$ and $c_i = f(P)\cap P$, and let $g_i$ have $g_i(b_i) = \bar{d}_i$ and $d_i = g_i(P)\cap P$.  Here the bar indicates that when $a_i$, $b_i$, $c_i$ and $d_i$ are given the boundary orientation from $P$, $f_i|_{a_i}$ and $g_i|_{b_i}$ reverse orientation.

One easily shows that the edge-pairing of $P$ described above has a single quotient vertex, so since the vertex angles of $P$ sum to $2\pi$ the Poincar\'{e} polygon theorem implies that the group $G=\langle f_1,g_1,\hdots,f_g,g_g\rangle$ acts properly discontinuously on $\mathbb{H}^2$ with fundamental domain $P$.  

For $r\leq r_g$, if the open metric disk $D_r(v)$ of radius $r$ centered at $v$ does not embed in $F\doteq \mathbb{H}^2/G$ under the quotient map $\pi\co\mathbb{H}^2\to F$ then $D_r(v)$ intersects a translate $D_r(w)$ for some $w\in\pi^{-1}(\pi(v))$.  For each vertex $w$ of each $T_i$, $T_i$ contains the entire sector of the open metric disk $D_r(w)$ that it determines; see \FullSector.  Moreover, disks of radius $r$ centered at distinct vertices of $T_i$ do not meet.  It follows that $D_r(v)\cap D_r(w) = \emptyset$ for any distinct vertices $v$ and $w$ of $P$, and that $P$ contains the full sector of any such $D_r(w)$ that it determines.  We therefore find that $D_r(v)$ embeds in $F$, upon noting that $\pi^{-1}(\pi(v))$ is the set of vertices of $G$-translates of $P$.  Thus $F$ has injectivity radius $r_g$ at $P$.\end{example}

\begin{example}\label{noncompact examples}  Fix $g\geq 0$ and $n\geq 1$ (excluding $(g,n) = (0,1)$ or $(0,2)$), and take $r_{g,n}$ as in Theorem \ref{main app}.  Let $T_1,\hdots,T_{4g+n-2}$ be  equilateral triangles with side lengths $2r_{g,n}$, arranged as in Example \ref{closed examples} so that their union is a $(4g+n)$-gon $P_0$.  Label the edges of $P_0$ in cyclic fashion as:
$$e_1,\hdots,e_n,a_1,b_1,c_1,d_1,a_2,\hdots,d_{g-1},a_g,b_g,c_g,d_g, $$
so that $v = e_1\cap d_g$. Then append horocyclic ideal triangles $S_1,\hdots,S_n$, each with finite side length $2r_{g,n}$, to $P_0$ so that $S_i\cap P_0 = e_i$ for each $i$.  Let $P = P_0\cup\bigcup S_i$.

For $1\leq i\leq n$, let $c_i$ be the parabolic isometry fixing the ideal point of $S_i$ and taking one of its sides to the other, and for $1\leq i\leq g$ let $f_i(a_i) = \bar{c}_i$ and $g_i(b_i)=\bar{d}_i$ as in Example \ref{closed examples}.  As in that example, each vertex of $P$ is equivalent to $v$ under the resulting edge-pairing.  By definition of $\alpha$ and $\beta$, the vertex angles of $P$ sum to:
$$ 3(4g+n)\alpha(r_{g,n}) + 2n\beta(r_{g,n}) = 2\pi $$
Therefore the Poincar\'e polygon theorem implies that $G = \langle c_1,\hdots,c_n,f_1,g_1,f_2,\hdots,f_g,g_g\rangle$ acts properly discontinuously on $\mathbb{H}^2$ with fundamental domain $P$ and quotient $F = \mathbb{H}^2/G$ a complete hyperbolic surface.

Inspecting the edge pairing one finds that $F$ has $n$ cusps.  Its area is equal to that of $P$, $(4g+2n-2)\pi - 2\pi = 2\pi(2g-2+n)$, so by the Gauss--Bonnet theorem $F$ has genus $g$.  We claim that $F$ has injectivity radius $r_{g,n}$ at the projection of $v$; the argument is completely analogous to that of Example \ref{closed examples}.\end{example}

\begin{corollary}\label{special monotonicity}  For any $r>0$, the function $x\mapsto D_0(2r,x,x)$ is continuous and increasing on $[2r,\infty]$.  In particular,
$$ \pi - 3\alpha(r) = D_0(2r,2r,2r) < D_0(2r,\infty,\infty) = \pi -2\beta(r);$$
whence $2\beta(r) < 3\alpha(r)$.\end{corollary}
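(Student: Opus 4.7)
The plan is to establish the first assertion of the corollary — that $f(x) \doteq D_0(2r,x,x)$ is continuous and strictly increasing on $[2r,\infty]$ — and then read off the desired inequality by substituting the endpoint values. The engine will be the derivative formula in Proposition \ref{omniarea}, whose ``centered'' branch requires us to know a priori that $(2r,x,x)\in\calc_3$ throughout the interval.

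First I would verify this containment. A cyclic triangle lies in $\calc_3$ precisely when it is acute, so by the hyperbolic law of cosines the angle $A$ opposite the side of length $2r$ in the isosceles triangle $(2r,x,x)$ satisfies $\cos A = (\cosh^2 x - \cosh 2r)/\sinh^2 x$, which is positive exactly when $\cosh x > \sqrt{\cosh 2r}$. Since $\cosh 2r > \sqrt{\cosh 2r}$ for $r>0$, this holds for every $x \geq 2r$. A symmetric computation shows each base angle is acute for all $r,x>0$, so $(2r,x,x)\in\calc_3$ throughout $[2r,\infty)$, and strictly so (never on $\calBC_3$).

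With centering secured, the chain rule and the symmetry of $D_0$ applied to Proposition \ref{omniarea} give
\[ f'(x) = 2\sqrt{\frac{1}{\cosh^2(x/2)} - \frac{1}{\cosh^2 J(2r,x,x)}}, \]
which is strictly positive on $[2r,\infty)$ because $x/2 < J$ strictly inside $\calc_3$. For continuity at infinity, I would compute directly from the law of cosines that as $x\to\infty$ the apex angle tends to $0$ and each base angle tends to $\cos^{-1}(\tanh r) = \beta(r)$, giving $f(x)\to \pi - 2\beta(r)$; this matches $D_0(\infty,2r,\infty)$ by Proposition \ref{horocyclic defects}, so $f$ extends continuously and monotonically to $[2r,\infty]$.

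The stated inequality then follows immediately: Lemma \ref{symmetric cyclic} with $n=3$, $d=2r$ yields $f(2r) = \pi - 6\sin^{-1}(1/(2\cosh r)) = \pi - 3\alpha(r)$, the value $f(\infty) = \pi - 2\beta(r)$ was just computed, and strict monotonicity forces $\pi - 3\alpha(r) < \pi - 2\beta(r)$, equivalent to $2\beta(r) < 3\alpha(r)$. The main obstacle is the centering check for $(2r,x,x)$; once that is in place the remainder is a routine application of results already in the paper.
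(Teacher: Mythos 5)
Your overall structure — verify $(2r,x,x)\in\calc_3$ for $x\geq 2r$, differentiate $D_0$ using Proposition \ref{omniarea}, evaluate the limit at $\infty$, and read off the inequality — is sound and matches the paper's intent (the paper reaches the limit via an external formula for $D_R(d,J)$ where you compute it directly; both are fine). However, the step that secures the centering contains a genuine error.

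Your criterion \emph{``a cyclic triangle lies in $\calc_3$ precisely when it is acute''} is a Euclidean fact that fails in $\mathbb{H}^2$. The boundary case $\calBC_3$ occurs when the longest side is a diameter of the circumcircle; decomposing such a triangle across that diameter into two isoceles triangles and using the angle defect shows that the angle $\angle A$ opposite the diameter satisfies $\angle A = \angle B + \angle C < \tfrac{\pi}{2}$ strictly. Hence there exist acute cyclic triangles (with $\angle A$ between $\angle B+\angle C$ and $\pi/2$) whose circumcenter lies \emph{outside} — acuteness is necessary for centeredness but not sufficient hyperbolically. Concretely, your apex-angle check gives $\cosh^2 x > \cosh 2r$, whereas the true condition for $(2r,x,x)$ (locating the circumcenter on the axis of symmetry) is $\cosh x > \cosh^2 r$, a strictly stronger inequality; these happen to coincide in the regime $x\geq 2r$, which is why you still land on the correct conclusion, but the stated lemma justifying the step is false.

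The cleanest repair is to check the definition of $\calc_3$ directly: for $x\geq 2r$ the largest entry of $(2r,x,x)$ is $D=x$, and from Definition \ref{parameter spaces} one has $\cos A_x(x/2) = -1$, so $A_x(x/2)=\pi$ and
\[ A_{2r}(x/2) + 2A_x(x/2) = A_{2r}(x/2) + 2\pi > 2\pi, \]
since $A_{2r}(x/2) \in (0,\pi]$ is defined and positive exactly when $r>0$ and $x\geq 2r$. This makes the membership in $\calc_3$ immediate without appealing to an angle criterion. With that replaced, the remainder of your argument (the derivative formula, the limit $f(x)\to\pi-2\beta(r)$, and the endpoint evaluation via Lemma \ref{symmetric cyclic}) goes through as written.
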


\begin{proof}  For $2r\leq x <\infty$ it is clear that $(2r,x,x)\in\calc_3$, so the function above is continuous and increasing on $[2r,\infty)$ (recall Corollary \ref{monotonicity}).  It is also continuous at $\infty$: for $D_R(d,J)$ as defined in \FullSector, that result implies that $D_0(2r,x,x) = D_R(d,J)$ with $R=0$, $d = 2r$, and $J = x$.  Then \cite[Lemma 6.7]{DeB_cyclic_geom} implies the result (recall the explicit definition of $D_0(x,\infty)$ in Proposition \ref{horocyclic defects}.)\end{proof}

\begin{example}\label{closed sequence}  For fixed $g\geq 2$, Corollary \ref{special monotonicity} and the definition of $r_{g-1,2}$ imply:
$$ (4g-2)3\alpha(r_{g-1,2}) = (4g)3\alpha(r_{g-1,2}) + 4\beta(r_{g-1,2}) + (6\alpha(r_{g-1,2})-4\beta(r_{g-1,2})) > 2\pi $$
(Recall Theorem \ref{main app}.)  Hence $r_{g-1,2} < r_{g,0}$, since $\alpha(r)$ decreases in $r$.  The inequality above also holds for any $r\in (r_{g-1,2},r_{g,0})$, and some rearrangement yields:
$$ (2g-2)2\pi > (4g-2)(\pi - 3\alpha(r)) = (4g-2)D_0(2r,2r,2r) $$
On the other hand, for such $r$ we also have $(4g-4)3\alpha(r) + 4\beta(r) < 2\pi$, so an analogous rearrangement yields:
$$ (2g-2)2\pi < (4g-4)(\pi-3\alpha(r)) + 2(\pi-2\beta(r)) = (4g-4)D_0(2r,2r,2r) + 2D_0(2r,\infty,\infty) $$
Thus by Corollary \ref{special monotonicity} and the intermediate value theorem there exists $x\in(2r,\infty)$ with\begin{align}\label{satisfaction}
  (2g-2)2\pi = (4g-4)D_0(2r,2r,2r) + 2D_0(2r,x,x)\end{align}
We arrange triangles $T_1,T_2,\hdots,T_{4g-2}$ as in Example \ref{closed examples}; however in this case only the last $4g-4$ are equilateral, each with sides of length $2r$.  We take $T_1$ and $T_2$ isosceles, each with one side of length $2r$ and others of length $x$, arranged so that $b_1$ and $T_2\cap T_3$ have length $2r$ and $a_1$, $T_1\cap T_2$, and $c_1$ have length $x$.  Here the sides of $P = \bigcup T_i$ are cyclically labeled as in Example \ref{closed examples}, starting at $v$, so that in particular $a_1$ and $b_1$ are sides of $T_1$, and $T_2\cap\partial P = c_1\sqcup\{v\}$.

As in Example \ref{closed examples}, for each $i$ there exists $f_i$ taking $a_i$ to $\bar{c}_i$, and $g_i$ taking $b_i$ to $\bar{d}_i$.  The collection $\{f_i,g_i\}_{i=1}^g$ is an edge-pairing on $P$ with a single quotient vertex.  The sum of all vertex angles of $P$ is the sum over $i$ of the vertex angle sum of each $T_i$.  Since the area of $T_i$ is the difference between $\pi$ and its vertex angle sum, the choice of $x$ and the equation above imply that the vertex angle sum of $P$ is $2\pi$.  Therefore by the Poincar\'e polygon theorem $G = \langle f_i,g_i\,|\, 1\leq i\leq g\rangle$ acts discontinuously on $\mathbb{H}^2$ with fundamental domain $P$.

The proof that $F = \mathbb{H}^2/G$ has injectivity radius $r$ at the projection of $v$ follows that of Example \ref{closed examples}.  The only additional note is that $T_1$ and $T_2$, being isosceles, are still centered, so the conclusions \FullSector\ apply to them as well.  

We claim that the minimal injectivity radius of $F$ approaches infinity as $r\to r_{g-1,2}^+$.  This follows from two sub-claims: first, that the solution $x$ to equation (\ref{satisfaction}) above goes to infinity as $r\to r_{g-1,2}$, and second, that the arc in $T_1$ joining points halfway up its sides with length $x$ has length approaching $0$.  Toward the second, a hyperbolic trigonometric calculation shows that the length $d$ of this arc satisfies:
$$  \cosh d = 1 + \frac{\cosh(2r)  -1}{2\cosh x + 2} $$
The second sub-claim therefore follows from the first.  The point $p$ at the midpoint of $a_1$ has distance at most $2d$ from its image in $c_1$, so $F$ has injectivity radius at most $d$ at the projection of $p$, and the claim follows.

It remains to show the first sub-claim.  Toward this end, let us recall from Theorem \ref{main app} that $r_{g-1,2}$ is defined by the equation:
$$ (2g-2)2\pi = (4g-4)D_0(2r_{g-1,2},2r_{g-1,2},2r_{g-1,2}) + 2D_0(2r_{g-1,2},\infty,\infty) $$
Therefore by Corollary \ref{special monotonicity}, for any fixed $x_0$ with $2r_{g-1,2}< x_0 < \infty$ we have:
$$ (2g-2)2\pi > (4g-4)D_0(2r_{g-1,2},2r_{g-1,2},2r_{g-1,2}) + 2D_0(2r_{g-1,2},x_0,x_0) $$
For $r$ near $r_{g-1,2}$ the function $r\mapsto (4g-4)D_0(2r,2r,2r) + D_0(2r,x_0,x_0)$ is continuous, so there exists $\epsilon > 0$ such that the inequality above holds with $r_{g-1,2}$ replaced with any $r\in(r_{g-1,2},r_{g-1,2}+\epsilon)$.  Since $D_0(2r,x,x)$ increases in $x$ this implies for any such $r$ that the solution to (\ref{satisfaction}) lies outside $[2r,x_0]$.  This proves the sub-claim.
\end{example}

\bibliographystyle{plain}
\bibliography{voronoi2}
\end{document}